\def\C{{\mathbf C}}
\def\R{{\mathbf R}}
\def\Z{{\mathbf Z}}
\def\Q{{\mathbf Q}}
\def\A{{\mathbf A}}
\newtheorem{theorem}{Theorem}[subsection]
\newtheorem{lemma}[theorem]{Lemma}
\newtheorem{proposition}[theorem]{Proposition}
\newtheorem{corollary}[theorem]{Corollary}
\newtheorem{claim}[theorem]{Claim}
\theoremstyle{definition}
\theoremstyle{remark}
\newtheorem{remark}[theorem]{Remark}
\newcommand{\mm}[4]{\left(\begin{smallmatrix} #1 & #2\\ #3 & #4\end{smallmatrix}\right)}
\DeclareMathOperator{\tr}{tr}
\DeclareMathOperator{\SO}{SO}
\DeclareMathOperator{\Spin}{Spin}
\DeclareMathOperator{\Sp}{Sp}
\DeclareMathOperator{\SU}{SU}
\DeclareMathOperator{\SL}{SL}
\DeclareMathOperator{\GL}{GL}
\DeclareMathOperator{\diag}{diag}
\def\g{{\mathfrak g}}
\def\h{{\mathfrak h}}
\def\m{{\mathfrak m}}
\def\sl{{\mathfrak {sl}}}
\def\su{{\mathfrak {su}}}
\begin{document}
\title{Exceptional Siegel Weil theorems for compact $\Spin_8$}
\author{Aaron Pollack}
\address{Department of Mathematics\\ The University of California San Diego\\ La Jolla, CA USA}
\email{apollack@ucsd.edu}

\thanks{AP has been supported by the Simons Foundation via Collaboration Grant number 585147,  by the NSF via grant numbers 2101888 and 2144021.}

\begin{abstract}  Let $E$ be a cubic \'etale extension of the rational numbers which is totally real, i.e., $E \otimes \mathbf{R} \simeq \mathbf{R} \times \mathbf{R} \times \mathbf{R}$.  There is an algebraic $\mathbf{Q}$-group $S_E =\Spin_{8,E}^c$ defined in terms of $E$, which is semisimple simply-connected of type $D_4$ and for which $S_E(\mathbf{R})$ is compact.  We let $G_E$ denote a certain semisimple simply-connected algebraic $\mathbf{Q}$-group of type $D_4$, defined in terms of $E$, which is split over $\mathbf{R}$.  Then $G_E \times S_E$ maps to quaternionic $E_8$.  This latter group has an automorphic minimal representation, which can be used to lift automorhpic forms on $S_E$ to automorphic forms on $G_E$.  We prove a Siegel-Weil theorem for this dual pair: I.e., we compute the lift of the trivial representation of $S_E$ to $G_E$, identifying the automorphic form on $G_E$ with a certain degenerate Eisenstein series.
	
Along the way, we prove a few more ``smaller" Siegel-Weil theorems, for dual pairs $M \times S_E$ with $M \subseteq G_E$.  The main result of this paper is used in the companion paper ``Exceptional theta functions and arithmeticity of modular forms on $G_2$" to prove that the cuspidal quaternionic modular forms on $G_2$ have an algebraic structure, defined in terms of Fourier coefficients.
\end{abstract}
\maketitle

\setcounter{tocdepth}{1}
\tableofcontents

\section{Introduction}
Suppose $G'$ is a reductive $\Q$-group with an automorphic minimal representation $V_{min}$, and $G \times S \subseteq G'$ is a dual pair.  Let $\phi \in V_{min}$ and $\Theta_\phi \in \mathcal{A}(G')$ the associated automorphic form.  Assuming the integral $\Theta_{\phi}(\mathbf{1})(g):=\int_{S(\Q)\backslash S(\A)}{\Theta(g,h)\,dh}$ converges (or can be regularized), one would like to identify explicitly the automorphic form $\Theta_{\phi}(\mathbf{1})(g) \in \mathcal{A}(G)$.  By a Siegel-Weil formula we mean an identity $\Theta_{\phi}(\mathbf{1})(g) = E_\phi(g)$, where $E_\phi(g)$ is some degenerate Eisenstein series on $G$ defined in terms of $\phi$.

Siegel-Weil theorems have a long history.  The classical setting is when $S = O(V)$ is the orthogonal group of a quadratic space of even dimension, $G = \Sp_{2m}$, $G'= \Sp_{2m \dim(V)}$ (or rather the metaplectic double cover), and $\Theta_\phi$ comes from the Weil representation.  This setting has been studied by Siegel \cite{siegelIndefiniteI}, Weil \cite{weilSW}, Rallis \cite{rallisBookOscillator}, and Kudla-Rallis \cite{kudlaRallisI,kudlaRallisII,kudlaRallisAnnals} among others.  We refer to \cite{kudlaRallisI,kudlaRallisAnnals} for a more extensive history.

The Weil representation on symplectic groups are not the only automorphic minimal representations.  The group $G'$ could, for instance, be of type $D_n$ or exceptional of type $E$.  Beautiful examples of this sort of ``exceptional" Siegel Weil theorems appear in work of Gan \cite{ganSW,ganSWSnitz,ganSWregularized} and Gan-Savin \cite{ganSavinSW}.  In this paper, we prove a family of Siegel-Weil theorems for $G'$ of type $E_8$.  We also prove some Siegel-Weil theorems when $G'$ is of type $D_6, D_7$ and $E_7$.  In all of these cases, $S$ is simply connected of type $D_4$ with $S(\R)$ compact.

Siegel-Weil theorems are frequently used in conjunction with Rankin-Selberg integrals to prove results connecting special values of $L$-functions, theta lifts, and periods of automorphic forms.  In such a framework, the Siegel-Weil Eisenstein series $E_\phi(g)$ is realized as a special value $E_\phi(g,s=s_0)$ of an Eisenstein series $E_\phi(g,s)$, and this latter Eisenstein series participates in a Rankin-Selberg integral.  Relating $E_\phi(g,s=s_0)$ to theta functions then gives a non-trivial relationship between $L$-values of the cusp forms appearing in the Rankin-Selberg integral and their theta lifts.

The situation in this paper is similar.  The main result of this paper will be used in the companion paper \cite{pollackETF} to prove that every cuspidal quaternionic modular form on $G_2$ of even weight at least $6$ lifts to an anisotropic group of type $F_4$.  Combined with the other main results of \cite{pollackETF}, this proves that the cuspidal quaternionic modular forms on $G_2$ of even weight at least $6$ have an algebraic structure, defined in terms of Fourier coefficients.  We refer to \cite{pollackETF} for more details.

We remark that our main result, Theorem \ref{thm:SWGE}, can be considered as part of the theory of $D_4$ modular forms, in the sense of \cite{weissmanD4}.  In fact, when $E = \Q\times \Q\times \Q$, Weissman hypothesized the existence of a Siegel-Weil formula as in Theorem \ref{thm:SWGE}.

\subsection{Statement of results}
We give rough version of the main result of this paper, deferring the precise statement until after all the group theoretic notation has been defined.

Let $E$ be a cubic \'etale extension of the rational numbers which is totally real, i.e., $E \otimes \R \simeq \R \times \R \times \R$.  There is an algebraic $\Q$-group $S_E =\Spin_{8,E}^c$ defined in terms of $E$, which is semisimple simply-connected of type $D_4$ and for which $S_E(\R)$ is compact.

We let $G_E$ denote a certain semisimple simply-connected algebraic $\Q$-group of type $D_4$, defined in terms of $E$, which is split over $\R$.  Then $G_E \times S_E$ maps to a group denoted $G_J$ below, which is quaternionic $E_8$.  This latter group is split at all finite places and $G_J(\R) = E_{8,4}$ has real rank four.

The group $G_J$ has an automorphic minimal representation, $V_{min}$, defined in \cite{ganMin} and studied further in \cite{ganSavin}.  We review the construction and properties of this and other automorphic minimal representations in sections \ref{sec:AMRD} and \ref{sec:AMRE} below.

Let $P_J$ be the maximal Heisenberg parabolic subgroup of $G_J$, so that $P_J$ has Levi subgroup of type $GE_{7,3}$.  Using the map $G_E \rightarrow G_J$, one can intersect $G_E \cap P_J$ to obtain a maximal Heisenberg parabolic subgroup of $G_E$, call it $P_E$.  

Let $\phi \in V_{min}$ and $\Theta_\phi$ the associated automorphic form on $G_J$.  We can realize $V_{min}$ as a submodule of $Ind_{P_J(\A)}^{G_J(\A)}(\delta_{P_J}^{5/{29}})$.  Using this realization, let $Res(\phi)$ be the restriction of $\phi$ to $G_E$.  Then $Res(\phi)$ lands in $Ind_{P_E(\A)}^{G_E(\A)}(\delta_{P_E})$.  Extending $Res(\phi)$ to a section $f_\phi(g,s)$ in $Ind_{P_E(\A)}^{G_E(\A)}(\delta_{P_E}^s)$, one can define\footnote{In the body of the paper, we normalize the parameter $s$ in these Eisenstein series differently.} an Eisenstein series $E_\phi(g,s) = \sum_{\gamma \in P_E(\Q)\backslash G_E(\Q)}{f_\phi(\gamma g,s)}$. The Eisenstein series turns out to be regular at $s=1$, and the associated automorphic form $E_\phi(g) \in \mathcal{A}(G_E)/\mathbf{1}$ (modding out by the trivial representation) is independent of the extension of $\phi$ to an inducing section $f_\phi(g,s)$.  Here is our main theorem.

Let $\Theta_\phi(\mathbf{1})(g) := \int_{S_E(\Q)\backslash S_E(\A)}{\Theta_\phi(g,h)\,dh}$ be the theta lift of $\mathbf{1}$ to $\mathcal{A}(G_E)$.  
\begin{theorem}[See section \ref{sec:Main}]\label{thm:introMain} Let the notation be as above.  Normalize the Haar measure on $S_E$ so that the automorphic quotient $S_E(\Q)\backslash S_E(\A)$ has measure $1$.  Then one has an identity of automorphic forms $\Theta_\phi(\mathbf{1})(g) = E_\phi(g)$ in $\mathcal{A}(G_E)/\mathbf{1}$.\end{theorem}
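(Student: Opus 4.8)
\medskip
\noindent\emph{Proof strategy.} The plan is to show that both $\Theta_\phi(\mathbf 1)$ and $E_\phi$ are automorphic forms on $G_E$ lying in the automorphic realization of the degenerate principal series attached to the Heisenberg parabolic $P_E$, and then to prove the identity by computing their Fourier coefficients along $P_E$ and matching them; the ambiguity by $\mathbf 1$ reflects exactly that these Fourier coefficients cannot detect constant functions. First I would dispose of the analytic issues: since $S_E(\R)$ is compact and $S_E$ is semisimple, $S_E$ is anisotropic over $\Q$, so $S_E(\Q)\backslash S_E(\A)$ is compact; hence the integral defining $\Theta_\phi(\mathbf 1)(g)$ converges absolutely, no regularization is needed, and $\Theta_\phi(\mathbf 1)$ is a genuine automorphic form. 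Using $V_{min}\subseteq \Ind_{P_J(\A)}^{G_J(\A)}(\delta_{P_J}^{5/29})$ together with the fact that $P_E=G_E\cap P_J$ is a Heisenberg parabolic, restriction to $G_E$ places $\Theta_\phi$, and hence $\Theta_\phi(\mathbf 1)$, inside $\Ind_{P_E(\A)}^{G_E(\A)}(\delta_{P_E})$, where the candidate $E_\phi$ already lives by construction.

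\smallskip
On the Eisenstein side I would compute the constant term of $E_\phi(g,s)$ along $P_E$ from the Langlands constant-term formula (for the maximal parabolic $P_E$ it is $f_\phi(g,s)$ plus a single intertwining term $M_w f_\phi(g,s)$), analyze the behaviour at $s=1$ --- this is the source of the asserted regularity and of the $\mathbf 1$-ambiguity, the relevant residue being a constant --- and write down the nonconstant Fourier coefficients of $E_\phi$ along $N_E$ by the usual unfolding of the Eisenstein sum. On the theta side I would insert the Fourier expansion of $\Theta_\phi$ along the unipotent radical $N_J$ of $P_J$; by minimality of $V_{min}$ only the constant term and the rank-one coefficients survive, the latter being generalized Whittaker functions supported on the rank-one locus of the $56$-dimensional Freudenthal module of the $GE_7$-Levi. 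Restricting this expansion to $G_E$ and integrating over $h\in S_E(\Q)\backslash S_E(\A)$, I would decompose the indexing set of rank-one vectors into $S_E(\Q)$-orbits. The distinguished open orbit (on which $S_E$ acts with trivial, or at worst finite, stabilizer) collapses the sum over orbits and the $S_E$-integral into an adelic integral $\int_{S_E(\A)}(\cdots)\,dh$, which, re-expanded over $P_E(\Q)\backslash G_E(\Q)$, reproduces $E_\phi(g)$ --- provided the local orbital integrals $\int_{S_E(\Q_v)}\phi_v(g_v h_v)\,dh_v$ match the inducing sections $f_{\phi,v}(g_v,s)$ at $s=1$, a place-by-place check which is an integral over a compact group at the archimedean place and an unramified orbital integral at the good finite places. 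The remaining non-open orbits and the $N_J$-constant-term contribution should then be shown to yield only constant functions on $G_E(\Q)\backslash G_E(\A)$, hence to vanish in $\mathcal A(G_E)/\mathbf 1$; this is exactly where the ``smaller'' Siegel--Weil theorems for the dual pairs $M\times S_E$ with $M\subseteq G_E$ (coming from $G'$ of type $D_6,D_7,E_7$) are used, since the boundary strata of the $S_E$-integral are theta integrals for those smaller dual pairs whose values are degenerate Eisenstein series on $M$ that are constant in the pertinent range.

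\smallskip
Finally, having matched all the nonconstant $P_E$-Fourier coefficients of $\Theta_\phi(\mathbf 1)$ and $E_\phi$, I would conclude with a rigidity statement: an automorphic form on $G_E$ lying in $\Ind_{P_E(\A)}^{G_E(\A)}(\delta_{P_E})$ whose nonconstant Fourier coefficients along $P_E$ all vanish must be a constant function (it equals its own $P_E$-constant term, which the shape of this degenerate principal series then forces to be constant). Applied to $\Theta_\phi(\mathbf 1)-E_\phi$ this gives the identity in $\mathcal A(G_E)/\mathbf 1$. I expect the main obstacle to be the orbit bookkeeping on the theta side: correctly enumerating the $S_E(\Q)$-orbits on the rank-one vectors --- which uses the arithmetic of the totally real cubic étale algebra $E$ and the exceptional geometry of the Freudenthal module --- and proving that the non-open orbits together with the constant term contribute only constants. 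Close behind in difficulty are the precise local orbital-integral identities matching $\int_{S_E(\Q_v)}\phi_v\,dh_v$ with the degenerate principal series sections, and the careful tracking of the modulus characters $\delta_{P_J}^{5/29}\rightsquigarrow\delta_{P_E}\rightsquigarrow\delta_{P_E}^{s}$ at $s=1$ through the restriction map.
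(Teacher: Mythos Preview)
Your overall architecture is quite different from the paper's, and the difference is not cosmetic: the paper does \emph{not} attempt to match nonconstant Fourier coefficients of $\Theta_\phi(\mathbf 1)$ and $E_\phi$ along $N_E$ directly, nor does it unfold the theta integral into an adelic orbital integral over $S_E(\A)$.  Instead, the paper matches the \emph{constant terms} of the two automorphic forms along \emph{every} maximal parabolic of $G_E$ (not just the Heisenberg one), using the smaller Siegel--Weil theorems on the Levi subgroups to identify the theta-side constant terms with Eisenstein series; this shows the difference is cuspidal.  The cuspidal part is then killed by a purely local argument at a split prime $p$: the twisted Jacquet modules $(V_{min,p})_{(N_E,\chi_x),S_E}$ are shown to be one-dimensional, which forces the Eisenstein projection on the theta image $\tau_p$ to be injective, so $\tau_p$ embeds into a quotient of $I_{G_E,p}(s=5)$; finally one invokes the composition series of $I_{G_E,p}(s=5)$ together with the non-unitarizability of its nontrivial constituent (via \cite{borelWallach}) to conclude the cuspidal projection is zero.

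There are two genuine gaps in your plan.  First, the proposed orbital-integral identity --- that $\int_{S_E(\A)}\phi(gh)\,dh$ reproduces the inducing section $f_\phi$ of the Eisenstein series --- is never established in the paper and would be a substantial local computation in its own right; the paper circumvents it entirely by working with constant terms rather than with individual Fourier coefficients.  Second, and more seriously, your final rigidity step does not work as stated.  You assert that an automorphic form on $G_E$ ``lying in $\Ind_{P_E}^{G_E}(\delta_{P_E})$'' with vanishing nonconstant $N_E$-Fourier coefficients must be constant, but you have no reason to believe the difference $\Theta_\phi(\mathbf 1)-E_\phi$ lies in that induced representation (each piece is a quotient of it, not a subrepresentation).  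The real enemy is a possible \emph{cuspidal} component of the difference, and ruling that out is exactly where the paper does the most work: the one-dimensionality of the twisted Jacquet coinvariants and the non-unitarizability argument at a split place are the essential inputs, and both are absent from your outline.  Matching Fourier coefficients along a single parabolic is not enough to conclude.
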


We also prove similar theorems for dual pairs of type $D_2 \times S_E \subseteq D_6$ when $E=\Q \times F$ ($F$ quadratic \'etale), $D_3 \times S_E \subseteq D_7$ when $E = \Q \times F$, and $\SL_{2,E} \times S_E \subseteq E_{7}$.  These ``smaller" theorems are, in fact, used in the proof of Theorem \ref{thm:introMain}.

\section{Group theory}\label{sec:groupTheory}
In this section, we define various groups and embeddings of groups that we use throughout the paper.

\subsection{Generalities}
We begin by recalling the following well-known result in the theory of linear algebraic groups. See \cite[Proposition 18.8, Theorem 22.53, and Theorem 23.70]{milneBook}.
\begin{proposition} Let $k$ be a field of characteristic $0$.
\begin{enumerate}
	\item Suppose $\g$ is a semisimple Lie algebra over $k$.  There exists a connected, semsisimple, simply-connected algebraic $k$-group $G(\g)$ with $Lie(G(\g)) = \g$.  The group $G(\g)$ with these properties is unique up to isomorphism.
	\item Suppose $H$ is an algebraic $k$-group and $L: \g \rightarrow Lie(H)$ a morphism of Lie algebras over $k$.  Then there exits a unique map of algebraic groups $G(\g) \rightarrow H$ whose differential is $L$.
\end{enumerate}
\end{proposition}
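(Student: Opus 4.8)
The plan is to reduce everything to one piece of structure theory of algebraic groups in characteristic zero together with the classification of semisimple Lie algebras over an algebraically closed field, and then to pure bookkeeping. Concretely, I would take as known: (a) for a connected algebraic $k$-group $G$ there is an inclusion-preserving bijection between the connected algebraic subgroups of $G$ and the \emph{algebraic} Lie subalgebras of $\Lie(G)$, and every semisimple Lie subalgebra is algebraic; and (b) over an algebraically closed field of characteristic zero, semisimple Lie algebras are classified by their Dynkin diagrams, and Chevalley's construction attaches to each Dynkin diagram a split, connected, semisimple, simply-connected group $G_0$ with $\Lie(G_0)$ the Lie algebra of that type. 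From (a) one gets for free that a morphism of connected algebraic groups is determined by its differential (apply (a) to the two graphs inside $G_1 \times G_2$), which handles all the uniqueness statements below.

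I would prove (2) first in the slightly more flexible form: for \emph{any} connected semisimple simply-connected $k$-group $G$ and any Lie algebra map $L\colon \Lie(G) \to \Lie(H)$, there is a unique homomorphism $L^{\sharp}\colon G \to H$ with $dL^{\sharp} = L$. For existence, consider the graph $\Gamma_L := \{(X, L(X)) : X \in \Lie(G)\}$ inside $\Lie(G) \oplus \Lie(H) = \Lie(G \times H)$. It is a subalgebra isomorphic to $\Lie(G)$, hence semisimple, hence algebraic, so by (a) there is a connected algebraic subgroup $\Gamma \subseteq G \times H$ with $\Lie(\Gamma) = \Gamma_L$; as $\Lie(\Gamma)$ is semisimple, $\Gamma$ is semisimple. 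The first projection $\mathrm{pr}_1\colon \Gamma \to G$ induces an isomorphism on Lie algebras, hence is a central isogeny of connected semisimple groups, and since $G$ is simply connected $\mathrm{pr}_1$ is an isomorphism. Then $L^{\sharp} := \mathrm{pr}_2 \circ \mathrm{pr}_1^{-1}\colon G \to H$ has differential $L$, and its uniqueness is the remark above with $G_2 = H$.

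For (1) I would obtain existence by Galois descent from the split case. Let $G_0$ be the split simply-connected group attached by (b) to the type of $\g_{\bar k} := \g \otimes_k \bar k$, so $\Lie(G_0) \otimes_k \bar k \cong \g_{\bar k}$. The differentiation map $\underline{\Aut}(G_0) \to \underline{\Aut}(\Lie(G_0))$ is an isomorphism of group schemes: it is a monomorphism by the uniqueness in the flexible (2), and an epimorphism because, given $\alpha \in \Aut(\Lie(G_0))$, the flexible (2) produces endomorphisms of $G_0$ with differentials $\alpha$ and $\alpha^{-1}$, which are mutually inverse. Now the $k$-form $\g$ of $\Lie(G_0)\otimes\bar k$ is classified by a class in $H^1(k,\underline{\Aut}(\Lie(G_0))) = H^1(k,\underline{\Aut}(G_0))$; twisting $G_0$ by a representing cocycle yields a connected semisimple simply-connected $k$-group $G(\g)$, and since forming the twist commutes with $\Lie(-)$ we get $\Lie(G(\g)) \cong \g$. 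Uniqueness: if $G_1, G_2$ both have Lie algebra isomorphic to $\g$, pick an isomorphism $\Lie(G_1) \xrightarrow{\sim} \Lie(G_2)$; by (2) it and its inverse integrate to homomorphisms $G_1 \to G_2$ and $G_2 \to G_1$ whose composites have differential the identity, hence (uniqueness again) are the identity.

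The substantive input is fact (a): that a semisimple Lie subalgebra of $\Lie(H)$ is algebraic and therefore is the Lie algebra of a connected algebraic subgroup. This is what makes the ``integration'' of the graph $\Gamma_L$ possible, and without it the graph trick never gets off the ground; everything else is standard manipulation of central isogenies, simple connectedness, and nonabelian $H^1$. In the setting of this paper one simply invokes \cite[Proposition 18.8, Theorem 22.53, Theorem 23.70]{milneBook}.
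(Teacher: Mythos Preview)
The paper does not give its own proof of this proposition; it simply records the statement as well-known and cites \cite[Proposition 18.8, Theorem 22.53, Theorem 23.70]{milneBook}. Your outline is a correct and standard reconstruction of what those references supply: the graph trick for integrating $L$ (relying on the fact that semisimple Lie subalgebras are algebraic), the characterization of simply-connected as admitting no nontrivial connected central cover to invert $\mathrm{pr}_1$, and Galois descent via the identification $\underline{\Aut}(G_0)\simeq \underline{\Aut}(\Lie(G_0))$ for existence over $k$. There is nothing to compare against in the paper beyond the citation, and you have in fact anticipated that citation in your final sentence.
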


We also recall:
\begin{proposition} Let $k$ be a field of characteristic $0$.  Suppose $G,S$ are connected semisimple algebraic $k$-groups, and each maps to an algebraic $k$-group $H$ via maps $\iota_G, \iota_S$.  Suppose moreover that the differential $d\iota_{G}: Lie(G) \rightarrow Lie(H)$ lands in $Lie(H)^{S}$.  Then $\iota_G(G)$ lands in the centralizer of $\iota_S(S)$, so that one obtains a map $\iota_G \times \iota_S: G \times S \rightarrow H$.
\end{proposition}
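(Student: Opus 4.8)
The plan is to prove that $\iota_G(G)$ is contained in the scheme-theoretic centralizer $Z := Z_H(\iota_S(S))$ of $\iota_S(S)$ inside $H$; once this is known, the morphism $(g,s) \mapsto \iota_G(g)\iota_S(s)$ is visibly a homomorphism $G \times S \to H$, and it is the asserted map $\iota_G \times \iota_S$.

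First I would identify the tangent space of $Z$ at the identity. The centralizer $Z$ is the closed subgroup scheme of $H$ whose $R$-points are the $h \in H(R)$ with $h\,\iota_S(s)\,h^{-1} = \iota_S(s)$ for all $s$; a one-line computation with the ring of dual numbers shows that $\mathrm{Lie}(Z) = T_1 Z$ equals $\{X \in \mathrm{Lie}(H) : \Ad(\iota_S(s))X = X \text{ for all } s\} = \mathrm{Lie}(H)^S$, where $S$ acts on $\mathrm{Lie}(H)$ through $\iota_S$ and the adjoint representation. By hypothesis, $d\iota_G(\mathrm{Lie}(G)) \subseteq \mathrm{Lie}(H)^S = \mathrm{Lie}(Z)$.

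Next I would form the scheme-theoretic preimage $N := \iota_G^{-1}(Z)$, a closed subgroup scheme of $G$. Its tangent space at the identity is $(d\iota_G)^{-1}(\mathrm{Lie}(Z))$, which by the previous step is all of $\mathrm{Lie}(G)$. Since $k$ has characteristic $0$, $N$ is a group scheme of finite type over a field of characteristic zero and hence smooth (Cartier's theorem, see \cite{milneBook}), so $\dim N = \dim T_1 N = \dim \mathrm{Lie}(G) = \dim G$. A smooth closed subgroup of the connected group $G$ having full dimension must be all of $G$, so $N = G$; equivalently $\iota_G(G) \subseteq Z$, which says exactly that $\iota_G(G)$ centralizes $\iota_S(S)$. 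This would complete the argument.

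There is no serious obstacle here: the argument is purely formal. The one ingredient requiring care is the smoothness of $\iota_G^{-1}(Z)$, and this is precisely where characteristic $0$ enters — in positive characteristic this preimage can be non-reduced and the dimension count breaks down. Connectedness of $G$ is likewise essential for the last step. By contrast, the argument never uses connectedness or semisimplicity of $S$; all that is needed is that $S$ acts on $\mathrm{Lie}(H)$ through $\iota_S$, so that $\mathrm{Lie}(H)^S$ is the corresponding $\Ad$-invariant subspace.
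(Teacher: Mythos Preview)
Your proof is correct, but it follows a different path from the paper's. The paper argues pointwise: for each $x \in S(\overline{k})$, conjugating $\iota_G$ by $\iota_S(x)$ yields another morphism $\iota': G \to H$ with the same differential $d\iota' = d\iota_G$ (since $d\iota_G$ lands in $\mathrm{Lie}(H)^S$), and then invokes the full faithfulness of the functor $\mathrm{Rep}(G) \to \mathrm{Rep}(\mathrm{Lie}(G))$ in characteristic $0$ (cited from \cite{milneBook}) to conclude $\iota' = \iota_G$. Your route is instead scheme-theoretic: you identify $\mathrm{Lie}(Z_H(\iota_S(S))) = \mathrm{Lie}(H)^S$, pull back to a closed subgroup $N \subseteq G$ with full tangent space, and use Cartier's theorem plus connectedness of $G$ to force $N = G$. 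Both arguments pivot on the same char-$0$ phenomenon (a connected group is determined by its Lie algebra), packaged differently: the paper's version is a one-line appeal to a Tannakian statement, while yours makes the role of smoothness and dimension explicit and, as you observe, shows that semisimplicity and connectedness of $S$ are not actually needed.
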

\begin{proof} Let $\overline{k}$ denote the algebraic closure of $k$.  It suffices to check that if $x \in S(\overline{k})$ then $x$ centralizes $\iota_G(G)$.  For this, consider the map $\iota'$ given by composing $\iota_G$ with conjugation by $x$.  This gives a potentially new map $\iota': G \rightarrow H$ over $\overline{k}$.  But $d \iota' = d\iota_G$ because $d\iota_{G}: Lie(G) \rightarrow Lie(H)$ lands in $Lie(H)^{S}$.  But the differential gives a fully faithful functor $Rep(G) \rightarrow Rep(Lie(G))$ \cite[Theorem 22.53]{milneBook}, so $\iota' = \iota_G$.
\end{proof}

\subsection{The group $S_E$}\label{subsec:SE}
Let $\Theta$ denote the octonion $\Q$ algebra with positive-definite norm form $N_\Theta$.  We write $\tr_\Theta$ for the octonionic trace on $\Theta$.

Set $J = H_3(\Theta)$ the exceptional cubic norm structure.  We write a typical element $X$ of $J$ as 
\begin{equation}\label{eqn:XJ}X = \left(\begin{array}{ccc} c_1 & x_3 & x_2^* \\ x_3^* & c_2 & x_1 \\ x_2 & x_1^* & c_3 \end{array}\right).\end{equation}

Let $E$ be an \'etale cubic extension of $\Q$ that is totally real.  We assume given an embedding $E \hookrightarrow J$ with the following properties:
\begin{enumerate}
	\item $E$ lands in $H_3(\Q) \subseteq J$;
	\item if $N_J$ denotes the cubic norm on $J$ and $N_E$ the cubic norm on $E$, then $N_J(x) = N_E(x)$ for all $x \in E$;
	\item $1 \in E$ maps to $1 \in J$;
	\item if $E = \Q \times \Q \times \Q$, then the map $E \rightarrow J$ is $(c_1,c_2,c_3) \mapsto \diag(c_1,c_2,c_3)$;
	\item if $E = \Q \times F$ with $F$ a real quadratic field, then $(1,0) \mapsto \diag(1,0,0)$, $(0,1) \mapsto \diag(0,1,1)$, and in general the image of $X \in E$ has $x_2$ and $x_3$ coordinates equal to $0$.
\end{enumerate}

Let $V_E$ denote the orthogonal complement of $E \subseteq J$ under the trace pairing on $J$.  Let $M_J^1$ denote group of linear automorphisms of $J$ that fix the cubic norm $N_J$.  Then $M_J^1$ is simply connected of type $E_6$.  We let $S_E$ denote the subgroup of $M_J^1$ that is the identity on $E$.  The group $S_E$ is connected, simply connected of type $D_4$ and has $S_E(\R)$ compact.  

We set $E_{sp} = \Q \times \Q \times \Q$.  The group $S_{E_{sp}}$ can be recognized as the group of triples $(g_1,g_2, g_3) \in \SO(\Theta)$ with $(g_1 x_1, g_2 x_2, g_3 x_3)_{\tr_\Theta} = (x_1, x_2, x_3)_{\tr_\Theta}$ for all $x_1, x_2, x_3 \in \Theta$, where $(x_1, x_2, x_3)_{\tr_\Theta} = \tr_\Theta(x_1(x_2x_3))$.  We identify $H_2(\Theta) \subseteq J$ as those elements with $c_1, x_2,$ and $x_3$ coordinates equal to $0$.  When $E = \Q\times F$, then $V_E = \Theta^2 \oplus \Theta_F$, where $\Theta_F$ is the orthogonal complement in $H_2(\Theta)$ to the image of $F$.

\subsection{Exceptional groups}\label{subsec:excgps}
For a cubic norm structure $A$ over a field $k$ of characteristic $0$, we have the Freudenthal space $W_A = k \oplus A \oplus A^\vee \oplus k$.  This space comes equipped with a symplectic form and a quartic form, see \cite[Section 2.2]{pollackQDS}.  We let $H_J$ denote the identity component of the group of linear automorphisms of that preserve these forms up to similitude $\nu: H_J \rightarrow \GL_1$ (see \cite[Section 2.2]{pollackQDS}) and $H_J^1$ the kernel of $\nu$.  When $A = J$ is the exceptional cubic norm structure, $H_J^1$ is simply connected of type $E_7$; it is split at every finite place and $H_J^1(\R) = E_{7,3}$.  When $A = E$ is cubic \'etale over $k$, then $H_E^1 = \SL_{2,E}$.  (For an explicit map $\SL_{2,E} \rightarrow H_E^1$, one can see \cite[Section 4.4]{pollackLL}.)  We have a map $H_E^1 \times S_{E} \rightarrow H_J^1$.

We recall from \cite[Section 4]{pollackQDS} the Lie algebra $\g(A)$.  One has $\g(A) = \sl_2 \oplus \h(A)^0 \oplus (V_2 \otimes W_A)$.  Here $\h(A)^0 = Lie(H_A^1)$ and $V_2$ denotes the standard representation of $\sl_2$.  We denote by\footnote{This notation slightly conflicts with the notation of \cite{pollackQDS}, where $G_A$ denoted the adjoint group associated to this Lie algebra.} $G_A$ the connected, simply connected group with $Lie(G_A) = \g(A)$.  When $A =J$, $G_J$ is of type $E_8$, split at every finite place and $G_J(\R) = E_{8,4}$.  When $A = E$, $G_E$ is of type $D_4$ with $G_E(\R)$ split.

The group $G_J$ has rational root system of type $F_4$.  When $E = \Q \times \Q \times \Q$, $D_E$ is split and has rational root system of type $D_4$; when $E = \Q \times F$ with $F$ field, $D_E$ has rational root system of type $B_3$, and when $E$ is a field, $D_E$ has rational root system of type $G_2$.  The group $M_J^1$ has rational root system of type $A_2$, and $H_J^1$ has rational root system of type $C_3$.

\subsection{Classical root types}\label{subsec:classicalgps}
We now define some groups that we will use that have classical root types.

Let $H = \Q \oplus \Q$ be a hyperbolic plane.  Set $V_{10} = H \oplus \Theta$ with bilinear form $((h_1,\theta_1),(h_2,\theta_2)) = (h_1,h_2) - (\theta_1,\theta_2)_{\Theta}$.  Let $V_{12} = H \oplus V_{10} = H^2 \oplus \Theta$ and $V_{14} = H \oplus V_{12} = H^3 \oplus \Theta$.  We let $G_{5,\Theta}$ be the simply-connected cover of $\SO(V_{10})$, and similarly define $G_{6,\Theta}$, $G_{7,\Theta}$ as the simply connected cover of $\SO(V_{12})$, respectively, $\SO(V_{14})$.

For a quadratic \'etale extension $F$ of $\Q$, we let $V_{4,F} = H \oplus F$ with quadratic form $q(h,f)= q_H(h) + N_F(f)$.  Observe that one has $V_{10,\Theta} = F \oplus \Theta_F$, so that $V_{12,\Theta} = V_{4,F} \oplus \Theta_F$.  Similarly, we set $V_{6,F} = H^2 \oplus F = H \oplus V_{4,F}$, and $V_{14,\Theta} = V_{6,F} \oplus \Theta_F$.  Let $G_{2,F}$ be the simply connected cover of $\SO(V_{4,F})$ and let $G_{3,F}$ be the simply connected cover of $\SO(V_{6,F})$.  From the inclusions $\SO(V_{4,F}) \times \SO(\Theta_F) \subseteq \SO(V_{12,\Theta})$ and  $\SO(V_{6,F}) \times \SO(\Theta_F) \subseteq \SO(V_{14,\Theta})$, we obtain maps $G_{2,F} \times S_{\Q \times F} \rightarrow G_{6,\Theta}$ and $G_{3,F} \times S_{\Q \times F} \rightarrow G_{7,\Theta}$.

\subsection{Parabolic subgroups}\label{subsec:parabolic}
We will need to keep track of numerous parabolic subgroups of various reductive groups.  We use the following naming convention: Suppose $G$ is a reductive group, with a fixed split torus $T$ and simple roots $\alpha_1,\ldots, \alpha_r$ for a rational root system $\Phi(G,T)$.  Then for each subset $I$ of the set of simple roots $\{\alpha_1,\ldots,\alpha_r\}$, there is an associated standard parabolic $P_{G,I}=M_{G,I} N_{G,I}$ with those simple roots appearing in its unipotent radical $N_{G,I}$.  Thus if $I$ is a singleton then $P_{G,I}$ is maximal, whereas if $I = \{\alpha_1,\ldots,\alpha_r\}$ then $P_{G,I} =: P_{G,0}$ is minimal.  We will also write $P_{G,j}$ for $P_{G, \{\alpha_j\}}$, $P_{G,jk}$ for $P_{G,\{\alpha_j,\alpha_k\}}$ and so on.

\section{General facts about automorphic forms and representation theory}\label{sec:general}
In this section, we collect various general facts about automorphic forms and representation theory that we will use later in the paper.

\subsection{Weyl groups}
Suppose $G$ is a reductive group with a split maximal torus $T \subseteq G$.  Fix a set of simple roots $\Delta= \{\alpha_1,\ldots,\alpha_r\}$ for the roots $\Phi(G,T)$; $\Phi(G,T)^+$ denotes the set of positive roots.  Let $W$ denote the Weyl group of $\Phi(G,T)$.  If $P = MN$ is a standard parabolic, we write $W_M$ for the subgroup of $W$ generated by the simple reflections corresponding to elements $\Delta \cap \Phi(M,T)$.  

Following \cite[Section 1]{casselmanBook}, set
\[[W/W_M] = \{w \in W: w(\alpha) \in \Phi(G,T)^+ \forall \alpha \in \Delta \cap \Phi(M,T)\},\]
\[[W_M\backslash W] = \{w \in W: w^{-1}(\alpha) \in \Phi(G,T)^+ \forall \alpha \in \Delta \cap \Phi(M,T)\}.\]
If $P= MN$, $Q = LV$ are two standard parabolic subgroups, then we set
\[[W_L \backslash W/W_M] = [W_L \backslash W] \cap [W/W_M].\]

\subsection{Intertwining operators}
We now review standard facts about intertwining operators.  Suppose $P_0$ is the minimal standard parabolic for the root system $\Phi(G,T)$.  Let $k$ be a local field, either $p$-adic or archimedean.  Let $\chi: P_0(k) \rightarrow \C^\times$ be a character.  Consider the induced representation $Ind_{P_0(k)}^{G(k)}(\delta_{P_0}^{1/2} \chi)$.  If $w \in W$, we will defined the intertwining operator 
\[M(w): Ind_{P_0(k)}^{G(k)}(\delta_{P_0}^{1/2} \chi) \rightarrow Ind_{P_0(k)}^{G(k)}(\delta_{P_0}^{1/2} w(\chi))\]
associated to $w$.

Let $U_\alpha$ be the root subgroup corresponding to the root $\alpha \in \Phi(G,T)$ and set $N_w = \prod_{\alpha}{U_\alpha}$ with the product over positive roots $\alpha$ for which $w^{-1}(\alpha)$ is negative.  If $w \in W$ and $f \in Ind_{P_0(k)}^{G(k)}(\delta_{P_0}^{1/2} \chi)$, we set
\[M(w)f(g) = \int_{N_w(k)}{f(w^{-1}ng)\,dn}\]
if the integral is absolutely convergent.  In this case, $M(w):  Ind_{P_0(k)}^{G(k)}(\delta_{P_0}^{1/2} \chi) \rightarrow Ind_{P_0(k)}^{G(k)}(\delta_{P_0}^{1/2} w(\chi))$ is $G(k)$-intertwining.  If $w = w_1 w_2$ with $\ell(w) = \ell(w_1) + \ell(w_2)$, and each of $M(w_1)$, $M(w_2)$ are defined (i.e., defined by absolutely convergent integrals), then so is $M(w)$, and $M(w) = M(w_1) \circ M(w_2)$.

Let $h_\alpha: \GL_1 \rightarrow T$ be the cocharacter associated to the root $\alpha$.  If $w = s_\alpha$ is a simple reflection, and $N_w = U_\alpha$ is one-dimensional, then the integral defining $M(s_\alpha)$ is absolutely convergent if $\chi(h_\alpha(t)) = |t|^s$ with $Re(s) > 0$.

We will frequently be in the following situation: $P \subseteq G$ is a maximal parabolic subgroup, $\nu: P \rightarrow \GL_1$ is a character, and $I(s):= Ind_{P(k)}^{G(k)}(|\nu|^{s})$. We assume $\delta_P= |\nu|^{s_P}$ with $s_P >0$ as opposed to $s_P < 0$. We set $\lambda_s = \delta_{P_0}^{-1/2} |\nu|^s$, so that $I(s) \subseteq Ind_{P_0(k)}^{G_0(k)}(\delta_{P_0}^{1/2} \lambda_s)$.  Now suppose $w \in [W/W_M]$ where $P = MN$.  Then if $\alpha > 0$ and $w^{-1}(\alpha) < 0$, we must have that $w^{-1}(U_\alpha) \subseteq \overline{N}$, the unipotent radical opposite to $P$.   In this case, $M(w)$ is absolutely convergent for $Re(s) >> 0$, and has a meromorphic continuation in $s$, in the following sense.  Given $s_0$, there is an integer $k$ so that if $f(g,s) \in I(s)$ is a flat section, then $(s-s_0)^k M(w)f(g,s)$ is meromorphic in $s$ and regular at $s_0$.  In this case, \[(s-s_0)^k M(w): I(s_0)\rightarrow Ind_{P_0(k)}^{G_0(k)}(\delta_{P_0}^{1/2} w(\lambda_s))\]
is $G(k)$-intertwining.

In the situation of the above paragraph, there is one intertwiner to which we give a fixed name and notation: the long intertwining operator.  Namely, the set $[W/W_M]$ has a unique longest element (see \cite[Proposion 1.1.4]{casselmanBook}), which we denote by $w_0$.  While this notation is independent of the group $G$, the underlying group $G$ will always be clear from context.  When we say the long intertwining operator, we mean $M(w_0)$.

Throughout, we normalize the Haar measure on a reductive group $G$ over $\Q$ so that the unipotent quotient groups $U_\alpha(\Q)\backslash U_\alpha(\A)$ have measure $1$.

\subsection{Eisenstein series}\label{subsec:Eis}
Suppose $G$ is a reductive group over $\Q$, and $P \subseteq G$ is a maximal parabolic subgroup.  Let $\nu: P \rightarrow \GL_1$ be a character, and $I(s) = Ind_{P}^{G}(|\nu|^s)$ (sometimes meaning a local induction, and sometimes meaning a global induction).  We will always write $I_v(s)$ for the local induction at a place $v$ and $I_f(s)$ for the induction at the finite places,   We let $s_P$ be the real number for which $\delta_P = |\nu|^{s_P}$, and we assume $s_P$ is positive (as oppposed to negative).  Let $f(g,s) \in I(s)$ be a global flat section.  The Eisenstein series associated to $f$ is
\[E(g,f,s) = \sum_{\gamma \in P(\Q)\backslash G(\Q)}{f(\gamma g,s)}.\]
The sum converges absolutely for $Re(s) > s_P$ and defines an analytic function of $s$ in that range.  The Eisenstein series has meromorphic continuation in $s$.  

Suppose $f_\infty(g,s)$ is a $K_\infty$-finite flat section, where $K_\infty \subseteq G(\R)$ is a flat section.  Fixing $f_\infty$, we can let $f_{fte} \in I_f(s)$ vary, and consider the Eisenstein map $Eis: I_f(s) \rightarrow \mathcal{A}(G)$ for $Re(s) > s_P$.  Now, given $s_0$, there is an integer $k$ so that $(s-s_0)^k E(g,f,s)$ is regular at $s=s_0$ for all $f_{fte}(g,s) \in I_f(s)$, and in that case, the Eisenstein map is well-defined and intertwining from $I_{f}(s_0)$ to $\mathcal{A}(G)$.

Recall that to test if the Eisenstein series $(s-s_0)^kE(g,f,s)$ is regular at $s_0$, it suffices to see that its constant term along any parabolic $Q$ of $G$ is regular.  If $Q = LV$ is standard, the constant term $E_V(g,s)$ is a finite sum of Eisenstein series for the Levi $L$, one for each element of $[W_L\backslash W/W_M]$, where $P = MN$.  Precisely, one has the following relation.

Suppose $w \in [W_L\backslash W/W_M]$ and $f(g,s) \in I(s)$, global induction.  Then one has the intertwined inducing section $f^w(g,s):=M(w)f(g,s)$, and the new Eisenstein series 
\[E^{w}(g,f,s) = \sum_{\gamma (wPw^{-1} \cap L)(\Q)\backslash L(\Q)}{f^w(\gamma g,s)}.\]
Then 
\[E_V(g,s) = \sum_{w \in [W_L\backslash W/W_M]}{E^{w}(g,f,s)},\]
and everything in sight converges absolutely for $Re(s)>>0$.

The parabolic subgroup $wPw^{-1} \cap L$ of $L$ can be described in terms of simple roots.  Namely, suppose $\{\alpha_1,\ldots,\alpha_r\} = I \sqcup J$ are the simple roots for $\Phi(G,T)$ with those elements of $I$ appearing in the unipotent radical $V$ of $Q$ and those elements of $J$ appearing in $\Phi(L,T)$.  To describe $wPw^{-1} \cap L$, we would like to know those simple roots $\beta$ of $\Phi(L,T)$ which appear in the unipotent radical of $wPw^{-1} \cap L$.  Equivalently, we would like to find those roots $\beta \in J$ for which $U_{-\beta}$ is not contained in $wPw^{-1} \cap L$.  We therefore set 
\[\Delta^w(L) = \{\beta \in J=\Phi(L,T): w^{-1}(\beta) \in \Phi(N,T)\}.\]
The parabolic subgroup $wPw^{-1} \cap L$ of $L$ is thus $P_{L,\Delta^w(L)}$, in the notation of subsection \ref{subsec:parabolic}.  We call the set $\Delta^w(L)$ \emph{the associated simple roots of $w$}.  Note that $\Delta^w(L)$ is sometimes empty, in which case $E^w(g,f,s) = f^w(g,s)$ is just a single term.

\subsection{Cuspidal and Eisenstein projection}
Suppose $G$ is a reductive group, either actually semisimple, or we fix some unitary central character.  Then if $\varphi \in \mathcal{A}(G)$ is a (moderate growth) automorphic form, we can consider the linear functional on the space of cusp forms $\xi \mapsto \int_{[G]}{\xi(g) \overline{\varphi(g)}\,dg}$.  Because the space of cusp forms is a Hilbert space, this linear functional is represented by a cusp form, called the cuspidal projection of $\varphi$, $\varphi_{cusp}(g)$.  One checks immediately that $\varphi \mapsto \varphi_{cusp}$ is linear and $G(\A_f)$-equivariant.

We define $\varphi_{Eis} = \varphi - \varphi_{cusp}$.  Note that $\varphi_{Eis}$ is orthogonal to the space of cusp forms, and the map $\varphi \mapsto \varphi_{Eis}$ is $G(\A_f)$-equivariant.

\subsection{Irreducibility and unitarizability of principal series}
We will require the following well-known theorem.

\begin{theorem}\label{thm:irredUnit} Let $G$ be a split semisimple group over a $p$-adic field $k$.  Let $P \subseteq G$ be a maximal parabolic subgroup.  If $Re(s) > 1$, then the representation $Ind_P^G(\delta_P^{s})$ is irreducible and not unitarizable.
\end{theorem}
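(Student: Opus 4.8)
The plan is to reduce to the case of a rank-one parabolic and then invoke the structure theory of the corresponding $\SL_2$ (or Levi-rank-one) degenerate principal series, where the analogous statement is classical. First I would pass to the associated spherical character: writing $\delta_P = |\nu|^{s_P}$ with $s_P > 0$, the induced representation $\Ind_P^G(\delta_P^s)$ sits inside $\Ind_{P_0}^G(\delta_{P_0}^{1/2}\lambda_s)$ with $\lambda_s = \delta_{P_0}^{-1/2}|\nu|^{s}$, exactly as set up in the Intertwining operators subsection. The key point is to identify the exponent of $\lambda_s$ along the unique simple root $\alpha$ \emph{not} in $M$: normalizing so that $\langle \lambda_s, \alpha^\vee\rangle$ is an affine function of $s$, the hypothesis $\delta_P = |\nu|^{s_P}$ forces $\langle\lambda_s,\alpha^\vee\rangle$ to be $c(s-1)$ for an explicit positive constant $c$ (the normalization in the statement is precisely the one making the critical value $s=1$), so that $Re(s) > 1$ translates into $Re\langle\lambda_s,\alpha^\vee\rangle > 0$ in the appropriate positive direction, i.e., into the "convergence range" of the relevant rank-one intertwining integral.

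Next I would establish irreducibility. For a maximal parabolic $P = MN$, reducibility of $\Ind_P^G(\delta_P^s)$ is governed, via Bruhat theory and the Casselman criteria, by the poles and zeros of the normalized intertwining operators $M(w)$ for $w \in [W/W_M]$; by the factorization $M(w) = M(s_{\beta_k})\circ\cdots\circ M(s_{\beta_1})$ into rank-one pieces, everything comes down to the rank-one operators attached to the roots $\alpha$ with $w^{-1}(\alpha) < 0$. Each such rank-one operator is, up to the usual reduction, the $\SL_2$-intertwining operator on $\Ind_{B}^{\SL_2}(|\cdot|^{t})$ for a parameter $t$ that is a positive multiple of $s-1$ plus a nonnegative contribution from the other simple roots; for $Re(s)>1$ all these parameters have $Re(t) > 0$, hence the corresponding $\SL_2$ induced representations are irreducible and the rank-one operators are holomorphic isomorphisms. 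Composing, $M(w_0): \Ind_P^G(\delta_P^s) \to \Ind_P^G(\delta_P^{-s})$ is an isomorphism, and the standard argument (the image of any proper submodule would have to be annihilated by the inverse intertwiner, which is injective here) yields irreducibility of $\Ind_P^G(\delta_P^s)$ for $Re(s) > 1$.

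Finally, non-unitarizability: since $\Ind_P^G(\delta_P^s)$ is irreducible for $Re(s) > 1$ and its "contragredient up to the identification via $M(w_0)$" is $\Ind_P^G(\delta_P^{-s})$, unitarizability would force a nonzero $G(k)$-invariant pairing between $\Ind_P^G(\delta_P^s)$ and its complex conjugate; for $s$ real and $>1$ the complex conjugate is $\Ind_P^G(\delta_P^{s})$ itself, and an invariant Hermitian form would have to be (a scalar multiple of) the one built from $M(w_0)$, whose signature I would show is indefinite by examining the leading rank-one constituent — equivalently, by noting that for real $s>1$ the exponent $\delta_P^s$ lies strictly outside the convex hull defining the unitary range (one can compare with the bound that any unitarizable spherical quotient of $\Ind_{P_0}^G$ has Langlands parameter in the closed "critical strip" $Re\langle\lambda,\alpha^\vee\rangle \le$ the value at the trivial representation, which $\delta_P^s$ with $s>1$ violates). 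The main obstacle I anticipate is the bookkeeping in the second step: correctly tracking, for each $w\in[W/W_M]$ and each root $\alpha$ with $w^{-1}\alpha<0$, that the rank-one parameter $t = t(\alpha,s)$ really does satisfy $Re(t)>0$ when $Re(s)>1$ — this requires knowing that the "other" simple-root contributions to $\langle\lambda_s, \alpha^\vee\rangle$ are nonnegative, which holds because $\lambda_s$ restricted to $M$ is (a twist of) $\delta_{P_0\cap M}^{-1/2}\delta_{P\cap M}^{1/2}$ and one is only moving in the $N$-direction. Once that positivity is in hand, the $\SL_2$ reduction makes both irreducibility and non-unitarizability routine.
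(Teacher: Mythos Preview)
Your irreducibility argument has a genuine gap at the step ``hence the corresponding $\SL_2$ induced representations are irreducible and the rank-one operators are holomorphic isomorphisms.'' First, $Re(t)>0$ does \emph{not} imply irreducibility of $\Ind_B^{\SL_2}(|\cdot|^t)$: the unramified principal series is reducible at $t=1$, and as $s$ ranges over $Re(s)>1$ the various rank-one parameters $t(\alpha,s)$ will pass through such points. Second, and more seriously, even if every rank-one intertwiner in a factorization of $M(w_0)$ were an isomorphism, so that $M(w_0):I(s)\to I(-s)$ is an isomorphism, this does \emph{not} yield irreducibility of $I(s)$: an isomorphism carries proper submodules to proper submodules, and your sentence about ``the inverse intertwiner'' is circular. (What one actually needs is a criterion of Jantzen/Muic type, computing the full lattice of submodules via all relevant normalized intertwiners, not just $M(w_0)$.) Finally, the positivity bookkeeping you flag as ``the main obstacle'' is in fact false as stated: $\lambda_s$ paired with the simple coroots inside $M$ equals $-1$, and once you start applying simple reflections the intermediate parameters $\langle w'\lambda_s,\alpha^\vee\rangle$ take both signs and hit integers; they are not uniformly of the form ``positive multiple of $(s-1)$ plus something nonnegative.''

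Your non-unitarizability sketch is in the right spirit (Hermitian forms on $I(s)$ for real $s$ come from $M(w_0)$, and one can try to track the signature), but nothing is actually proved; ``examining the leading rank-one constituent'' and the ``critical strip'' remark are not arguments. The paper's proof takes a completely different and much shorter route: irreducibility is obtained by citing the Jantzen criterion, and then non-unitarizability follows from Tadi\'c's theorem that the set of $s$ for which $I(s)$ has a unitarizable subquotient is \emph{compact}. Since the irreducible locus $\{Re(s)>1\}$ is connected and unbounded, a single unitarizable point there would (by the open--closed nature of unitarizability along irreducible families) force the entire half-plane to be unitarizable, contradicting compactness. This continuity/compactness trick sidesteps all signature computations and is the key idea you are missing.
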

\begin{proof} The irreducibility can be proved using the criterion in \cite[Theorem 3.1.2]{jantzenMAMS}.  
	
Now, the set of $s$ for which $Ind_P^G(\delta_P^s)$ has a unitarizable subquotient is compact.  This is a theorem of Tadic, see \cite[Lemma 5.1]{muicG2}. If for any $s_0$ with $Re(s_0) > 1$ one had that $Ind_P^G(\delta_P^{s_0})$ were unitarizable, then by continuity all $Ind_P^G(\delta_P^{s})$ with $Re(s)>1$ would be unitarizable, violating the compactness.  This proves the theorem.
\end{proof}

\subsection{Moving between isogenous groups}\label{subsec:isog}
Throughout the paper, we will prove Siegel-Weil identities and theorems about Eisenstein series on groups that are semisimple and simply connected.  We will then utilize these results on isogenous groups.  We explain now the principle that lets us go between isogenous groups in the context of these sorts of results.

Suppose $G$ is reductive, and let $G_{sc}$ the simply-connected cover of the derived group of $G$.  There is a canonical map $\pi_{sc}:G_{sc} \rightarrow G$. Let $P$ be a parabolic subgroup of $G$. We begin with the following well-known lemma.

\begin{lemma}\label{lem:parsingle}For any field $k$ of characteristic $0$, $P(k) \backslash G(k)/ G_{sc}(k)$ is a singleton.
\end{lemma}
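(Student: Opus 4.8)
The plan is to use the surjectivity of $\pi_{sc}$ on $\overline{k}$-points together with a Galois-cohomology argument to reduce to the algebraically closed case, and then to invoke the Bruhat decomposition. First I would observe that the double coset space $P(k)\backslash G(k)/G_{sc}(k)$ is in bijection with the orbits of $G_{sc}(k)$ acting on the flag variety-like space $P(k)\backslash G(k)$. Since $P\backslash G$ is a projective $k$-variety on which $G$ acts transitively (as an algebraic group), and since the natural map $(P\backslash G)(k) \to (P\backslash G)(\overline k)$ need not be surjective, the key point is that $G_{sc}(k)$ nonetheless acts transitively on the $k$-rational points.

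The cleanest route is as follows. The variety $X = P\backslash G$ is $G$-homogeneous, and fixing the base point $x_0 = P$, its stabilizer is $P$ itself. For any $x \in X(k)$, choose $g \in G(\overline k)$ with $x_0 g = x$; then the cocycle $\sigma \mapsto g \cdot {}^\sigma g^{-1}$ takes values in $P(\overline k)$, and defines a class in $H^1(k, P)$ whose image in $H^1(k,G)$ is trivial. Since $P$ is a parabolic, it is a semidirect product of its unipotent radical $N$ (for which $H^1(k,N)$ vanishes as $N$ has a filtration by vector groups in characteristic $0$) and a Levi $M$; moreover the inclusion $P \hookrightarrow G$ induces an \emph{injection} on $H^1$ for parabolic subgroups — equivalently, the $G(k)$-orbit of $x_0$ is \emph{all} of $X(k)$ by the standard fact that $P(k)\backslash G(k) \to (P\backslash G)(k)$ is a bijection (this is exactly the statement that parabolic quotients have no rational-point obstruction; one reference is Borel--Tits, or it follows from $H^1(k,P)\hookrightarrow H^1(k,G)$ plus triviality of the relevant class). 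Hence $G(k)$ acts transitively on $X(k)$, so $P(k)\backslash G(k)$ is a single $G(k)$-orbit, and it remains to show $G_{sc}(k)$ already acts transitively on it. Equivalently, I must show $G(k) = P(k)\cdot G_{sc}(k)$.

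For the last step, note that $\pi_{sc}: G_{sc} \to G_{\mathrm{der}} \hookrightarrow G$ has central kernel $\mu$, so $H^1(k,\mu) \to H^1(k,G_{sc})$ controls the cokernel, but more usefully: the quotient $G/\pi_{sc}(G_{sc})$ is a torus $T'$ (the cocenter), and $G(k)/\pi_{sc}(G_{sc}(k))$ injects into $T'(k)$. On the other hand, $P$ surjects onto $T'$ already, because $P$ contains a maximal torus $T$ of $G$ and the composite $T \to G \to T'$ is surjective (a maximal torus surjects onto the cocenter — its cocharacter lattice maps onto that of $T'$ since the coroots lie in the kernel lattice $\Span$ of the simply-connected cover). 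Therefore the composite $P(k) \to G(k) \to T'(k)$ hits everything in the image of $G(k)$, which forces $G(k) = P(k)\cdot \pi_{sc}(G_{sc}(k))$. Combining with transitivity of $G(k)$ on $X(k)$ gives that $G_{sc}(k)$ is transitive on $P(k)\backslash G(k)$, i.e. $P(k)\backslash G(k)/G_{sc}(k)$ is a singleton.

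\textbf{Main obstacle.} The one genuinely non-formal input is the statement that $P(k)\backslash G(k) \to (P\backslash G)(k)$ is a bijection, i.e. that rational points of a parabolic quotient all come from $G(k)$; equivalently, that the connecting map into $H^1(k,P)$ lands in the kernel of $H^1(k,P)\to H^1(k,G)$ which is trivial. I expect to handle this by citing the standard result (for instance from Borel's \emph{Linear Algebraic Groups} or Milne's book) that $H^1(k,P)\to H^1(k,G)$ is injective for $P$ parabolic — or, alternatively, by sidestepping it entirely and arguing directly via the Bruhat decomposition $G(\overline k) = \coprod_{w} P(\overline k)\, w\, B(\overline k)$ together with the fact that $G_{sc}(\overline k) \to G(\overline k)$ is surjective and that all of this descends appropriately, though that descent is itself what requires the cohomological remark. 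Everything else — vanishing of $H^1$ for unipotent groups in characteristic $0$, surjectivity of a maximal torus onto the cocenter, and the elementary manipulation of double cosets — is routine.
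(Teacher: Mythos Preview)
Your argument has a genuine gap in the key step. You claim that $G(k)/\pi_{sc}(G_{sc}(k))$ injects into $T'(k)$, where $T' = G/G_{\mathrm{der}}$ is the cocenter. This is false. Take $G = \PGL_2$ over $\Q$: then $G = G_{\mathrm{der}}$, so $T' = 1$, yet $\PGL_2(\Q)/\pi_{sc}(\SL_2(\Q)) \cong \Q^\times/(\Q^\times)^2$ is nontrivial. The point is that $\pi_{sc}(G_{sc}(k))$ is only a subgroup of $G_{\mathrm{der}}(k)$, with cokernel controlled by $H^1(k,\mu)$ for $\mu = \ker(G_{sc}\to G_{\mathrm{der}})$, and this cokernel lies in the kernel of $G(k)\to T'(k)$. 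So knowing that $P(k)$ surjects onto the image of $G(k)$ in $T'(k)$ does not give $G(k) = P(k)\cdot\pi_{sc}(G_{sc}(k))$.

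The paper's proof avoids all of this cohomological machinery and is much shorter. It observes that every unipotent element of $G(k)$ lies in $\pi_{sc}(G_{sc}(k))$ (the kernel of $\pi_{sc}$ is finite central, hence contains no unipotents), and that representatives of the relative Weyl group can be written as products of unipotent elements (via the standard $\SL_2$ formula $n_\alpha = \exp(e_\alpha)\exp(-f_\alpha)\exp(e_\alpha)$ for each simple root). The Bruhat decomposition $G(k) = \bigcup_w P_0(k)\,n_w\,P_0(k)$ over $k$, with $P_0 \subseteq P$ a minimal parabolic, then gives $G(k) = P(k)\cdot\pi_{sc}(G_{sc}(k))$ directly: write $P_0 = M_0 N_0$, move the anisotropic Levi factor $M_0(k) \subseteq P(k)$ across using that $n_w$ normalizes $M_0$, and absorb $N_0(k)$ and $n_w$ into $\pi_{sc}(G_{sc}(k))$. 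Your first half (the detour through $(P\backslash G)(k)$ and $H^1(k,P)$) is correct but unnecessary, since the desired statement $G(k) = P(k)\cdot\pi_{sc}(G_{sc}(k))$ already implies the lemma without ever mentioning the flag variety.
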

\begin{proof}
	The unipotent elements of $G(k)$ are in the image of $G_{sc}(k)$.  Consequently, representatives of the Weyl group of $G$ are in the image of $\pi_{sc}$ as well.  Combining the last two statements, the lemma follows from the Bruhat decomposition.
\end{proof}

Suppose now $G$ is defined over $\Q$, $K_\infty \subseteq G(\R)$ a maximal compact subgroup, and let $U$ be a finite-dimensional representation of $K_\infty$.  Suppose we have a family of characters $\chi_s: P(\A) \rightarrow \C^\times$ depending on a complex parameter $s$.  Suppose also that $f_\infty(g,s) \in I_{P(\R)}^{G(\R)}(\chi_s) \otimes U$ is $K_\infty$-equivariant, i.e., $f_\infty(gk,s) = k^{-1} f_\infty(g,s)$ for all $g \in G(\R)$ and $k \in K_\infty$.  Let $f_{fte}(g,s) \in Ind_{P(\A_f)}^{G(\A_f)}(\chi_s)$ be some inducing section, $f(g,s) = f_{fte}(g,s) f_{\infty}(g,s)$, and $E(g,f,s)$ the Eisenstein series, defined by an absolutely convergent summation for $Re(s) >> 0$.  We assume $f(g,s)$ is flat.

\begin{lemma}\label{lem:isogEis} Let $Z$ denote the center of $G$.  Assume that $G(\R) = Z(\R)G_{sc}(\R) K_\infty$. Let the other notation and assumptions be as above.  Then the restriction of $E(g,f,s)$ to $G_{sc}(\A)$ is regular (or has a pole of order at most $k$) at some special piont $s=s_0$ for all $f_{fte}(g,s)$ if and only if $E(g,f,s)$ is regular (or has a pole of order at most $k$) on $G(\A)$ for all $f_{fte}(g,s)$.
\end{lemma}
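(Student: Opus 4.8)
The ``only if'' direction is immediate, since restricting along $\pi_{sc}\colon G_{sc}(\A)\to G(\A)$ a function that is regular, resp.\ has a pole of order at most $k$, at $s=s_0$ preserves that property. The content is the ``if'' direction, and the plan is to move an arbitrary point of $G(\A)$ into the image $\pi_{sc}(G_{sc}(\A))$ by operations that do not change the analytic behaviour of $E(g,f,s)$ in $s$ near $s_0$, and then invoke the hypothesis there.

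First I would record three such operations on the Eisenstein series, valid for $\mathrm{Re}(s)\gg 0$ and then propagated by meromorphic continuation. For $z\in Z(\A)$: since $Z\subseteq P$ and $z$ is central, $f(\gamma z g,s)=f(z\gamma g,s)=\chi_s(z)f(\gamma g,s)$, so $E(gz,f,s)=\chi_s(z)E(g,f,s)$, and $\chi_s(z)$ is a nowhere-vanishing holomorphic function of $s$, hence irrelevant to regularity and pole order. For $\kappa\in K_\infty$: the $K_\infty$-equivariance of $f_\infty$ gives $E(g\kappa,f,s)=\kappa^{-1}\cdot E(g,f,s)$, an invertible linear operator on $U$ applied to the value, again harmless. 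For $q\in P(\A_f)$: writing $(R_qf)(x,s):=f(xq,s)$, one has $E(gq,f,s)=E(g,R_qf,s)$ (right translation commutes with the left summation over $P(\Q)\backslash G(\Q)$), and because $q$ has trivial archimedean component, $R_qf=(R_qf_{fte})\cdot f_\infty$ is again a flat inducing section with the \emph{same} fixed archimedean part $f_\infty$ and some new finite part, hence an admissible object to feed into the hypothesis. (The left $G(\Q)$-invariance of $E$ will not be needed.)

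Next I would establish the decomposition $G(\A)=\pi_{sc}(G_{sc}(\A))\cdot P(\A_f)\cdot Z(\A)\cdot K_\infty$. Its archimedean part is exactly the assumed identity $G(\R)=Z(\R)G_{sc}(\R)K_\infty$, after using that $Z$ is central to commute the factors past each other and the $\A_f$-factors. Its finite part is $G(\A_f)=\pi_{sc}(G_{sc}(\A_f))\cdot P(\A_f)$: over each $\Q_p$, Lemma~\ref{lem:parsingle} gives $G(\Q_p)=P(\Q_p)\pi_{sc}(G_{sc}(\Q_p))$, hence also $G(\Q_p)=\pi_{sc}(G_{sc}(\Q_p))P(\Q_p)$ on taking inverses (both factors being subgroups), and at almost every $p$ the factorization can be carried out with the two factors lying in the respective hyperspecial maximal compact subgroups, so that they assemble into elements of the restricted products. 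Granting this, fix $g\in G(\A)$ and a finite section $f_{fte}$, write $g=\pi_{sc}(h)\,q\,z\,\kappa$ with $h\in G_{sc}(\A)$, $q\in P(\A_f)$, $z\in Z(\A)$, $\kappa\in K_\infty$, and peel the factors off on the right using the three rules: $E(g,f,s)=\kappa^{-1}\cdot\chi_s(z)\cdot E(\pi_{sc}(h),R_qf,s)$. The hypothesis, applied to the section $R_qf$ and the point $h$, says the last factor is regular, resp.\ has a pole of order at most $k$, at $s=s_0$; reading the equality backwards gives the same for $E(g,f,s)$, and since every step preserves pole order exactly, the bracketed ``order at most $k$'' version comes along at the same time. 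The converse implication is trivial, completing the equivalence.

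The one genuinely technical point is the adelic upgrade of Lemma~\ref{lem:parsingle} used above: the lemma is stated over a field, and checking that at the primes of good reduction the $\pi_{sc}(G_{sc})$- and $P$-components can be chosen integrally — so that $G(\A_f)=\pi_{sc}(G_{sc}(\A_f))\,P(\A_f)$ holds compatibly with the restricted-product structure — is a routine but slightly fussy argument via the Bruhat decomposition over the residue fields together with smoothness of reduction. Everything else is a formal manipulation of the Eisenstein summation.
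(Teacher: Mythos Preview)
Two issues, one cosmetic and one substantive.

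First, you have the ``if'' and ``only if'' labels reversed: the statement is ``restriction regular for all $f_{fte}$ \emph{if and only if} $E$ regular on $G(\A)$ for all $f_{fte}$'', so the trivial direction (restriction of a regular function is regular) is the \emph{if} direction, and the content is the \emph{only if} direction. You argue the correct nontrivial direction, so this is only a labeling slip.

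The substantive gap is your claim that $R_qf$ is a flat section for $q\in P(\A_f)$. It is not. For $k\in K$, writing $kq=p'k'$ by Iwasawa, one has $(R_qf)(k,s)=\chi_s(p')\,\phi(k')$, which depends on $s$ through $\chi_s(p')$ whenever $q\notin K_f$. So $R_qf_{fte}$ is not flat and you cannot literally feed it into the hypothesis, which is stated for flat sections.

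The paper's proof sidesteps both this and your adelic decomposition $G(\A_f)=\pi_{sc}(G_{sc}(\A_f))\,P(\A_f)$ by using the standard fact (recalled just before the lemma) that the leading Laurent coefficient $f_0\mapsto (s-s_0)^kE(\cdot,f,s)|_{s=s_0}$ is a $G(\A_f)$-intertwining map $I_f(s_0)\to\mathcal{A}(G)$. If the maximal pole order on $G$ is $k$, pick $f_{fte}$ and $g=g_fg_\infty$ where this coefficient is nonzero; by intertwining, right-translating the section by $g_f$ moves the nonzero value to $g_\infty\in G(\R)$; then the assumed decomposition $G(\R)=Z(\R)G_{sc}(\R)K_\infty$ produces a point of $G_{sc}(\R)\subseteq G_{sc}(\A)$ where the pole persists. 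Your route through $P(\A_f)$ was an attempt to keep the translation step elementary, but since flatness fails you would need the intertwining property anyway to justify it --- at which point there is no reason to factor at finite places at all, and the ``genuinely technical point'' about the restricted-product compatibility of Lemma~\ref{lem:parsingle} disappears.
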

\begin{proof} Let $k$ be the highest order of pole for the Eisenstein series on $G$ and $k_{sc}$ the highest order of pole for the Eisenstein series on $G_{sc}$. By Lemma \ref{lem:parsingle},$P \cap G_{sc} =: P_{sc}$ is a parabolic subgroup of $G_{sc}$, and the restriction map from the induced representation on $G$ to the induced representation on $G_{sc}$ is surjective.  Consequently, $k_{sc} \leq k$.  For the reverse inequality, we have the $(s-s_0)^kE(g,f,s)$ gives an intertwining map from $I_{fte}(\chi_s)$ to $\mathcal{A}(G)$.  There exists $f_{fte}(g,s)$ so that the associated Eisenstein series value is non-zero.  By the fact that the map is intertwining, we can assume that $(s-s_0)^kE(g_\infty,f,s)$ is nonzero at $s=s_0$.  But now because $G(\R) = Z(\R)G_{sc}(\R) K_\infty$, this Eisenstein series is nonzero on $G_{sc}(\R)$.  Hence $k_{sc} \geq k$.
\end{proof}

We now explain how we go between Siegel-Weil formulas proved for simply-connected groups, and Siegel-Weil formulas for more general groups.  So, suppose we have a commuting pair $G \times S \rightarrow G'$.  Let $G_1' = G'_{sc}$ and $G_{1} =G_{sc}$.  Then we have maps of Lie algebras
\[ Lie(G_1) \rightarrow Lie(G) \rightarrow Lie(G') \rightarrow Lie(G_1')\]
so there is a map $G_1 \rightarrow G_1'$.  Suppose $H$ is simply connected, so the map $H \rightarrow G'$ factors through $G_1'$.  Now $Lie(G)$ maps into $Lie(G')^{S}$, and thus $Lie(G_1')^S$, so $Lie(G_1)$ maps into $Lie(G_1')^S \subseteq Lie(G_1')$.  Consequently we have a map $G_1 \times H \rightarrow G_1'$.

Now suppose we have proved a Siegel-Weil theorem for the pair $G_1 \times S \rightarrow G_1'$, and we have a hypothetical Siegel-Weil theorem for the pair $G \times S \subseteq G'$.  That is:
\begin{enumerate}
	\item we have an induced representation on $G'(\A_f)$, $I_{G',fte}(s=s_0)$ and similarly an induced representation on $G_1'(\A_f)$;
	\item we have two linear maps $m_{theta}, m_{Eis}$, $m_{*}: I_{G',fte}(s=s_0) \rightarrow (\mathcal{A}(G) \otimes U)^{K_{G,\infty}}$;
	\item we have two linear maps $m_{theta}^{1}, m_{Eis}^{1}$, $m_{*}: I_{G_1',fte}(s=s_0) \rightarrow (\mathcal{A}(G_1) \otimes U)^{K_{G_1,\infty}}$;
	\item the maps are compatible with restrictions, i.e. $m_{*}^{1} Res_{G'}^{G_1'}(f_{fte}) = Res_{G}^{G_1} m_{*} (f_{fte})$;
	\item the maps $m_{Theta}^1 = m_{Eis}^1$ agree on $I_{G_1',fte}(s=s_0)$;
	\item the maps $m_{*}$ are $G(\A_f)$ intertwining, and the maps $m_{*}^1$ are $G_1(\A_f)$ intertwining.
\end{enumerate} 

\begin{proposition} Suppose $m_{theta}$ and $m_{Eis}$ both land in $\mathcal{A}(G)_\xi \otimes U$ for the same central character $\xi.$  Suppose also that $G(\R) = Z_{G}(\R) G_1(\R) K_{\infty,G}$.  Then in the above situation, we have a Siegel-Weil theorem for the pair $G \times S \subseteq G'$.  I.e., $m_{Theta} = m_{Eis}$ on $I_{G',fte}(s=s_0)$.
\end{proposition}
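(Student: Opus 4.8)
The plan is to reduce the desired identity $m_{Theta}=m_{Eis}$ on $I_{G',fte}(s=s_0)$ to the identity $m_{Theta}^{1}=m_{Eis}^{1}$ on $I_{G_1',fte}(s=s_0)$ that has already been established, using the decomposition $G(\R)=Z_G(\R)G_1(\R)K_{\infty,G}$ to recover an automorphic form on $G(\A)$ from its restriction (along $\pi_{sc}$) to $G_1(\A)$.

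Fix $f\in I_{G',fte}(s=s_0)$ and set $\psi_f:=m_{Theta}(f)-m_{Eis}(f)$. Since $m_{Theta}$ and $m_{Eis}$ take values in $(\mathcal{A}(G)\otimes U)^{K_{G,\infty}}$ and, by hypothesis, both land in the single central-character eigenspace $\mathcal{A}(G)_\xi\otimes U$, the function $\psi_f$ lies in $\mathcal{A}(G)_\xi\otimes U$ and is $K_{\infty,G}$-equivariant, i.e. $\psi_f(gk)=k^{-1}\cdot\psi_f(g)$ for $k\in K_{\infty,G}$. First I would observe that, by the compatibility of the maps with restriction (assumption (4)) and the already-established identity $m_{Theta}^{1}=m_{Eis}^{1}$ (assumption (5)), the restriction of $\psi_f$ to $G_1(\A)$ equals $m_{Theta}^{1}(\mathrm{Res}_{G'}^{G_1'}f)-m_{Eis}^{1}(\mathrm{Res}_{G'}^{G_1'}f)=0$. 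Because $f$ was arbitrary and the maps $m_{Theta},m_{Eis}$ are $G(\A_f)$-intertwining for right translation (assumption (6)), one has $R(g_f)\psi_f=\psi_{R(g_f)f}$ for every $g_f\in G(\A_f)$; hence every right $G(\A_f)$-translate of $\psi_f$ again restricts to $0$ on $G_1(\A)$.

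It then remains to reconstruct $\psi_f$ at a general point. Write $g=g_\infty g_f\in G(\A)$ with $g_\infty\in G(\R)$ and $g_f\in G(\A_f)$, and use $G(\R)=Z_G(\R)G_1(\R)K_{\infty,G}$ to write $g_\infty=z_\infty h_\infty k_\infty$ with $z_\infty\in Z_G(\R)$, $h_\infty$ in the image of $G_1(\R)$, and $k_\infty\in K_{\infty,G}$. Since $k_\infty$ and $g_f$ are supported at disjoint places they commute, and $z_\infty$ is central, so
\[\psi_f(g)=\xi(z_\infty)\,\psi_f(h_\infty k_\infty g_f)=\xi(z_\infty)\,k_\infty^{-1}\cdot\psi_f(h_\infty g_f)=\xi(z_\infty)\,k_\infty^{-1}\cdot\bigl(R(g_f)\psi_f\bigr)(h_\infty),\]
and the last quantity vanishes because $R(g_f)\psi_f$ restricts to $0$ on $G_1(\A)$ while $h_\infty$ lies in the image of $G_1(\R)\subseteq G_1(\A)$. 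Therefore $\psi_f\equiv 0$, that is, $m_{Theta}(f)=m_{Eis}(f)$, which is the asserted Siegel--Weil identity for $G\times S\subseteq G'$.

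The argument is essentially formal and involves no new analytic input at this stage, all the substance residing in the simply connected case. The only points that require care are bookkeeping ones: that $m_{Theta}$ and $m_{Eis}$ land in the \emph{same} central-character eigenspace, so that the $Z_G(\R)$-factor contributes only the harmless scalar $\xi(z_\infty)$; that ``restriction'' means pullback along $\pi_{sc}$, so that it interacts correctly with the decomposition of $G(\R)$ and with $h_\infty$ being only in the \emph{image} of $G_1(\R)$; and that the $G(\A_f)$-equivariance is used to propagate the vanishing on $G_1(\A)$ to all right translates \emph{before} evaluating at an arbitrary point of $G(\A)$. I expect the verification that the six compatibilities genuinely hold in each concrete application (rather than this deduction itself) to be where the work lies.
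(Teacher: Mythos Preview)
Your proof is correct and follows essentially the same approach as the paper: form the difference $\psi_f$, use assumptions (4) and (5) to see it vanishes on $G_1(\A)$, invoke the $G(\A_f)$-intertwining property to move the finite-adelic part into the inducing section, and then use the decomposition $G(\R)=Z_G(\R)G_1(\R)K_{\infty,G}$ together with the central character and $K_{\infty,G}$-equivariance. The paper's proof is a terse two-sentence version of exactly this; you have simply written out the details it leaves implicit.
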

\begin{proof}  Set $d = m_{Theta}-m_{Eis}$.  Then for all $f_{fte} \in I_{G',fte}(s=s_0)$, the hypotheses imply $d(f_{fte})|_{G_1} = 0$.  Because the maps $m_{*}$ are $G(\A_f)$ intertwining, we need only verify that $d(f_{fte})|_{G(\R)} \equiv 0$ for all $f_{fte}$.  Now the proposition follows from the assumption that $G(\R) = Z_{G}(\R) G_1(\R) K_{\infty,G}$ and the fact that $d(f_{fte})|_{G_1} = 0$.
\end{proof}

\section{Automorphic minimal representations: type $D$}\label{sec:AMRD}
In this section, we consider degenerate Eisenstein series on groups of type $D_5,D_6,D_7$ that can be used to define automorphic minimal representations.  The minimal representations appear as residues of the Eisenstein series at certain special points.  Beyond reviewing these constructions, we also calculate the constant terms along maximal parabolic subgroups of the functions in the automorphic minimal representation.

\subsection{Degenerate principal series}\label{subsec:DndegPS}
We review a few facts concerning degenerate principal series on $p$-adic groups of type $D$.  Thus let $k = \Q_p$ and let $G_n$ be split, simply-connected of type $D_n$.  We set $V_{2n} = H^{n}$ over $k$.  Thus $G_n \rightarrow \SO(V_{2n})$.  Let $b_j, b_{-j}$ be the standard basis of the $j^{th}$ copy of the hyperbolic plane $H$ in $V_{2n}$.  Let $P \subseteq G$ be the parabolic stabilizing the line $k b_1$, and $\nu: P \rightarrow \GL_1$ as $p b_1 = \nu(p) b_1$.  One has $\delta_P = |\nu|^{2n-2}$.  We set $I_P^G(s) = Ind_P^G(|\nu|^s)$; the long intertwiner $M(w_0)$ relates $I(s)$ with $I(2n-2-s)$.

\begin{proposition}[Weissman \cite{weissmanFJ}]\label{prop:degPSDn} Let the notation be as above.
	\begin{enumerate}
		\item The representations $I_P^G(n)$ and $I_P^G(n-2)$ have a non-split composition series of length two.
		\item The spherical representation is a subrepresentation of $I_P^G(n-2)$ and a quotient of $I_P^G(n)$.
		\item The spherical vector generates $I_P^G(n)$.
		\item The meromorphically continued intertwining operator $M(w_0): I_P^G(s) \rightarrow I_P^G(2n-2-s)$ defines a regular intertwining map at $s=n$.  The image of the intertwining operator is the unique irreducible subrepresentation of $I_P^G(s=n-2)$; it is spherical.
	\end{enumerate}
\end{proposition}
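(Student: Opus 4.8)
The engine of the argument is the long intertwining operator $A(s)=M(w_0)\colon I_P^G(s)\to I_P^G(2n-2-s)$ together with the Gindikin--Karpelevich formula for its action on the normalized spherical vector $f^0_s$. Since $P$ is maximal, $w_0$ conjugates $N$ to $\overline N$ and $N_{w_0}=N$, the abelian unipotent radical; in the usual coordinates for $D_n$ its roots are $e_1\pm e_j$ for $2\le j\le n$, and, with $\nu$ corresponding to $e_1$, the inducing character $\lambda_s=\delta_{P_0}^{-1/2}|\nu|^s$ pairs with the relevant coroots through the values $s-1,\dots,s-(n-1)$ and $s-(2n-3),\dots,s-(n-1)$. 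A telescoping Gindikin--Karpelevich computation then yields
\[
A(s)\,f^0_s=\frac{\zeta(s-n+1)\,\zeta(s-2n+3)}{\zeta(s)\,\zeta(s-n+2)}\,f^0_{2n-2-s},\qquad \zeta(x)=(1-q^{-x})^{-1},
\]
so the scalar $c(s)$ appearing here vanishes exactly at $s=0,n-2$ and has poles exactly at $s=n-1,2n-3$; in particular $c(n)$ is finite and nonzero. Writing $A(s)$ as a composite of rank-one intertwiners --- possible because $N$ is abelian --- its poles coincide with those of $c(s)$, so $A(s)$ is holomorphic at $s=n$ and $A(n)$ is a nonzero $G(k)$-map $I_P^G(n)\to I_P^G(n-2)$ with $A(n)f^0_n=c(n)f^0_{n-2}\ne 0$. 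This gives the regularity half of (4).

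Next I would bring in the Langlands classification and Jantzen's analysis. Since $n>n-1=\tfrac12 s_P$, the module $I_P^G(n)$ is a standard module and hence has a unique irreducible quotient $J$; dually, using the standard duality $I_P^G(s)^\vee\cong I_P^G(2n-2-s)$ (the contragredient of $\Ind_P^G(|\nu|^s)$ is $\Ind_P^G(\delta_P|\nu|^{-s})$ and $\delta_P=|\nu|^{2n-2}$), the module $I_P^G(n-2)\cong I_P^G(n)^\vee$ has a unique irreducible submodule. Jantzen's irreducibility criterion \cite[Theorem 3.1.2]{jantzenMAMS} is a finite rank-one check showing $I_P^G(n)$ and $I_P^G(n-2)$ are reducible at $s=n,n-2$, and his finer analysis of composition series of degenerate principal series of split even orthogonal groups shows the length there is exactly two, with one spherical and one non-spherical constituent (the spherical line being one-dimensional). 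The part that does real work is the \emph{placement} of these constituents: one must know that the spherical one is the quotient of $I_P^G(n)$ --- equivalently, the submodule of $I_P^G(n-2)$; equivalently, that the minimal representation of $G$ embeds in $I_P^G(n-2)$, which is exactly how that representation is built. Granting this, $J$ is spherical, and (1) follows: reducibility and length two are above, and non-split holds because a semisimple length-two module with distinct constituents would have two irreducible quotients. Statement (3) also follows: $f^0_n$ maps nonzero to the spherical cosocle $J$, so it lies outside the unique proper nonzero submodule and therefore generates $I_P^G(n)$.

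It remains to package (2) and (4). We have $I_P^G(n)$ of length two with cosocle $J$ (spherical) and socle $S\ne J$ (non-spherical), while $\operatorname{soc}(I_P^G(n-2))=J^\vee$ is spherical. Hence $A(n)$ cannot be an isomorphism (an isomorphism would carry $S$ onto the spherical socle of $I_P^G(n-2)$); being nonzero, it therefore has nonzero kernel, so its image is the unique proper nonzero quotient of $I_P^G(n)$, namely $J$. Since $A(n)f^0_n=c(n)f^0_{n-2}\ne 0$, the spherical vector of $I_P^G(n-2)$ lies in $\operatorname{im}(A(n))=J$; as $J$ is irreducible and sits inside $I_P^G(n-2)$, it must be the socle. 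Thus $J$ is the spherical representation, it is a quotient of $I_P^G(n)$ and a submodule of $I_P^G(n-2)$ --- which is (2) --- and it is precisely the image of the long intertwining operator, completing (4).

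The step I expect to be the main obstacle is the composition-series input in the second paragraph: reducibility at $s=n,n-2$ is the easy rank-one point (it is exactly where $c(s)$ vanishes), but knowing that the length is \emph{exactly} two, and above all that the spherical constituent occupies the quotient of $I_P^G(n)$ rather than the submodule, is the real content. I would take this from Jantzen's memoir, or equivalently from the known realization of the minimal representation inside $I_P^G(n-2)$; a more self-contained route is to decompose $w_0$ into simple reflections, run the rank-one reductions, and track composition factors through the intermediate $\SL_2$-inductions (for small $n$, e.g.\ via $G_3\cong\SL_4$, this reduces to a $\GL$-segment computation). One should also double-check, using the abelianness of $N$ and this rank-one factorization, that $A(s)$ acquires no pole at $s=n$ beyond those visible on the spherical vector.
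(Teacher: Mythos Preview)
Your approach is broadly correct and overlaps with the paper's: both compute the same Gindikin--Karpelevich $c$-function
\[
c(s)=\frac{\zeta(s-n+1)\,\zeta(s-2n+3)}{\zeta(s)\,\zeta(s-n+2)}
\]
and use its value and zero to locate the spherical constituent and control $M(w_0)$. Two differences are worth flagging.

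First, your source for the composition-series input is off. Jantzen's memoir treats degenerate principal series for symplectic and odd orthogonal groups, not split $D_n$; the paper instead invokes Weissman's Fourier--Jacobi functor, which reduces the structure of $I_P^G(s)$ to principal series on $\SL_2$ and directly yields the length-two non-split series together with the placement of the spherical piece. You correctly identify this step as the crux, but the right reference (and the one in the proposition's attribution) is Weissman. Your fallback to ``the known realization of the minimal representation inside $I_P^G(n-2)$'' is circular in this context, since that realization is exactly what (2) establishes.

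Second, your argument for regularity of $M(w_0)$ at $s=n$ via rank-one factorization works but is heavier than needed, and you rightly flag the residual worry about poles not visible on the spherical vector. The paper's route is cleaner and sidesteps this: once (3) is known, regularity follows immediately from finiteness of $c(n)$. Indeed, if $M(w_0)$ had a pole of order $k>0$ at $s=n$, then $(s-n)^k M(w_0)\big|_{s=n}$ would be a nonzero intertwiner $I_P^G(n)\to I_P^G(n-2)$ annihilating $f^0_n$; since $f^0_n$ generates $I_P^G(n)$, this operator would vanish identically, a contradiction. This also streamlines the logical order: (1) is the external input from Weissman; (2) follows from $c(n-2)=0$ (the spherical vector lies in the kernel of $M(w_0)$ at $s=n-2$, hence in a proper submodule); (3) follows from (1)+(2); and (4) follows from (3) together with $c(n)$ finite and nonzero. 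Your detour through Langlands quotients and socle/cosocle bookkeeping arrives at the same place but less directly.
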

\begin{proof}  This is essentially all contained in \cite{weissmanFJ}. The structure of a composition series of $I_P^G(s)$ is computed using the Fourier-Jacobi functor, which reduces the calculation to the case of principal series on $\SL_2$.
	
For the second part, one computes that the $c$-function for $M(w_0)$ is 
\[ c(s) = \frac{\zeta(s+1-n) \zeta(s+3-2n)}{\zeta(s) \zeta(s+2-n)}.\]
This vanishes at $s=n-2$, so the spherical representation is a subrepresentation at $s=n-2$ and a quotient at $s=n$.  The third part follows from the second part. Because the spherical vector generates $I(n)$, that $M(w_0)$ is regular follows from the fact that $c(n)$ is regular.
\end{proof}

\subsection{Type $D_5$}  Recall the group $G_5=G_{5,\Theta}$, which is simply-connected of type $D_5$. This group acts on $V_{10} = H \oplus \Theta$.  Let $b_1, b_{-1}$ be the standard basis of $H$, and $P_{G_{5}}$ the parabolic subgroup stabilizing the line $\Q b_1$.  We define $\nu$ and $I(s)$ as in the previous subsection.

Suppose $f = \otimes f_v \in  Ind(|\nu|^s)$ is a flat section for every $v$, and $f_\infty$ is spherical.   We have the Eisenstein series $E(g,f,s) = \sum_{\gamma \in P_{G_5}(\Q)\backslash G_5(\Q)}{f(\gamma g,s)}$.

\begin{proposition}\label{prop:D5reg} The Eisenstein series $E(g,f,s)$ is regular at $s=5$.\end{proposition}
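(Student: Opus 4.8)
The plan is to show regularity at $s=5$ by testing the constant term of $E(g,f,s)$ along every maximal parabolic $Q$ of $G_5$, using the general formalism of Section \ref{subsec:Eis}. The key input is Proposition \ref{prop:degPSDn}: for $n=5$ the degenerate principal series $I_P^{G_5}(s)$ has the special structure described there, and in particular the long intertwiner $M(w_0)\colon I(s)\to I(2n-2-s)=I(8-s)$ is \emph{regular} at $s=5$ (this is the fourth part of Proposition \ref{prop:degPSDn} applied at $s=n=5$, since $c(n)$ is regular there). Since any sub-intertwiner $M(w)$ with $w\in[W/W_M]$ is a factor of $M(w_0)$ in a length-additive decomposition, and the only potential poles of the constant term terms $E^w(g,f,s)$ come from these intertwiners acting on a flat section, regularity of $M(w_0)$ at $s=5$ controls regularity of all the pieces.

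First I would recall that to prove $E(g,f,s)$ is regular at $s=5$ it suffices, by the standard fact recalled in Section \ref{subsec:Eis}, to prove that the constant term $E_V(g,s)$ along each standard parabolic $Q=LV$ is regular at $s=5$. By that same discussion, $E_V(g,s) = \sum_{w\in[W_L\backslash W/W_M]} E^w(g,f,s)$, where each $E^w$ is built from the intertwined section $M(w)f(g,s)$ and is itself an Eisenstein series on the Levi $L$ for the parabolic $P_{L,\Delta^w(L)}$. Next I would argue that each $M(w)$ appearing here is obtained by restriction from a partial factor of the long intertwiner $M(w_0)$ on $G_5$; since $M(w_0)$ is regular at $s=5$ and the relevant $c$-function $c(s) = \frac{\zeta(s+1-n)\zeta(s+3-2n)}{\zeta(s)\zeta(s+2-n)}$ (with $n=5$) has no pole at $s=5$ — the denominator factors $\zeta(5)$, $\zeta(2)$ are finite and nonzero and the numerator factors $\zeta(1)$, $\zeta(-1)$ contribute the known pole/zero behaviour but no pole at this point — each individual intertwiner $M(w)f(g,s)$ is regular at $s=5$. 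Finally, one must check that after plugging the (regular) section $M(w)f$ into the Eisenstein series $E^w$ on $L$, no \emph{new} pole is created by the summation over $(wPw^{-1}\cap L)(\Q)\backslash L(\Q)$: this follows because $\mathrm{Re}(s)=5$ lies in the range of absolute convergence for those smaller Eisenstein series on the proper Levi subgroups (the abscissa of convergence drops strictly for the sub-Eisenstein series), so each $E^w(g,f,s)$ is holomorphic at $s=5$.

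I would organize the case check by the Heisenberg parabolic and the Siegel-type parabolic of $G_5$ separately, since those are where the Fourier–Jacobi computation of \cite{weissmanFJ} is cleanest, and observe that for the remaining maximal parabolics the associated double cosets $[W_L\backslash W/W_M]$ and the resulting $\Delta^w(L)$ can be read off combinatorially from the $D_5$ root system. In each case the term $E^w$ is either a single inducing section (when $\Delta^w(L)=\varnothing$) or an Eisenstein series on a classical group of smaller rank, for which $s=5$ is safely inside the convergence range.

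\medskip

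The main obstacle I anticipate is bookkeeping rather than conceptual: one must verify that for \emph{every} double coset $w\in[W_L\backslash W/W_M]$ over every maximal $Q$, the intertwiner $M(w)$ genuinely factors through a length-additive piece of $M(w_0)$ and that the only arithmetic factor that could blow up at $s=5$ is accounted for by the explicit $c(s)$ above. The subtle point is the interaction between the residual/pole structure of $M(w_0)$ at $s=5$ (which by Proposition \ref{prop:degPSDn}(4) is exactly regular, with image the spherical subrepresentation of $I_P^{G_5}(3)$) and the vanishing behaviour forced on the flat section $f$ by the requirement that $f_\infty$ be spherical — it is precisely sphericity at infinity that pins down which component of a possibly reducible $I(s)$ the section lives in, and hence guarantees we never sit on a pole of a partial intertwiner. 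Once that is in hand, regularity of $E(g,f,s)$ at $s=5$ is immediate.
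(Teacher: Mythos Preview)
Your proposal has the right general shape but contains genuine errors that prevent it from going through.

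First, you are over-complicating the Weyl-group bookkeeping. The group $G_5 = G_{5,\Theta}$ acts on $V_{10} = H \oplus \Theta$ where $\Theta$ carries the positive-definite octonion norm; hence $V_{10}$ has Witt rank $1$ over $\Q$, and $G_5$ has $\Q$-rank $1$. There is only \emph{one} proper $\Q$-parabolic, namely $P_{G_5}$, and $[W_M\backslash W/W_M] = \{1, w_0\}$. So the constant term along $N_{P_{G_5}}$ is simply $f(g,s) + M(w_0)f(g,s)$, and this single computation suffices. There are no ``smaller Eisenstein series on proper Levi subgroups'' to worry about, no Heisenberg versus Siegel-type parabolics to treat separately, and no need to factor $M(w_0)$ into shorter pieces.

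Second, and more seriously, you are conflating the \emph{local} $p$-adic statement of Proposition~\ref{prop:degPSDn}(4) with the \emph{global} regularity needed here. Proposition~\ref{prop:degPSDn} concerns $I_p(s)$ over $k=\Q_p$ and local $c$-functions; it says nothing directly about the global intertwiner, which is a product over all places. The global finite-part $c$-function $\frac{\zeta(s-4)\zeta(s-7)}{\zeta(s)\zeta(s-3)}$ at $s=5$ has numerator factors $\zeta(1)$ and $\zeta(-2)$ --- not $\zeta(-1)$ as you wrote. This arithmetic slip is fatal: $\zeta(1)$ contributes an honest simple \emph{pole}, and it is precisely the trivial zero $\zeta(-2)=0$ that cancels it. With your value $\zeta(-1)=-\tfrac{1}{12}\neq 0$ there would be no cancellation and your conclusion ``no pole at this point'' would be false. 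You also omit the archimedean factor $\frac{\Gamma(s-4)}{\Gamma(s)}$, which must be checked separately and is regular at $s=5$.

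The paper's actual argument is: (i) reduce to the section spherical at all finite places, using that the spherical vector generates $I_p(s=5)$ for every $p<\infty$ --- this is the role Proposition~\ref{prop:degPSDn} actually plays; (ii) for the spherical section, compute $M(w_0)f$ via the global $c$-function including the archimedean gamma factor; (iii) observe that the pole of $\zeta(s-4)$ at $s=5$ is cancelled by the zero of $\zeta(s-7)$ there. Sphericity at infinity is not used to ``pin down a component'' as you suggest, but simply to make the archimedean intertwiner explicitly computable.
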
 
\begin{proof}  Observe that $G_5$ is split at every finite place.  We first verify the proposition for the spherical Eisenstein series, then deduce the general case from the spherical case.
	
Let $N_{P_{G_5}}$ denote the unipotent radical of $P_{G_5}$. The constant term of $E(g,f,s)$ along $N_{P_{G_5}}$ is (see subsection \ref{subsec:Eis})
\[E_{N_{P_{G_5}}}(g,f,s) = f(g,s) + M(w_0)f(g,s).\]  The $c$-function is $\frac{\zeta(s-4)\zeta(s-7)}{\zeta(s)\zeta(s-3)}$.  The archimedean interwiner is $\frac{\Gamma(s-4)}{\Gamma(s)}$ \cite[Lemma 4.1.1]{pollackNTM}.  The global zeta function $\zeta(s-4)$ has a simple pole at $s=5$, but $\zeta(s-7)$ has a simple zero at $s=5$.  The archimedean intertwiner is regular at $s=5$, so we conclude that the lemma holds in case $f$ is spherical everywhere.
	
The general case follows because the spherical vector generates the module $I_p(s)$ for $s=5$ and every $p < \infty$; see Proposition \ref{prop:degPSDn}.
\end{proof}

\subsection{Type $D_6$}\label{subsec:AMRD6}
Recall that we set $V_{12} = H^2 \oplus \Theta$, and $G_{6} = G_{6,\Theta} \rightarrow \SO(V_{12})$ the simply-connected cover.  In this subsection, we discuss the automorphic minimal representation on $G_{6}$, and compute constant terms of the automorphic forms in this representation for the standard maximal parabolic subgroups of $G_6$.

To begin, let $b_j, b_{-j}$ be the standard basis of the $j^{th}$ copy of the hyperbolic plane $H$.  We set $P_{G_6,1}$ the maximal parabolic that stabilizes the line $\Q b_1$, with associated character $\nu$.  We set $P_{G_6,2}$ the maximal parabolic that stabilizes the plane $\Q b_1 \oplus \Q b_2$.  As usual, we set $I(s) = Ind_{P_{G_6,1}}^{G_6}(|\nu|^s)$.

Set $v_j = \frac{1}{\sqrt{2}}(b_j+b_{-j})$, so that $(v_i,v_j) = \delta_{ij}$.  We let $\iota \in O(V_{12})$ be the map that exchanges $b_j$ with $b_{-j}$, and is minus the identity on $\Theta$.  We define a maximal compact subgroup of $\SO(V_{12})(\R)$ as those group elements that commute with $\iota$; similarly one has a maximal compact subgroup $K_{6,\infty}$ of $G_{6}(\R)$.  Observe that the action of $K_{6,\infty}$ on $v_1 + i v_2 \in V_{12} \otimes \C$ defines a representation $j(\bullet, i): K_{6,\infty} \rightarrow \C^\times$.

For an even integer $\ell$, let $f_{\infty,\ell}(g,s) \in I_\infty(s)$ be the flat section with $f_{\infty,\ell}(k_\infty,s) = j(k_\infty,i)^{\ell}$.  Let $f_{fte}(g,s) \in I_{f}(s)$ be a flat section, let $f_{\ell}(g,s) = f_{fte}(g,s) f_{\infty,\ell}(g,s)$ and $E(g,f_{\ell},s)$ the associated Eisenstein series.

\begin{proposition}\label{prop:D6minAut} Let the notation be as above.
\begin{enumerate}
	\item \cite[Theorem 6.4]{hanzerSavin} The Eisenstein series $E(g,f_{\ell},s)$ has at most a simple pole at $s=6$. 
	\item This simple pole is achieved if $\ell=\pm 4$ and $f_{fte}(g,s)$ is spherical.
	\item Fixing $\ell=4$ or $\ell=-4$, the residual representation is irreducible.
	\item If $\ell \in \{-2,0,2\}$, then $E(g,f_{\ell},s)$ is regular at $s=6$.
\end{enumerate}
\end{proposition}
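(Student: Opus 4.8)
Part~(1) is \cite[Theorem 6.4]{hanzerSavin}, which I take as given; the strategy for the remaining three parts is the one already used in the proof of Proposition~\ref{prop:D5reg}. From subsection~\ref{subsec:DndegPS}, $\delta_{P_{G_6,1}}=|\nu|^{s_P}$ with $s_P=2n-2=10$, so $s=6$ lies in the range $Re(s)>s_P/2$; as in the $D_5$ case, the behaviour of $E(g,f_\ell,s)$ at $s=6$ is then controlled by its constant term along $P_{G_6,1}$, which (up to terms regular at $s=6$) has the shape $f_\ell(g,s)+M(w_0)f_\ell(g,s)$. Since $f_\ell$ is entire in $s$, everything comes down to the long intertwining operator $M(w_0)f_\ell$ near $s=6$.

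I would compute $M(w_0)f_\ell$ one place at a time. At the finite places $G_6$ is split, and the Gindikin--Karpelevich computation from the proof of Proposition~\ref{prop:degPSDn}, specialized to $n=6$, gives the $c$-function
\[
c(s)=\frac{\zeta(s-5)\,\zeta(s-9)}{\zeta(s)\,\zeta(s-4)}.
\]
At $s=6$ the factor $\zeta(s-5)=\zeta(1)$ has a simple pole while $\zeta(s-9)=\zeta(-3)$, $\zeta(s)=\zeta(6)$ and $\zeta(s-4)=\zeta(2)$ are finite and nonzero, so the finite part of $M(w_0)$ has a simple pole at $s=6$ and nonzero residue on a spherical vector. (This is where $D_6$ differs from $D_5$, in which $\zeta(s-7)=\zeta(-2)=0$ cancelled the pole.) At the archimedean place, $M_\infty(w_0)$ carries the line $\C f_{\infty,\ell}$ into the weight-$\ell$ line of the target and acts there by a ratio of $\Gamma$-functions $b_\ell(s)$, computed --- as in \cite[Lemma 4.1.1]{pollackNTM} --- by reducing to $\SL_2$ along the rank-one direction of $w_0$ and tracking how the $K_{6,\infty}$-type of weight $\ell$ shifts the arguments of the $\Gamma$-factors. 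The crucial point is that $b_\ell(s)$ is holomorphic and nonzero at $s=6$ exactly when $\ell=\pm4$ --- the extreme weights, corresponding to the holomorphic and antiholomorphic minimal lowest/highest-weight modules of $G_6(\R)=\Spin(2,10)$ --- whereas for $\ell\in\{-2,0,2\}$ the function $b_\ell(s)$ has a simple zero at $s=6$ coming from a $\Gamma$-factor pole in its denominator. Combining the two computations: for $\ell=\pm4$ and $f_{fte}$ spherical the constant term of $E(g,f_\ell,s)$ along $P_{G_6,1}$, and hence $E(g,f_\ell,s)$ itself, has a simple pole at $s=6$ with nonzero residue, which (together with (1)) gives (2); for $\ell\in\{-2,0,2\}$ the archimedean zero cancels the finite simple pole, so the constant term --- and therefore $E(g,f_\ell,s)$ --- is regular at $s=6$, which gives (4).

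For (3), fix $\ell=4$ (the case $\ell=-4$ is the complex conjugate). Let $\Pi$ be the $(\g_6,K_{6,\infty})\times G_6(\A_f)$-submodule of $\mathcal{A}(G_6)$ generated by the residues $\Res_{s=6}E(g,f_{fte}f_{\infty,4},s)$, $f_{fte}\in I_f(6)$; it is nonzero by (2). Since $6>s_P/2$, these residues are square-integrable, so $\Pi$ is semisimple. At each finite place $p$, the degenerate principal series $I_p(6)$ has nonsplit composition series of length two with irreducible spherical quotient (Proposition~\ref{prop:degPSDn}(1),(2)); as the $p$-component of $\Pi$ is a nonzero semisimple quotient of $I_p(6)$, it must be the spherical constituent, which by Proposition~\ref{prop:degPSDn}(4) equals the image of $M_p(w_0)$ at $s=6$. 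At the archimedean place, $f_{\infty,4}$ generates the irreducible weight-$4$ holomorphic lowest-weight module of $\Spin(2,10)$, and $M_\infty(w_0)f_{\infty,4}\neq0$ at $s=6$, so this is the archimedean component of $\Pi$. Being a restricted tensor product of irreducibles, $\Pi$ is irreducible.

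The step I expect to be the main obstacle is the archimedean intertwining computation: pinning down the $\Gamma$-factor $b_\ell(s)$ for each even weight $\ell$ and verifying that it is holomorphic and nonvanishing at $s=6$ precisely for $\ell=\pm4$, with a simple zero there for $\ell\in\{-2,0,2\}$. A related point requiring care is, in (3), identifying the archimedean component of $\Pi$ as the minimal-weight lowest-weight module of $\Spin(2,10)$, which rests on the known structure (in particular the irreducibility and unitarizability) of lowest-weight representations of $\SO(2,n)$.
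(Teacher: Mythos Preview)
Your overall strategy is right, and the distinction you draw between $\ell=\pm4$ and $\ell\in\{-2,0,2\}$ via the archimedean gamma factor is exactly the mechanism at work. But there is a structural gap in the constant-term step. You write that the constant term along $P_{G_6,1}$ ``has the shape $f_\ell+M(w_0)f_\ell$'', by analogy with the $D_5$ case. That analogy fails: $G_{5,\Theta}$ has $\Q$-rank $1$ (since $V_{10}=H\oplus\Theta$), so its unique proper parabolic has a two-term constant term, but $G_{6,\Theta}$ has $\Q$-rank $2$ (since $V_{12}=H^2\oplus\Theta$) with restricted root system of type $B_2$. The set $[W_{M_{G_6,1}}\backslash W_{G_6}/W_{M_{G_6,1}}]$ has three elements, not two, and the middle one contributes a genuine Eisenstein series on the Levi (whose derived group is of type $D_5$), not just a section. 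Your parenthetical ``(up to terms regular at $s=6$)'' is asserting precisely the thing that needs to be proved; without it, neither (2) nor (4) is established, since an unanalysed middle term could in principle produce or cancel a pole.

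The paper avoids this by taking the constant term all the way down to the \emph{minimal} parabolic $P_0$, where there are four terms indexed by $[W_{G_6}/W_{M_{G_6,1}}]$, each an explicit product of local $c$-functions times the inducing section (no sub-Eisenstein series to worry about). It then checks directly that all four products are regular at $s=6$ when $\ell\in\{-2,0,2\}$, citing \cite[Proposition 6.1.1 and Theorem 7.0.1]{pollackNTM} for the form of these $c$-functions. For (2) the paper simply invokes the computation in \cite[Theorem 7.0.1]{pollackNTM}, and for (3) it cites \cite[Theorem 6.4]{hanzerSavin} directly rather than running the square-integrability argument you sketch. Your argument for (3) is more hands-on and essentially sound, though the last sentence (``being a restricted tensor product of irreducibles, $\Pi$ is irreducible'') presupposes that the residue map factors through a tensor product, which is an extra step; the paper's citation to \cite{hanzerSavin} packages this.
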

\begin{proof} As written, part 1 is from \cite{hanzerSavin}.  Part 2 follos form the computation of the constant term of the Eisenstein series in the proof of Theorem 7.0.1 of \cite{pollackNTM}.  Part 3 again follows from \cite[Theorem 6.4]{hanzerSavin}.
	
It remains to prove the fourth part.  Because the spherical vector generates $I_{f}(s=6)$, it suffices to check the regularity on the spherical vector.   Its constant term down to the minimal parabolic $P_0$ is a sum of four terms (see Proposition 6.1.1 of \cite{pollackNTM} and the proof of Theorem 7.0.1 of that paper.). These four terms give $c$-functions (see \cite{pollackNTM} for notation)
\begin{enumerate}
	\item $c(1) = 1$
	\item $c(w_{12}) = \frac{\zeta(s-1)}{\zeta(s)} \frac{\Gamma_\C(s-1)}{\Gamma_\R(s-j)\Gamma_\R(s+j)}$
	\item $c(w_{2} w_{12}) = c(w_{12}) \frac{\zeta(s-5)\zeta(s-8)}{\zeta(s-1)\zeta(s-4)}\frac{\Gamma(s-5)}{\Gamma(s-1)}$
	\item $c(w_{12} w_{2} w_{12}) = c(w_{2} w_{12}) \frac{\zeta(s-9)}{\zeta(s-8)} \frac{\Gamma_\C(s-9)}{\Gamma_\R(s-8+v)\Gamma_\R(s-8-v)}$
\end{enumerate}
All these terms are regular at $s=6$.  Thus the spherical Eisenstein series is regular at $s=6$.  This proves the proposition.
\end{proof}

Given $f_{fte} \in I_f(s=6)$, we let $\Theta_f^+ = Res_{s=6} E(g,f_{4},s)$ and $\Theta_f^{-} = Res_{s=6}E(g,f_{-4},s)$, where we have extended $f_{fte}$ to a flat section to define the Eisenstein series map.  We set $V_{min,p}$ the unique irreducible quotient of $I_p(s=6)$, or equivalently the unique irreducible subrepresentation of $I_p(s=4)$.  We set $V_{min} = \otimes'_p V_{min,p}$.  Then the residue of the Eisenstein map gives an intertwining operator $V_{min} \rightarrow \mathcal{A}(G_{6})$ for either the $+$ or $-$ cases.

Recall the standard maximal parabolic subgroups $P_{G_6,j} = M_{G_6,j} N_{G_6,j}$ of $G_6$, for $j=1,2$.  We will now compute the constant terms $\Theta^+f(g)_{N_{G_6,j}}$; the computation for $\Theta_f^-$ is identical.

The structure of the computation is explained in section \ref{subsec:Eis}.  What one has to do is to compute which intertwining operators $M(w)$ and which Eisenstein series $E^w(g,f_4,s)$ are absolutely convergent and thus do not contribute to the residue at $s=6$.  And for those intertwining operators and Eisenstein series which can potentially contribute to the residue at $s=6$, one must make a finer computation to determine if they actually do contribute.

Let $f^1(g,s) = M(w_0)f_4(g,s)$, the result of applying the global long intertwining operator $M(w_0)$ to $f_4(g,s)$.  Then $f^1(g,s) \in I(10-s)$, and has a simple pole at $s=6$.  Indeed, one can check that there is a simple pole for the finite spherical vector, and then there is at most a simple pole for the entire induced representation $I_f(s)$ because $I_f(s=6)$ is generated by the spherical vector.  We let $\overline{f}(g) = Res_{s=6} f^1(g,s)$.
\begin{proposition}\label{prop:minD6const} Let $f(g,s) = f_{fte}(g,s) f_{\infty,4}(g,s) \in I(s)$ be a flat section and let the notation be as above.
	\begin{enumerate}
		\item One has $\Theta_f(g)_{N_{G_6,2}} = E_{\GL_2}(g,\overline{f})$, an absolutely convergent Eisenstein series of $\GL_2$-type, defined as a sum over $P_{G_6,0}\backslash P_{G_6,2}$.
		\item One has $\Theta_f(g)_{N_{G_6,1}} = \overline{f}(g)$.
	\end{enumerate}
\end{proposition}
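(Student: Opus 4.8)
The plan is to compute the two constant terms $\Theta_f(g)_{N_{G_6,j}}$ directly from the general machinery of subsection \ref{subsec:Eis}, taking the residue at $s=6$ of the constant-term decomposition of the Eisenstein series $E(g,f_4,s)$. Recall that for a standard parabolic $Q = LV$ of $G_6$ one has
\[
E_V(g,f_4,s) = \sum_{w \in [W_L\backslash W/W_M]} E^w(g,f_4,s),
\]
where $P_{G_6,1} = M N$ and each $E^w$ is an (absolutely convergent, for $\mathrm{Re}(s) \gg 0$) Eisenstein series on $L$ built from the intertwined section $M(w)f_4(g,s) \in I(w(\lambda_s))$. Taking residues at $s=6$ commutes with this finite sum, so the first step is to enumerate the double coset representatives $w \in [W_L\backslash W/W_M]$ for $L = M_{G_6,2}$ and $L = M_{G_6,1}$, and for each one decide (i) whether $M(w)f_4(g,s)$ has a pole at $s=6$, and (ii) if so, whether the residual Eisenstein series $E^w$ on $L$ actually contributes — i.e., whether it is itself absolutely convergent at the relevant point, in which case it is holomorphic and drops out of the residue, or whether it genuinely carries the pole. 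This is exactly the ``finer computation'' flagged in subsection \ref{subsec:Eis}.

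Concretely, for $Q = P_{G_6,1}$ (so $V = N_{G_6,1}$, $L = M_{G_6,1}$, and $\Delta = \{\alpha_1\}$, $J = \{\alpha_2,\ldots\}$), I would show that the only double coset contributing to the residue is the one represented by the long element $w_0 \in [W/W_M]$, whose associated simple roots $\Delta^{w_0}(L)$ are empty (so $E^{w_0} = f^{w_0}$ is a single term), and that $f^{w_0}(g,s) = M(w_0)f_4(g,s)$ is precisely the section with $\overline{f}(g) = \mathrm{Res}_{s=6} f^{w_0}(g,s)$. All other double cosets either have $M(w)f_4$ regular at $s=6$, or give a genuinely convergent Eisenstein series $E^w$ on $L$ (hence no contribution). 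This gives part (2): $\Theta_f(g)_{N_{G_6,1}} = \overline{f}(g)$. The $c$-function computations needed here are already available from Proposition \ref{prop:D6minAut} and the four-term constant-term formula quoted from \cite{pollackNTM}, so this reduces to bookkeeping about which factors of $\zeta$ have poles/zeros at $s=6$ and which partial intertwiners remain convergent.

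For $Q = P_{G_6,2}$ (so $L = M_{G_6,2}$ has a $\GL_2$ factor acting on $\mathrm{Span}(b_1,b_2)$ together with a $D_4$-type factor), I expect exactly one double coset $w \in [W_L\backslash W/W_M]$ to survive to the residue, with associated simple roots $\Delta^w(L) = \{$the $\GL_2$ simple root$\}$, so that $E^w(g,f_4,s)$ is an Eisenstein series induced from the parabolic $P_{L,\Delta^w(L)} = P_{G_6,0} \cap M_{G_6,2}$, i.e. of ``$\GL_2$-type'' (a sum over $P_{G_6,0}\backslash P_{G_6,2}$). Taking the residue, its inducing section is $\overline{f}$, yielding part (1): $\Theta_f(g)_{N_{G_6,2}} = E_{\GL_2}(g,\overline{f})$, and one checks this residual $\GL_2$ Eisenstein series is absolutely convergent (the inducing character, after the shift by $\delta$'s, lands in the convergent range for $\GL_2$). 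The main obstacle is the combinatorial identification of the relevant Weyl double cosets and the determination, for each, of exactly where the pole of the long intertwiner localizes — that is, separating the one coset carrying the simple pole at $s=6$ from all the convergent ones. Once the surviving cosets are pinned down, matching the residual sections to $\overline{f}$ and recognizing the $\GL_2$-Eisenstein-series shape is formal, and the convergence assertion for $E_{\GL_2}(g,\overline{f})$ follows from a direct estimate on the exponents.
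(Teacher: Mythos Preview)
Your overall strategy matches the paper's: decompose the constant term via the double-coset sum of subsection \ref{subsec:Eis}, determine which cosets can contribute to the residue at $s=6$, and show that only one survives in each case. For part (1) your expectation is correct --- $[W_{M_{G_6,2}}\backslash W/W_{M_{G_6,1}}]$ has two elements, the identity gives an absolutely convergent term which drops out, and the surviving coset is handled (in the paper) via Langlands' functional equation, relating $E^w$ to a $\GL_2$-Eisenstein series with inducing section $M(w_0)f$, hence $\overline{f}$ after taking the residue.

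There is, however, a genuine gap in your treatment of part (2). The set $[W_{M_{G_6,1}}\backslash W/W_{M_{G_6,1}}]$ has three elements: $1$, $w_0$, and a middle element $w_1$ exchanging $r_1$ with $r_2$. The identity term is $f(g,s)$, regular; the $w_0$ term gives $\overline{f}$. But the $w_1$ term is \emph{not} disposed of by either of the two mechanisms you list. The intertwiner $M(w_1)$ is indeed absolutely convergent at $s=6$, but the resulting Eisenstein series $E^{w_1}$ on the Levi $M_{G_6,1}$ is (up to isogeny, via Lemma \ref{lem:isogEis}) exactly the $D_5$ Eisenstein series of Proposition \ref{prop:D5reg} at its point $s=5$, which lies \emph{outside} the range of absolute convergence (that range is $\mathrm{Re}(s)>8$). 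Showing it is regular there is the content of Proposition \ref{prop:D5reg}: the $c$-function $\frac{\zeta(s-4)\zeta(s-7)}{\zeta(s)\zeta(s-3)}$ has the simple pole of $\zeta(s-4)$ at $s=5$ cancelled by the trivial zero of $\zeta(s-7)$, together with the fact that the spherical vector generates $I_p(s=5)$. This cancellation is the key missing ingredient in your argument, and it is not extractable from the four-term constant-term data of Proposition \ref{prop:D6minAut} that you cite.
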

\begin{proof}
To give notation for the computation, suppose $t \in T$ the split standard torus, and $t b_j = t_j b_j$ for $j = 1,2$.  We set $r_j(t) = |t_j|$.  Writing unramified characters of $T$ in additive notation, we have $|\nu|^s$ is $s r_1$ and $\lambda_s = \delta_{P_0}^{-1/2} |\nu|^s$ is $(s-5)r_1 + (-4)r_2$.	
	
For the first part of the proposition, observe that the set $[W_{M_{G_6,2}}\backslash W_{G_6}/W_{M_{G_6,1}}]$ has size $2$, with elements $1, w$ where $w(r_1) = -r_2$ and $w(r_2) = r_1$.  The Eisenstein series corresponding to $w=1$ is absolutely convergent at the special point $s=6$, so this term does not contribute to the residue.  The other term can be understood in terms of $f^1(g,s) = M(w_0)f_{4}(g,s)$ using Langlands functional equation for Eisenstein series.

For the second part of the proposition, one has that the set  $[W_{M_{G_6,1}}\backslash W_{G_6}/W_{M_{G_6,1}}]$ has size three.  The elements are $\{1,w_0,w_1\}$ where $w_0(r_1) = -r_1$, $w_0(r_2) = r_2$ and $w_1(r_1) = r_2$, $w_1(r_2) = r_1$.  The intertwining operator $M(w_1)$ is checked to be absolutely convergent at $s=6$.  One finds that the Eisenstein series $E^{w_1}(g,f^{w_1},s)$ is exactly the one (up to isogenous groups, see Lemma \ref{lem:isogEis}) studied in Proposition \ref{prop:D5reg}.  The result follows.
\end{proof}

\subsection{Type $D_7$}\label{subsec:AMRD7}
In this subsection, we prove results about the automorphic minimal representation on the group $G_{7} = G_{7,\Theta}$ of type $D_7$.  Recall that $V_{14} = H^3 \oplus \Theta$.  We write $b_j, b_{-j}$ for the standard basis of the $j^{th}$ copy of the hyperbolic plane $H$.

Define $\iota \in O(V_{14})$ exactly as in the case of $O(V_{12})$, so that $\iota$ exchanges $b_j$ with $b_{-j}$ and is minus the identity of $\Theta$.  We write $K_{G_7,\infty} \subseteq G_{7}(\R)$ for the associated maximal compact subgroup.  We let $v_j = \frac{1}{\sqrt{2}}(b_j + b_{-j})$ so that $(v_i,v_j) = \delta_{ij}$.  The action on $V_3:=\mathrm{Span}(v_1,v_2,v_3)$ defines a map $K_{G_7,\infty} \rightarrow O(3)$.

Suppose $\ell \geq 1$ is an integer.  The $\ell^{th}$ symmetric power $S^{\ell}(V_3)$ has an irreducible highest weight quotient of dimension $2\ell+1$; this is the usual theory of spherical harmonics.  Let $\mathbf{V}_{\ell}$ be this $2\ell+1$-dimensional space, which is an $O(3)$ and thus $K_{G_7,\infty}$ representation.

Let $f_{\infty,\ell}(g,s) \in I(s) \otimes \mathbf{V_{\ell}}$ be the flat section, for which $f_{\infty,\ell}(gk,s) = k^{-1} f_{\infty,\ell}(g,s)$ and $f_{\infty,\ell}(1,s)$ is the image of $v_1^{\ell}$ in $\mathbf{V}_{\ell}$.  Suppose $f_{fte}(g,s) \in I_{f}(g,s)$ is a flat section for this induced representation at the finite places.  Let $f_{\ell}(g,s) = f_{fte}(g,s) f_{\infty,\ell}(g,s)$ and let $E(g,f,s)$ be the associated Eisenstein series.  \textbf{We now set $\ell = 4$.}

\begin{proposition} Let the notation be as above.
\begin{enumerate}
	\item The Eisenstein series $E(g,f,s)$ has at most a simple pole at $s=7$, which is achieved by the finite-place spherical vector. 
	\item The residue defines a $G_7(\A_f)$ intertwining map $I_f(s=7) \rightarrow \mathcal{A}(G_7)$.  The image is nonzero and irreducible.
	\item The global long intertwining operator $M(w_0)$ has at worst a simple pole at $s=7$.
\end{enumerate}	
\end{proposition}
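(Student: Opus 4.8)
The plan is to follow the template already established for the $D_5$ and $D_6$ cases (Propositions \ref{prop:D5reg} and \ref{prop:D6minAut}): reduce every statement at the finite places to the everywhere-spherical section, carry out the archimedean bookkeeping for the $K_{G_7,\infty}$-type $\mathbf{V}_4$ by hand, and then feed the outcome into the formal machinery of subsection \ref{subsec:Eis}. For part (1), I would first observe that by Proposition \ref{prop:degPSDn}(3) the spherical vector generates $I_p(s=7)$ for every finite $p$, so that $I_f(s=7)$ is generated as a $G_7(\A_f)$-module by the pure spherical tensor; since the Eisenstein map is $G_7(\A_f)$-equivariant, the order of pole of $E(g,f,s)$ at $s=7$ is the same as for this generator. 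Then for the everywhere-finite-spherical section I would compute the constant term of $E(g,f,s)$ along the minimal parabolic $P_{G_7,0}$: it is a finite sum over $w \in [W/W_M]$, where $M$ is the Levi of $P_{G_7,1}$, of intertwined spherical sections, each with coefficient $c(w;s)$ a product of a ratio of Riemann zeta values and an archimedean ratio of Gamma factors depending on $\mathbf{V}_4$. At $s=7$ most of the finite zeta ratios vanish through trivial zeros of $\zeta$; on the surviving terms I would check that the Gamma factors attached to $\mathbf{V}_4$ cancel every potential double pole, leaving at most a simple pole, and that one surviving term does contribute an honest simple pole. This is the $D_7$-analog of the constant-term computation of \cite{pollackNTM}, and it yields part (1).

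Granting part (1), part (2) is essentially formal. The residue $\Theta_f := \mathrm{Res}_{s=7} E(g,f,s)$ is a well-defined $G_7(\A_f)$-intertwining map $I_f(s=7) \to \mathcal{A}(G_7)$ by the generalities of subsection \ref{subsec:Eis}, and it is nonzero because the pole is achieved by the spherical vector. For irreducibility, I would use that by Proposition \ref{prop:degPSDn}(1)--(2) each $I_p(s=7)$ has a unique irreducible quotient $V_{min,p}$, so $V_{min} := \otimes'_p V_{min,p}$ is irreducible and is the unique irreducible quotient of $I_f(s=7)$; it therefore suffices to show the residue annihilates the radical of $I_f(s=7)$. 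This I would obtain either by invoking the identification of the minimal representation of split $\Spin_{14}$ in the literature (as the $D_6$ case cites \cite{hanzerSavin}), or by computing the constant terms $\Theta_f(g)_{N_{G_7,j}}$ along the maximal parabolics of $G_7$ exactly as in Proposition \ref{prop:minD6const} and reading off that the Weyl-support of the answer forces the image to be the minimal automorphic representation. Either way the image is a nonzero quotient of $V_{min}$, hence isomorphic to it.

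For part (3), I would note that the finite part of the $c$-function of the global long intertwiner $M(w_0)\colon I(s) \to I(12-s)$ is $\frac{\zeta(s-6)\zeta(s-11)}{\zeta(s)\zeta(s-5)}$, the $n=7$ specialization of the computation in the proof of Proposition \ref{prop:degPSDn}. At $s=7$ the simple pole of $\zeta(s-6)$ is cancelled by the trivial zero of $\zeta(s-11)=\zeta(-4)$ while $\zeta(7)$ and $\zeta(2)$ are finite and nonzero, so the finite factor of $M(w_0)$ is regular there; moreover each local $M_p(w_0)$ is regular on all of $I_p(s=7)$, since that module is generated by its spherical vector. Hence the only possible pole of $M(w_0)$ at $s=7$ comes from the archimedean intertwiner $M_\infty(w_0)$ acting on the type $\mathbf{V}_4$, and I would verify from the explicit archimedean computation (as in the $D_6$ case of \cite{pollackNTM}) that this is at worst a simple pole.

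The main obstacle is the archimedean bookkeeping underlying parts (1) and (3): one must pin down the $K_{G_7,\infty}$-type $\mathbf{V}_4$ inside the degenerate principal series, write down the archimedean intertwining factors attached to the relevant Weyl elements and to $w_0$, and verify that their Gamma-factor zeros cancel the would-be double poles coming from the global zeta factors --- this is precisely where the choice $\ell = 4$ enters and where the $D_6$ computation of \cite{pollackNTM} must be redone in the $D_7$ setting. Everything else --- the reduction to the spherical vector, the formal properties of residues of Eisenstein series, and the passage from the local structure in Proposition \ref{prop:degPSDn} to global irreducibility --- follows from tools already developed in the paper.
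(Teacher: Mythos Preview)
Your proposal is correct, and for parts (2) and (3) it matches the paper's argument almost exactly: the paper reduces part (3) to the spherical section via Proposition \ref{prop:degPSDn}, writes out the full $c$-function for $M(w_0)$ by citing \cite[Propositions 4.1.2 and 6.2.1]{pollackNTM} for the archimedean and finite pieces respectively, and reads off the simple pole; part (2) is dispatched, as you suggest, by appeal to \cite{hanzerSavin}.

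For part (1), however, the paper takes a shorter route than your direct constant-term computation along the minimal parabolic. The upper bound ``at most a simple pole'' is not computed but simply cited from \cite[Proposition 6.3]{hanzerSavin}. More interestingly, the fact that the pole is \emph{achieved} is not extracted from a surviving term in the constant term, but deduced from the functional equation $E(g,f,s) = E(g,M(w_0)f,s)$: since the spherical Eisenstein series is regular at the dual point $s=5$ by \cite[Theorem 7.0.1]{pollackNTM}, and $M(w_0)$ has an honest simple pole at $s=7$ by part (3), the equality forces $E(g,f,s)$ to have a pole at $s=7$. Your approach would also work, but it requires tracking all Weyl elements in $[W/W_M]$ for $D_7$ and verifying the archimedean cancellations term by term, whereas the paper's functional-equation trick reuses part (3) and an already-known regularity result to get the existence of the pole for free.
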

\begin{proof} That the poles of the Eisenstein series are at most simple follows from \cite[Proposition 6.3]{hanzerSavin}.  As usual, by the structure of the degenerate principal series $I(s=7)$ reviewed in Proposition \ref{prop:degPSDn}, to prove that $M(w_0)$ has at worst a simple pole, it suffices to analyze the case when the finite part of our inducing section is spherical.
	
In this case, the intertwining operator $M(w_0)$ is computed in \cite[Proposition 4.1.2]{pollackNTM} (archimedean part) and \cite[Proposition 6.2.1]{pollackNTM} (finite part). One obtains
\[c(s) = \frac{\zeta(s-6)\zeta(s-11)}{\zeta(s) \zeta(s-5)} \frac{ ((s-5)/2)_2 \Gamma(s-6) ((s-14)/2)_2}{ ((s-2)/2)_{3} \Gamma(s-2) ((s-11)/2)_{3}}.\]
Here $(z)_n = z(z+1) \cdots (z+n-1)$ is the Pochammer symbol.  One sees that $c(s)$ has a simple pole at $s=7$. This proves the third part of the proposition.

Going back to the first part, because $E(g,f_s) = E(g, M(w_0)f_s)$, and the spherical Eisenstein series is regular at $s=5$ \cite[Theorem 7.0.1]{pollackNTM}, the spherical Eisenstein series $E(g,f,s)$ has an honest pole at $s=7$.  

Part 2 of the proposition follows as in previous cases from \cite{hanzerSavin}.
\end{proof}

We write $\Theta_f(g) = Res_{s=7} E(g,f,s)$.  This is an element of $\mathcal{A}(G_{7,\Theta}) \otimes \mathbf{V}_{4}$ that is $K_{G_7,\infty}$-equivariant.

For $j = 1,2,3$, let $P_{G_7,j} = M_{G_7,j} N_{G_7,j}$ be the parabolic subgroup of $G_{7}$ that stabilizes the isotropic subspace spanned by $b_1$ through $b_j$.  We compute the constant terms of $\Theta_f$ along the $N_{G_7,j}$ for $j = 1,2,3$.

Similar to our analysis of the minimal representation on $G_6$, we set $f^1(g,s) = M(w_0)f_{4}(g,s)$ and $\overline{f}(g) = Res_{s=7} f^1(g,s)$.  Set
\[E_{M_{G_7,2},1,2}(g,\overline{f}) = \sum_{\gamma \in (P_{G_7,1} \cap P_{G_7,2})(\Q)\backslash P_{G_7,2}(\Q)}{\overline{f}(\gamma g)}\]
and
\[E_{M_{G_7,3},1,3}(g,\overline{f}) = \sum_{\gamma \in (P_{G_7,1} \cap P_{G_7,3})(\Q)\backslash P_{G_7,3}(\Q)}{\overline{f}(\gamma g)}.\]
These sums are absolutely convergent.

\begin{proposition} Let the notation be as above. 
\begin{enumerate}
	\item $\Theta_f(g)_{N_{P_{G_7,2}}} = E_{M_{G_7,2},1,2}(g,\overline{f})$.
	\item $\Theta_f(g)_{N_{P_{G_7,3}}} = E_{M_{G_7,3},1,3}(g,\overline{f})$.
\end{enumerate} 
\end{proposition}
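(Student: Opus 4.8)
The plan is to compute the two constant terms via the general recipe of subsection~\ref{subsec:Eis}, following the proof of Proposition~\ref{prop:minD6const}. Write $P = P_{G_7,1} = MN$ for the parabolic defining the Eisenstein series, and for $j \in \{2,3\}$ write $Q = P_{G_7,j} = LV$ with $L = M_{G_7,j}$, $V = N_{P_{G_7,j}}$. In the range of absolute convergence,
\[ E_V(g,f_4,s) = \sum_{w \in [W_L \backslash W_{G_7} / W_M]} E^{w}(g,f_4,s), \qquad E^{w}(g,f_4,s) = \sum_{\gamma \in (wPw^{-1} \cap L)(\Q) \backslash L(\Q)} (M(w)f_4)(\gamma g,s), \]
where the Weyl groups are those of the relative (type $B_3$) root system of $G_7$, and $\Theta_f(g)_V = Res_{s=7}E_V(g,f_4,s)$. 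First I would enumerate, for $j = 2$ and $j = 3$, the (small) double coset set $[W_L\backslash W_{G_7}/W_M]$, and for each representative $w$ record the associated simple roots $\Delta^w(L)$ and the $c$-function of $M(w)$. Each $c$-function is a product of rank-one archimedean and non-archimedean factors computed in \cite{pollackNTM}; the four-term data displayed in the proof of Proposition~\ref{prop:minD6const}, together with the structure of the degenerate principal series recalled in Proposition~\ref{prop:degPSDn}, is exactly the input needed.

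The second step is to show that the only double coset contributing to $Res_{s=7}$ is the one containing the longest element $w_0$ of $[W_{G_7}/W_M]$, i.e.\ the one attached to the long intertwiner $M(w_0)$ appearing in the statement. For every other representative $w$, the term $E^w(g,f_4,s)$ is regular at $s = 7$: either $M(w)$ is given there by an absolutely convergent integral and $\Delta^w(L)$ is small enough that the sub-Eisenstein series on $L$ is itself absolutely convergent at $s = 7$ (the ``$D_5$-type'' terms reducing, after passage to isogenous groups via Lemma~\ref{lem:isogEis}, to the regularity of Proposition~\ref{prop:D5reg}); or else the recorded $c$-functions show that the product of global $\zeta$-factors and archimedean $\Gamma$-factors is finite at $s = 7$, the cancellations (a $\zeta(s-a)$ with a pole at $s = 7$ against a $\zeta(s-b)$ with a zero there, archimedean factors regular) being precisely those already seen in the $D_6$ computation. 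Such terms drop out of the residue.

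For the surviving double coset, let $w_*$ be the minimal representative; then $w_0 = \sigma w_*$ with $\sigma \in W_L$ and $\ell(w_0) = \ell(\sigma) + \ell(w_*)$, so $M(w_0) = M(\sigma)M(w_*)$, and since $M(\sigma)$ is a composition of rank-one intertwiners in the roots of $L$ that are regular at $s = 7$, the operator $M(w_*)$ inherits a simple pole there. One checks from the action of $w_*$ on roots that $w_*Pw_*^{-1}\cap L$ is $P_{G_7,1}\cap P_{G_7,j}$ inside $L$ when $j = 2$, and is the associate (but not conjugate) maximal parabolic when $j = 3$, where $L$ has a factor of type $A_2$. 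In both cases, Langlands' functional equation for Eisenstein series on $L$ --- precisely the one attached to $\sigma$ --- converts the $L$-Eisenstein series built from $M(w_*)f_4$ over $w_*Pw_*^{-1}\cap L$ into the one built from $M(w_0)f_4$ over $P_{G_7,1}\cap P_{G_7,j}$; taking residues at $s = 7$ and using $\overline{f} = Res_{s=7}M(w_0)f_4$ gives $Res_{s=7}E^{w_*}(g,f_4,s) = \sum_{\gamma \in (P_{G_7,1}\cap P_{G_7,j})(\Q)\backslash P_{G_7,j}(\Q)} \overline{f}(\gamma g) = E_{M_{G_7,j},1,j}(g,\overline{f})$. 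That the right-hand side converges absolutely is a short check via Godement's criterion, since $\overline{f}$ lies in the degenerate principal series of $G_7$ at the parameter corresponding to $s = 5$, which restricts to a deep-enough parameter on the relevant maximal parabolic of $L$.

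The main obstacle I anticipate is the second step. A non-surviving double coset term can be regular at $s = 7$ for either of two distinct reasons --- an absolutely convergent intertwiner, or an absolutely convergent sub-Eisenstein series on $L$ at the special point --- and both must be ruled out term by term. The explicit $c$-functions of \cite{pollackNTM} reduce this to a finite verification, but tracking which $\zeta$-zeros cancel which $\zeta$-poles, and applying the Levi functional equation in the third step with the correct normalization (and only after passing to the range where the $L$-Eisenstein series converges, then continuing), is where the care lies.
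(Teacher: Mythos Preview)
Your proposal is correct and follows essentially the same route as the paper: enumerate the double cosets $[W_{M_{G_7,j}}\backslash W_{G_7}/W_{M_{G_7,1}}]$ (two for $j=3$, three for $j=2$), show that the identity coset contributes a term regular at $s=7$ by absolute convergence and that the intermediate coset for $j=2$ reduces to the $D_5$ Eisenstein series of Proposition~\ref{prop:D5reg}, then apply Langlands' functional equation to the remaining coset to produce the sum over $(P_{G_7,1}\cap P_{G_7,j})(\Q)\backslash P_{G_7,j}(\Q)$ with section $\overline{f}$. One small imprecision: the $D_5$-type sub-Eisenstein series (the $w_1$ term for $j=2$) is \emph{not} absolutely convergent at the relevant point---its regularity comes from Proposition~\ref{prop:D5reg} itself, not from Godement's criterion---so your parenthetical slightly conflates two distinct mechanisms, though your citation is the right one.
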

\begin{proof}
One first computes the sets $[W_{M_{G_7,j}}\backslash W_{G_7} / W_{M_{G_7,1}}]$ for $j = 2,3$.  For $j = 3$, the set has size two, with elements $1,w_3$ where $w_3(r_1) = -r_3$, $w_3(r_2) = r_1$, and $w_3(r_3) = r_2$.  For $j=2$, the set has size three, with elements $1,w_1, w_2$ where
\begin{enumerate}
	\item $w_1(r_1) = r_3$, $w_1(r_2) = r_1$, $w_1(r_3) = r_2$;
	\item $w_2(r_1) = -r_2$, $w_2(r_2) = r_1$ and $w_2(r_3) = r_3$.
\end{enumerate}
Now, for $j=2$ or $3$ and $w=1$, one checks that the Eisenstein series $E^{w=1}(g,f,s)$ are regular at $s=7$ by absolute convergence.  Thus, these terms do not contribute to the residue at $s=7$.  One deduces the second part of the proposition from Langlands functional equation for Eisenstein series.

For the first part of the proposition, we analyze the Eisenstein series coming from $w = w_1$.  In this case, we are reduced to the Eisenstein series studied in Proposition \ref{prop:D5reg}.  Thus this term does not contribute to the residue.  The first part of the proposition now follows from Langlands functional equation.
\end{proof}

We now consider the constant term of $\Theta_f$ along $N_{P_{G_7,1}}$.  Let $w_{12}$ be the Weyl group element that exchanges $r_1$ with $r_2$ (leaving $r_3$ fixed).  Set $f^{w_{12}}(g,s) = M(w_{12})f(g,s)$ and $E^{w_{12}}(g,f,s)$ the associated Eisenstein series (see subsection \ref{subsec:Eis}).  

\begin{proposition} One has
	\[\Theta_f(g)_{N_{G_7,1}} = \overline{f}(g) + res_{s=7} E^{w_{12}}(g,f,s).\]
This latter Eisenstein series residue can be identified with theta functions on $G_{6,\Theta}$.
\end{proposition}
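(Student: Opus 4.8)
The plan is to apply the constant-term formalism of subsection~\ref{subsec:Eis} to the Eisenstein series $E(g,f,s)$ whose residue at $s=7$ is $\Theta_f$, taking the parabolic along which we compute to be $Q = P_{G_7,1}$ itself; thus in the notation of that subsection $L = M_{G_7,1}$ and $P = M = P_{G_7,1}$. The first step is to determine the double coset set $[W_{M_{G_7,1}}\backslash W_{G_7}/W_{M_{G_7,1}}]$. Since $M_{G_7,1}$ is, up to its central $\GL_1$, the stabilizer of an isotropic line in $V_{14}$, this set is in bijection with the possible relative positions of a pair of isotropic lines, hence has exactly three elements; a direct check in the Weyl group of $D_7$ identifies them as $\{1,\ w_{12},\ w_0\}$, where $w_{12}$ is the transposition $r_1\leftrightarrow r_2$ fixing $r_3,\dots,r_7$, and $w_0\colon r_1\mapsto -r_1$, fixing $r_2,\dots,r_7$, is the longest element of $[W/W_{M_{G_7,1}}]$. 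Computing the associated simple roots of subsection~\ref{subsec:Eis}, one finds $\Delta^{1}(L)=\Delta^{w_0}(L)=\emptyset$ while $\Delta^{w_{12}}(L)=\{\alpha_2\}$. Hence the constant term decomposes as
\[
E(g,f,s)_{N_{G_7,1}} = f(g,s) + M(w_0)f(g,s) + E^{w_{12}}(g,f,s),
\]
where the first two terms are (restrictions of) single inducing sections and $E^{w_{12}}(g,f,s) = \sum_{\gamma\in P_{L,\{\alpha_2\}}(\Q)\backslash L(\Q)}(M(w_{12})f)(\gamma g,s)$ is an honest Eisenstein series on $L$, summed over a maximal parabolic.

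Next I would take the residue at $s=7$ of each term. The section $f(g,s)$ is flat, hence holomorphic in $s$, and contributes nothing. For the $w_0$ term, we already know the global long intertwiner $M(w_0)$ has at worst a simple pole at $s=7$, and by definition $\overline{f}(g) = \Res_{s=7}M(w_0)f(g,s)$, so this term contributes $\overline{f}(g)$. For the $w_{12}$ term, observe that $w_{12} = s_{\alpha_1}$ is a simple reflection with one-dimensional $N_{w_{12}} = U_{\alpha_1}$, and $\langle\lambda_s,\alpha_1^\vee\rangle = s-1$, which is positive at $s=7$; by the convergence criterion for rank-one intertwiners recalled in the discussion of intertwining operators, $M(w_{12})f(g,s)$ is then given by an absolutely convergent integral near $s=7$, so it is holomorphic there, and any pole of $E^{w_{12}}$ at $s=7$ comes entirely from the Eisenstein summation over $L$. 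Putting the three contributions together (legitimately, since everything converges absolutely for $\Re(s)\gg 0$) gives the first claim, $\Theta_f(g)_{N_{G_7,1}} = \overline{f}(g) + \Res_{s=7}E^{w_{12}}(g,f,s)$.

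For the second claim, I would restrict $E^{w_{12}}(g,f,s)$ to the derived subgroup of $L = M_{G_7,1}$, which --- up to central isogeny, dealt with by Lemmas~\ref{lem:parsingle} and~\ref{lem:isogEis} and the discussion of subsection~\ref{subsec:isog} --- is $G_{6,\Theta}$ acting on $V_{12}\subseteq V_{14}$, with $P_{L,\{\alpha_2\}}$ meeting it in the line-stabilizing maximal parabolic $P_{G_6,1}$. A short torus computation (tracking the $\rho$-shifts and the swap $r_1\leftrightarrow r_2$) shows that $M(w_{12})f(g,s)$ restricted to $G_{6,\Theta}$ is a section of $\Ind_{P_{G_6,1}}^{G_{6,\Theta}}(|\nu|^{s-1})$, where $\nu$ is the line character on $G_{6,\Theta}$; consequently $E^{w_{12}}(g,f,s)$ restricted to $G_{6,\Theta}$ is precisely a degenerate Eisenstein series of the type studied in subsection~\ref{subsec:AMRD6}, evaluated at the shifted parameter $s' = s-1$. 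Its residue at $s=7$ is therefore the residue at $s'=6$ of that Eisenstein series, which by subsection~\ref{subsec:AMRD6} lies in the automorphic minimal representation of $G_{6,\Theta}$ --- i.e.\ it is a theta function on $G_{6,\Theta}$. It remains to match the archimedean datum: the $\mathbf{V}_4$-valued section $f_{\infty,4}$ on $G_7$, restricted along $K_{G_6,\infty}\hookrightarrow K_{G_7,\infty}$, breaks up as a sum of sections with $K_{G_6,\infty}$-types $j(\bullet,i)^{\ell}$ for $\ell\in\{-4,-2,0,2,4\}$; by part~(4) of Proposition~\ref{prop:D6minAut} the pieces with $\ell\in\{-2,0,2\}$ are regular at $s'=6$ and drop out of the residue, while the $\ell=\pm4$ pieces produce precisely the two theta functions on $G_{6,\Theta}$. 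I expect this archimedean bookkeeping --- carefully following the spherical-harmonic-valued section through $M(w_{12})$ and across the restriction, and comparing with the archimedean sections used in subsection~\ref{subsec:AMRD6} to build $V_{min}$ on $G_6$ --- together with the isogeny passage from $L$ down to $G_{6,\Theta}$, to be the only genuinely delicate points; the rest is the routine double-coset-and-residue bookkeeping already carried out above for the $j=2,3$ constant terms.
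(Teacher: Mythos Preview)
Your argument is correct and follows essentially the same route as the paper: compute the three-element double coset set $\{1,w_{12},w_0\}$, note that the $w=1$ term is holomorphic, that the $w_0$ term gives $\overline{f}$, and that the $w_{12}$ term yields a $G_{6,\Theta}$-type Eisenstein series at the minimal-representation point. The only cosmetic difference is in justifying the theta-function identification: you invoke part~(4) of Proposition~\ref{prop:D6minAut} to kill the $\ell\in\{-2,0,2\}$ archimedean components, while the paper appeals to part~(3) (irreducibility of the residual representation); both work.
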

\begin{proof} The set $[W_{M_{G_7,1}}\backslash W_{G_7} / W_{M_{G_7,1}}]$ has size three, with elements $1, w_{12}$ and $w_0$.  The part of the constant term fcorresponding to $w=1$ is just $f(g,s)$, which of course is regular at $s=7$.  The $w = w_{12}$ and $w= w_0$ terms do contribute nontrivially to the residue at $s=7$.  That $res_{s=7} E^{w_{12}}(g,f,s)$ can be identified with theta functions on $G_{6,\Theta}$ follows from Proposition \ref{prop:D6minAut}.  Specifically, a priori, the residue at $s=7$ might involve terms that do not arise as theta functions, but these terms disappear by Proposition \ref{prop:D6minAut} part 3.
\end{proof}

\section{Automorphic minimal representations: type $E$}\label{sec:AMRE}
In this section, we discuss automorphic minimal representations on group of type $E$.

\subsection{Type $E_6$} Recall the group $M_J^1$ defined in section \ref{sec:groupTheory}; it is semisimple, simply connected of type $E_6$.  Moreover, it is split at every finite place. It has an $A_2$ rational root system. Specifically, there is an action of $\SL_3$ on $J$, defined by the formula $g \cdot X = gXg^{t}$.  Taking the diagonal torus of $\SL_3$ gives rise to this root system of $M_J^1$. We write $\{r_i - r_j\}_{i \neq j}$ for the roots, where $i,j \in \{1,2,3\}$.

Let $e_{11}$ be the element of $J$ with $c_1$ coordinate equal to $1$ and all other coordinates equal to $0$; see equation \eqref{eqn:XJ}.  Let $Q_{M_J^1}$ be the parabolic subgroup of $M_J^1$ that fixes the line $\Q e_{11}$.  Define $\mu: Q \rightarrow \GL_1$ as $q e_{11} = \mu(q) e_{11}$. Let $I(s) = Ind_{Q_{M_J^1}}^{M_J^1}(|\mu|^{s/2})$.  We will consider Eisenstein series associated to this induced representation.

If $T$ is the diagonal torus of $\SL_3$, as mentioned, we have a map $T \rightarrow M_J^1$.  If $t = (t_1, t_2, t_3)$, this map satisfies $\mu(t) = t_1^2$.  One has $\delta_P(t) = 8 (2r_1 - r_2 -  r_3)$, where $r_j(t) = |t_j|$, and $\rho_{P_0} = 8(r_1-r_3)$.  We have $\lambda_s = |\mu|^{s/2} \delta_{P_0}^{-1/2} = (s-8) r_1 + 8 r_3$.  (Remember that $r_1 + r_2 + r_3 = 0$ on $T$.)

We assume $f(g,s)$ is a flat section in $I(s)$, spherical at the archimedean place.  We let $E(g,f,s)$ be the associated Eisenstein series.  We are interested in this Eisenstein series at $s= 18$.  According to \cite{weissmanFJ}, the spherical vector generates the $I(s=18)$ at every finite place. 

We now have the following proposition.

\begin{proposition}\label{prop:E6reg} The Eisenstein series $E(g,f,s)$ is regular at $s=18$. \end{proposition}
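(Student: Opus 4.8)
The plan is to follow the same template used for the $D_5$, $D_6$, and $D_7$ cases: reduce to the spherical Eisenstein series by invoking the generation statement from \cite{weissmanFJ}, and then check regularity at $s=18$ by computing the constant term along the minimal parabolic $P_0$ of $M_J^1$. Since $M_J^1$ is split at every finite place and has $A_2$ rational root system, the constant term along $N_{P_0}$ is a sum over $[W/W_{M}]$ where $Q_{M_J^1} = M N$ is the maximal parabolic in question; there are $|W(A_2)/W_M| = 3$ terms (the identity coset plus two others), each contributing a $c$-function which is a ratio of completed zeta functions times an archimedean $\Gamma$-factor.

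First I would set up coordinates: write $\lambda_s = (s-8)r_1 + 8r_3$ as in the paragraph preceding the proposition, list the three minimal coset representatives $w \in [W/W_M]$, and for each compute $w(\lambda_s)$ together with the associated Gindikin--Karpelevich product $c(w,s) = \prod_{\alpha>0,\, w^{-1}\alpha<0} c_\alpha(\langle \lambda_s,\alpha^\vee\rangle)$. Because $M_J^1$ is split with a $D_4$ absolute root system but $A_2$ rational root system, each rational root $\alpha$ is a sum of absolute roots; the local $c_\alpha$ factors will be $\frac{\zeta(\cdot)}{\zeta(\cdot+1)}$ type expressions (possibly with multiplicity coming from the restriction of roots, analogous to the $\Gamma_\C$ versus $\Gamma_\R$ distinctions seen in the $D_6$ and $D_7$ computations). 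The archimedean factors are the corresponding ratios of $\Gamma$-functions, regular away from nonpositive integer arguments.

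The main obstacle — and the place requiring actual care — is confirming that every potential pole of a completed zeta function in the numerators of these $c$-functions at $s=18$ is cancelled, either by a matching zero of a zeta function in the denominator or by the archimedean $\Gamma$-factor. This is exactly the phenomenon exploited in Proposition \ref{prop:D5reg} (where $\zeta(s-4)$ has a pole at $s=5$ but $\zeta(s-7)$ has a compensating zero) and in the proof of Proposition \ref{prop:D6minAut}(4). Concretely, I expect the numerators to involve values like $\zeta(s-8)$ (pole at $s=9$, harmless at $18$) and $\zeta(s-17)$ (pole at $s=18$ — this is the dangerous one), and I would check that the partner factor, likely $\zeta(s-18)$ or an archimedean pole, forces the residue to vanish; a $\zeta$-value such as $\zeta(0) = -1/2$ or a simple zero of $\zeta$ at a negative even integer appearing in the denominator is the typical mechanism. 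Once all three $c$-functions are verified regular at $s=18$, the spherical constant term is regular there, hence the spherical Eisenstein series is regular, and the general case follows from the generation of $I_p(s=18)$ by the spherical vector exactly as in the earlier propositions.

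Finally, for bookkeeping cleanliness I would cite \cite[Proposition 4.1.2]{pollackNTM} or the analogous archimedean intertwiner computations already used above for the $\Gamma$-factor normalizations, and note — as in Proposition \ref{prop:D5reg} — that since $M_J^1$ is split at all finite places there is no subtlety at ramified primes. The only genuinely new input beyond the earlier cases is the explicit Gindikin--Karpelevich calculation for the $A_2$-rational, $D_4$-absolute configuration, which is mechanical given the root data.
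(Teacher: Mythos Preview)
Your overall strategy---reduce to the spherical vector via the generation statement, then verify regularity by computing the constant term---is correct and is indeed the template the paper follows in the $D_n$ cases. However, the paper executes it slightly differently here, and your proposal contains a factual error that would derail the explicit computation.

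First the error: you write that $M_J^1$ ``is split with a $D_4$ absolute root system.'' Neither clause is correct. $M_J^1$ is simply connected of absolute type $E_6$; it is split at every \emph{finite} place but has $\Q$-rank $2$ (the $A_2$ rational root system you correctly identify). The $D_4$ you are thinking of is the anisotropic kernel $S_E$. This matters because the root-space multiplicities entering your Gindikin--Karpelevich products are $8$ (coming from the $E_6/D_4$ restriction), not whatever a $D_4/A_2$ picture would give; with the wrong multiplicities your $c$-function shifts would be off and the pole/zero bookkeeping would fail.

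Second, the paper does \emph{not} go all the way down to the minimal parabolic. Instead it computes the constant term along the unipotent radical of the \emph{maximal} parabolic $Q_{M_J^1}$ itself. The relevant double coset set $[W_{L_{M_J^1}}\backslash W / W_{L_{M_J^1}}]$ has only two elements, $1$ and $w_{r_1-r_2}$. For the nontrivial one, the intertwiner $M(w_{r_1-r_2})$ is absolutely convergent at $s=18$ (since $\langle \lambda_s, r_1-r_2\rangle = s-8$), and the resulting object is an Eisenstein series on the Levi $L_{M_J^1}$, whose derived group is (up to isogeny) the $D_5$ group $G_{5,\Theta}$. Its regularity at the relevant point is \emph{exactly} Proposition~\ref{prop:D5reg}. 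So rather than recomputing the longest-element $c$-function from scratch, the paper reuses the $D_5$ result as a black box. Your three-term $P_0$ computation would, if done correctly (with the right $E_6$ multiplicities), unwind to the same thing---the two nontrivial terms in your sum are precisely the two terms of the $D_5$ constant term, pushed forward by the absolutely convergent $M(w_{r_1-r_2})$---but the paper's route is shorter and makes the inductive structure transparent.
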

\begin{proof}
Write $Q_{M_J^1} = L_{M_J^1}V_{M_J^1}$.  We compute the constant term down to Levi $L_{M_J^1}$ of $Q_{M_J^1}$.  The set $[W_{L_{M_J^1}}\backslash W / W_{L_{M_J^1}}]$ consist of $w = 1$ and $w = w_{r_{1}-r_{2}}$, the simple reflection corresponding to this root.   We have $w_{r_1-r_2}(\lambda_s) + \rho_{P_0} = 8 r_1 + (s-8) r_2$.  The global intertwining operator $M(w_{r_1-r_2})$ is absolutely convergent at $s=18$, using that $\langle \lambda_s, r_1 - r_2 \rangle = s-8$.  But now, the Eisenstein series on $L_{M_J^1}$ associated to $M(w_{r_1-r_2})f(g,s)$ at $s=18$ was proved to be regular in Proposition \ref{prop:D5reg}.  This completes the proof.
\end{proof}

\subsection{Type $E_7$}\label{subsec:minE7}
Recall from section \ref{sec:groupTheory} the group $H_J^1$; it is simply connected of type $E_7$, and split at every finite place.  In this subsection, we define and compute with the automorphic minimal representation on $H_J^1$.

There is a map $\Sp_6 \hookrightarrow H_J^1$, defined by realizing $W_J \subseteq \wedge^3 W_6 \otimes \Theta$, where $W_6$ is the standard representation of $\Sp_6$.  Let $T$ be the diagonal torus of $\Sp_6$.  Then $T$ gives $H_J^1$ a rational root system of type $C_3$.  We write $r_i \pm r_j$, $i,j \in \{1,2,3\}$ for these roots.

We let $P_{H_J^1}$ denote the Siegel parabolic subgroup of $H_J^1$, defined as the as the stabilizer of the line $\Q (0,0,0,1)$.  In terms of the $C_3$ root system, the Siegel parabolic subgroup corresponds to the simple root $2 r_3$.  We define $\lambda: P \rightarrow \GL_1$ via $p (0,0,0,1) = \lambda(p)(0,0,0,1)$. Define $j(g,Z) \in \C^\times$ via the action on $r_0(Z) := (1,-Z,Z^{\#},-N(Z))$, i.e., $g r_0(Z) = j(g,Z) r_0(g Z)$. (See \cite[Proposition 2.3.1]{pollackQDS}.) We define sections for $I(s) := Ind_{P_{H_J^1}}^{H_J^1}(|\lambda|^s)$.  Specifically, for an even integer $\ell$, let $f_{\infty,\ell}(g,s)$ be the associated flat section in $I_\infty(s)$ with $f(k,s) = j(k,i)^{\ell}$ for all $k \in K_{H_J^1,\infty}$.  Here the maximal compact subgroup $K_{H_J^1,\infty}$ is defined as $k \in H_J^1(\R)$ with $r_0(k \cdot i) = r_0(i)$.

Consider a flat section $f_{fte}(g,s) \in I_f(s)$.  Let $f_{\ell}(g,s) = f_{fte}(g,s)f_{\infty,\ell}(g,s)$.  We define an Eisenstein series $E(g,f,s) = \sum_{\gamma \in P_{H_J^1}(\Q)\backslash H_J^1(\Q)}{f(\gamma g,s)}$.  The modulus character of $P_{H_J^1}$ is $\delta_{P_{H_J^1}}(p) = |\lambda(p)|^{18}$, so the Eisenstein series converges absolutely for $Re(s) > 18$.  

We will be interested in the residue at $s=14$ when $\ell=\pm 4$.  We fix now $\ell =4$; the results for $\ell = -4$ are identical and proved identically.

We recall from \cite[Theorem 3.3]{hanzerSavin}, see also \cite{weissmanFJ}, that the $p$-adic representations $I_p(s=14), I_p(s=4)$ have a nonsplit composition series of length two.  The spherical representation is the unique irreducible subrepresentation of $I_p(s=4)$, while it is the unique irreducible quotient of $I_p(s=14)$.  Finally, as usual, the intertwining operator locally gives a defined surjection from $I_p(s=14)$ to the proper spherical subrepresentation in $I_p(s=4)$.

\begin{proposition} Let the notation be as above.
\begin{enumerate}
	\item The Eisenstein series has at most a simple pole at $s=14$.  This pole is achieved for the inducing section that is spherical at every finite place.
	\item The residue representation is nonzero and irreducible, and thus defines an intertwining map $I_f(s=14) \rightarrow \mathcal{A}(H_J^1)$.
\end{enumerate}
\end{proposition}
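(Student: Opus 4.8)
The plan is to follow the template already established for the $D_6$ and $D_7$ minimal representations (Proposition \ref{prop:D6minAut} and its $D_7$ analogue), adapting the constant-term bookkeeping to the $C_3$ rational root system of $H_J^1$. For part (1), the fact that the Eisenstein series has at most a simple pole at $s=14$ should be extracted from the constant-term calculation: it suffices to verify that each $c$-function appearing in the constant term down to the minimal parabolic $P_0$ of $H_J^1$ has at most a simple pole at $s=14$, and this I would reduce to the spherical case using the structure of the degenerate principal series $I_p(s=14)$ recalled just above the statement (the spherical vector generates $I_p(s=14)$ at every finite $p$). The explicit $c$-functions are products of ratios of completed zeta functions (with archimedean $\Gamma$-factors involving the weight $\ell = 4$) coming from the factorization of the long element $w_0 \in [W/W_M]$ for $M$ the Siegel Levi; these have been computed in \cite{pollackNTM} and can be cited rather than recomputed. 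Tracking where the simple pole of $\zeta(s-\text{shift})$ lands, and checking the matching zero of another zeta factor or the regularity of the archimedean factor at $s=14$, gives the bound.

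For part (2), nonvanishing of the residue I would again deduce from the spherical vector: since the spherical local vectors generate $I_p(s=14)$ and the residue map is $H_J^1(\A_f)$-intertwining, it suffices to show the residue of the \emph{everywhere-spherical} (away from $\infty$, with $f_{\infty,4}$ at the archimedean place) Eisenstein series is nonzero at $s=14$. This follows from the functional equation $E(g,f,s) = E(g,M(w_0)f,\ 18-s)$ together with the fact that the image Eisenstein series is regular (and the relevant spherical Eisenstein series on the $D_5$-type Levi is regular — nonvanishing — by Proposition \ref{prop:D5reg} / \ref{prop:E6reg}), exactly as in the $D_6,D_7$ arguments; the $c$-function has an honest simple pole (not cancelled) at $s=14$, so the residue is a nonzero automorphic form. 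Irreducibility then follows by the same reasoning as in the cited \cite{hanzerSavin} results: the residue, being a quotient of $I_f(s=14)$ and lying in the automorphic spectrum, factors through the unique irreducible quotient $V_{min} = \otimes_p' V_{min,p}$ of $I_f(s=14)$, and one checks the residue is nonzero on this quotient, so the image is isomorphic to $V_{min}$, which is irreducible. Hence the residue defines a nonzero intertwining map $I_f(s=14) \to \mathcal{A}(H_J^1)$ with irreducible image.

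The main obstacle I anticipate is the combinatorial bookkeeping of the constant terms along the non-minimal standard parabolics of $H_J^1$, i.e., determining precisely which double-coset representatives $w \in [W_L \backslash W / W_M]$ contribute to the residue at $s=14$ and which yield absolutely convergent (hence regular) Eisenstein series on the Levi $L$. For parts (1) and (2) as stated only the pole order and nonvanishing are needed, so the constant term down to $P_0$ essentially suffices, and the real work is just the arithmetic of zeta and $\Gamma$-factors — which, as noted, is already available in \cite{pollackNTM} and \cite{hanzerSavin}. If one wants the finer constant-term formulas (as in the $D_6, D_7$ propositions, presumably needed later in the paper), then identifying the $w \neq 1, w_0$ contributions with smaller-rank theta functions or with the regular Eisenstein series of Proposition \ref{prop:E6reg} is where care is required, since $C_3$ has more intermediate parabolics than the rank-one situations treated so far; but for the present statement I would keep the proof short and cite \cite{hanzerSavin} for the pole bound and irreducibility, supplying only the spherical $c$-function computation and the functional-equation argument for nonvanishing.
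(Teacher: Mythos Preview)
Your overall plan is sound and largely matches the paper's: cite \cite{hanzerSavin} for the simple-pole bound and for irreducibility of the residual representation. The paper's proof is in fact just two citations---\cite[Proposition 6.3]{hanzerSavin} for the pole bound, and Kim \cite{kimMin} for the fact that the finite-spherical vector actually achieves the pole---so most of your discussion of constant-term bookkeeping, while correct in spirit, is more than is needed here.

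The one place your proposal diverges is the nonvanishing argument. You propose to use the functional equation $E(g,f,s)=E(g,M(w_0)f,18-s)$ together with the simple pole of $M(w_0)$ at $s=14$. This reduces nonvanishing of the residue to showing that $E(g,\overline{f},s'=4)\neq 0$, where $\overline{f}=\mathrm{Res}_{s=14}M(w_0)f$ lies in the spherical subrepresentation of $I(s'=4)$. Your citation of Propositions \ref{prop:D5reg} and \ref{prop:E6reg} for this step is misplaced: those results concern regularity of Eisenstein series on the $D_5$ and $E_6$ Levi factors at different special points, not nonvanishing of the $E_7$ Eisenstein series at $s'=4$ on the minimal subrepresentation. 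One can complete your argument by computing the constant term of $E(g,\overline{f},s'=4)$ and observing that the $w=1$ term (the inducing section itself) is nonzero and has a distinct exponent from the other terms---but this is essentially what Kim did, and the paper simply cites \cite{kimMin} rather than redoing it. So your route works with a small repair, but the paper's is shorter.
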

\begin{proof} That the Eisenstein series has at most a simple pole at $s=14$ is proved in \cite[Proposition 6.3]{hanzerSavin}.  That this pole is achieved by the vector that is spherical at every finite place is due to Kim \cite{kimMin}.
\end{proof}

We define $\Theta_f^+(g) = Res_{s=14} E(g,f_4,s)$, and $\Theta_f^{-} = Res_{s=14} E(g,f_{-4},s)$.  We compute the constant terms of $\Theta_f^+$ along the unipotent radicals of the standard maximal parabolic subgroups.  Our simple roots are $\alpha_1 = r_1 -r_2, \alpha_2 = r_2-r_3$ and $\alpha_3 = 2r_3$.  Thus, following the naming convention of subsection \ref{subsec:parabolic}, the standard maximal parabolic subgroups of $H_J^1$ are $P_{H_J^1,j} = M_{H_J^1,j} N_{H_J^1,j}$ for $j = 1,2,3$.  The Siegel parabolic occurs for $j=3$.

We begin by computing the constant term down to the Siegel Levi.  As usual, let $M(w_0)$ be the long intertwining operator.  Set $f^1(g,s) = M(w_0)f_4(g,s)$.  We will see below that $f^1(g,s)$ has at most a simple pole at $s=14$.  Let $\overline{f}(g) = Res_{s=14} f^1(g,s)$.
\begin{proposition}\label{prop:E71constant} One has $\Theta_f(g)_{N_{H_J^1,3}} = \overline{f}(g)$.
\end{proposition}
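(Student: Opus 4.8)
The plan is to compute the constant term $\Theta_f(g)_{N_{H_J^1,3}}$ directly from the formula for constant terms of Eisenstein series recalled in subsection \ref{subsec:Eis}, and then show that all terms except the one coming from the long Weyl element $w_0$ fail to contribute to the residue at $s=14$. First I would identify the double coset space $[W_{M_{H_J^1,3}}\backslash W_{H_J^1}/W_{M_{H_J^1,3}}]$, where $M_{H_J^1,3}$ is the Siegel Levi (corresponding to the simple roots $\alpha_1 = r_1-r_2$ and $\alpha_2 = r_2-r_3$). The Weyl group of $C_3$ acts on $\{\pm r_1,\pm r_2,\pm r_3\}$ by signed permutations, and $W_{M_{H_J^1,3}}$ is the symmetric group permuting $r_1,r_2,r_3$; the double cosets are then indexed by the number of sign changes, so the set has size four, with representatives $1, w_0$, and two intermediate elements changing one or two signs respectively. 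Each such $w$ contributes a term $E^w(g,f^w,s)$, a sum over $(wP_{H_J^1,3}w^{-1}\cap M_{H_J^1,3})(\Q)\backslash M_{H_J^1,3}(\Q)$ of the intertwined section $f^w = M(w)f_4$.

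Next I would show that for the three double coset representatives $w \neq w_0$, either the intertwining operator $M(w)$ is holomorphic at $s=14$ and the associated Eisenstein series $E^w(g,f^w,s)$ is absolutely convergent there — hence contributes nothing to the residue — or the corresponding Eisenstein series on the Levi has already been shown to be regular (as in Proposition \ref{prop:D5reg} or Proposition \ref{prop:E6reg}, since the Levi of the Siegel parabolic in $E_{7,3}$ has a factor of type $D_5$ or $E_6$-flavored behavior). The key computational input is the $c$-function for the relevant intertwining operators, which can be read off from the standard formula as a product of ratios of completed zeta functions indexed by the positive roots made negative by $w^{-1}$, together with an archimedean factor computed from $j(\bullet,i)^4$; for the $w\neq w_0$ terms I expect the poles of $\zeta$ in the numerator to be cancelled either by zeros from the numerator at shifted arguments or by regularity of the restricted Eisenstein series on the Levi. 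Since the spherical vector generates $I_p(s=14)$ at every finite place (by the composition series structure recalled before the proposition), it suffices to verify holomorphy/convergence on the spherical section. Along the way this establishes that $f^1(g,s) = M(w_0)f_4(g,s)$ has at most a simple pole at $s=14$, justifying the definition of $\overline{f}$; this follows because $M(w_0)$ factors through the intertwiners whose $c$-functions one has computed, and the only surviving simple pole is the one producing the residual representation.

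Finally, for the surviving $w=w_0$ term, the associated simple roots $\Delta^{w_0}(M_{H_J^1,3})$ are empty (the long element of $[W/W_M]$ sends all of $\Phi(M,T)$ into negative roots relative to the opposite of $P$, so $wPw^{-1}\cap M$ is all of $M$), so $E^{w_0}(g,f^{w_0},s) = f^{w_0}(g,s) = M(w_0)f_4(g,s)$ is a single term with no summation. Taking the residue at $s=14$ then gives exactly $\overline{f}(g)$, completing the proof. The main obstacle I anticipate is the bookkeeping for the intermediate double coset representatives: one must correctly decompose each $M(w)$ into rank-one steps with additive lengths, track the arguments of the zeta factors through the shifts $(s-6),(s-11),(s-5)$ etc. coming from the $C_3$ root data of $E_{7,3}$, and confirm in each case that no new simple (or higher-order) pole at $s=14$ survives — this is the same style of argument as in the $D_6$ and $D_7$ constant term computations above, but the $E_7$ root combinatorics make it the delicate part.
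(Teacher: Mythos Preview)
Your proposal is correct and follows essentially the same route as the paper's proof: identify the four double cosets in $[W_{M_{H_J^1,3}}\backslash W_{H_J^1}/W_{M_{H_J^1,3}}]$, show that the terms for $w=1$ and the length-one element $w_{2r_3}$ give absolutely convergent intertwiners and Eisenstein series at $s=14$, invoke an auxiliary regularity result for the intermediate element, and conclude that only the $w_0$ term survives in the residue. One small sharpening: the Siegel Levi of $H_J^1$ is of type $E_6$, so the regularity input you need for the intermediate double coset (the one changing two signs, $w_{r_2+r_3}$) is precisely Proposition~\ref{prop:E6reg}, not Proposition~\ref{prop:D5reg}; the paper checks that $w_{r_2+r_3}(\lambda_s)+\rho_{P_0}$ at $s=14$ lands exactly at the point handled there.
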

\begin{proof}	The set $[W_{M_{H_J^1,3}}\backslash W_{H_J^1}/W_{M_{H_J^1,3}}]$ has size four.  The four elements are $1$, $w_{2r_3}$, $w_{r_2 + r_3} = w_{2r_3} w_{r_2-r_3} w_{2r_3}$, $w_0 = w_{2r_3} w_{r_2-r_3} w_{r_1-r_2} w_{r_2+r_3}$, of lengths $1$, $2$, $3$, and $6$.  
	
We now compute how these Weyl elements move around our inducing character.  We begin by observing $|\lambda| = r_1 + r_2 + r_3$ and $\delta_{P_0}^{1/2}= 17 r_1 + 9 r_2 + r_3$.
\begin{enumerate}
	\item We set $\lambda_s = |\lambda|^{s} \delta_{P_0}^{-1/2} = (s-17)r_1 + (s-9) r_2 + (s-1) r_3$.
	\item applying $w_{2r_3}$, get $w_{2r_3}(\lambda_s) = (s-17)r_1 + (s-9) r_2 + (1-s) r_3$, with $\langle \lambda_s, r_3 \rangle = s-1$ and $w_{2r_3}(\lambda_s) + \rho_{P_0} = s r_1 + s r_2 + (2-s) r_3$.
	\item applying $w_{r_2-r_3}$, get $w_{r_2-r_3} w_{2r_3}(\lambda_s) = (s-17) r_1 + (1-s)r_2 + (s-9) r_3$, with $\langle w_{2r_3}(\lambda_s), r_2-r_3 \rangle = 2s-10$.
	\item applying $w_{2r_3}$, get $w_{2r_3}w_{r_2-r_3} w_{2r_3}(\lambda_s) = (s-17) r_1 + (1-s)r_2 + (9-s) r_3$, with $\langle w_{r_2-r_3}w_{2r_3}(\lambda_s), r_3\rangle = s-9$ and $w_{r_2+r_3}(\lambda_s) + \rho_{P_0} = sr_1 + (10-s) r_2 + (10-s) r_3$.  At $s=14$, this is $6(2r_1 - r_2 -r_3) + 2(r_1 + r_2 + r_3)$.
	\item applying $w_{r_1-r_2}$, get $w_{r_1-r_2} w_{r_2+r_3}(\lambda_s) = (1-s) r_1 + (s-17) r_2 + (9-s) r_3$, with $\langle w_{r_2+r_3}(\lambda_s), r_1 -r_2 \rangle = 2s-18$.
	\item applying $w_{r_2-r_3}$, get $w_{r_2-r_3}w_{r_1-r_2} w_{r_2+r_3}(\lambda_s) = (1-s) r_1 + (9-s) r_2 + (s-17) r_3$, with $\langle w_{r_1-r_2}w_{r_2+r_3}(\lambda_s), r_2-r_3 \rangle = 2s-26$.
	\item applying $w_{2r_3}$, get $w_0(\lambda_s) = (1-s) r_1 + (9-s) r_2 + (17-s) r_3$, with $\langle w_{r_2-r_3}w_{r_1-r_2}w_{r_2+r_3}(\lambda_s), r_3 \rangle = s-17$ and $w_0(\lambda_s) + \rho_{P_0} = (18-s)(r_1 + r_2 + r_3)$.
\end{enumerate}

At the finite places, we obtain the following $c$-functions.  Set $\zeta_{\Theta}(s) = \zeta(s)\zeta(s-3)$.
\begin{enumerate}
	\item $c(1) = 1$
	\item $c(w_{2r_3}) = \frac{\zeta(s-1)}{\zeta(s)}$
	\item $c(w_{r_2+r_3}) = c(w_{2r_3}) \frac{\zeta_{\Theta}(s-5)}{\zeta_{\Theta}(s-1)} \frac{\zeta(s-9)}{\zeta(s-8)} = \frac{\zeta(s-5)\zeta(s-9)}{\zeta(s)\zeta(s-4)} $
	\item $c(w_0) = c(w_{r_2+r_3}) \frac{\zeta_{\Theta}(s-9)}{\zeta_{\Theta}(s-5)} \frac{\zeta_{\Theta}(s-13)}{\zeta_{\Theta}(s-9)} \frac{\zeta(s-17)}{\zeta(s-16)} = \frac{\zeta(s-9)\zeta(s-13)\zeta(s-17)}{\zeta(s)\zeta(s-4)\zeta(s-8)}$
\end{enumerate}

Observe that the global intertwining operators $M(w_{2r_3})$ and $M(w_{r_2+r_3})$ are absolutely convergent at $s=14$.  Moreover, $w_{2r_3}(\lambda_s) + \rho_{P_0} = s r_1 + s r_2 + (2-s) r_3$ at $s=14$ becomes $(8 \frac{2}{3})(r_1 + r_2 - 2r_3) + \frac{16}{3}(r_1+r_2+r_3)$.  Because $8 \frac{2}{3} > 8$, the associated Eisenstein series is absolutely convergent at $s=14$.  Thus the $E^w(g,f,s)$ for $w=1$ and $w=w_{2r_3}$ do not contribute to the residue at $s=14$.  Additionally, the $E^w(g,f,s)$ for $w = w_{r_2+r_3}$ is regular at $s=14$, by Proposition \ref{prop:E6reg}.

The only term that can contribute is thus $M(w_0)f(g,s)$.  By explicitly computing the intertwining operator at the archimedean place, we see that $M(w_0)f(g,s)$ has at most a simple pole at $s=14$.  Indeed, because $I_p(s=14)$ is generated by the spherical vector for every $p < \infty$, it suffices to check the simplicity of the pole when the inducing section is spherical at every finite place.
\end{proof}

\begin{remark}\label{rmk:E7lowerwt} One can use identical computations to those in the proof of Proposition \ref{prop:E71constant} to prove that if $\ell \in \{-2,0,2\}$ then the Eisenstein series is regular at $s=14$.
\end{remark}

We now compute the constant term $\Theta_f(g)$ down to parabolic with Levi of type $D_{5,1} \times \SL_2$.  This is the parabolic $P_{H_J^1, 2} = M_{H_J^1,2} N_{H_J^1,2}$.  The simple roots in its Levi are $r_1 - r_2, 2r_3$.  Set 
\[E_{M_{H_J^1,2}}(g,\overline{f}) = \sum_{\gamma \in (P_{H_J^1,3} \cap P_{H_J^1,2})(\Q)\backslash P_{H_J^1,2}(\Q)}{\overline{f}(\gamma g)}.\]
The sum defining this Eisenstein series is absolutely convergent.
\begin{proposition}\label{prop:E7minConst2} One has $\Theta_f(g)_{N_{H_J^1,2}} = E_{M_{H_J^1,2}}(g,\overline{f})$.
\end{proposition}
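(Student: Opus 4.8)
The plan is to compute the constant term $E(g,f,s)_{N_{H_J^1,2}}$ by the recipe of Section~\ref{subsec:Eis} and then take the residue at $s=14$. Since the inducing parabolic is the Siegel parabolic $P_{H_J^1,3}$, the relevant index set is $[W_{M_{H_J^1,2}}\backslash W_{H_J^1}/W_{M_{H_J^1,3}}]$, and I would first check that it has exactly three elements. Writing $W_{H_J^1}\cong(\Z/2)^3\rtimes S_3$ for the $C_3$ root system, one has $W_{M_{H_J^1,3}}\cong S_3$ (the permutation part) while $W_{M_{H_J^1,2}}$ is generated by $w_{r_1-r_2}$ and $w_{2r_3}$; the three double cosets are represented by $1$, by $w_{r_2-r_3}w_{2r_3}$ (length $2$), and by $w_{2r_3}w_0$ (length $5$), where $w_0=w_{2r_3}w_{r_2-r_3}w_{r_1-r_2}w_{r_2+r_3}$ is the long element from the proof of Proposition~\ref{prop:E71constant}. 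For each representative $w$ I would compute the associated simple roots $\Delta^w(M_{H_J^1,2})\subseteq\{r_1-r_2,\,2r_3\}$ of Section~\ref{subsec:Eis}: one gets $\{2r_3\}$ for $w=1$, all of $\{r_1-r_2,\,2r_3\}$ for $w=w_{r_2-r_3}w_{2r_3}$, and $\{2r_3\}$ for $w=w_{2r_3}w_0$, where $r_1-r_2$ is the simple root of the $D_{5,1}$-factor of $M_{H_J^1,2}$ and $2r_3$ that of the $\SL_2$-factor.

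Next I would dispose of the two non-contributing terms. The $w=1$ term is an Eisenstein series built from the $\SL_2$-factor alone, with parameter $\langle\lambda_s,r_3\rangle=s-1=13$ at $s=14$, well inside the range of absolute convergence, so it contributes nothing to the residue. The term for $w=w_{r_2-r_3}w_{2r_3}$ is an Eisenstein series induced from the Borel of $M_{H_J^1,2}=D_{5,1}\times\SL_2$, which for a flat section factors as a Borel-Eisenstein series on the $D_{5,1}$-factor times one on the $\SL_2$-factor; the $\SL_2$-factor has parameter $\langle w_{r_2-r_3}w_{2r_3}(\lambda_s),r_3\rangle=s-9=5>1$ at $s=14$ and so converges absolutely, while the $D_{5,1}$-factor is, up to isogeny (Lemma~\ref{lem:isogEis}), the Eisenstein series studied in Proposition~\ref{prop:D5reg} and hence regular at $s=14$. (One also checks, using $\langle\lambda_s,r_3\rangle>0$ and $\langle w_{2r_3}(\lambda_s),r_2-r_3\rangle>0$ at $s=14$, that $M(w_{r_2-r_3}w_{2r_3})$ is defined by an absolutely convergent integral there, so no spurious pole is introduced.) Thus this term does not contribute either.

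The only remaining term, for $w=w_{2r_3}w_0$, is again an Eisenstein series on the $\SL_2$-factor of $M_{H_J^1,2}$: since $\Delta^w(M_{H_J^1,2})=\{2r_3\}$ one has $wP_{H_J^1,3}w^{-1}\cap M_{H_J^1,2}=P_{M_{H_J^1,2},\{2r_3\}}$, which is exactly $(P_{H_J^1,3}\cap P_{H_J^1,2})\cap M_{H_J^1,2}$, so this term is $\sum_{\gamma}M(w)f(\gamma g,s)$ over $(P_{H_J^1,3}\cap P_{H_J^1,2})(\Q)\backslash P_{H_J^1,2}(\Q)$. Using $w_0=w_{2r_3}w$ with $\ell(w_0)=1+\ell(w)$, the Langlands functional equation for Eisenstein series identifies this term with the Eisenstein series on the $\SL_2$-factor built from $M(w_{2r_3})M(w)f=M(w_0)f=f^1$; the latter has $\SL_2$-parameter $\langle w_0(\lambda_s),r_3\rangle=17-s$, equal to $3>1$ at $s=14$, so that form converges absolutely at $s=14$ and its only singularity there is the simple pole of $f^1(g,s)$ (established in the proof of Proposition~\ref{prop:E71constant}), with residue $\overline{f}(g)=\Res_{s=14}f^1(g,s)$. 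Hence the residue at $s=14$ of this term is $\sum_{\gamma}\overline{f}(\gamma g)$ over $(P_{H_J^1,3}\cap P_{H_J^1,2})(\Q)\backslash P_{H_J^1,2}(\Q)$, which is exactly $E_{M_{H_J^1,2}}(g,\overline{f})$, and the same estimate shows this sum converges absolutely. Summing the three terms and passing to the residue --- all interchanges being justified by absolute convergence for $Re(s)\gg 0$ and the Langlands functional equation --- gives $\Theta_f(g)_{N_{H_J^1,2}}=E_{M_{H_J^1,2}}(g,\overline{f})$.

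The main obstacle is the root-theoretic bookkeeping: enumerating $[W_{M_{H_J^1,2}}\backslash W_{H_J^1}/W_{M_{H_J^1,3}}]$ and the sets $\Delta^w(M_{H_J^1,2})$, and tracking the action of each $w$ on $\lambda_s$ carefully enough to verify the absolute-convergence and regularity claims at $s=14$. The genuinely delicate point is the identification of the surviving term: although it is visibly the $\SL_2$-Eisenstein series attached to $M(w)f$, whose naive parameter $\langle w(\lambda_s),r_3\rangle=s-17$ lies outside the range of convergence at $s=14$, the Langlands functional equation re-expresses it as the $\SL_2$-Eisenstein series attached to the globally normalized section $f^1=M(w_0)f$, which both converges at $s=14$ and carries the simple pole producing $\overline{f}$.
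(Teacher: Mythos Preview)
Your proof follows essentially the same approach as the paper: enumerate the three double-coset representatives $1$, $w_{r_2-r_3}w_{2r_3}$, and $w_{r_2-r_3}w_{r_1-r_2}w_{2r_3}w_{r_2-r_3}w_{2r_3}$ (your $w_{2r_3}w_0$), show the first two contribute nothing to the residue, and identify the third via the Langlands functional equation with the $\SL_2$-Eisenstein series in $\overline{f}$. Your bookkeeping on $\Delta^w$ and the handling of the surviving term are correct.

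There is one inaccuracy. For $w=w_{r_2-r_3}w_{2r_3}$ you appeal to Proposition~\ref{prop:D5reg} for the $D_{5,1}$-factor, but that proposition concerns the $D_5$ Eisenstein series at its special point $s_{D_5}=5$. Here the $D_5$-parameter is actually $s_{D_5}=9$: one computes $(w(\lambda_s)+\rho_{P_0})|_{s=14}=14r_1-4r_2+6r_3$, so the unnormalized character on the $D_5$ coroot $h_{r_1-r_2}(u)$ is $|u|^{18}$, i.e.\ $|\nu_{D_5}|^9$, which lies strictly above the convergence threshold $|\nu_{D_5}|^8$. Thus the $D_5$-factor (and hence the full Borel Eisenstein series on $M_{H_J^1,2}$) is absolutely convergent at $s=14$, and no appeal to Proposition~\ref{prop:D5reg} is needed or appropriate. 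This is exactly what the paper asserts: for $w=1$ and $w=w_{r_2-r_3}w_{2r_3}$, both the intertwining operator and the Eisenstein series on $M_{H_J^1,2}$ are absolutely convergent, so neither contributes to the residue. Your conclusion is unaffected, but the justification for that middle term should be replaced by the simpler absolute-convergence check.
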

\begin{proof} The set $[W_{M_{H_J^1,2}}\backslash W_{H_J^1}/W_{M_{H_J^1,3}}]$ has size three.  Its elements are $1, w_{r_2-r_3} w_{2r_3}$ and $w = w_{r_2-r_3}w_{r_1-r_2} w_{2r_3} w_{r_2-r_2} w_{2r_3}$

Both the global intertwining operator and the sum defining the Eisenstein series on $M_{H_J^1,2}$ are absolutely convergent for $w=1$ and $w=w_{r_2-r_3} w_{2r_3}$ at $s=14$.  Thus, these terms do not contribute to the residue at $s=14$.  The proposition follows from Langlands functional equation of Eisenstein series.
\end{proof}

The parabolic $P_{H_J^1,1} = M_{H_J^1,1} N_{H_J^1,1}$ has Levi of type $D_{6,2}$.  We now compute the constant term of $\Theta_f(g)$ along $N_{H_J^1,1}$.  Let $w_1 = w_{r_1-r_2} w_{r_2-r_3} w_{2r_3}$.  Set $f^{w_1}(g,s) = M(w_1)f(g,s)$ and $E^{w_1}(g,f,s)$ the associated Eisenstein series.
\begin{proposition} One has $\Theta_f(g)_{N_{H_J^1,1}} = Res_{s=14}E^{w_1}(g,f,s)$.
\end{proposition}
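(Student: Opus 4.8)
The plan is to apply the constant-term formalism of subsection~\ref{subsec:Eis} to $E(g,f,s)$ along the unipotent radical $N_{H_J^1,1}$ and then pass to the residue at $s=14$. Since forming the constant term is integration over the compact quotient $N_{H_J^1,1}(\Q)\backslash N_{H_J^1,1}(\A)$, it commutes with taking residues in $s$; hence $\Theta_f(g)_{N_{H_J^1,1}}=\Res_{s=14}\big(E(g,f,s)_{N_{H_J^1,1}}\big)$, and this constant term is, for $\mathrm{Re}(s)\gg0$ and then by meromorphic continuation, $\Res_{s=14}\sum_{w}E^{w}(g,f,s)$, the sum over $w\in[W_{M_{H_J^1,1}}\backslash W_{H_J^1}/W_{M_{H_J^1,3}}]$ (recall $I(s)$ is induced from the Siegel parabolic $P_{H_J^1,3}$).

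First I would determine this double-coset set. Realizing $W_{H_J^1}=\{\pm1\}^3\rtimes S_3$ as the signed permutations of $r_1,r_2,r_3$, the group $W_{M_{H_J^1,3}}$ is the copy of $S_3$ permuting $r_1,r_2,r_3$, while $W_{M_{H_J^1,1}}$ is the Weyl group of type $C_2$ acting by signed permutations of $r_2,r_3$ and fixing $r_1$. Identifying $W_{H_J^1}/W_{M_{H_J^1,3}}$ with the sign vectors $\{\pm1\}^3$, the left action of $W_{M_{H_J^1,1}}$ permutes and flips the last two coordinates, so there are exactly two orbits, distinguished by the sign of the first coordinate, and the double-coset set has size two. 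From the action $w_1\colon r_1\mapsto r_2,\ r_2\mapsto r_3,\ r_3\mapsto -r_1$ one checks $w_1^{-1}(r_2-r_3)=r_1-r_2>0$, $w_1^{-1}(2r_3)=2r_2>0$, $w_1(r_1-r_2)=r_2-r_3>0$ and $w_1(r_2-r_3)=r_1+r_3>0$, so $w_1=w_{r_1-r_2}w_{r_2-r_3}w_{2r_3}$ lies in $[W_{M_{H_J^1,1}}\backslash W_{H_J^1}]\cap[W_{H_J^1}/W_{M_{H_J^1,3}}]$; as $w_1\ne 1$, the minimal-length representatives are precisely $1$ and $w_1$. (Notably the long intertwining element $w_0$ of Proposition~\ref{prop:E71constant} lies in the $w_1$-double coset rather than forming its own, in contrast with the $D_7$ picture.) Therefore $\Theta_f(g)_{N_{H_J^1,1}}=\Res_{s=14}E^{1}(g,f,s)+\Res_{s=14}E^{w_1}(g,f,s)$.

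Next I would dispose of the $w=1$ term. Its associated simple roots are $\Delta^1(M_{H_J^1,1})=\{2r_3\}$, since $1^{-1}(2r_3)=2r_3$ lies in $\Phi(N_{H_J^1,3},T)$ (the roots $r_i+r_j$) while $1^{-1}(r_2-r_3)=r_2-r_3$ does not; thus $E^1(g,f,s)$ is the degenerate Eisenstein series on $M_{H_J^1,1}$ induced from its line-stabilizer maximal parabolic $P_{M_{H_J^1,1},\{2r_3\}}$ (Levi with semisimple part $G_{5,\Theta}$ of type $D_5$) with inducing section the restriction of $f$. Identifying the inducing exponent, one finds that the restriction of $E^1$ to $G_{6,\Theta}$ is a degenerate Eisenstein series of the form considered in subsection~\ref{subsec:AMRD6}, at the same parameter $s$, which converges absolutely for $\mathrm{Re}(s)>10$; hence (using Lemma~\ref{lem:isogEis} to pass between $M_{H_J^1,1}$ and $G_{6,\Theta}$) $E^1$ is holomorphic at $s=14$ and $\Res_{s=14}E^1(g,f,s)=0$. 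Combining with the previous paragraph gives $\Theta_f(g)_{N_{H_J^1,1}}=\Res_{s=14}E^{w_1}(g,f,s)$, as claimed. One also computes $\Delta^{w_1}(M_{H_J^1,1})=\{2r_3\}$, so $E^{w_1}$ is a single Eisenstein series on $M_{H_J^1,1}$; via Lemma~\ref{lem:isogEis} and subsection~\ref{subsec:AMRD6}, its residue at $s=14$ is the object producing the $D_6$ minimal-representation theta functions $\Theta_f^{\pm}$ on $G_{6,\Theta}$ of Proposition~\ref{prop:D6minAut}.

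The whole argument is routine given the constant-term formalism of subsection~\ref{subsec:Eis} and the material of section~\ref{sec:AMRD}; the one point that must be gotten right is the double-coset bookkeeping together with the computation of the $\Delta^w(M_{H_J^1,1})$ — in particular, confirming that there are only two double cosets (so that $w_0$ is absorbed into the $w_1$-coset and the entire pole at $s=14$ is carried by the single Eisenstein series $E^{w_1}$, with no functional-equation rewriting in terms of $\overline{f}$ needed), while the $w=1$ term lands safely to the right of its abscissa of absolute convergence. I do not anticipate a serious obstacle beyond this.
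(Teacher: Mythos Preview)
Your proof is correct and follows the same approach as the paper's. You compute the double coset set $[W_{M_{H_J^1,1}}\backslash W_{H_J^1}/W_{M_{H_J^1,3}}]=\{1,w_1\}$, show the $w=1$ term is absolutely convergent at $s=14$ (your convergence bound $\mathrm{Re}(s)>10$ is exactly right, since $\delta$ of the relevant parabolic in $M_{H_J^1,1}$ is $10(r_2+r_3)$ while the restricted inducing character is $s(r_2+r_3)$ on the derived group), and conclude. Your explicit verification that $w_1$ is the minimal representative and your remark that $w_0$ is absorbed into the $w_1$-coset are more detailed than the paper, which simply asserts the coset count and the absolute convergence, but the logic is identical.
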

\begin{proof} The set $[W_{M_{H_J^1,1}}\backslash W_{H_J^1}/W_{M_{H_J^1,3}}]$ has size two.  Its elements are $1$ and $w_1$.  The Eisenstein series $E^w(g,f,s)$ for $w=1$ is absolutely convergent at $s=14$, so does not contribute to the residue.  The proposition follows.
\end{proof}

Note that $Res_{s=14}E^{w_1}(g,f,s)$ can be considered a theta function on $G_{6,\Theta}$ associated to $f^{w_1}$.

\subsection{Type $E_8$}\label{subsec:minE8}
The automorphic minimal representation on quaternionic $E_8$, i.e., the group $G_J$, was constructed in \cite{ganMin}.  In this subsection, we review results from \cite{ganMin} and compute the constant terms of the functions in this automorphic minimal representation along for the standard maximal parabolic subgroups of $G_J$.

We fix a maximal compact subgroup $K_{G_J,\infty} \subseteq G_J(\R)$ as in \cite[Paragraph 4.1.3]{pollackQDS}.  The Lie algebra of this compact subgroup maps to $\su_2 = V_3$, affording a three-dimensional representation of $K_{G_J,\infty}$ via the adjoint action. For a positive integer $\ell$, we have the $(2\ell+1)$-dimensional vector space $\mathbf{V}_\ell$ defined as the highest weight quotient of $S^\ell(V_3)$; this is again a representation of $K_{G_J,\infty}$.  Fixing an $\sl_2$-triple of $\su_2 \otimes \C$ gives us an associated basis $\{x^{\ell+v}y^{\ell-v}\}_{-\ell \leq v \leq \ell}$ of $\mathbf{V}_{\ell}$.

The relative root system is of type $F_4$.  The simple roots are $\alpha_1 = (0,1,-1,0)$, $\alpha_2 = (0,0,1,-1)$, $\alpha_3 = (0,0,0,1)$ and $\alpha_4 = (1/2,-1/2,-1/2,-1/2)$ in a Euclidean coordinate system.  The highest root in these coordinates is $(1,1,0,0)$.

The Heisenberg parabolic subgroup of $G_J$ is defined to be the stabilizer of the highest root space.  In terms of the decomposition $\g(J) = \sl_2 \oplus \h(J)^0 \oplus V_2 \otimes W_J$, the highest root space is spanned by $\mm{0}{1}{0}{0} \in \sl_2$.  In the notation of subsection \ref{subsec:parabolic}, it is the parabolic $P_{G_J,1} = M_{G_J,1} N_{G_J,1}$.  The derived group of the Levi $M_{G_J,1}$ is the group $H_J^1$ with a $C_3$ root system.  We define $\nu: P_{G_J,1} \rightarrow \GL_1$ as $p \mm{0}{1}{0}{0} = \nu(p) \mm{0}{1}{0}{0}$.

We will consider the induced representation $I(s) = Ind_{P_{G_J,1}}^{G_J}(|\nu|^s)$.  The modulus character $\delta_{P_{G_J}} = |\nu|^{29}$.

See \cite[Proposition 3.2]{ganMin} for the following proposition.  One can also see \cite{halawiSegal}.
\begin{proposition}\label{prop:E8degPS}The representation $I_p(s=24)$ has a unique irreducible quotient, and the representation $I_p(s=5)$ has a unique irreducible subrepresentation.  These irreducible representations are spherical.
\end{proposition}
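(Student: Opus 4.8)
The plan is to reduce both assertions to a single generation statement, using the self-duality structure of this degenerate principal series. Throughout, write $f^0 \in I_p(s)$ for the (normalized) spherical vector.

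\emph{Step 1: duality.} Since $\delta_{P_{G_J,1}} = |\nu|^{29}$, the pairing given by integration over $P_{G_J,1}(\Q_p)\backslash G_J(\Q_p)$ identifies the smooth contragredient as $I_p(s)^\vee \cong I_p(29-s)$. As $24 + 5 = 29$, the contragredient carries $I_p(24)$ to $I_p(5)$; being exact and involutive, it interchanges ``unique irreducible quotient'' with ``unique irreducible subrepresentation'', and it preserves sphericity (for a compact open $K_p$, the functor $(-)^{K_p}$ is exact and $\dim \pi^{K_p} = \dim (\pi^\vee)^{K_p}$). Hence it suffices to prove that $I_p(s=24)$ has a unique irreducible quotient and that it is spherical; the statement at $s=5$ follows by dualizing.

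\emph{Step 2: reduction to generation.} By the Iwasawa decomposition $G_J(\Q_p) = P_{G_J,1}(\Q_p) K_p$, the space $I_p(s)^{K_p}$ is one-dimensional, spanned by $f^0$; by exactness of $(-)^{K_p}$ this forces the spherical subquotient to occur exactly once in the Jordan--H\"older series of $I_p(s)$. I claim it suffices to show that $f^0$ generates $I_p(s=24)$ as a $G_J(\Q_p)$-module. Indeed, granting this, no proper submodule of $I_p(24)$ contains $f^0$, so the sum $M_{\max}$ of all proper submodules is itself proper; then $I_p(24)/M_{\max}$ is the unique irreducible quotient, it is spherical since the image of $f^0$ is a nonzero $K_p$-fixed vector, and by the multiplicity-one remark it is exactly the spherical subquotient.

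\emph{Step 3: the generation statement --- the main obstacle.} To prove $f^0$ generates $I_p(24)$, let $V \subseteq I_p(24)$ be the submodule it generates. Since $I_p(24)$ is fully induced it has no supercuspidal constituents, so it is enough to show $r_Q(V) = r_Q(I_p(24))$ for every proper parabolic $Q$ (then $I_p(24)/V$ has vanishing Jacquet modules everywhere, hence is zero). Using the geometric lemma and induction in stages, the Jacquet modules of $I_p(24)$ --- in particular along the Heisenberg parabolic $P_{G_J,1}$ and, via its Fourier--Jacobi variants, along the unipotent radical $N_{G_J,1}$ --- are expressed in terms of degenerate principal series of the Levi's derived group $H_J^1$, which is split of type $E_7$ and whose structure is recorded in subsection \ref{subsec:minE7} (following \cite{hanzerSavin,weissmanFJ}); iterating down the chain $E_7 \rightsquigarrow D_6 \rightsquigarrow D_5$ reaches the situation of Proposition \ref{prop:degPSDn}, where the spherical vector is known to generate. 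Tracking the spherical line of $I_p(24)$ through each of these functors (it maps to a nonzero spherical vector at each stage) and invoking generation there inductively gives $r_Q(V) = r_Q(I_p(24))$, completing the argument; alternatively, one may simply invoke the explicit analysis of this degenerate principal series in Gan \cite{ganMin} or in Halawi--Segal. I expect the only genuinely delicate point to be this inductive bookkeeping --- matching up Jacquet modules across the Fourier--Jacobi functors and checking that the relevant spherical lines survive each step --- since Steps 1 and 2 are formal.
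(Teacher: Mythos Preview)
Your proposal is correct and aligns with the paper's treatment, which simply cites \cite[Proposition 3.2]{ganMin} and \cite{halawiSegal} without further argument. Your Steps 1 and 2 (the duality $I_p(s)^\vee \simeq I_p(29-s)$ and the reduction to ``the spherical vector generates $I_p(24)$'') are correct and add useful structure that the paper omits; your Step 3 then lands on exactly the same references the paper invokes, so there is no substantive divergence. The only soft spot is that your inductive Jacquet/Fourier--Jacobi sketch in Step 3 is not a proof as written --- you flag this yourself --- so in practice you are, like the paper, relying on \cite{ganMin} or \cite{halawiSegal} for the generation statement.
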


We now have the following result from \cite{ganMin}.
\begin{proposition} For any flat inducing section $f(g,s) \in I(s)$, the Eisenstein series $E(g,f,s)$ has at most a simple pole at $s=24$.
\end{proposition}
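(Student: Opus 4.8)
The plan is as follows; this statement is \cite[Proposition 3.2]{ganMin} (see also \cite{halawiSegal}), and I describe how one verifies it. By the principle recalled in subsection \ref{subsec:Eis}, $E(g,f,s)$ has a pole of order at most $1$ at $s=24$ as soon as all of its constant terms do, and the constant term along the minimal parabolic $P_{G_J,0}$ is the finite sum $\sum_{w\in [W/W_{M_{G_J,1}}]}M(w)f(g,s)$; hence it suffices to bound by $1$ the order of the pole at $s=24$ of each intertwined section $M(w)f(g,s)$. I would carry this out in detail for the long intertwining operator $M(w_0)$ associated to the longest element $w_0$ of $[W/W_{M_{G_J,1}}]$, the remaining $w$ (and the correspondingly simpler Eisenstein series that occur in constant terms along larger parabolics) being treated by the same computation.

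Next I would reduce to a finite spherical inducing section. By Proposition \ref{prop:E8degPS}, $I_p(s=24)$ has a unique irreducible quotient, which is spherical; since a finitely generated module with a unique maximal submodule is generated by any vector outside that submodule, the spherical vector generates $I_p(s=24)$, and therefore $I_f(s=24)$ is generated over $G_J(\A_f)$ by the finite spherical section $f^0_{fte}$. As $M(w_0)$ is $G_J(\A)$-equivariant, the order of the pole at $s=24$ of $M(w_0)(f_{fte}\cdot f_\infty)$ is at most that of $M(w_0)(f^0_{fte}\cdot f_\infty)$, for an arbitrary $K_{G_J,\infty}$-finite archimedean section $f_\infty$ (of any weight $\mathbf{V}_\ell$). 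From here on I would fix $f^0_{fte}$.

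Then, factoring $M(w_0)$ as a product of local intertwiners and decomposing $w_0$ into simple reflections for the relative $F_4$ root system of $G_J$, the finite places contribute, via the Gindikin--Karpelevich formula, ordinary local zeta factors along the long roots and the $\Theta$-twisted factors built from $\zeta_p(s)\zeta_p(s-3)$ along the short roots, reflecting the octonionic root multiplicity $8$, exactly as in the $E_7$ computation of subsection \ref{subsec:minE7}. Assembling the product over $p$ into completed global zeta functions and their twisted variants, and computing the archimedean rank-one intertwiners on the fixed $K_{G_J,\infty}$-type by reduction to $\SL_2(\R)$ as in \cite{pollackNTM,pollackQDS}, the essential point is that the archimedean $\Gamma$-poles cancel the $\Gamma$-factors produced by the completion, so that $M(w_0)(f^0_{fte}\cdot f_\infty)$ has the same order of pole at $s=24$ as an explicit ratio of completed Riemann zeta functions and twisted variants $\zeta(s)\zeta(s-3)$.

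Finally I would evaluate that ratio at $s=24$: a factor $\zeta(s-a)$ contributes a pole there exactly when $a=23$ and a trivial zero when $24-a$ is a non-positive even integer, so the claim reduces to checking that, after cancellations, the numerator carries exactly one more factor with argument $1$ at $s=24$ than the denominator. I expect the main obstacle to be precisely this bookkeeping: choosing and tracking a reduced word for $w_0$ in the relative $F_4$ system, keeping the long/short root dichotomy (hence ordinary versus twisted zeta factors) straight through the Gindikin--Karpelevich product, and matching the archimedean $\Gamma$-poles against the completion factors. Once that is set up, the statement at $s=24$ is a finite arithmetic verification.
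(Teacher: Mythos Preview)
Your approach is correct in outline and is essentially the strategy of \cite{ganMin}: reduce to the constant term along the minimal parabolic, reduce to the finite spherical vector using Proposition \ref{prop:E8degPS}, and then bound each of the $24$ intertwiners $M(w)f$ by a direct Gindikin--Karpelevich computation. The paper, however, reproves the result by a different and shorter route. Instead of the minimal parabolic it takes the constant term along the maximal parabolic $P_{G_J,4}$, whose Levi has type $D_{7,3}$. The double coset $[W_{M_{G_J,4}}\backslash W_{F_4}/W_{M_{G_J,1}}]$ has only three elements; for $w=[]$ and $w=[4,3,2,1]$ both the intertwiner and the resulting Eisenstein series on the Levi are absolutely convergent at $s=24$, while for $w_3=[4,3,2,3,4,1,2,3,2,1]$ one obtains (after an absolutely convergent intertwiner) exactly a Heisenberg Eisenstein series on the $D_{7}$ Levi, which has at most a simple pole by the general result of Hanzer--Savin \cite{hanzerSavin}. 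This replaces the $24$-term $c$-function bookkeeping by a three-term analysis plus an appeal to a known $D_n$ theorem.

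The paper's route also buys you uniformity in the archimedean data for free: absolute convergence and the Hanzer--Savin bound are insensitive to $f_\infty$. In your plan, the step ``the archimedean $\Gamma$-poles cancel the $\Gamma$-factors produced by the completion'' is only literally available for the spherical (or the special $f_{\infty,\ell}$) archimedean section; for a general $K_{G_J,\infty}$-finite flat section the archimedean intertwiner is a matrix on the $K$-type and you must argue separately that it introduces no additional pole at $s=24$. That is true, but it is an extra argument you did not supply, and it has to be made for each of the $24$ elements, not just $w_0$.
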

In the course of computing constant terms of $E(g,f,s)$ to the various maximal parabolic subgroups of $G_J$, we will reprove this result (in a different way).

We write elements of the Weyl group $W_{F_4}$ in notation that indicates how they are a product of simple reflections.  Specifically, if $w_{j}$ denotes the reflection corresponding to the simple root $\alpha_j$, and $w = w_{i_1}\cdots w_{i_N}$, we denote $w$ by $[i_1,i_2,\ldots,i_N]$.

We begin by computing the constant term of $E(g,f,s)$ to the parabolic with Levi of type $D_{7,3}$.  This is the parabolic $P_{G_J,4} = M_{G_J,4} N_{G_J,4}$. It has simple roots $\alpha_1,\alpha_2, \alpha_3$ in its Levi. Let $w_3 = [4,3,2,3,4,1,2,3,2,1]$ and $f^{w_3}(g,s) = M(w_3)f(g,s)$ and $E^{w_3}(g,f,s)$ the associated Eisenstein series on $M_{G_J,4}$.

\begin{proposition} For a general flat section $f(g,s) \in I(s)$, the constant term $E_{N_{G_J,4}}(g,f,s)$ has at most a simple pole at $s=24$.  The residue is $Res_{s=24} E^{w_3}(g,f,s)$.
\end{proposition}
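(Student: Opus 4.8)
The plan is to compute the constant term along $N_{G_J,4}$ by the standard recipe of subsection \ref{subsec:Eis}: it is a sum of Eisenstein series $E^w(g,f,s)$ on the Levi $M_{G_J,4}$ (of type $D_{7,3}$), indexed by the double-coset representatives $w \in [W_{M_{G_J,4}}\backslash W_{F_4}/W_{C_3}]$, where $P_{G_J,1}=M_{G_J,1}N_{G_J,1}$ has Levi with a $C_3$ root system. First I would enumerate this double-coset set; since $[W_{F_4}/W_{C_3}]$ has 24 elements (listed in the commented-out block), and we then quotient on the left by $W_{D_7}$, the set $[W_{M_{G_J,4}}\backslash W_{F_4}/W_{C_3}]$ will be small — I expect just a handful of representatives. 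For each representative $w$, I would track the image $w(\lambda_s)$ of the inducing character, where $\lambda_s = |\nu|^s\delta_{P_0}^{-1/2}$, exactly as in the proof of Proposition \ref{prop:E71constant}: write $w(\lambda_s)+\rho_{P_0}$ in the Euclidean coordinates for $F_4$ and read off (i) whether the global intertwining operator $M(w)$ is absolutely convergent at $s=24$ via the pairings $\langle\,\cdot\,,\alpha\rangle$ against the roots made negative by $w$, and (ii) whether the resulting Eisenstein series on $M_{G_J,4}$ converges absolutely (or is already known to be regular) at $s=24$, by comparing $w(\lambda_s)+\rho_{P_0}$ restricted to the Levi against the Levi's $\rho$.

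The key structural claims to extract from this bookkeeping are: the identity representative $w=1$ and all ``small'' representatives give $M(w)$ absolutely convergent at $s=24$ and Eisenstein series on $M_{G_J,4}$ that are either absolutely convergent there or have already been shown regular (here I would invoke Proposition \ref{prop:D5reg}, Proposition \ref{prop:D6minAut}, and the $D_7$ results, since the relevant Eisenstein series on the $D_7$ Levi — or on its smaller Levi subgroups reached after applying $M(w)$ — are precisely those studied in section \ref{sec:AMRD}); hence none of these contribute to the residue at $s=24$. The single surviving term is the one attached to $w_3 = [4,3,2,3,4,1,2,3,2,1]$, and it remains to show $M(w_3)f(g,s)$ has at most a simple pole at $s=24$. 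For this I would argue as in Proposition \ref{prop:E71constant}: by Proposition \ref{prop:E8degPS} the spherical vector generates $I_p(s=24)$ at every finite place, so it suffices to bound the pole order on the everywhere-spherical section; there the finite-place part of $M(w_3)$ is an explicit ratio of completed zeta functions (a product of the $\zeta$-factors picked up root-by-root along the reduced word), which I would compute and check has at worst a simple pole at $s=24$, while the archimedean factor $f_{\infty,\ell}$-intertwiner is computed explicitly (as in \cite{pollackNTM}) and seen to be regular at $s=24$. Combining, $E_{N_{G_J,4}}(g,f,s) = E^{w_3}(g,f,s) + (\text{terms regular at }s=24)$, which gives both assertions, and also reproves that $E(g,f,s)$ itself has at most a simple pole at $s=24$ since regularity of all constant terms along maximal parabolics forces regularity of the Eisenstein series (using in addition the remaining maximal parabolics, to be handled in the subsequent propositions).

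The main obstacle I anticipate is purely combinatorial-computational: correctly identifying the double-coset set $[W_{M_{G_J,4}}\backslash W_{F_4}/W_{C_3}]$ and, for each representative, correctly computing $w(\lambda_s)+\rho_{P_0}$ and the associated $c$-function as a product of $\zeta$-factors, so as to be certain that every term other than $w_3$ genuinely fails to contribute. The subtle point — as flagged in the analogous $D_7$ computation — is that a priori $M(w_3)f$ could a priori have a higher-order pole or contribute extra pieces not of ``minimal'' type; this is controlled by the generation-by-the-spherical-vector statement of Proposition \ref{prop:E8degPS} together with the explicit archimedean intertwiner computation, which pins the pole order down to exactly one. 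Everything else is a routine, if lengthy, propagation of the methods already used for $E_7$ in Proposition \ref{prop:E71constant} and for $D_6,D_7$ in section \ref{sec:AMRD}.
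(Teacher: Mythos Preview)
Your overall strategy is correct and matches the paper's: enumerate $[W_{M_{G_J,4}}\backslash W_{F_4}/W_{M_{G_J,1}}]$ (there are exactly three elements: $[]$, $[4,3,2,1]$, and $w_3$), compute $w(\lambda_s)+\rho_{P_0}$ for each, and show that the first two give terms regular at $s=24$ while only the $w_3$ term can have a pole. But you have mislocated the source of the simple pole, and this is a genuine gap.

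You write that ``it remains to show $M(w_3)f(g,s)$ has at most a simple pole at $s=24$'' and propose to compute the $c$-function of $M(w_3)$ as a product of $\zeta$-ratios. In fact the intertwining operator $M(w_3)$ is \emph{absolutely convergent} at $s=24$ (the paper checks this directly from the root pairings along the reduced word), so it contributes no pole at all. The simple pole comes from the Eisenstein series $E^{w_3}(g,f,s)$ on the Levi $M_{G_J,4}$: the associated simple root for $w_3$ is $[1]$, so $E^{w_3}$ is a genuine Eisenstein series on the $D_{7,3}$ Levi attached to its $D_{6,2}$-type maximal parabolic, and one computes $\langle w_3(\lambda_s)+\rho_{P_0},\alpha_1^\vee\rangle = s-17$. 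At $s=24$ this places the $D_7$ Eisenstein series exactly at the point where Hanzer--Savin show it has at most a simple pole. That citation is what controls the pole order, not a $c$-function computation for $M(w_3)$.

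Two smaller points. First, for the representatives $[]$ and $[4,3,2,1]$ you do not need Propositions~\ref{prop:D5reg} or~\ref{prop:D6minAut}: both the intertwining operators and the resulting Eisenstein series on the Levi are absolutely convergent at $s=24$ (the paper checks $s=24>12$ for $[]$ and $s-9=15>10$ for $[4,3,2,1]$), so regularity is immediate. Second, the proposition is stated for a \emph{general} flat section, before any choice of archimedean component $f_{\infty,\ell}$; your discussion of an explicit archimedean intertwiner computation and the generation-by-spherical-vector reduction via Proposition~\ref{prop:E8degPS} is therefore misplaced here (those tools are used later, once the archimedean section is fixed).
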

\begin{proof} The set $[W_{M_{G_J,4}}\backslash W/ W_{M_{G_J,1}}]$ has size three, with elements
\begin{enumerate}
	\item $[]$
	\item $[4,3,2,1]$
	\item $w_3=[4,3,2,3,4,1,2,3,2,1]$
\end{enumerate}
 We analyze the terms from $[W_{M_{G_J,4}}\backslash W/ W_{M_{G_J,1}}]$ one-by-one:
	\begin{enumerate}
		\item $[]$: The associated simple roots are $[1]$.  (See subsection \ref{subsec:Eis} for the meaning of this terminology.)  This yields an Eisenstein series associated to the $D_{6,2}$ Levi on $D_{7,3}$.  The intertwining operator is trivial, and because $s=24 > 12$, the Eisenstein series is absolutely convergent.  Thus this term is regular at $s=24$.
		\item $[4,3,2,1]$:  The associated simple roots are $[2]$.  The $2$ parabolic of $D_{7,3}$ will have Levi $\SL_2 \times D_{5,1}$.  The intertwining operator is absolutely convergent.  Setting $\lambda' = [4,3,2,1](\lambda_s) + \rho_{P_0}$, we obtain $\langle \lambda', \alpha_2^\vee \rangle = s-9$.  Because $24-9 = 15 > 10$, this Eisenstein series is absolutely convergent, so is regular at $s=24$.
		\item $w_3=[4,3,2,3,4,1,2,3,2,1]$: The associated simple roots are $[1]$.  The intertwining operator is absolutely convergent, as $s-19-3> 1$ at $s=24$.  Setting $\lambda' = [4,3,2,3,4,1,2,3,2,1](\lambda_s) + \rho_{P_0}$, we obtain $\langle \lambda', \alpha_1^\vee \rangle = s-17$.  At $s=24$, the associated Eisenstein series on $D_{7,3}$ has at most a simple pole by \cite{hanzerSavin}.
	\end{enumerate}
	This completes the proof of the proposition.
\end{proof}

We will now make a special choice of flat inducing section at the infinite place.  Namely, set $f_{\infty,\ell}(g,s) \in I(s) \otimes \mathbf{V}_{\ell}$ the flat section satisfying $f(gk,s) = k^{-1} f(g,s)$ for all $k \in K_{G_J,\infty}$ and $f(1,s) = x^{\ell} y^{\ell} \in \mathbf{V}_{\ell}$.  Up to scalar multiple, the vector $x^{\ell} y^{\ell}$ is the image in $\mathbf{V}_{\ell}$ of $h^{\ell}$, where $e,h,f \in \su_2\otimes \C$ is our fixed $\sl_2$ triple.

For a flat section $f_{fte}(g,s) \in I_f(s)$, we set $f_{\ell}(g,s) = f_{fte}(g,s) f_{\infty,\ell}(g,s)$.  \textbf{We fix $\ell=4$} and consider the Heisenberg Eisenstein series $E(g,f,s)$.

We now have the following proposition.
\begin{proposition}[Gan, see \cite{ganMin,ganSW}] The Eisenstein series $E(g,f,s)$ with $f$ spherical at every finite place attains the pole at $s=24$. The residue of the Eisenstein map $I_f(s=24) \rightarrow \mathcal{A}(G_J)$ is defined and intertwining, and the residual representation, is irreducible.
\end{proposition}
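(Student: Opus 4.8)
The plan is to reprove, via the constant term machinery of subsection~\ref{subsec:Eis}, that $E(g,f,s)$ has at most a simple pole at $s=24$ for every flat section, to show this pole is attained when $f$ is spherical at the finite places, and then to identify the residual representation using the local structure from Proposition~\ref{prop:E8degPS}; as a byproduct this produces the constant-term formulas for $\Theta_f$ that are needed later.

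First I would compute the constant term of $E(g,f,s)$ along each standard maximal parabolic $P_{G_J,j}=M_{G_J,j}N_{G_J,j}$ for $j\in\{1,2,3\}$, the case $j=4$ having just been treated. For each $j$ one enumerates the double coset set $[W_{M_{G_J,j}}\backslash W_{F_4}/W_{M_{G_J,1}}]$ --- a short list in each case --- and for each representative $w$ computes $w(\lambda_s)+\rho_{P_0}$. Then for each $w$ one checks whether $M(w)$ is absolutely convergent at $s=24$, by verifying $\langle\lambda_s,\alpha^\vee\rangle>0$ on the roots inverted by $w$, and whether the Levi Eisenstein series $E^w(g,f,s)$ on $M_{G_J,j}$ is absolutely convergent at $s=24$ by a degree count on $\langle w(\lambda_s)+\rho_{P_0},\beta^\vee\rangle$ for the simple roots $\beta$ of $M_{G_J,j}$, or else reduces --- up to isogeny, by Lemma~\ref{lem:isogEis} --- to one of the Eisenstein series already shown regular in Propositions~\ref{prop:D5reg}, \ref{prop:D6minAut}(4), \ref{prop:E6reg}, Remark~\ref{rmk:E7lowerwt}, or the $D_7$ results of subsection~\ref{subsec:AMRD7}, or to a residual theta function on $G_{6,\Theta}$, $G_{7,\Theta}$, or $H_J^1$. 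In every case this yields a pole of order at most $1$ at $s=24$. Since constant terms along arbitrary parabolics are obtained by transitivity, the criterion recalled in subsection~\ref{subsec:Eis} shows $E(g,f,s)$ has at most a simple pole at $s=24$ for every flat section, so that after fixing $f_{\infty,4}$ the residue is a well-defined $G_J(\A_f)$-intertwining map $I_f(s=24)\to\mathcal{A}(G_J)$.

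For $f$ spherical at all finite places, the constant term along $P_{G_J,4}$ has residue $\mathrm{Res}_{s=24}E^{w_3}(g,f,s)$, which by subsection~\ref{subsec:AMRD7} is, up to isogeny, a \emph{nonzero} residual theta function on $G_{7,\Theta}$; hence $\mathrm{Res}_{s=24}E(g,f,s)\neq0$, so the pole is attained and the residual representation $\Pi$ is nonzero. For irreducibility, Proposition~\ref{prop:E8degPS} gives that each $I_p(s=24)$ has a unique irreducible (spherical) quotient $V_{min,p}$; since the spherical vector does not lie in the unique maximal submodule, it generates $I_p(s=24)$, and standard restricted-tensor-product considerations give that $I_f(s=24)$ has unique irreducible quotient $V_{min}=\otimes'_p V_{min,p}$ with $\dim\Hom_{G_J(\A_f)}(I_f(s=24),V_{min})=1$. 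The exponents appearing in the constant terms computed above show $\Theta_f$ is square-integrable, so $\Pi$ lies in the discrete spectrum and is semisimple; each irreducible summand of $\Pi$ is then a quotient of $I_f(s=24)$, hence isomorphic to $V_{min}$, and the one-dimensionality of $\Hom(I_f(s=24),V_{min})$ forces $\Pi\cong V_{min}$. (Alternatively one may simply cite \cite{ganMin} for the irreducibility.)

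The main obstacle I anticipate is the bookkeeping of the constant term computation: for $P_{G_J,2}$ and $P_{G_J,3}$ one must track the double cosets and their associated simple roots carefully, and --- crucially --- match each Levi Eisenstein series that arises with the correct regularity or residue statement on a $D_n$, $E_6$, or $E_7$ group, which is precisely why those ``smaller'' results were established first. Conceptually, the identification of the residual representation is the delicate point, but granted Proposition~\ref{prop:E8degPS} it reduces to the square-integrability estimate, itself part of the constant term calculation.
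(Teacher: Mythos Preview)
Your proposal is correct, but it is worth noting that the paper does not actually prove this proposition: it simply attributes the result to Gan \cite{ganMin,ganSW} and moves on. What the paper does do, in the propositions immediately following this one, is exactly the constant-term computation along $P_{G_J,j}$ for $j=1,2,3$ that you outline (the case $j=4$ having been done just before), and the paper remarks earlier that these computations reprove the at-most-simple-pole statement ``in a different way''. So your first step matches the paper precisely.

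Your arguments for attainment of the pole and for irreducibility go beyond what the paper supplies. Both are sound. For attainment, your route via the $P_{G_J,4}$ constant term and the nonvanishing of the $D_7$ residue (subsection~\ref{subsec:AMRD7}) is valid; the only point that deserves an explicit check is that the archimedean section $f_{\infty,4}$ on $G_J$, after applying the absolutely convergent intertwiner $M(w_3)$ and restricting to the $D_7$ Levi, lands on a nonzero multiple of the $\mathbf{V}_4$-valued section used in subsection~\ref{subsec:AMRD7}. This is true (the $K$-types are designed to be compatible, and $M(w_3)$ is regular at $s=24$), but you should say a word about it. For irreducibility, your square-integrability plus unique-quotient argument is the standard one and is correct; the paper simply cites Gan here, as you anticipated in your parenthetical remark.
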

We write $\Theta_f(g) = Res_{s=24}E(g,f,s)$.

We now compute the constant terms of $\Theta_f$ along the parabolic subgroups $P_{G_J,j}$ with $j = 1,2,3$.  We begin with the constant term down to the Heisenberg parabolic.  As usual, we write $f^1(g,s) = M(w_0)f(g,s)$ and $\overline{f}(g) = Res_{s=24}f^1(g,s)$.  (We will see momentarily that $f^1(g,s)$ has at most a simple pole at $s=24$.)

\begin{proposition}  Let $w_2 = [1,2,3,4,2,3,2,1]$, $f^{w_2}(g,s) = M(w_2)f(g,s)$ and let the other notation be as above.  Then $f^1(g,s)$ has a simple pole at $s=24$ while the integral defining $M(w_2)$ is absolutely convergent.  One has 
	\[\Theta_f(g)_{N_{G_J,1}} = \overline{f}(g) + Res_{s=24}E_{w_2}(g,f,s).\]
\end{proposition}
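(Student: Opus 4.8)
The plan is to compute the constant term $E(g,f,s)_{N_{G_J,1}}$ from the formula in subsection \ref{subsec:Eis}: for $Re(s) \gg 0$ it equals $\sum_{w} E^{w}(g,f,s)$, a sum over $w \in [W_{M_{G_J,1}} \backslash W_{F_4} / W_{M_{G_J,1}}]$, where the Levi $M_{G_J,1}$ has derived group $H_J^1$ of type $C_3$ with simple roots $\alpha_2, \alpha_3, \alpha_4$. First I would enumerate this (short) list of double coset representatives, noting that it contains the identity, $w_2 = [1,2,3,4,2,3,2,1]$, and the longest element $w_0 = [1,2,3,4,2,3,1,2,3,4,1,2,3,2,1]$. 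Extracting the residue at $s=24$ of the meromorphically continued constant term then amounts to deciding, for each $w$, whether the term $E^w(g,f,s)$ is regular there.

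For each representative $w$, I would carry out the same type of bookkeeping as in the proof of Proposition \ref{prop:E71constant}: write down where $w$ sends $\lambda_s = \delta_{P_0}^{-1/2}|\nu|^s$ by stepping through a reduced word, and along the way (i) compute the pairing $\langle \cdot, \alpha_j^\vee \rangle$ of the partially-transformed character with the next simple root to test absolute convergence of $M(w)$, and (ii) decompose $w(\lambda_s) + \rho_{P_0}$ along the associated simple roots $\Delta^w(M_{G_J,1})$ versus the complementary directions to test absolute convergence of the Levi Eisenstein series $E^w$. The claim to verify is that for every $w \notin \{1, w_2, w_0\}$ the term $E^w(g,f,s)$ is regular at $s=24$ --- either because both $M(w)$ and the Levi Eisenstein series are absolutely convergent there, or, when $\Delta^w(M_{G_J,1})$ is a single simple root, because $M(w)$ converges and the resulting maximal-parabolic Eisenstein series on the $C_3$-Levi is regular at $s=24$ by an earlier regularity statement (Proposition \ref{prop:D5reg} or Proposition \ref{prop:E6reg}); the $w=1$ term is just $f(g,s)$, which is holomorphic. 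Hence only $w_0$ and $w_2$ survive in the residue.

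For $w_0$, the associated simple root set $\Delta^{w_0}(M_{G_J,1})$ is empty, so $E^{w_0}(g,f,s) = f^1(g,s) = M(w_0)f(g,s)$ is a single term; I would show it has at most a simple pole at $s=24$ by explicitly computing $M(w_0)$ on the everywhere-spherical section as a product of completed zeta factors at the finite places times a ratio of Gamma/Pochhammer factors at the archimedean place (organizing $w_0$ as a chain of rank-one steps, exactly as in Proposition \ref{prop:E71constant}), seeing a genuine simple pole there, and then invoking Proposition \ref{prop:E8degPS} --- that $I_p(s=24)$ is generated by its spherical vector for every finite $p$ --- to deduce the same order bound for an arbitrary flat section. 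Its residue is $\overline{f}(g)$ by definition. For $w_2$, I would check directly that the reduced word $[1,2,3,4,2,3,2,1]$ gives a strictly positive pairing with the next simple root at every stage when $s=24$, so $M(w_2)$ converges absolutely there and $f^{w_2}(g,s)$ is holomorphic at $s=24$; the Levi Eisenstein series $E_{w_2}(g,f,s)$ built from $f^{w_2}$ is nonetheless not absolutely convergent at $s=24$ and contributes its residue, yielding $\Theta_f(g)_{N_{G_J,1}} = \overline{f}(g) + Res_{s=24} E_{w_2}(g,f,s)$.

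The main obstacle is the Weyl-group bookkeeping: correctly listing the double cosets in $[W(C_3) \backslash W(F_4) / W(C_3)]$, computing $\Delta^w(M_{G_J,1})$ for each, and verifying the long list of convergence inequalities at $s=24$ without a sign error --- a single slip would either spuriously retain a term or drop a genuine one. A secondary delicate point is the archimedean intertwining computation for $w_0$: it must be organized step by step, tracking the action on the weight-$\mathbf{V}_4$ part of the inducing section, to confirm that the archimedean factor is regular and nonvanishing at $s=24$ so that the pole of $f^1$ there is of order exactly one.
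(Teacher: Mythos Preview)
Your proposal is correct and follows essentially the same approach as the paper: enumerate $[W_{M_{G_J,1}}\backslash W_{F_4}/W_{M_{G_J,1}}]$ (the paper lists five elements: $[]$, $[1]$, $[1,2,3,2,1]$, $w_2$, $w_0$), dispose of $[]$, $[1]$, $[1,2,3,2,1]$ by absolute convergence of both the intertwiner and the Levi Eisenstein series, and keep only $w_2$ and $w_0$. Two minor remarks: in the paper the two intermediate representatives are handled purely by absolute convergence, so your fallback to Propositions \ref{prop:D5reg}/\ref{prop:E6reg} is not needed here; and for the simple pole of $M(w_0)$ at $s=24$ the paper cites \cite[Proposition 4.1.3]{pollackE8} rather than recomputing the archimedean factor from scratch, though your direct computation would also work.
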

The Eisenstein series on $M_{G_J,1}$ is for its Siegel parabolic, and yields a vector in the minimal representation on $H_J^1$.
\begin{proof} 
The set $[W_{M_{G_J,1}}\backslash W/ W_{M_{G_J,1}}]$ has five elements:
\begin{enumerate}
\item $[]$
\item $[1]$
\item $w_2=[1,2,3,4,2,3,2,1]$
\item $1,2,3,2,1]$
\item $w_0=[1,2,3,4,2,3,1,2,3,4,1,2,3,2,1]$
\end{enumerate}

We analyze the them in turn:
	\begin{enumerate}
		\item The term $[]$ yields the inducing section, which is of course defined at $s=24$.
		\item The term $[1]$ yields a Siegel Eisenstein series on the Levi, evaluated at $s=23 > 18$, (observe $|\lambda(h_{\alpha_2}(t))| = |t|$ as $ \langle r_1+r_2+r_3, r_3\rangle = 1$) and from an absolutely convergent intertwining operator.  Thus this term does not contribute to the residue.
		\item The term $[1,2,3,2,1]$ gives an Eisenstein series for $M_{G_J,1}$ with the simple root $4$ excluded.  The associated Levi in $M_{G_J,1}$ is of type $D_{5,1} \times \SL_2$.  The intertwining operator $M([1,2,3,2,1])$ is seen to be absolutely convergent. Setting $\lambda' = [1,2,3,2,1](\lambda_s) + \rho_{P_0}$, one has $\langle \lambda', \frac{1}{2}\alpha_4^\vee \rangle = s-6$.  As $24-6=18 > 8$, the associated Eisenstein series is absolutely convergent.  Thus this term does not contribute to the residue.
		\item The term $w_2=[1,2,3,4,2,3,2,1]$ yields an absolutely convergent intertwining operator.  The associated Eisenstein series on $M_{G_J,1}$ is for the Siegel parabolic. We have analyzed this Eisenstein series in subsection \ref{subsec:minE7}, see Remark \ref{rmk:E7lowerwt}.
		\item The long intertwining operator $M(w_0)$ has a simple pole at $s=24$; see Proposition 4.1.3 of \cite{pollackE8}, which handles the spherical case.  The general case follows from Proposition \ref{prop:E8degPS}.
	\end{enumerate}
\end{proof}

We next compute the constant term down to the parabolic $P_{G_J,2}=M_{G_J,2} N_{G_J,2}$.  This is the one with Levi of form $\SL_2 \times E_{6,2}$. The roots in the $\SL_2$ are $\alpha_1$, and the roots in the $E_6$ are $\alpha_3, \alpha_4$. 

\begin{proposition} Let the notation be as above.  Then $\Theta_f(g)_{N_{G_J,2}} = E_{\SL_2}(g,\overline{f})$, an absolutely convergent $\SL_2$ Eisenstein series on $M_{G_J,2}$.
\end{proposition}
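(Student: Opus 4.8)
The plan is to apply the constant-term machinery of subsection \ref{subsec:Eis} to the Heisenberg Eisenstein series $E(g,f_4,s)$ on $G_J$ with $Q = P_{G_J,2}$, just as in the preceding $G_J$-propositions. Recall $L := M_{G_J,2}$ has derived group $\SL_2\times E_{6,2}$, with $\alpha_1$ spanning the relative root system of the $\SL_2$-factor, $\alpha_3,\alpha_4$ the $A_2$-root system of the $E_{6,2}$-factor, and $\alpha_2 \in N_{G_J,2}$. First I would compute the double coset set $[W_{M_{G_J,2}}\backslash W_{G_J}/W_{M_{G_J,1}}]$: among the $24$ representatives of $[W_{G_J}/W_{M_{G_J,1}}]$, keep those $w$ that also lie in $[W_{M_{G_J,2}}\backslash W_{G_J}]$, i.e.\ satisfy $w^{-1}(\alpha)>0$ for $\alpha\in\{\alpha_1,\alpha_3,\alpha_4\}$. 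This leaves a short list; for instance $w_2=[1,2,3,4,2,3,2,1]$ is eliminated since its reduced word begins with the reflection in $\alpha_1$, so the extra residual term that appeared in the computation of $\Theta_f(g)_{N_{G_J,1}}$ does not recur here.

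Then, for each surviving representative $w$, I would run the two checks used in the proofs of Propositions \ref{prop:E71constant}--\ref{prop:E7minConst2}: determine the associated simple roots $\Delta^w(L)\subseteq\{\alpha_1,\alpha_3,\alpha_4\}$, which fix the parabolic $wP_{G_J,1}w^{-1}\cap L$ of $L$ over which $E^w(g,f,s)$ is summed; track the transformed exponent $w(\lambda_s)+\rho_{P_0}$; and verify (a) absolute convergence of the integral defining $M(w)$ at $s=24$ (positivity of the relevant $\langle\lambda_s,\beta^\vee\rangle$) and (b) absolute convergence of the summation $E^w(g,f,s)$ on $L$ there, by comparing the exponent of $M(w)f$ against the $\rho$ of $wP_{G_J,1}w^{-1}\cap L$ and invoking Proposition \ref{prop:E6reg} whenever a degenerate Eisenstein series on the $E_{6,2}$-factor appears (its special point $s=18$ matches $s=24$ after renormalizing the $E_6$ parameter). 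Every $w$ passing both (a) and (b) contributes a term regular at $s=24$, hence invisible in the residue; I expect this for all representatives except the longest one, $w_*$, in particular for $[\,]$, which contributes up to an isogeny a $\GL_2$-type Eisenstein series on $L$ evaluated well within its range of absolute convergence.

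It then remains to pin down the contribution of $w_*$. One checks that $M(w_*)$ carries a simple pole at $s=24$ (the long intertwiner $M(w_0)$ does, by the computation of $\Theta_f(g)_{N_{G_J,1}}$ and \cite{pollackE8}, and $M(w_*)$ differs from it only by an absolutely convergent $L$-level intertwiner at $s=24$), and that $\Delta^{w_*}(L)=\{\alpha_1\}$, so that $w_*P_{G_J,1}w_*^{-1}\cap L$ is the parabolic of $L$ with Levi $B_{\SL_2}\times E_{6,2}$. Thus $E^{w_*}(g,f,s)$ is, apart from the trivial $E_{6,2}$-factor, the rank-one Eisenstein series attached to the Borel of the $\SL_2$-factor; using the Langlands functional equation inside $L$ to replace $M(w_*)$ by $M(w_0)$, its residue at $s=24$ is $\sum_{\gamma\in (P_{G_J,1}\cap P_{G_J,2})(\Q)\backslash P_{G_J,2}(\Q)}\overline{f}(\gamma g)=E_{\SL_2}(g,\overline{f})$. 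Finally, absolute convergence of this $\SL_2$-Eisenstein series is a one-line exponent check in the style of the proof of Proposition \ref{prop:E71constant}: at $s=24$ the pairing $\langle w_0(\lambda_s)+\rho_{P_0},\alpha_1^\vee\rangle$ exceeds $1$.

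The main obstacle is organizational rather than conceptual: trimming the $24$-element list correctly to the genuine double-coset representatives, and then, for the handful of borderline $w$, carrying out the exponent comparisons precisely enough to be certain the associated $E^w$ on $L$ — a degenerate Eisenstein series on the $E_{6,2}$-factor, on the $\SL_2$-factor, or a product of the two — is truly absolutely convergent at $s=24$; this is where Proposition \ref{prop:E6reg} does the real work. Verifying that the $E_{6,2}$-component of the $w_*$-term is trivial, so that the residue is genuinely an $\SL_2$-Eisenstein series and nothing larger, is the remaining subtlety, but it follows once $\Delta^{w_*}(L)=\{\alpha_1\}$ is established.
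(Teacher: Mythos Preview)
Your proposal is correct and follows the same approach as the paper. The double coset set $[W_{M_{G_J,2}}\backslash W_{G_J}/W_{M_{G_J,1}}]$ has exactly seven elements; the paper enumerates them and confirms your expectations case by case: five are disposed of by absolute convergence of both the intertwiner and the Levi Eisenstein series, one ($[2,3,1,2,3,4,1,2,3,2,1]$, with $\Delta^w(L)=\{\alpha_4\}$) needs Proposition~\ref{prop:E6reg} exactly as you anticipate, and only the longest representative $w_*=[2,3,4,2,3,1,2,3,4,1,2,3,2,1]$ survives, with $\Delta^{w_*}(L)=\{\alpha_1\}$ and the residue extracted via the Langlands functional equation on the $\SL_2$ factor. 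One minor remark: you need not argue that $M(w_*)$ itself has a simple pole; it suffices that the functional equation rewrites $E^{w_*}(g,f,s)$ as the $\SL_2$-sum of $M(w_0)f$, whose residue is $\overline{f}$.
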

\begin{proof}
 We have $[W_{M_{G_J,2}}\backslash W/ W_{M_{G_J,1}}]$:
\begin{enumerate}
	\item $[]$
	\item $[2,3,4,2,3,2,1]$
	\item $[2,3,2,1]$
	\item $[2,3,4,2,3,1,2,3,4,1,2,3,2,1]$
	\item $[2,1]$
	\item $[2,3,4,1,2,3,2,1]$
	\item $[2,3,1,2,3,4,1,2,3,2,1]$
\end{enumerate}	
	
 We handle the corresponding terms one-by-one.
	\begin{enumerate}
		\item $[]$: This yields an Eisenstein series on the $\SL_2$ part of $M_{G_J,2}$.  It is absolutely convergent, so does not contribute to the residue at $s=24$.
		\item $[2,3,4,2,3,2,1]$: This again yields an Eisenstein series on the $\SL_2$ part of $M_{G_J,2}$. The intertwining operator is absolutely convergent, and so is the Eisenstein series.  Thus this term does not contribute to the residue.
		\item $[2,3,2,1]$: The associated simple roots for this Eisenstein series are $[1,4]$.  Thus this term yields an Eisenstein series on $\SL_2$ part of the Levi, and an Eisenstein on $E_6$ part.  The intertwining operator is absolutely convergent, and so is the $\SL_2$ Eisenstein series.  Setting $\lambda' = [2,3,2,1](\lambda_s) + \rho_{P_0}$, one has $\langle \lambda_s, \frac{1}{2} \alpha_4^\vee \rangle = s-6$.  As $24-6 = 18 > 12$, this Eisenstein series on $E_6$ is also absolutely convergent.  Thus this term is regular at $s=24$.
		\item $[2,3,4,2,3,1,2,3,4,1,2,3,2,1]$: This yields an Eisenstein series on the $\SL_2$ bit. Neither the intertwining operator, nor the Eisenstein series, is absolutely convergent.  However, applying the Langlands functional equation, one obtains the Eisenstein series in the statement of the proposition.  
		\item $[2,1]$: This yields an Eisenstein series on the $E_6$ part, with simple root $[3]$ not in the new Levi. The intertwining operator is absolutely convergent, and so is the Eisenstein series.  Thus this term does not contribute to the residue.
		\item $[2,3,4,1,2,3,2,1]$: The associated roots for this term are $[1,3]$, so there is an $\SL_2$ Eisenstein series and an $E_6$ Eisenstein series.  The intertwining operator is absolutely convergent.  Setting $\lambda' = [2,3,4,1,2,3,2,1](\lambda_s) + \rho_{P_0}$, one has $\langle \lambda',\frac{1}{2} \alpha_3^\vee \rangle = s-11$.  As $24-11=13> 12$, the Eisenstein series on $E_6$ is absolutely convergent.  One also sees that the $\SL_2$ Eisenstein series is absolutely convergent.  Thus this term does not contribute to the residue.
		\item $[2,3,1,2,3,4,1,2,3,2,1]$: The associated roots for this term is $[4]$, so this term yields an Eisenstein series on $E_6$.  One sees that the intertwining operator is absolutely convergent.  Setting $\lambda' = [2,3,1,2,3,4,1,2,3,2,1](\lambda_s) + \rho_{P_0}$, one has $\langle \lambda',\frac{1}{2} \alpha_4^\vee \rangle = s-15$.  One sees, because the $K$-equivariance is preserved by the intertwining operator, that the above inducing section on $E_6$ will be spherical at the archimedean place.  Thus we know from \ref{prop:E6reg} that this Eisenstein series is regular. 
	\end{enumerate}
	The proposition is proved.
\end{proof}

The parabolic $P_{G_J,3}=M_{G_J,3}N_{G_J,3}$ has Levi of type $\SL_3 \times D_{5,1}$.  The simple roots in its Levi are $\alpha_1,\alpha_2$ (in the $\SL_3$) and $\alpha_4$ in the $D_{5,1}$. 
\begin{proposition} Let the notation be as above.  Let $E_{\SL_3}(g,\overline{f})$ be absolutely convergent Eisenstein series on $\SL_3$ for the simple root $[1]$ for the inducing section $\overline{f}$.  Then $\Theta_f(g)_{N_{G_J,3}} = E_{\SL_3}(g,\overline{f})$.
\end{proposition}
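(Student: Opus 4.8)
The proof will follow the same template as the three preceding propositions on constant terms of $\Theta_f$. The plan is: (1) compute the double coset set $[W_{M_{G_J,3}}\backslash W/W_{M_{G_J,1}}]$ and choose length-minimal representatives; (2) for each representative $w$, determine the associated simple roots $\Delta^w(M_{G_J,3})$ (in the sense of subsection \ref{subsec:Eis}) together with the transported parameter $w(\lambda_s)+\rho_{P_0}$, writing things in the $F_4$ coordinates as in the proof of Proposition \ref{prop:E71constant}; (3) discard every $w$ for which both the intertwining operator $M(w)$ and the Levi Eisenstein series $E^w(g,f,s)$ are absolutely convergent at $s=24$, as well as every $w$ for which $E^w(g,f,s)$ is regular at $s=24$ by one of the earlier propositions; (4) show the single surviving term equals $E_{\SL_3}(g,\overline f)$ via the Langlands functional equation for Eisenstein series.

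For step (1) I would compute $[W_{M_{G_J,3}}\backslash W/W_{M_{G_J,1}}]$ explicitly, for instance by intersecting the $24$-element set $[W_{F_4}/W_{M_{G_J,1}}]$ with $[W_{M_{G_J,3}}\backslash W_{F_4}]$, using that $W_{M_{G_J,3}} = S_3\times\{\pm1\}$ is generated by $w_{\alpha_1},w_{\alpha_2}$ and $w_{\alpha_4}$ (recall $\alpha_4$ is orthogonal to both $\alpha_1$ and $\alpha_2$); only a small list of double cosets survives, beginning with the identity. For each representative $w$, recalling $|\nu| = r_1+r_2$ (the highest root) and computing $\lambda_s = |\nu|^s\delta_{P_0}^{-1/2}$ as in the earlier proofs, I would pair $\lambda_s$ with the coroots inverted by $w$ to test absolute convergence of $M(w)$, and pair $w(\lambda_s)+\rho_{P_0}$ with the simple coroots $\alpha_1^\vee,\alpha_2^\vee,\alpha_4^\vee$ of $M_{G_J,3}$ to locate the Levi Eisenstein series relative to its range of absolute convergence. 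Terms whose associated simple roots land on the $D_{5,1}$ node $\alpha_4$ and are not in that range would be handled by Proposition \ref{prop:D5reg} — the inducing section on the $D_{5,1}$ factor becomes spherical at the archimedean place, since the intertwining operators preserve $K$-type, exactly as in the $E_6$-term of the $P_{G_J,2}$ computation; terms whose associated simple roots lie in the $A_2$ factor $\{\alpha_1,\alpha_2\}$ produce degenerate $\SL_3$-Eisenstein series which are either absolutely convergent or regular at the relevant point. This mirrors precisely how Propositions \ref{prop:D5reg} and \ref{prop:E6reg} were invoked in the proofs for $P_{G_J,4}$ and $P_{G_J,2}$; as there, one also reduces simplicity of any remaining pole to the finite-spherical case via Proposition \ref{prop:E8degPS}.

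The surviving term should be the unique $w$ with $\Delta^w(M_{G_J,3}) = \{\alpha_1\}$, for which neither $M(w)$ nor $E^w(g,f,s)$ is absolutely convergent at $s=24$. To identify its residue I would apply the Langlands functional equation exactly as in the $P_{G_J,2}$ proof: the residue $\mathrm{Res}_{s=24}E^w(g,f,s)$ is re-expressed through $f^1(g,s) = M(w_0)f(g,s)$ and its residue $\overline f(g) = \mathrm{Res}_{s=24}f^1(g,s)$ (whose simple pole is already known from the $P_{G_J,1}$ computation), yielding the degenerate $\SL_3$-Eisenstein series on $M_{G_J,3}$ attached to $\overline f$ and to the parabolic with $\alpha_1$ in its unipotent radical; since that $\SL_3$-Eisenstein series lies in its range of absolute convergence, taking the residue commutes with the summation, and the result is $E_{\SL_3}(g,\overline f)$.

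The main obstacle is purely combinatorial bookkeeping: correctly enumerating the double cosets in $W_{F_4}$ with a rank-two-plus-rank-one Levi on the left, and for each representative unwinding $\Delta^w(M_{G_J,3})$ and the induced parameter on the Levi precisely enough to invoke the correct convergence estimate or the correct earlier proposition. No new idea is needed beyond those already used for $P_{G_J,1}$, $P_{G_J,2}$ and $P_{G_J,4}$; the $F_4$-Weyl combinatorics with a disconnected Levi on one side is simply the fiddliest instance of this batch.
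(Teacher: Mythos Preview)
Your approach is exactly the paper's: enumerate $[W_{M_{G_J,3}}\backslash W/W_{M_{G_J,1}}]$ (five elements), discard the terms that are absolutely convergent at $s=24$, and identify the remaining one via the Langlands functional equation as the $\SL_3$-Eisenstein series in $\overline f$. Two small corrections to your bookkeeping: first, the surviving $w$ has $\Delta^w(M_{G_J,3})=\{\alpha_2\}$, not $\{\alpha_1\}$ --- the $w=1$ term has associated root $\alpha_1$ but its $\SL_3$-Eisenstein series is already absolutely convergent and so does not contribute to the residue; it is only after applying the functional equation (which exchanges the two maximal parabolics of $\SL_3$) that the surviving $\alpha_2$-Eisenstein series in $f^w$ becomes the $\alpha_1$-Eisenstein series in $\overline f$; second, in this particular case every term whose associated roots touch $\alpha_4$ (the $D_{5,1}$ node) happens to land strictly inside the range of absolute convergence, so Proposition~\ref{prop:D5reg} is not needed here, in contrast to the $P_{G_J,2}$ computation where the analogous $E_6$-regularity was genuinely required.
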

\begin{proof} 
 We have $[W_{M_{G_J,3}}\backslash W/ W_{M_{G_J,1}}]$:
\begin{enumerate}
	\item $[]$
	\item $[3,4,2,3,2,1]$
	\item $[3,2,1]$
	\item $[3,4,2,3,1,2,3,4,1,2,3,2,1]$
	\item $[3,2,3,4,1,2,3,2,1]$
\end{enumerate}	
We analyze the terms one-by-one:
	\begin{enumerate}
		\item $[]$: The associated simple roots is $[1]$.  This yields a maximal parabolic Eisenstein series on $\SL_3$, for the $(1,2)$ parabolic.  The associated Eisenstein series is absolutely convergent.
		\item $[3,4,2,3,2,1]$: The associated simple roots are $[1,2]$.  This yields a Borel Eisenstein series on $\SL_3$.  The intertwining operator is absolutely convergent.  Setting $\lambda' = [3,4,2,3,2,1](\lambda_s) + \rho_{P_0}$, one has $\langle \lambda', \alpha_1^\vee \rangle = s-10$ and $\langle \lambda', \alpha_2^\vee \rangle = s-17$.  Because $24-10 > 24-17 = 7 > 3$, this Borel Eisenstein series is absolutely convergent.  Thus, this term does not contribute to the residue.
		\item $[3,2,1]$: The associated simple roots are $[2,4]$.  This yeilds a maximal parabolic Eisenstein series on $\SL_3$ times a maximal parabolic Eisenstein series on $D_{5,1}$.  The intertwining operator is absolutely convergent.  Setting $\lambda' = [3,2,1](\lambda_s) + \rho_{P_0}$, one has $\langle \lambda', \alpha_2^\vee \rangle = s-9$ and $\langle \lambda', \frac{1}{2} \alpha_4^\vee \rangle = s-6$.  The $\SL_3$ Eisenstein series is absolutely convergent because $24-9=15>3$.  The $D_{5,1}$ Eisenstein series is absolutely convergent because $24-6 = 18 > 8$.  Thus this term does not contribute to the residue. 
		\item $[3,4,2,3,1,2,3,4,1,2,3,2,1]$: The associated simple root is $[2]$.  This yields a maximal parabolic Eisenstein series on $\SL_3$.  Neither the intertwining operator nor the Eisenstein series will be in the range of absolute convergence.  Thus we analyze it using Langlands functional equation, and obtain the Eisenstein series in the statment of the proposition.
		\item $[3,2,3,4,1,2,3,2,1]$: The associated simple roots are $[1,4]$.  This yields a maximal parabolic Eisenstein series on $\SL_3$ times a maximal parabolic Eisenstein series on $D_{5,1}$.  The intertwining operator is absolutely convergent.  Setting $\lambda' = [3,2,3,4,1,2,3,2,1](\lambda_s) + \rho_{P_0}$, one has $\langle \lambda', \alpha_1^\vee \rangle = s-17$ and $\langle \lambda', \frac{1}{2}\alpha_4^\vee \rangle = s-15$.  Because $24-17=7 >3$ and $24-15=9 > 8$, the two Eisenstein series are absolutely convergent.  Thus this term does not contribute to the residue.
	\end{enumerate}
\end{proof}

\section{Twisted Jacquet functors}\label{sec:twisedJacquet}
In this section, we compute various twisted Jacquet functors of $p$-adic minimal representations.  We will use these computations as part of the eventual proof of the Siegel-Weil theorems in section \ref{sec:Main}.

More specifically, in this section, we prove results of the following sort.  Suppose $G \times S \subseteq G'$ is a commuting pair, and $V_{min,p}$ is a minimal representation of $G'(\Q_p)$.  Let $U$ be the unipotent radical of a parabolic subgroup of $G$, and $\chi: U(\Q_p) \rightarrow \C^\times$ a non-degenerate character.  Let $(V_{min,p})_{(U,\chi)}$ be the twisted Jacquet functor.  Then in this section, we prove that the $S(\Q_p)$-coinvariants $(V_{min,p})_{(U,\chi),S(\Q_p)}$ of $(V_{min,p})_{(U,\chi)}$ are one-dimensional in various cases.

\subsection{Orbits}
To prove the one-dimensionality of the space of coinvariants as mentioned above, we will need to show that $S_E(\Q_p)$ acts transitively on the $\Q_p$ points of a certain algebraic set $\Omega_x$, in various cases.  To prove this transitivity of action, we consistently use the following method.
\begin{enumerate}
	\item We prove that $S_E(\overline{\Q}_p)$ acts transitively on $\Omega_x(\overline{\Q}_p)$;
	\item We verify that the stabilizer of a point $\lambda \in \Omega_x(\Q_p)$ is an algebraic group that is semisimple and simply-connected.
\end{enumerate}
In the above setting, it then follows that $S_E(\Q_p)$ acts transitively on $\Omega_x(\Q_p)$ using \cite[Proposition 1]{bhargavaGrossAIT} and the triviality of the Galois cohomology of a simply-connected group over a $p$-adic field.

Throughout this section, we write $C = \Theta \otimes k$ for a $p$-adic local field $k$ and $C^0$ for the subspace of trace $0$ elements.  The group $\Spin(C)$ acts on three copies of $C$.  We write a typical element $g$ of $\Spin(C)$ as $g = (g_1, g_2, g_3)$, with $g_j \in \SO(C)$.  We begin by recalling the following well-known lemma.
\begin{lemma}\label{lem:SpinCstabs}For the action of $\Spin(C)$ on $C^3$, one has the following stabilizers:
	\begin{enumerate}
		\item The set of $g \in \Spin(C)$ with $g_1(1) = 1$ is a copy of $\Spin_7$.
		\item The set of $g \in \Spin(C)$ with $g_1(1) = 1$ and $g_2(1) = 1$ is $G_2$.
		\item Suppose $v \in C^0$ has nonzero norm.  The set of $g \in \Spin(C)$ with $g_1(1) = 1$, $g_2(1) = 1$ and $g_3(v) = v$ is a copy of $\SU_3$.
	\end{enumerate}
\end{lemma}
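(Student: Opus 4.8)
The plan is to deduce all three statements from the principle of triality together with standard structure theory of composition algebras; since $\Theta\otimes k$ is the split octonion algebra over every $p$-adic field $k$, the groups that appear are the expected (quasi-)split ones. Recall from subsection~\ref{subsec:SE} that $\Spin(C)\subseteq\SO(C)^{3}$ is the group of triples $(g_{1},g_{2},g_{3})$ preserving the trilinear form $\tr_{C}(x_{1}(x_{2}x_{3}))$, and that in a composition algebra one has trace-associativity $\tr_{C}((x_{1}x_{2})x_{3})=\tr_{C}(x_{1}(x_{2}x_{3}))$ and nondegeneracy of the symmetric bilinear form $\langle a,b\rangle:=\tr_{C}(ab)$. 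For part (1), each coordinate projection $p_{j}\colon\Spin(C)\to\SO(C)$ is a spin covering (a central isogeny of degree two); since $1\in C$ has nonzero norm, $\Stab_{\SO(C)}(1)=\SO(1^{\perp})=\SO(C^{0})$ is a split $\SO_{7}$, so $\{g\in\Spin(C):g_{1}(1)=1\}=p_{1}^{-1}(\SO(C^{0}))$ is precisely the preimage of $\SO_{7}\hookrightarrow\SO_{8}$ under the spin covering, namely $\Spin_{7}$.

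For part (2), using trace-associativity, nondegeneracy of $\langle\,,\,\rangle$, and a short computation of the $\langle\,,\,\rangle$-adjoint, one checks that the relation defining $\Spin(C)$ is equivalent to the triality identity $g_{1}(x)g_{2}(y)=\overline{g_{3}(\overline{xy})}$ for all $x,y\in C$, where $x\mapsto\bar x$ denotes octonionic conjugation. Imposing $g_{1}(1)=g_{2}(1)=1$ and substituting $x=1$, then $y=1$, forces $g_{1}=g_{2}=:g$ with $g(x)g(y)=g(xy)$, i.e.\ $g\in\Aut(C)$, and then $g_{3}=g$ as well. Conversely $(g,g,g)\in\Spin(C)$ for every $g\in\Aut(C)$, with all coordinates fixing $1$. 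Hence $\{g\in\Spin(C):g_{1}(1)=g_{2}(1)=1\}=\Aut(C)=G_{2}$, which is split because $C$ is split.

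For part (3), by part (2) an element of $\Spin(C)$ with $g_{1}(1)=g_{2}(1)=1$ has the form $(g,g,g)$ for some $g\in\Aut(C)$, and the condition $g_{3}(v)=v$ becomes $g(v)=v$. Such a $g$ fixes $1$ and $v$, hence is the identity on the subalgebra $K:=k\oplus kv\subseteq C$ (note $v^{2}=-N_{C}(v)\cdot1$ with $-N_{C}(v)\in k^{\times}$, so $K$ is quadratic étale), and conversely any automorphism of $C$ trivial on $K$ fixes $v$. By the structure of an octonion algebra relative to a quadratic étale subalgebra, $C=K\oplus K^{\perp}$ with $K^{\perp}$ a free rank-three $K$-module carrying a nondegenerate hermitian form induced by $N_{C}$, and the subgroup of $\Aut(C)$ acting trivially on $K$ is exactly the special unitary group $\SU_{3}$ of this hermitian space over $K/k$. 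This identifies the group in part (3) with a copy of $\SU_{3}$.

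The most delicate point is part (2): recasting the trilinear-form definition of $\Spin(C)$ as the multiplicative triality identity, and then collapsing that identity to a diagonal algebra automorphism; the remaining manipulations are routine. Part (3) is equally standard but relies on the structure theory of octonions relative to a quadratic étale subalgebra. For both the triality principle and that structure theory I would cite Springer--Veldkamp rather than reproduce the proofs; note also that for the subsequent orbit arguments all that is really needed is that each of these stabilizers is semisimple and simply connected, which is immediate once the identifications above are in place.
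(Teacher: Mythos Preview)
Your argument is correct. The paper itself does not prove this lemma; it is introduced as ``well-known'' and left without proof, so there is no approach to compare against. Your derivation of the triality identity $g_1(x)g_2(y)=\overline{g_3(\overline{xy})}$ from the trilinear-form definition via nondegeneracy of $\langle\,,\,\rangle$ and cyclic symmetry of $\tr_C(x_1(x_2x_3))$ is the standard route, as is the identification of $\Stab_{\Aut(C)}(K)$ with $\SU_3$ via the hermitian structure on $K^\perp$; both are exactly what one finds in Springer--Veldkamp, which is also the paper's reference for the surrounding material. One small remark on part~(1): the projection $p_1$ need not be the ``vector'' covering $\Spin_8\to\SO_8$---depending on conventions it may be one of the half-spin coverings---but by triality all three are related by outer automorphisms of $\Spin_8$, so the preimage of $\SO_7$ is $\Spin_7$ in each case, and your conclusion stands.
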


We now recall the construction of some specific elements in $\Spin(C)$, from \cite[Section 3.6]{springerVeldkamp}. 
\begin{lemma}[\cite{springerVeldkamp}]\label{lem:trans1} For $c \in C$ an octonion with nonzero norm, let $s_c$ denote the reflection in $c$, $\ell_c$ left multiplication by $c$ and $r_c$ right multiplication by $c$.  Suppose $a_1,\ldots, a_r, b_1, \ldots, b_r \in C$ with $\prod_{i}{N(a_i)N(b_i)} = 1$.  Set $t_1 = s_{a_1} s_{b_1} \cdots s_{a_r} s_{b_r}$, $t_2 = \ell_{a_1} \ell_{b_1^*} \cdots \ell_{a_r}\ell_{b_r^*}$, and $t_3 = r_{a_1} r_{b_1^*} \cdots r_{a_r} r_{b_r^*}$.  Finally, let $\widehat{t}(x) = (t(x^*))^*$.  Then $(\widehat{t_1},t_2,t_3) \in \Spin(C)$.
\end{lemma}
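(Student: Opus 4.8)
The statement to prove is Lemma~\ref{lem:trans1}, asserting that the triple $(\widehat{t_1}, t_2, t_3)$ built from products of reflections and left/right multiplications lies in $\Spin(C)$, i.e., that it defines an element of the spin group acting on the three copies of $C$ compatibly with the triality-type relation $(g_1 x_1, g_2 x_2, g_3 x_3)_{\tr} = (x_1,x_2,x_3)_{\tr}$.

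\medskip

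The plan is to reduce everything to the single-reflection case and then multiply. First I would recall the principle of local (or infinitesimal) triality for the octonions: for any $c \in C$ with $N(c)\ne 0$, the orthogonal transformation $s_c$ (reflection in $c$), together with appropriate companions, extends to an element of $\Spin(C)$. Concretely, the key identity is the one from \cite[Section 3.6]{springerVeldkamp}: writing $b_c(x) = c^{-1}(cx)$-type relations, one has for the reflection $s_c(x) = -c x^* c / N(c)$ (or the appropriate normalization) that the triple $(\widehat{s_c}, N(c)^{-1}\ell_c \ell_{?}, \ldots)$ — more precisely the triple $(s_c', \ell'_c, r'_c)$ suitably normalized — satisfies $s_c'(x_1(x_2 x_3))$ related correctly, so that it lands in $\Spin(C)$. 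So step one is: establish the base case $r=1$ with $a_1 = c$, $b_1 = 1$ (or handle a single reflection $s_c$ up to the norm normalization), showing $(\widehat{s_c}, \text{const}\cdot \ell_c, \text{const}\cdot r_c)$ lies in $\Spin(C)$; this is exactly the content of the octonionic triality relations $s_c(xy)$ in terms of $\ell_c, r_c$, which are classical (Moufang-type identities). Step two: observe that $\Spin(C)$ is a group, so a product of such base-case elements is again in $\Spin(C)$; since $t_1 = \prod s_{a_i} s_{b_i}$, $t_2 = \prod \ell_{a_i}\ell_{b_i^*}$, $t_3 = \prod r_{a_i} r_{b_i^*}$ are the corresponding products, and the hats and conjugations are compatible with composition ($\widehat{tt'} = \widehat{t}\,\widehat{t'}$), the triple $(\widehat{t_1}, t_2, t_3)$ is the product in $\Spin(C)$ of the base-case triples, hence lies in $\Spin(C)$. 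Step three: check the norm/scalar bookkeeping — each $s_c$ carries a factor $N(c)^{-1}$ relative to the "naive" formula, and the hypothesis $\prod_i N(a_i) N(b_i) = 1$ is precisely what makes the accumulated scalars cancel, so that the honest products $\ell_{a_1}\ell_{b_1^*}\cdots$ (without extra scalars) give an element of $\Spin(C)$ rather than just $\GSpin(C)$.

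\medskip

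The main obstacle I expect is getting the base case exactly right: pinning down the correct form of the triality relation for a single octonionic reflection, including all conjugations/adjoints ($x \mapsto x^*$, the hat operation $\widehat{t}(x) = (t(x^*))^*$) and the norm normalizations, so that the three maps genuinely satisfy $(g_1 x_1, g_2 x_2, g_3 x_3)_{\tr} = (x_1, x_2, x_3)_{\tr}$ with $(x_1,x_2,x_3)_{\tr} = \tr_\Theta(x_1(x_2 x_3))$. This amounts to the standard but fiddly computation with the Moufang identities and the interplay of left/right multiplication operators on $C$; once that single identity is verified, the rest is formal (group multiplication plus scalar cancellation). Since the lemma is attributed to \cite{springerVeldkamp}, I would in fact cite the relevant statements there for the base case and only spell out the reduction (the product structure and the role of the hypothesis $\prod N(a_i)N(b_i) = 1$) in detail.
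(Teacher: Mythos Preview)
Your proposal is correct and follows essentially the same route as the paper: both rest on the triality identity from \cite[Section~3.6]{springerVeldkamp}. The paper is slightly more economical in that it cites directly the composite identity $t_1(xy) = t_2(x)t_3(y)$ and $t_j \in \SO(C)$ from Springer--Veldkamp (rather than rebuilding it from the single-reflection base case plus norm bookkeeping as you propose), and then simply observes that this identity, together with orthogonality of $t_1$, yields preservation of $\tr(x_1(x_2 x_3))$ once the hat is inserted on the first factor.
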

\begin{proof} It is proved in \cite[section 3.6]{springerVeldkamp} prove that under the conditions above, $t_1(xy) = t_2(x)t_3(y)$ for all $x,y \in C$, and that the $t_j$ are in $\SO(C)$.  But now one checks immediately that this means  $(\widehat{t_1},t_2,t_3) \in \Spin(C)$.
\end{proof}

\begin{lemma}\label{lem:trans2} Suppose $(v_1,v_2)$ and $(v_1',v_2')$ in $C^2$ satisfies $N(v_j) = N(v_j') \neq 0$ for $j=1,2$. Then there exists $g \in \Spin(C)$ so that $g_j(v_j) = v_j'$.
\end{lemma}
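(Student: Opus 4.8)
The plan is to apply the ``orbit method'' described at the start of this subsection, using Lemma \ref{lem:SpinCstabs} to recognize the relevant geometric stabilizers as semisimple and simply connected. Write $c_j = N(v_j) = N(v_j') \in k^\times$ for $j = 1,2$. The argument proceeds in two steps: first carry $v_1$ to $v_1'$ using the first-factor action, then carry $v_2$ to $v_2'$ within the stabilizer.

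First I would dispose of the first coordinate. Consider the affine quadric $X_1 = \{v \in C : N(v) = c_1\}$ with the $\Spin(C)$-action through the first factor $g \mapsto g_1$. Over $\overline{k}$ the group $\SO(C)$ acts transitively on $X_1$ (Witt's extension theorem), and the isogeny $\Spin(C) \to \SO(C)$ is surjective on $\overline{k}$-points, so $\Spin(C)(\overline{k})$ is transitive on $X_1(\overline{k})$; after rescaling a point of $X_1(\overline{k})$ to $1$ (possible over $\overline{k}$, where $N$ is homogeneous of degree $2$ and every scalar is a square), Lemma \ref{lem:SpinCstabs}(1) identifies the stabilizer with $\Spin_7$. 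Since any $k$-form of $\Spin_7$ is semisimple and simply connected, its Galois cohomology over the $p$-adic field $k$ is trivial, so by \cite[Proposition 1]{bhargavaGrossAIT} the group $\Spin(C)(k)$ is transitive on $X_1(k)$. Thus there is $h \in \Spin(C)(k)$ with $h_1(v_1) = v_1'$; replacing $(v_1,v_2)$ by $(h_1(v_1),h_2(v_2)) = (v_1',h_2(v_2))$, which leaves the norms unchanged, I may assume $v_1 = v_1'$.

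With $v_1 = v_1'$ fixed, let $S_1 \subseteq \Spin(C)$ be the stabilizer of $v_1$ for the first-factor action, a group which over $\overline{k}$ is the $\Spin_7$ of Lemma \ref{lem:SpinCstabs}(1). I would then let $S_1$ act on $C$ through the \emph{second} factor $g \mapsto g_2$; this preserves $N$, so $S_1$ acts on $X_2 = \{w \in C : N(w) = c_2\}$. Over $\overline{k}$ (moving $v_1$ to $1$) this is the action of $\Spin_7$ on the affine quadric in its $8$-dimensional spin representation, which is transitive with stabilizer $G_2$: the stabilizer computation at the base point $1$ is Lemma \ref{lem:SpinCstabs}(2), and transitivity on the full quadric is the classical triality fact that $\Spin_7/G_2$ is the quadric. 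As $G_2$ (any $k$-form) is semisimple and simply connected with trivial Galois cohomology over $k$, \cite[Proposition 1]{bhargavaGrossAIT} again gives that $S_1(k)$ is transitive on $X_2(k)$. Since $v_2, v_2' \in X_2(k)$, there is $g \in S_1(k) \subseteq \Spin(C)(k)$ with $g_2(v_2) = v_2'$, and by construction $g_1(v_1) = v_1 = v_1'$, which is the required element.

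The main obstacle is the geometric input: the transitivity of $\Spin_7(\overline{k})$ on the quadric in its spin representation with stabilizer $G_2$ (equivalently, transitivity of $\Spin(C)(\overline{k})$ on $X_1 \times X_2$ with geometric stabilizer $G_2$). This is the triality-theoretic content packaged by Lemma \ref{lem:SpinCstabs}, so once that lemma is granted the rest is formal. A more hands-on alternative would use the explicit elements $(\widehat{t_1},t_2,t_3)$ of Lemma \ref{lem:trans1}: after arranging $v_1 = v_1'$, take $t_2 = \ell_{a_1}\ell_{b_1^*}\cdots \ell_{a_r}\ell_{b_r^*}$ with all $a_i,b_i$ orthogonal to $v_1^*$ (so that $\widehat{t_1}$ fixes $v_1$) and $\prod_i N(a_i)N(b_i) = 1$, and check that such products already suffice to carry $v_2$ to $v_2'$; but verifying this reduces to the same transitivity statement, so I would favor the orbit-method formulation. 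Finally, one should note the $k$-point sets in play are nonempty — $\Theta \otimes \Q_p$ is split, so $N$ is universal on $k^\times$ — although here this is automatic since $v_1,v_2$ already lie in $X_1(k)$ and $X_2(k)$.
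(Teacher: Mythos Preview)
Your argument is correct and follows the same overall architecture as the paper's---transitivity over $\overline{k}$ plus descent via the simply-connected stabilizers of Lemma~\ref{lem:SpinCstabs}---but the two proofs differ in how the $\overline{k}$-transitivity is established. You invoke it as a structural fact (Witt for the first factor, then the classical identification $\Spin_7/G_2 \cong$ spin quadric for the second), and you also split the descent into two stages, first fixing $v_1$ over $k$ and then working inside the $k$-form $S_1$ of $\Spin_7$. The paper instead works entirely over $\overline{k}$ first and proves transitivity constructively using Lemma~\ref{lem:trans1}: after moving $v_1$ into $\overline{k}\cdot 1$, it builds an explicit element $(\widehat{t_1},t_2,t_3)$ from reflections $s_u, s_{u'}$ with $u,u' \in C^0$ (so $\widehat{t_1}$ fixes $1$) and $u' = N(v_2)^{-1/2}(uv_2)$, which carries $v_2$ into $\overline{k}\cdot 1$ via $t_2$; only then does it descend once, via $H^1(k,G_2)=0$.

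What each buys: your version is cleaner and makes clear that nothing beyond Lemma~\ref{lem:SpinCstabs} and the standard orbit method is needed, but it imports the transitivity of $\Spin_7$ on its spin quadric as a black box (Lemma~\ref{lem:SpinCstabs} only records the stabilizer, not transitivity, so strictly speaking a dimension/connectedness argument or an outside reference is still required). The paper's hands-on construction explains why Lemma~\ref{lem:trans1} was stated at all and gives the $\overline{k}$-transitivity self-containedly. You in fact anticipated this alternative in your final paragraph; that ``hands-on'' route is exactly what the paper does.
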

\begin{proof} We first work over the algebraic closure of $k$.  By Lemma \ref{lem:trans1}, we can move $v_1$ to $v_1'$, so we can assume $v_1 = v_1' \in \overline{k} 1$. Now, there exists $u \in C^0$ with $N(u) \neq 0$ so that $(u,v_2) = 0$.  Hence $u v_2 \in C^0$.  Now we take $u' = N(v_2)^{-1/2} (uv_2)$.  Then $N(u') = N(u)$ and $(u')^{-1} (uv_2) \in \overline{k} 1$.  Because $u, u' \in C^0$, the reflections by $u,u'$ do not move $v_1 \in \overline{k} 1$.  By choosing the squareroot of $N(v_2)$ appropriately, we see that the lemma is proved over $\overline{k}$.
	
To descend from $\overline{k}$ to $k$, we use Galois cohomology, applying Lemma \ref{lem:SpinCstabs}.
\end{proof}

\begin{lemma} Suppose $E_j \simeq k \times k \times k$ for $j=1,2$ are embedded in $J$ as cubic norm structures, both inside $H_3(k) \subseteq J$.  Then there exists $m \in M_J^1$ so that $m(E_1(a,b,c)) = E_2(a,b,c)$.
\end{lemma}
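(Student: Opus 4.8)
The plan is to run the orbit method of this section with $M_J^1$ playing the role that $S_E$ plays elsewhere. Write $\varepsilon_i^{(j)}=E_j(e_i)\in J$ for $i=1,2,3$ and $j=1,2$, where $e_1,e_2,e_3$ is the standard basis of $k\times k\times k$, so that $E_j(a,b,c)=a\,\varepsilon_1^{(j)}+b\,\varepsilon_2^{(j)}+c\,\varepsilon_3^{(j)}$. By linearity, producing $m\in M_J^1$ with $m(E_1(a,b,c))=E_2(a,b,c)$ for all $a,b,c$ amounts to producing $m\in M_J^1(k)$ with $m(\varepsilon_i^{(1)})=\varepsilon_i^{(2)}$ for $i=1,2,3$. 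The hypotheses say precisely that, for each $j$, the tuple $(\varepsilon_1^{(j)},\varepsilon_2^{(j)},\varepsilon_3^{(j)})$ is an ordered complete orthogonal system of primitive idempotents of $J$, each of norm $0$, all lying in $H_3(k)$; call such a tuple a \emph{split frame}. The standard split frame is $\omega_0=(e_{11},e_{22},e_{33})$ in the notation of \eqref{eqn:XJ}, and by the very definition in subsection \ref{subsec:SE} its stabilizer in $M_J^1$ is the group $S_{E_{sp}}$ attached to $E_{sp}=\Q\times\Q\times\Q$, which is connected, semisimple, and simply connected of type $D_4$.

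The key point is that $M_J^1(\overline{k})$ acts transitively on split frames lying in $H_3$. Over $\overline{k}$ every primitive idempotent of $H_3(\overline{k})$ has the form $vv^t$ with $v^tv=1$, so a split frame in $H_3(\overline{k})$ is the same as an orthonormal basis $(v_1,v_2,v_3)$ of the standard quadratic space $\overline{k}^3$. For $g\in\mathrm{O}_3$ the map $X\mapsto gXg^t$ is a linear automorphism of $J$ fixing $N_J$ (since $N_J(gXg^t)=\det(g)^2N_J(X)$), hence an element of $M_J^1$; and given two orthonormal bases $(v_i)$ and $(v_i')$ of $\overline{k}^3$ there is a single $g\in\mathrm{O}_3(\overline{k})$ with $gv_i=v_i'$ for every $i$, whence $g\cdot(v_iv_i^t)=v_i'(v_i')^t$ for every $i$. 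Thus each of $E_1,E_2$ is $M_J^1(\overline{k})$-conjugate to $\omega_0$, so each defines a $k$-point of the $k$-variety $\Omega:=M_J^1/S_{E_{sp}}$, whose $\overline{k}$-points are exactly the $M_J^1(\overline{k})$-conjugates of $\omega_0$.

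It remains to descend to $k$. Since $k$ is $p$-adic and $S_{E_{sp}}$ is semisimple and simply connected, $H^1(k,S_{E_{sp}})=1$ by Kneser's theorem. The exact sequence of pointed sets attached to $1\to S_{E_{sp}}\to M_J^1\to\Omega\to 1$ (exactly as in the use of \cite[Proposition 1]{bhargavaGrossAIT} elsewhere in this section) identifies the set of $M_J^1(k)$-orbits on $\Omega(k)$ with a subset of $\ker\!\big(H^1(k,S_{E_{sp}})\to H^1(k,M_J^1)\big)$, which is trivial; hence $\Omega(k)$ is a single $M_J^1(k)$-orbit. As $E_1,E_2\in\Omega(k)$, there is $m\in M_J^1(k)$ with $m(\varepsilon_i^{(1)})=\varepsilon_i^{(2)}$ for all $i$, i.e.\ $m(E_1(a,b,c))=E_2(a,b,c)$, which is the claim. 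The one place where something genuine is used is this last descent: over $k$ a primitive idempotent $\varepsilon_i=v_iv_i^t/(v_i^tv_i)$ of $H_3(k)$ only determines $v_i^tv_i$ up to squares, so the naive orthogonal matrix need not be $k$-rational, and it is precisely the fact that the pointwise stabilizer $S_{E_{sp}}$ is simply connected --- so that its $p$-adic Galois cohomology vanishes --- that closes the argument.
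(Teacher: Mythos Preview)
Your proof is correct. The paper's own proof is a single sentence: both $E_j$ sit inside $M_3(k)$, and one moves them to each other by $\SL_3(k)$ acting via $X\mapsto gXg^t$. Over an algebraically closed field this is immediate --- normalize each $\varepsilon_i^{(j)}=v_iv_i^t$ with $v_i^tv_i=1$ and use $\mathrm{SO}_3\subseteq\SL_3$ to carry one orthonormal basis to the other --- and this is in fact the only setting in which the paper ever invokes the lemma (in the proof of Lemma~\ref{lem:transCF} one first passes to $\overline{k}$).

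Your argument is genuinely different and strictly stronger: rather than appealing to $\SL_3$ directly, you run the orbit/Galois-cohomology machine of the section with $M_J^1$ in place of $S_E$. The payoff is that your proof works over the $p$-adic field $k$ itself, not just over $\overline{k}$. This matters because the naive $\SL_3(k)$ argument via $X\mapsto gXg^t$ has a rational obstruction: writing $\varepsilon_i^{(j)}=v_iv_i^t/q_i$ with $q_i=v_i^tv_i$, matching the two frames via this action forces each ratio $q_i^{(1)}/q_i^{(2)}$ to be a square in $k$, which need not hold. You circumvent this by observing that the pointwise stabilizer of the standard frame is the simply connected $D_4$ group $S_{E_{sp}}$, so Kneser kills $H^1(k,S_{E_{sp}})$ and there is a single $M_J^1(k)$-orbit on $\Omega(k)$. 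That is exactly the template the section uses elsewhere, and it is the right way to justify the lemma over $k$ rather than over $\overline{k}$.

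One cosmetic remark: when you invoke $\mathrm{O}_3$ to get transitivity over $\overline{k}$, note that $M_J^1$ is connected, so the non-identity component must land in the identity component of $M_J^1$. This is harmless, since $gXg^t=(-g)X(-g)^t$ shows the image of $\mathrm{O}_3$ in $M_J^1$ is already the image of $\mathrm{SO}_3\subseteq\SL_3$; but you could simply use $\mathrm{SO}_3$ from the start (replacing one $v_i'$ by $-v_i'$ if needed to fix orientation) and avoid the sentence about $\det(g)^2$.
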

\begin{proof} We can consider both $E_j$ in $M_3(k)$, and then they can be moved to one another by $\SL_3(k)$.
\end{proof}

\begin{lemma}\label{lem:transCF} We work over a $p$-adic field $k$.  Let $F$ be an \'etale quadratic extension of $k$.  Assume we have an embedding $E = k \times F \hookrightarrow J$ satisfying the assumptions in subsection \ref{subsec:SE}.  Let $C_F$ be $(F)^\perp \subseteq H_2(C)$.  Then $\Spin_{E}$ acts transitively on elements of $C_F$ with the same nonzero norm.
\end{lemma}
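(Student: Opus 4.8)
The plan is to follow the method recorded at the start of this section: establish transitivity over $\overline{k}$, show that the stabilizer of a point is semisimple and simply connected, and then descend using Galois cohomology. First, note that since $S_E=\Spin_E$ fixes $F$ pointwise it preserves $F^\perp$, so $C_F$ is an $S_E$-stable subspace of $V_E$, and the trace form restricts to a nondegenerate quadratic form of rank $8$ on $C_F$ (it is the orthogonal complement of the nondegenerate rank-two space $F$ inside the nondegenerate space $H_2(C)$). Fix $x\in k^\times$ and suppose $\lambda\in C_F(k)$ has $N(\lambda)=x$; if there is no such $\lambda$ the lemma is vacuous. Set $\Omega_x=\{v\in C_F:\,N(v)=x\}$, a smooth affine quadric over $k$.

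Over $\overline{k}$ the algebra $E\otimes_k\overline{k}$ is split, so we may identify $S_E\otimes\overline{k}$ with $\Spin(C)\otimes\overline{k}$ acting on $C^{\,3}$ in the octonionic fashion of subsection~\ref{subsec:SE}; under this identification $V_E\otimes\overline{k}$ becomes the sum of the vector representation and the two half-spin representations. Since $C_F\otimes\overline{k}$ is an $8$-dimensional $S_E$-subrepresentation and these three representations are pairwise non-isomorphic irreducibles, a dimension count forces $C_F\otimes\overline{k}$ to be isomorphic, as a quadratic space with $\Spin_8$-action, to one of the three standard copies of $C$ in $C^{\,3}$. In every such case $\Spin_8(\overline{k})$ acts transitively on the vectors of the fixed nonzero norm $x$ (Witt's theorem in the vector representation, transported to the half-spin representations by triality), so $S_E(\overline{k})$ acts transitively on $\Omega_x(\overline{k})$; and by Lemma~\ref{lem:SpinCstabs}(1), applied after moving a chosen non-isotropic vector to $1$ in the relevant copy of $C$, the stabilizer in $\Spin_8$ of such a vector is a copy of $\Spin_7$, which is connected, semisimple and simply connected. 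Consequently $\Stab_{S_E}(\lambda)$ is an algebraic $k$-group whose base change to $\overline{k}$ is this copy of $\Spin_7$; since semisimplicity and simple-connectedness are geometric properties, $\Stab_{S_E}(\lambda)$ is semisimple and simply connected over $k$. By \cite[Proposition 1]{bhargavaGrossAIT} the $S_E(k)$-orbits on $\Omega_x(k)$ are in bijection with $\ker\!\left(H^1(k,\Stab_{S_E}(\lambda))\to H^1(k,S_E)\right)$, and as $\Stab_{S_E}(\lambda)$ is simply connected and $k$ is $p$-adic, $H^1(k,\Stab_{S_E}(\lambda))$ is trivial; hence $\Omega_x(k)$ is a single $S_E(k)$-orbit. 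Since $x$ was an arbitrary nonzero norm, this proves the lemma.

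The step I expect to be the main obstacle is the representation-theoretic identification above: one must be certain that $C_F\otimes\overline{k}$ really is one of the three standard $8$-dimensional representations of $\Spin_8$ --- not, for instance, a ``diagonal'' copy inside a sum of isomorphic summands --- and that the stabilizer of a non-isotropic vector in each of them is genuinely connected and equal to $\Spin_7$. Both points can be handled abstractly via triality, or concretely by first moving the embedding $E\otimes\overline{k}\hookrightarrow J$ into a convenient diagonal normal form (as in the lemmas just above) and then combining the explicit elements of $\Spin(C)$ from Lemma~\ref{lem:trans1} with the stabilizer computation of Lemma~\ref{lem:SpinCstabs}. I also note that the argument deliberately avoids analyzing the cokernel of $S_E(k)\to\SO(C_F)(k)$: the spinor-norm obstruction is absorbed into the vanishing cohomology of $\Stab_{S_E}(\lambda)$.
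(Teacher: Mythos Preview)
Your proof is correct and takes essentially the same approach as the paper: establish transitivity over $\overline{k}$, identify the stabilizer as a copy of $\Spin_7$ via Lemma~\ref{lem:SpinCstabs}, and descend by Galois cohomology. The only difference is in the $\overline{k}$-step: the paper uses the preceding lemma to conjugate the embedding $E\otimes\overline{k}\hookrightarrow J$ to the standard diagonal one by an element of $M_J^1(\overline{k})$, which directly identifies $C_F\otimes\overline{k}$ with the $x_1$-copy of $C$, whereas you argue abstractly via the decomposition of $V_E\otimes\overline{k}$ into three pairwise non-isomorphic $8$-dimensional irreducibles---the concrete alternative you mention in your final paragraph is exactly what the paper does.
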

\begin{proof} We first work over the algebraic closure of $k$.  In that case, we can move $E = k \times F$ to $E_1 = k \times k \times k$ embedded diagonally, via some element $g \in M_J^1$.  Then the claim follows from the same claim for $E_1$, which we have already proved.
	
To descend to $k$, apply Galois cohomology and Lemma \ref{lem:SpinCstabs}.
\end{proof}

\begin{lemma}\label{lem:VEtrans}  Let $E \hookrightarrow J$ be an embedding of a cubic \'etale $k$-algebra.  Let $x \in E$ have $N(x)\neq 0$, and let $\Omega_x = \{(x,v) \in E \oplus V_E: \text{ rank one}\}$.  Then $S_{E}(k)$ acts transitively on $\Omega_x(k)$.
\end{lemma}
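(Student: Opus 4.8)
The plan is to run the two-step orbit argument described at the beginning of this section: first establish that $S_E(\overline{k})$ acts transitively on $\Omega_x(\overline{k})$, then check that the stabilizer of a point of $\Omega_x(k)$ is semisimple and simply connected, and finally invoke \cite[Proposition 1]{bhargavaGrossAIT} together with the vanishing of $H^1$ of simply-connected groups over a $p$-adic field.

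First I would reduce to the split situation over $\overline{k}$. Since $E \otimes \overline{k} \simeq \overline{k}\times\overline{k}\times\overline{k}$, the argument used in the proof of Lemma \ref{lem:transCF} lets me assume, after conjugating by an element of $M_J^1(\overline{k})$, that $E$ is the diagonal copy; then $S_E = S_{E_{sp}}$ is the triality form of $\Spin(C)$ with $C = \Theta\otimes\overline{k}$, acting on $V_E = C^3$ through a triple $g=(g_1,g_2,g_3)$. Writing $x = \diag(c_1,c_2,c_3)$ with $c_1c_2c_3 = N(x)\neq 0$ and $v = (x_1,x_2,x_3)$, I would unwind the rank-one condition $(x+v)^\# = 0$: its diagonal part gives $N(x_i) = c_jc_k$ for $\{i,j,k\}=\{1,2,3\}$, and its off-diagonal part gives octonion identities relating the $x_i$, of the shape $x_j^\ast x_k^\ast = c_i x_i$ (up to the precise index conventions). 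Using alternativity of $C$ one checks that the single identity $x_3 = c_3^{-1}x_2^\ast x_1^\ast$ already forces the other two identities as well as $N(x_3)=c_1c_2$; hence $(x_1,x_2,x_3)\mapsto (x_1,x_2)$ is a bijection
\[ \Omega_x(\overline{k}) \simeq \{(x_1,x_2)\in C^2 : N(x_1)=c_2c_3,\ N(x_2)=c_1c_3\}, \]
and this bijection is $S_E$-equivariant, where $g$ acts on the right-hand side through $(g_1,g_2)$, precisely because of the triality relation $g_3(x_2^\ast x_1^\ast) = (g_2x_2)^\ast(g_1x_1)^\ast$ underlying the definition of $S_{E_{sp}}$ recalled in Lemma \ref{lem:trans1}. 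Transitivity over $\overline{k}$ is then immediate from the $\overline{k}$-version of Lemma \ref{lem:trans2}.

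For the stabilizer, take $\lambda=(x_1,x_2,x_3)\in\Omega_x(k)$. The equivariant bijection identifies $\Stab_{S_E}(\lambda)$ with the common stabilizer of $x_1$ and $x_2$ in $\Spin(C)$. Since $N(x_1),N(x_2)\neq 0$, Lemma \ref{lem:trans2} lets me conjugate $(x_1,x_2)$ over $\overline{k}$ to a pair of nonzero scalars, whose common stabilizer coincides with that of $(1,1)$ and hence is $G_2$ by the second part of Lemma \ref{lem:SpinCstabs}. Thus $\Stab_{S_E}(\lambda)$ is a $k$-form of $G_2$, in particular semisimple and simply connected, so $H^1(k,\Stab_{S_E}(\lambda))$ vanishes and \cite[Proposition 1]{bhargavaGrossAIT} yields transitivity of $S_E(k)$ on $\Omega_x(k)$.

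The step I expect to require the most care is the explicit analysis of the rank-one locus: writing $(x+v)^\#$ out in the $H_3(C)$-coordinates and verifying, via the Moufang identities, that all of its vanishing conditions collapse to the two norm equalities together with the single octonion relation $x_3 = c_3^{-1}x_2^\ast x_1^\ast$. Once that normal form is available, the rest is a formal consequence of Lemmas \ref{lem:SpinCstabs}, \ref{lem:trans1} and \ref{lem:trans2}.
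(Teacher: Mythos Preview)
Your proposal is correct and follows essentially the same route as the paper's proof: reduce over $\overline{k}$ to the diagonal embedding, read off from $(x+v)^\#=0$ that two of the three octonion coordinates (with prescribed nonzero norms) determine the third, invoke Lemma~\ref{lem:trans2} for transitivity on the pair, identify the stabilizer as $G_2$ via Lemma~\ref{lem:SpinCstabs}, and descend by Galois cohomology. The only cosmetic difference is that you write out the off-diagonal rank-one equations $x_j^\ast x_k^\ast = c_i x_i$ explicitly, whereas the paper packages the determining relation as $v_1(v_2 v_3)=c_1c_2c_3$; these are equivalent once $v_1,v_2$ have been moved to scalars.
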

\begin{proof} Over an algebraic closure, we may assume $E$ is embedded diagonally in $J$.  Then an $(x,v)$ in $\Omega_x$ satisfies $x= (c_1, c_2, c_3)$ with all $c_j \neq 0$, and $v= (v_1,v_2,v_3)$ with $N(v_j) = c_{j-1}c_{j+1}$ and $v_1(v_2 v_3) = c_1 c_2 c_3$.

In this case, by Lemma \ref{lem:trans2}, we may move $v_1$ and $v_2$ to nonzero elements of $\overline{k} 1$.  Then $v_3$ is uniquely determined by the final equation in terms of $v_1, v_2$.  Thus over $\overline{k}$, there is one orbit.
	
Because $v_3$ is determined by $v_1,v_2$ under the conditions of the lemma, the stabilizer of a $v=(v_1,v_2,v_3)$ is of type $G_2$.  Thus the stabilizer is simply connected, so there is one $k$-orbit.
\end{proof}

\begin{lemma}\label{lem:WEtrans} Suppose $y = (a,b,c,d) \in W_E$ is non-degenerate.  Let $\Omega_y = \{(y,w) \in W_J=W_E \oplus V_E^2: \text{ rank one}\}$.  Then $S_E(k)$ acts transitively on $\Omega_w(k)$.
\end{lemma}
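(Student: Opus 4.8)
The plan is to follow verbatim the template of Lemma \ref{lem:VEtrans}: first prove transitivity over $\overline{k}$, then descend to $k$ by the Galois cohomology argument described at the start of this section, i.e.\ \cite[Proposition 1]{bhargavaGrossAIT} together with the vanishing of $H^1$ of a semisimple simply connected group over a $p$-adic field. Since $S_E$ fixes $W_E$ pointwise, it fixes $y$, so $\Omega_y(k)$ really is a subset of the $V_E^2$-factor on which $S_E$ acts, and we are asking for transitivity on the fibre of the ``$W_E$-part'' map over the fixed non-degenerate point $y$.

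Over $\overline{k}$ I would first move $E$ to be diagonally embedded in $J=H_3(C)$ by an element of $M_J^1$, so that $V_E = C^3$ and $S_E=\Spin(C)$ acts through $(g_1,g_2,g_3)$. Using the Heisenberg unipotent of $P_J$ (and, if needed, the action of $H_E^1\cong\SL_{2,E}$, which commutes with $S_E$, to put $y$ in convenient position), I would reduce a rank one vector with top coordinate $\lambda\neq 0$ to the form $\lambda\, r_0(Z)$ with $r_0(Z)=(1,-Z,Z^\#,-N(Z))$ and $Z\in J$, as in \cite[Proposition 2.3.1]{pollackQDS}. Writing $Z=(x_0,v)\in E\oplus V_E$ with $x_0$ forced by the $E$-part of $y$, the conditions ``$(y,w)$ rank one'' and ``$W_E$-part of $(y,w)$ equals $y$'' translate, via the explicit formulas for $Z^\#$ and $N_J$ on $H_3(C)$, into: each octonion component $v_j\in C$ has a prescribed norm $N(v_j)=\mu_j$, the trilinear expression $\tr_\Theta(v_1(v_2v_3))$ takes a prescribed value, and the remaining $V_E^\vee$-coordinates of $w$ (coming from $Z^\#$) are explicit polynomials in $v$ and the fixed data. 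Granting that non-degeneracy of $y$ forces $\mu_j\neq 0$ for $j=1,2,3$, Lemma \ref{lem:trans2} lets me move $(v_1,v_2)$ to the standard pair $(\sqrt{\mu_1}\cdot 1,\sqrt{\mu_2}\cdot 1)$; by Lemma \ref{lem:SpinCstabs} the residual stabilizer in $\Spin(C)$ is a copy of $G_2$ fixing $1$, acting on $v_3$ through its action on the trace-zero part $v_3^0\in C^0$ and preserving $N(v_3^0)$. Since $G_2$ is transitive on each nonzero-norm level set in $C^0$ over $\overline{k}$, and the prescribed value of the trilinear term pins down $\tr(v_3)$, this gives a single $\overline{k}$-orbit.

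For the descent to $k$, fix $w_0\in\Omega_y(k)$. The computation above identifies $\Stab_{S_E}(w_0)$, depending on the stratum of $v_3^0$, with a form of $G_2$, of $\SU_3$ (Lemma \ref{lem:SpinCstabs}(3)), or of $\SL_3$ --- in every case semisimple and simply connected. Combined with transitivity over $\overline{k}$, the triviality of $H^1(k,\Stab_{S_E}(w_0))$ and \cite[Proposition 1]{bhargavaGrossAIT} yield transitivity of $S_E(k)$ on $\Omega_y(k)$, exactly as in the proofs of Lemmas \ref{lem:transCF} and \ref{lem:VEtrans}.

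The main obstacle I expect is in the explicit description step: identifying the precise notion of ``non-degenerate'' for $y\in W_E$ that is both what gets used in the applications and strong enough to guarantee (a) all prescribed norms $\mu_j$ are nonzero (so Lemma \ref{lem:trans2} applies), and (b) that every $k$-point of $\Omega_y$ lies in a stratum with simply connected stabilizer --- the danger being a $v_3^0$ that is a nonzero isotropic vector of $C^0$, whose $G_2$-stabilizer is not reductive. If (b) fails pointwise, the fallback is to permute the roles of $v_1,v_2,v_3$ (moving a different pair to scalars), or to argue directly that non-degeneracy of $y$ confines the $k$-points of $\Omega_y$ to the open $S_E$-orbit. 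The octonionic identities for $Z^\#$ and $N_J$ needed in the reduction are routine but must be assembled carefully; the $G_2$-transitivity input is elementary.
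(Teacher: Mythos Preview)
Your approach is essentially the paper's, and your outline is correct. The ``main obstacle'' you anticipate is exactly what the paper's extra normalization step eliminates, so let me point it out.

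The paper takes non-degenerate to mean that the quartic form on $W_E$ is nonzero; after using $\SL_{2,E}$ to put $y=(1,0,c,d)$, this reads $d^2+4N_E(c)\neq 0$. The key move you are missing is that \emph{over $\overline{k}$} one can use $\SL_{2,E}$ once more to arrange $d=0$, which then forces $N_E(c)\neq 0$ and hence (with $E$ split over $\overline{k}$) every $c_j\neq 0$. This immediately gives your condition (a): each prescribed norm $N(u_j)=-c_j$ is nonzero.

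More importantly, setting $d=0$ makes the trilinear constraint $(u_1,u_2,u_3)_{\tr_C}=0$. After you move $u_1,u_2$ to nonzero scalars via Lemma~\ref{lem:trans2}, this forces $\tr(u_3)=0$, so $u_3\in C^0$ with $N(u_3)=-c_3\neq 0$. Thus $u_3$ is \emph{always} an anisotropic trace-zero octonion, and the stabilizer is \emph{always} $\SU_3$ by Lemma~\ref{lem:SpinCstabs}(3). There is no stratification to worry about, no isotropic $v_3^0$, and no need to contemplate $G_2$ or $\SL_3$ stabilizers. Your worry (b) simply does not arise once $d=0$.

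So the only adjustment to your plan is: over $\overline{k}$, normalize $y$ all the way to $(1,0,c,0)$ with $N_E(c)\neq 0$ before unpacking the rank-one condition. Then the rest of your argument goes through cleanly, with a single simply connected stabilizer type.
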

\begin{proof} Using the action of $\SL_{2,E}$ on $W_J$, we may assume $y = (1,0,c,d)$, with $d^2+4N(c) \neq 0$.  In fact, working over $\overline{k}$ for now, we may assume $d = 0$, so that $N(c) \neq 0$.
	
Now, in this case, $w = (u,v)$, with $u = (u_1,u_2,u_3) \in C^3$, $N(u_j) = -c_j$, $v$ determined by $u$, and $(u_1, u_2, u_3)_{\tr_C} = 0$.  By Lemma \ref{lem:trans2}, we can and do move $u_1, u_2$ to nonzero elements of $\overline{k} 1$.  Then $\tr(u_3) = 0$ and $N(u_3)= - c_3 \neq 0$.  But such elements are in one orbit under the action of $G_2$.  Moreover, the stabilizer is an $\SU_3$ by Lemma \ref{lem:SpinCstabs}, which is simply connected.  Thus there is one orbit over $\overline{k}$, and in fact one orbit over $k$.  This completes the proof.
\end{proof}

\subsection{Spaces of coinvariants}\label{subsec:coinvts}
For the split, simply-connected group $G_n$ over $k$ of type $D_n$, with standard representation $V_{2n} = H \oplus V_{2n-2}$, let $\Omega$ denote the nonzero isotropic vectors in $V_{2n-2}$.  Let $V_{min}$ be the minimal representation of $G_n$, which recall is the unique irreducible subrepresentation of $I(s=n-2)$, in the notation of subsection \ref{subsec:DndegPS}.

We recall the following theorem.  Let $P_{G_n} = M_{G_n} N_{G_n}$ be the maximal parabolic of $G_n$ stabilizing the line $k b_1$ in $V_{2n}$.  One can define an action of $P_{G_n}$ on  $C_c^\infty(\omega)$ as in \cite{magaardSavin}.
\begin{theorem}[Savin, Maagard-Savin]\label{thm:minlocDn} One has an exact sequence of $P_{G_n}$-modules, 
	\[0 \rightarrow C_c^\infty(\omega) \rightarrow V_{min} \rightarrow V_{min,N_{G_n}} \rightarrow 0.\]
\end{theorem}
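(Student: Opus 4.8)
The plan is to work with the realization of $V_{min}$ as the unique irreducible subrepresentation of the degenerate principal series $I := I_{P_{G_n}}^{G_n}(s=n-2)$ provided by Proposition~\ref{prop:degPSDn}, to restrict everything to $P := P_{G_n} = M_{G_n} N_{G_n}$, and to exploit that $N := N_{G_n}$ is abelian. As an algebraic group $N \cong (V_{2n-2},+)$, so the Pontryagin dual $\widehat N$ is identified with $V_{2n-2}$ by means of a fixed additive character of $k$ together with the bilinear form, and $M_{G_n} \cong \GL_1 \times \SO(V_{2n-2})$ acts on $\widehat N$ with orbits $\{0\}$, the cone $\omega$ of nonzero isotropic vectors in $V_{2n-2}$, and finitely many open orbits consisting of anisotropic vectors. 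For any smooth $P$-module $W$, the description of smooth representations of an abelian $\ell$-group as sheaves on $\widehat N$ (Bernstein--Zelevinsky) gives, for the closed subset $\{0\}$ with open complement $U := \widehat N \setminus \{0\}$, a short exact sequence of $P$-modules $0 \to W(U) \to W \to W_N \to 0$ in which $W(U)$ is itself filtered according to the $M_{G_n}$-orbit type on $U$, the graded pieces being compact inductions of the twisted Jacquet modules $W_{N,\psi_w}$ from their point stabilizers. Applying this with $W = V_{min}$, the proof reduces to two claims: (A) $(V_{min})_{N,\psi_w} = 0$ whenever $w \in V_{2n-2}$ has nonzero norm; and (B) for a fixed $w_0 \in \omega$, the twisted Jacquet module $(V_{min})_{N,\psi_{w_0}}$ is one-dimensional, with $\Stab_{M_{G_n}}(\psi_{w_0})$ acting through the character making the corresponding orbit piece equal to $C_c^\infty(\omega)$ with the $P$-action of \cite{magaardSavin}.

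For (B), once the one-dimensionality is known, the identification with $C_c^\infty(\omega)$ is a modulus-character computation: $\Stab_{M_{G_n}}(\psi_{w_0})$ is the extension of $\GL_1$ by the parabolic of $\SO(V_{2n-2})$ fixing the isotropic line $k w_0$, and after tracking the $\GL_1$-weight and $\delta_P$ one sees that the induced representation it produces is precisely $C_c^\infty(\omega)$ with the action of \cite{magaardSavin} --- indeed that action is defined exactly so as to be the restriction to the stratum $\omega \subseteq \widehat N$ of the sheaf attached to $V_{min}$. The one-dimensionality I would deduce from the Bruhat filtration of $I|_P$: there are three $P$-double cosets in $G$ (matching the three possible relative positions of two isotropic lines), the open one contributes to $I|_P$ a single copy of $c\text{-}\Ind_N^P$ of a line --- hence exactly one dimension to each twisted Jacquet module of $I$ --- the closed one contributes only at the trivial character of $N$, and, using (A), the middle-cell contribution is seen to vanish on the small subrepresentation $V_{min}$ at $\psi_{w_0}$.

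The hard part is (A): the vanishing of the anisotropic twisted Jacquet modules is where the minimality of $V_{min}$ enters essentially and non-formally --- it is the representation-theoretic reflection of the associated variety of $V_{min}$ being the minimal nilpotent orbit. I would prove it by using that $V_{min}$ is the image of the meromorphically continued intertwining operator $M(w_0)\colon I(s) \to I(2n-2-s)$ specialized at $s = n$ (Proposition~\ref{prop:degPSDn}), and by analyzing the induced map on twisted Jacquet modules: just as in the spherical $c$-function computation recalled in the proof of Proposition~\ref{prop:degPSDn}, the local factor governing this map on the anisotropic stratum picks up a zero at the special point (from a $\zeta(0)$-type denominator), forcing the image of $M(w_0)$ in $(I(n-2))_{N,\psi_w}$ to vanish. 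As an independent check I would carry out the Fourier--Jacobi descent used to obtain the composition series in Proposition~\ref{prop:degPSDn}: applying the twisted Jacquet functor along an anisotropic $\psi_w$ transports $V_{min}$ to a representation of a strictly smaller classical group which must vanish on weight grounds. With (A) and (B) established, reassembling the sheaf filtration over $\widehat N$ produces the asserted exact sequence $0 \to C_c^\infty(\omega) \to V_{min} \to V_{min,N_{G_n}} \to 0$.
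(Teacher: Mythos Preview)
The paper does not actually prove this theorem: it is quoted as a result of Savin and Magaard--Savin, and the only additional content is the remark that one may alternatively deduce it from the Fourier--Jacobi functor of Weissman and Hanzer--Savin. Your outline is a reasonable reconstruction of the Savin/Magaard--Savin strategy, and your overall architecture (Bernstein--Zelevinsky sheaves on $\widehat N$, the $M_{G_n}$-orbit stratification, reduction to the two claims (A) and (B)) is correct. The Fourier--Jacobi descent you mention as an ``independent check'' for (A) is in fact the route the paper singles out, and is the more robust of the two approaches you sketch.

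There is, however, a genuine gap in your argument for (B). You assert that ``using (A), the middle-cell contribution is seen to vanish on the small subrepresentation $V_{min}$ at $\psi_{w_0}$,'' but (A) is a statement about \emph{anisotropic} characters, whereas $w_0 \in \omega$ is isotropic. It is not clear how the anisotropic vanishing bears on the isotropic twisted Jacquet module. The Bruhat filtration of $I|_P$ tells you about $(I)_{N,\psi_{w_0}}$; to isolate the subspace $(V_{min})_{N,\psi_{w_0}}$ inside it you must explain why the extra dimension (if any) from the middle cell lands in the quotient $I/V_{min}$ rather than in $V_{min}$. That step requires a separate argument --- in the literature this is typically done either by tracking the intertwining operator on each Bruhat stratum, or, more cleanly, by the Fourier--Jacobi reduction to $\SL_2$ that underlies Proposition~\ref{prop:degPSDn}. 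Your intertwining-operator heuristic for (A) (``a $\zeta(0)$-type denominator'') is likewise not a proof as written: the spherical $c$-function governs only the spherical line, and you would need to analyze $M(w_0)$ stratum by stratum, not merely on the spherical vector.
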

\begin{remark} We remark that one does not need to use the exact argument of \cite{savinMin} to prove this result.  One can use the Fourier-Jacobi functor of \cite{weissmanFJ}, \cite{hanzerSavin} to obtain the theorem, if one wants.
\end{remark}

We will use the following proposition in section \ref{sec:Main}.
\begin{proposition}\label{prop:twistedJacquetDn} Let $F$ be a quadratic \'etale extension of $\Q_p$, and $E = \Q_p \times F$.  Recall that we have maps $G_{2,F} \times S_E \rightarrow G_6$ and $G_{3,F} \times S_E \rightarrow G_7$; see subsection \ref{subsec:classicalgps}.  Let $P_{2,F} \subseteq G_{2,F}$ and $P_{3,F} \subseteq G_{3,F}$ be the parabolic subgroups that stabilize the line $\Q_p b_1$ in the standard representation of these groups.
	\begin{enumerate}
		\item Let $N_{2,F}$ be the unipotent radical of $P_{2,F}$, which we identify with $F$ via the exponential map.  Suppose $x \in F$ has nonzero norm to $\Q_p$, and let $\chi_x: N_{2,F} \simeq F \rightarrow \C^\times$ be the character given by $\chi_x(y) = \psi((x,y))$.  Then the space of coinvariants $(V_{min,G_6})_{(N_{2,F},\chi_x),S_E}$ is dimension one.
		\item Let $N_{3,F}$ be the unipotent radical of $P_{3,F}$, which we identify with $H \oplus F$ via the exponential map.  Suppose $x \in H \oplus F$ is non-degenerate, and let $\chi_x: N_{3,F} \simeq H\oplus F \rightarrow \C^\times$ be the character given by $\chi_x(y) = \psi((x,y))$.  Then the space of coinvariants $(V_{min,G_7})_{(N_{3,F},\chi_x),S_E}$ is dimension one.
	\end{enumerate}
\end{proposition}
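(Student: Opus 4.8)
The plan is to treat both parts by a single argument, reducing each to the transitivity statement of Lemma~\ref{lem:transCF} via the $P$-module structure of the $p$-adic minimal representation recorded in Theorem~\ref{thm:minlocDn}. Take part~(1). The parabolic $P_{2,F}\subseteq G_{2,F}$ stabilizes the line $\Q_p b_1$ of $V_{4,F}=H\oplus F$, and since this $b_1$ is also the vector whose line $P_{G_6,1}$ stabilizes in $V_{12}=V_{4,F}\oplus\Theta_F$, one has $P_{2,F}\subseteq P_{G_6,1}$; under the identification of the abelian unipotent radical $N_{G_6,1}$ with $V_{10}=F\oplus\Theta_F$ the subgroup $N_{2,F}$ is exactly the summand $F$. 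As $S_E$ acts trivially on $V_{4,F}$ it lies in the Levi $M_{G_6,1}$ and centralizes $N_{2,F}$, so $(V_{min,G_6})_{(N_{2,F},\chi_x)}$ is naturally an $S_E(\Q_p)$-module. Theorem~\ref{thm:minlocDn} then supplies the short exact sequence of $P_{G_6,1}$-modules, hence of $S_E(\Q_p)$-modules,
\[0\to C_c^\infty(\omega)\to V_{min,G_6}\to (V_{min,G_6})_{N_{G_6,1}}\to 0,\]
where $\omega$ is the set of nonzero isotropic vectors of $V_{10}$ and, in the model of \cite{magaardSavin}, the abelian radical $N_{G_6,1}\cong V_{10}$ acts on $f\in C_c^\infty(\omega)$ by $(v\cdot f)(\xi)=\psi((v,\xi))f(\xi)$.

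Next I would apply the (exact) twisted Jacquet functor $(-)_{(N_{2,F},\chi_x)}$. On the Jacquet module $(V_{min,G_6})_{N_{G_6,1}}$ the subgroup $N_{2,F}$ acts trivially while $\chi_x$ is nontrivial, since $x$ has nonzero norm; hence that term is killed and $(V_{min,G_6})_{(N_{2,F},\chi_x)}\cong\bigl(C_c^\infty(\omega)\bigr)_{(N_{2,F},\chi_x)}$ as $S_E(\Q_p)$-modules. Because $N_{2,F}=F$ acts on $C_c^\infty(\omega)$ through the characters $\xi\mapsto\psi((v,\xi))$, restriction of functions (after the usual $\psi$-twist) identifies these coinvariants with $C_c^\infty(\omega_x)$, where $\omega_x=\{\xi\in\omega:(\xi,v)=(x,v)\ \forall v\in F\}$; since $F\subseteq V_{10}$ is non-degenerate with orthogonal complement $\Theta_F$ and $x\in F$, this is $\omega_x=\{x+w:w\in\Theta_F,\ q(x)+q(w)=0\}$, which as an $S_E(\Q_p)$-set is the affine copy of the norm-sphere $\{w\in C_F:q(w)=-q(x)\}$ in $C_F$. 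This sphere is nonempty because the $8$-dimensional $\Q_p$-quadratic form $q|_{C_F}$ is universal.

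Finally, Lemma~\ref{lem:transCF} gives that $S_E(\Q_p)=\Spin_E(\Q_p)$ acts transitively on this sphere, and by Lemma~\ref{lem:SpinCstabs} the point-stabilizer is a copy of $\Spin_7$; being semisimple and simply-connected it is unimodular, so $\omega_x\cong S_E(\Q_p)/\Spin_7(\Q_p)$ carries an $S_E(\Q_p)$-invariant measure and $\bigl(C_c^\infty(\omega_x)\bigr)_{S_E(\Q_p)}$ is one-dimensional, spanned by the invariant integral. With the isomorphism of the previous paragraph this proves part~(1). Part~(2) is the same argument with the summand $V_{4,F}=H\oplus F$ in the role of $F$: now $N_{3,F}$ is the $V_{4,F}$-summand of $N_{G_7,1}\cong V_{12}=V_{4,F}\oplus\Theta_F$, $S_E$ again fixes $V_{6,F}$ pointwise and so lies in $M_{G_7,1}$ and centralizes $N_{3,F}$, and non-degeneracy of $x$ (i.e.\ $q(x)\neq0$) both makes $\chi_x$ nontrivial (killing the Jacquet-module term) and forces $\omega_x=\{x+w:w\in\Theta_F,\ q(x)+q(w)=0\}$; everything then reduces, exactly as before, to the norm-sphere in $C_F$ and Lemma~\ref{lem:transCF}.

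The transitivity that would otherwise be the crux is already in hand as Lemma~\ref{lem:transCF}, with the simply-connected (hence unimodular, cohomologically trivial) stabilizer furnished by Lemma~\ref{lem:SpinCstabs}. The step that requires genuine care is therefore the \emph{bookkeeping}: realizing $N_{2,F}$ (resp.\ $N_{3,F}$) as a summand of the abelian radical $N_{G_6,1}$ (resp.\ $N_{G_7,1}$) compatibly with the decomposition $V_{2n-2}=V_{4,F}\oplus\Theta_F$, and confirming that in the model of \cite{magaardSavin} the abelian radical acts by multiplication by $\psi$ of the form pairing---this is precisely what turns the twisted Jacquet functor into restriction to the fiber $\omega_x$, after which the computation of $S_E(\Q_p)$-coinvariants is the standard fact that $C_c^\infty$ of a homogeneous space with unimodular stabilizer has one-dimensional coinvariants.
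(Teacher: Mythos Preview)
Your proof is correct and follows essentially the same route as the paper: use Theorem~\ref{thm:minlocDn} to identify the twisted Jacquet module with $C_c^\infty(\Omega_x)$ (the paper cites \cite[Lemma~2.2]{magaardSavin} for this restriction-to-the-fiber step), and then invoke Lemma~\ref{lem:transCF} for transitivity of $S_E(\Q_p)$ on $\Omega_x$. You have simply filled in more of the details---the embedding $N_{2,F}\subseteq N_{G_6,1}$, the vanishing of the Jacquet-module term under $\chi_x$, nonemptiness of the fiber, and the unimodularity of the stabilizer---than the paper's terse proof spells out.
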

\begin{proof}We prove the first item.  The proof of the second item is identical.
	
Let 
	\[\Omega_x = \{(x,v) \in F \oplus C_F: (x,v) \in \Omega \text{ is isotropic}\}.\]
By Theorem \ref{thm:minlocDn}, the coinvariants $(V_{min,G_6})_{(N_{2,F},\chi_x)} \simeq C^\infty_c(\Omega_x)$ via the restriction map.  See \cite[Lemma 2.2]{magaardSavin} for a very similar argument.  Now the claim follows from the transitivity of the action of $S_E$ on $\Omega_x$, which is proved in Lemma \ref{lem:transCF}.
\end{proof}

We now consider similar spaces of coinvariants for the minimal representations on groups of type $E_7$ and $E_8$.  We refer the reader to \cite{ganSavin} and the references contained therein, especially section 12 of \cite{ganSavin}, for the fact that the minimal representation is the unique irreducible subrepresentation of the degenerate principal series we studied in section \ref{sec:AMRE}. 

For $E_7$, we have the following.  Let $P_{H_J^1} = M_{H_J^1} N_{H_J^1}$ be the Siegel parabolic subgroup of $H_J^1$.  Let $\Omega \subseteq J$ be the set of rank one elements.  One defines an action of $P_{H_J^1}$ on $C^\infty_c(\Omega)$ as in \cite{magaardSavin}.
\begin{theorem}[Savin, Magaard-Savin]\label{thm:minE7loc}  There is a short exact sequence of $P_{H_J^1}$ modules
\[0 \rightarrow C^\infty_c(\Omega) \rightarrow V_{min,H_J^1} \rightarrow (V_{min,H_J^1})_{N_{H_J^1}}\rightarrow 0.\]
\end{theorem}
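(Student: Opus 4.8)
The plan is to establish this exactly in parallel with its type $D_n$ analogue, Theorem~\ref{thm:minlocDn}. Recall from subsection~\ref{subsec:minE7} that $V_{min,H_J^1}$ is the unique irreducible subrepresentation of $I_p(s=4) = \Ind_{P_{H_J^1}}^{H_J^1}(|\lambda|^4)$. Write $N := N_{H_J^1}$ and $M := M_{H_J^1}$, so that the Siegel unipotent radical $N$ is abelian and, as an $M$-module, is identified with $J$; dually, the Pontryagin dual $\widehat{N}$ is identified $M$-equivariantly with $J^\vee$, on which $M(\Q_p)$ has exactly four orbits, namely the rank strata of ranks $0,1,2,3$. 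The rank-one stratum is $\Omega$, the minimal nonzero orbit, and its closure is $\overline{\Omega} = \Omega \cup \{0\}$.

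The key step is the twisted Jacquet computation: for $\lambda \in J^\vee$ of rank $\geq 2$ one has $(V_{min,H_J^1})_{(N,\psi_\lambda)} = 0$, while for $\lambda \in \Omega$ the twisted Jacquet module $(V_{min,H_J^1})_{(N,\psi_\lambda)}$ is one-dimensional, with the stabilizer $M_\lambda(\Q_p)$ and the center of $M$ acting through the expected character. The vanishing in rank $\geq 2$ encodes the smallness of the minimal representation: its wave-front set is the closure of the minimal nilpotent orbit, whose intersection with the Lie algebra of the opposite unipotent radical (a vector space $\cong J^\vee$) is $\overline{\Omega}$. I would obtain these facts either by reading off the Bruhat / geometric-lemma filtration of $I_p(s=4)$ along $N$ together with the fact that $V_{min,H_J^1}$ is its socle, or --- mirroring the remark following Theorem~\ref{thm:minlocDn} --- by applying the Fourier--Jacobi functor of \cite{weissmanFJ}, \cite{hanzerSavin} attached to the Siegel parabolic of $H_J^1$, which reduces the computation to principal series of $\SL_2$; alternatively one may cite \cite{magaardSavin}, \cite{ganSavin}. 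The one-dimensionality for $\lambda \in \Omega$ is the uniqueness of this Fourier--Jacobi model of $V_{min,H_J^1}$.

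Granting the previous step, assembling the exact sequence is the standard Mackey / sheaf argument for smooth representations of the abelian group $N(\Q_p)$. The kernel $V_{min,H_J^1}(N)$ of the canonical surjection $V_{min,H_J^1} \to (V_{min,H_J^1})_{N}$ is precisely the subspace of vectors whose $N$-spectrum avoids the trivial character; by the support statement this spectrum is carried by $\Omega$. Since $M(\Q_p)$ acts transitively on $\Omega(\Q_p)$ --- transitivity over $\overline{\Q}_p$ is the $E_6$-action on rank-one elements, and one descends to $\Q_p$ by a Galois-cohomology argument as in subsection~\ref{subsec:coinvts} --- and since the fibre over each $\lambda \in \Omega$ is one-dimensional, the restriction-to-$\Omega$ map identifies $V_{min,H_J^1}(N)$ with $C^\infty_c(\Omega)$ as a $P_{H_J^1}(\Q_p)$-module, for the action recalled above following \cite{magaardSavin}; compare \cite[Lemma 2.2]{magaardSavin} for the model argument in a closely related setting. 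This yields the claimed short exact sequence of $P_{H_J^1}$-modules.

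I expect the main obstacle to be the twisted Jacquet computation of the second paragraph --- equivalently, pinning down the $N$-support (wave-front set) of $V_{min,H_J^1}$ --- since everything else (the identification of $\widehat{N}$ with $J^\vee$, the rank stratification, the transitivity of $M$ on $\Omega$, the Mackey-type assembly, and the explicit $P_{H_J^1}$-action on $C^\infty_c(\Omega)$) is routine and already recorded in \cite{magaardSavin} and its references. Accordingly I would organize the proof as: reduce to the twisted Jacquet statement; establish it via the Fourier--Jacobi functor (parallel to the $D_n$ case) or by citation; then assemble.
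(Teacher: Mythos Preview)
Your proposal is correct and aligns with the paper's own treatment: the paper does not give a detailed proof but simply remarks ``Again, this theorem can be proved using the Fourier--Jacobi functor,'' attributing the result to Savin and Magaard--Savin. Your outline---reduce to the twisted Jacquet computation, handle it via the Fourier--Jacobi functor or by citation, then assemble via the standard Mackey argument as in \cite[Lemma 2.2]{magaardSavin}---is exactly the approach the paper gestures at, only spelled out in more detail than the paper itself provides.
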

Again, this theorem can be proved using the Fourier-Jacobi functor.

For $E_8$, let $P_{G_J} = M_{G_J} N_{G_J}$ be the Heisenberg parabolic subgroup.  Let $Z \subseteq N_{G_J}$ be the center of $N_{G_J}$, which is also highest root space of $G_J$.  Denote by $\Omega$ the rank one elements of $W_J$.  There is a representation of $P_{G_J}$ on $C^\infty_c(\Omega)$; see \cite[section 2.3]{ganSWregularized}.  The following theorem (see \cite[Section 2.3]{ganSWregularized} again) can be proved using the work in \cite[Sections 11,12]{ganSavin}.
\begin{theorem}[Gan, Savin]\label{thm:minlocE8} There is a short exact sequence of $P_{G_J}$ modules
	\[0 \rightarrow C^\infty_c(\Omega) \rightarrow (V_{min,G_J})_Z \rightarrow (V_{min,G_J})_{N_{G_J}} \rightarrow 0.\]
\end{theorem}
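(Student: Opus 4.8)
The plan is to follow the pattern of Theorems~\ref{thm:minlocDn} and~\ref{thm:minE7loc} (cf.\ the methods of \cite{magaardSavin} and \cite[Section 2.3]{ganSWregularized}), with the center $Z$ of the Heisenberg radical $N_{G_J}$ playing the role that the abelian Siegel radical plays for $D_n$ and $E_7$. Write $V := (V_{min,G_J})_Z$ and let $W_J = N_{G_J}/Z$ be the $56$-dimensional space on which $M_{G_J}$ acts, preserving its symplectic form up to similitude (see \cite[Section 2.2]{pollackQDS}). Taking further $W_J$-coinvariants gives a canonical $P_{G_J}$-equivariant surjection $V \twoheadrightarrow V_{W_J} = (V_{min,G_J})_{N_{G_J}}$ with kernel the augmentation submodule $V(W_J)$, so the claimed sequence is exactly $0 \to V(W_J) \to V \to V_{W_J} \to 0$, and the whole theorem reduces to identifying $V(W_J)$ with $C^\infty_c(\Omega)$ as a $P_{G_J}$-module.

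First I would describe $V$ through its spectral decomposition as a smooth representation of the abelian group $W_J \cong \Q_p^{56}$: for each open compact $L \subseteq W_J$ the fixed space $V^L$ breaks up over the finite group $\widehat{W_J/L}$, and in the limit one gets a $P_{G_J}$-equivariant sheaf on $\widehat{W_J}$, which the symplectic form identifies with $W_J$, $M_{G_J}$-equivariantly up to the similitude twist built into the $P_{G_J}$-action on $C^\infty_c(\Omega)$. In this language the trivial-character part of $V$ is the quotient $(V_{min,G_J})_{N_{G_J}}$, the submodule $V(W_J)$ is the complementary part and has vanishing $W_J$-coinvariants, and for $\lambda \in W_J$ the stalk at $\lambda$ is the twisted Jacquet module $(V_{min,G_J})_{N_{G_J},\widetilde\psi_\lambda}$, where $\widetilde\psi_\lambda$ is the character of $N_{G_J}$ that is trivial on $Z$ and equals $\psi(\langle\lambda,\cdot\rangle)$ on $W_J$.

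The substantive step — and the one I expect to be the real obstacle to a self-contained argument — is the computation of these stalks: $(V_{min,G_J})_{N_{G_J},\widetilde\psi_\lambda} = 0$ whenever $\lambda$ has rank $\geq 2$, while it is one-dimensional whenever $\lambda$ has rank $1$, i.e.\ $\lambda \in \Omega$. The vanishing for higher rank says that the wavefront set of the minimal representation meets $\Lie(N_{G_J})$ only along the closure of the minimal nilpotent orbit, and the one-dimensionality on $\Omega$ is the uniqueness, up to the character by which the point-stabilizer acts, of the corresponding degenerate Whittaker functional. These are the expected smallness properties of a minimal representation; they follow from the structure theory of the minimal representations of $E_7$ and $E_8$ in \cite[Sections 11, 12]{ganSavin} (using also the $E_7$ input behind Theorem~\ref{thm:minE7loc}), and I would quote them, exactly as \cite{magaardSavin} and \cite[Section 2.3]{ganSWregularized} do, rather than reprove them.

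Granting the stalk computation, one finishes with a standard cleanness argument. The $W_J$-support of $V(W_J)$ is then contained in the closed set $\overline{\Omega} = \Omega \sqcup \{0\}$, in which $\Omega$ is open; since $V(W_J)$ has no $W_J$-coinvariants, the restriction--extension sequence $0 \to j_!\,j^* V(W_J) \to V(W_J) \to i_*\,i^* V(W_J) \to 0$ for the open immersion $j\colon \Omega \hookrightarrow \overline{\Omega}$ and the closed point $i\colon \{0\} \hookrightarrow \overline{\Omega}$ forces $i^* V(W_J) = 0$, hence $V(W_J) \cong j_!\,j^* V(W_J)$. Because $M_{G_J}(\Q_p)$ acts transitively on $\Omega(\Q_p)$ — which, as in the orbit computations of Section~\ref{sec:twisedJacquet}, follows from the triviality of the $p$-adic Galois cohomology of the point-stabilizer — and the stalks are one-dimensional, $j^* V(W_J)$ is the module of sections of a $P_{G_J}$-equivariant line bundle on $\Omega$; evaluating a $\widetilde\psi_\lambda$-functional on a section pins this line bundle down as the one defining $C^\infty_c(\Omega)$ in \cite[Section 2.3]{ganSWregularized}, giving $V(W_J) \cong C^\infty_c(\Omega)$ $P_{G_J}$-equivariantly. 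As in the remarks after Theorems~\ref{thm:minlocDn} and~\ref{thm:minE7loc}, an alternative route is to apply the Fourier--Jacobi functor of \cite{weissmanFJ, hanzerSavin} to the realization of $V_{min,G_J}$ as the unique irreducible submodule of $I_p(s=5)$ from Proposition~\ref{prop:E8degPS}.
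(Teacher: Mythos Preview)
Your proposal is correct and matches the paper's treatment: the paper does not give its own proof of this theorem but simply cites it, stating that it ``can be proved using the work in \cite[Sections 11,12]{ganSavin}'' and referring to \cite[Section 2.3]{ganSWregularized}. Your sketch is a faithful elaboration of exactly that route --- spectral decomposition of $(V_{min,G_J})_Z$ over $W_J$, the stalk computation quoted from \cite{ganSavin}, and the assembly into $C^\infty_c(\Omega)$ as in \cite{magaardSavin,ganSWregularized} --- and you even note the Fourier--Jacobi alternative the paper mentions after the analogous $D_n$ and $E_7$ statements.
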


We can now state and prove the analogues of Proposition \ref{prop:twistedJacquetDn} that we will need in the cases of minimal representation on $E_7$ and $E_8$.

\begin{proposition} Let $E$ be a cubic \'etale algebra over $\Q_p$, and $x \in E$ an element with nonzero norm to $\Q_p$.  Recall that we have a map $\SL_{2,E} \times S_E \rightarrow H_J^1$.  Let $U_E$ be the unipotent radical of the standard Borel of $\SL_{2,E}$, which we identify with $E$ via the exponential map.  Let $\chi_x: U_E \rightarrow \C^\times$ be the character given by $\chi_x(y) = \psi((x,y))$.  Then the space of coinvariants $(V_{min,H_J^1})_{(U_E,\chi_x),S_E}$ is one-dimensional.
\end{proposition}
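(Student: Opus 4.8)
The plan is to argue just as in Proposition \ref{prop:twistedJacquetDn}, now using the $p$-adic model of $V_{min,H_J^1}$ provided by Theorem \ref{thm:minE7loc}. First I would record, by inspection of the explicit map $\SL_{2,E} \rightarrow H_E^1$ of \cite[Section 4.4]{pollackLL}, that under $\SL_{2,E}\times S_E \rightarrow H_J^1$ the group $U_E$ lands inside the Siegel unipotent radical $N_{H_J^1}$, which is abelian and canonically identified with $J$; under this identification the image of $U_E$ is the subspace $E \subseteq J$, and $\chi_x$ is the restriction to $E$ of the additive character $\lambda \mapsto \psi(\langle x,\lambda\rangle)$ of $N_{H_J^1}\simeq J$, where $\langle \cdot,\cdot\rangle$ denotes the trace pairing on $J$. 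Since $N_{E/\Q_p}(x)\neq 0$ forces $x\neq 0$, and the trace form of the \'etale algebra $E$ is nondegenerate, $\chi_x$ is a nontrivial character of $U_E$.

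Next I would apply the $(U_E,\chi_x)$-twisted Jacquet functor to the short exact sequence $0 \rightarrow C^\infty_c(\Omega) \rightarrow V_{min,H_J^1} \rightarrow (V_{min,H_J^1})_{N_{H_J^1}} \rightarrow 0$ of Theorem \ref{thm:minE7loc}, where $\Omega \subseteq J$ is the rank one locus. On the quotient $(V_{min,H_J^1})_{N_{H_J^1}}$ the abelian group $N_{H_J^1}$, hence its subgroup $U_E$, acts trivially; since $\chi_x$ is nontrivial, the $(U_E,\chi_x)$-coinvariants of this quotient vanish. Thus $(V_{min,H_J^1})_{(U_E,\chi_x)} \simeq C^\infty_c(\Omega)_{(U_E,\chi_x)}$. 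On the model $C^\infty_c(\Omega)$ the abelian group $N_{H_J^1}\simeq J$ acts by the multiplication operators $(n\cdot\phi)(\lambda) = \psi(\langle n,\lambda\rangle)\phi(\lambda)$ (this is part of the construction behind Theorem \ref{thm:minE7loc}, and is worked out as in \cite[Lemma 2.2]{magaardSavin}), so twisting by the character $\chi_x$ of the subgroup $U_E$ imposes exactly the linear condition that the $E$-component of $\lambda$ equal $x$. Restriction of functions therefore gives $C^\infty_c(\Omega)_{(U_E,\chi_x)} \simeq C^\infty_c(\Omega_x)$, with $\Omega_x = \{(x,v)\in E\oplus V_E : x+v \text{ rank one in } J\}$ the variety of Lemma \ref{lem:VEtrans}.

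Finally, passing to $S_E$-coinvariants yields $(V_{min,H_J^1})_{(U_E,\chi_x),S_E} \simeq C^\infty_c(\Omega_x)_{S_E(\Q_p)}$. By Lemma \ref{lem:VEtrans}, $S_E(\Q_p)$ acts transitively on $\Omega_x(\Q_p)$, with stabilizer a group of type $G_2$; hence $\Omega_x(\Q_p) \simeq S_E(\Q_p)/\Stab$ and $C^\infty_c(\Omega_x)$ is compactly induced from the trivial representation of $\Stab$. Since $S_E$ and any group of type $G_2$ are unimodular, Shapiro's lemma gives that the space of $S_E(\Q_p)$-coinvariants is one-dimensional, proving the proposition. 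The step I expect to require the most care is the second one: fixing the $N_{H_J^1}$-action on $C^\infty_c(\Omega)$ with its normalizations and checking that its restriction to $U_E \hookrightarrow N_{H_J^1}\simeq J$, twisted by $\chi_x$, cuts out precisely the locus $\Omega_x$ of Lemma \ref{lem:VEtrans} inside $\Omega$; this is the $H_J^1$-analogue of the computations from \cite{magaardSavin} invoked for $G_6$ in Proposition \ref{prop:twistedJacquetDn}.
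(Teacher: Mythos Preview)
Your proposal is correct and follows exactly the approach the paper intends: the paper's proof simply reads ``This follows from Theorem \ref{thm:minE7loc} and Lemma \ref{lem:VEtrans}, completely similar to the proof of Proposition \ref{prop:twistedJacquetDn},'' and you have faithfully unpacked those details. The identification of $U_E$ with $E\subseteq J$ inside $N_{H_J^1}$, the vanishing of the $(U_E,\chi_x)$-coinvariants on the Jacquet quotient, the restriction isomorphism $C_c^\infty(\Omega)_{(U_E,\chi_x)}\simeq C_c^\infty(\Omega_x)$, and the appeal to transitivity from Lemma \ref{lem:VEtrans} are precisely the steps the paper has in mind.
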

\begin{proof} This follows from Theorem \ref{thm:minE7loc} and Lemma \ref{lem:VEtrans}, completely similar to the proof of Proposition \ref{prop:twistedJacquetDn}.
\end{proof}

We now consider the case of minimal representation on $E_8$.
\begin{proposition} Let $E$ be a cubic \'etale algebra over $\Q_p$, and $x \in W_E$ a non-degenerate element. Recall that we have a map $G_E \times S_E \rightarrow G_J$.  Let $N_E$ be the unipotent radical of the standard Heisenberg parabolic subgroup of $G_E$.  We identify $N_E/Z$ with $W_E$ via the exponential map.  Let $\chi_x: N_E \rightarrow \C^\times$ be the character given by $\chi_x(y) = \psi((x,y))$.  Then the space of coinvariants $(V_{min,G_J})_{(N_E,\chi_x),S_E}$ is one-dimensional.
\end{proposition}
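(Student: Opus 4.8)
The plan is to imitate the proof of Proposition~\ref{prop:twistedJacquetDn}, with one extra step to handle the fact that the Heisenberg unipotent radical $N_{G_J}$ is two-step nilpotent rather than abelian. Write $Z$ for the common centre of $N_E$ and $N_{G_J}$: under the embedding $G_E \times S_E \to G_J$ the $\mathfrak{sl}_2$-factors of $\mathfrak{g}(E)$ and $\mathfrak{g}(J)$ are identified, so the highest root space of $G_E$ is the highest root space of $G_J$ and the two centres agree. Since $\chi_x$ is a character of $N_E/Z \cong W_E$, we have $(V_{min,G_J})_{(N_E,\chi_x)} = \bigl((V_{min,G_J})_Z\bigr)_{(W_E,\chi_x)}$, where $W_E \subseteq W_J = N_{G_J}/Z$ and, via the decomposition $W_J = W_E \oplus V_E^2$ of Lemma~\ref{lem:WEtrans}, $V_E^2$ is the symplectic complement of $W_E$ in $W_J$. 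Note also that $S_E \subseteq H_J^1 \subseteq M_{G_J}$ centralises $N_E$ and fixes $\chi_x$, so all the coinvariant spaces below carry an $S_E(\Q_p)$-action.

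First I would feed the short exact sequence of $P_{G_J}$-modules of Theorem~\ref{thm:minlocE8}, namely $0 \to C_c^\infty(\Omega) \to (V_{min,G_J})_Z \to (V_{min,G_J})_{N_{G_J}} \to 0$, into the functor of $(W_E,\chi_x)$-coinvariants. Since $W_E$ is abelian unipotent, this functor is exact (it is a filtered colimit over compact open subgroups of projections onto isotypic components). On $(V_{min,G_J})_{N_{G_J}}$ the group $W_E \subseteq N_{G_J}/Z$ acts trivially, while $\chi_x$ is a nontrivial character of $W_E(\Q_p)$ because $x$ is non-degenerate, hence nonzero, and the symplectic form on $W_E$ is non-degenerate; therefore $\bigl((V_{min,G_J})_{N_{G_J}}\bigr)_{(W_E,\chi_x)} = 0$. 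Thus the restriction map gives an $S_E(\Q_p)$-equivariant isomorphism $C_c^\infty(\Omega)_{(W_E,\chi_x)} \xrightarrow{\ \sim\ } (V_{min,G_J})_{(N_E,\chi_x)}$.

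Next I would identify $C_c^\infty(\Omega)_{(W_E,\chi_x)}$ with $C_c^\infty(\Omega_x)$, where $\Omega_x = \{(x,v)\in W_E\oplus V_E^2 : (x,v)\text{ rank one}\}$ as in Lemma~\ref{lem:WEtrans}. In the mixed model of the minimal representation the group $W_J = N_{G_J}/Z$ acts on $C_c^\infty(\Omega)$ by multiplication operators $(w\cdot f)(\xi) = \psi(\langle w,\xi\rangle)f(\xi)$, so for $w\in W_E$ the fibre character at $\xi = \xi_E + \xi_V$ is $w\mapsto \psi(\langle w,\xi_E\rangle)$, using $W_E\perp V_E^2$; this equals $\chi_x$ exactly when $\xi_E = x$ (up to a harmless sign, with $\Omega_{-x}\cong\Omega_x$), the locus $\{\xi_E = x\}$ is closed, and the fibre characters separate it from its complement, so a partition-of-unity argument --- this is the content of \cite[Lemma~2.2]{magaardSavin}, cf.\ also \cite[Section~2.3]{ganSWregularized} --- yields $C_c^\infty(\Omega)_{(W_E,\chi_x)} = C_c^\infty(\Omega_x)$ as $S_E(\Q_p)$-modules.

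Finally I would take $S_E(\Q_p)$-coinvariants. By Lemma~\ref{lem:WEtrans}, $S_E(\Q_p)$ acts transitively on $\Omega_x(\Q_p)$, and (as in its proof) the stabiliser of a point is a copy of $\SU_3$, which is semisimple and simply connected, hence unimodular. Therefore $C_c^\infty(\Omega_x)$ is isomorphic as an $S_E(\Q_p)$-module to the space of compactly supported locally constant functions on $S_E(\Q_p)/\SU_3(\Q_p)$, and integration against an invariant measure identifies its $S_E(\Q_p)$-coinvariants with $\C$. Combined with the isomorphism of the previous step, this shows $(V_{min,G_J})_{(N_E,\chi_x),S_E}$ is one-dimensional. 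The main obstacle is the second step: pinning down the explicit mixed-model action of $W_E$ on $C_c^\infty(\Omega)$ precisely enough to run the geometric lemma, together with the bookkeeping that $V_E^2$ really is the symplectic complement of $W_E$ in $W_J$, so that the fibre-character condition cuts out exactly $\Omega_x$. Everything else is either formal or already packaged in the cited results.
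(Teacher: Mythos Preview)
Your proposal is correct and follows essentially the same approach as the paper: the paper's proof simply reads ``This follows from Theorem~\ref{thm:minlocE8} and Lemma~\ref{lem:WEtrans}, completely similar to the proof of Proposition~\ref{prop:twistedJacquetDn},'' and you have spelled out exactly that argument, including the extra step of passing through the $Z$-coinvariants to account for the two-step nilpotent Heisenberg radical. Your treatment is more explicit than the paper's (particularly the exactness argument killing the quotient term and the unimodularity check for the stabiliser), but the route is the same.
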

\begin{proof} This follows from Theorem \ref{thm:minlocE8} and Lemma \ref{lem:WEtrans}, completely similar to the proof of Proposition \ref{prop:twistedJacquetDn}.
\end{proof}

\section{Siegel Weil Eisenstein series I}\label{sec:SWEis}
Our Siegel-Weil theorems are identities relating a theta lift to a special value of a degenerate Eisenstein series.  We call the latter ``Siegel-Weil Eisenstein series".  In this section, we define some of these Siegel-Weil Eisenstein series and compute their constant terms along various maximal parabolic subgroups.

\subsection{The group $G_{2,F}$}
Let $F$ be a real quadratic field. The group $G_{2,F}$ acts on the $V_{2,F} = H \oplus F$. Let $P_{2,F} = M_{2,F} N_{2,F}$ be the parabolic subgroup of $G_{2,F}$ that stabilizes the line $\Q b_1$.  The action of $P_{2,F}$ on $b_1$ defines a character $\nu: P_{2,F} \rightarrow \GL_1$, and we consider the associated induced representation $I(s) = Ind_{P_{2,F}}^{G_{2,F}}(|\nu|^s)$. 

Let $v_1 = \frac{1}{\sqrt{2}}(b_1 + b_{-1})$ and let $v_2 = \frac{1}{\sqrt{2}}(1_F)$.  Observe that $(v_i,v_j) = \delta_{ij}$. Then $v_1,v_2$ can be used to define a maximal compact subgroup $K_{G_{2,F},\infty} \subseteq G_{2,F}(\R)$, and the action of $K_{G_{2,F},\infty}$ on $v_1 + i v_2 \in V_{2,F} \otimes \C$ gives a character $j(\bullet,i): K_{G_{2,F},\infty} \rightarrow \C^\times$. For an even integer $\ell$, let $f_{\infty,\ell}(g,s) \in I_\infty(s)$ be the flat section with $f_{\infty,\ell}(k,s) = j(k,i)^{\ell}$.

Let $f_{\ell}(g,s) \in I(s)$ be a global flat section, with infinite component equal to $f_{\infty,\ell}$.  Let $E(g,f_{\ell},s) = \sum_{\gamma \in P_{2,F}(\Q)\backslash G_{2,F}(\Q)}{f_{\ell}(\gamma g,s)}$ be the associated Eisenstein series. The following proposition is well-known.
\begin{proposition}\label{prop:G2Fconst} Suppose $s_0:=|\ell| > 2$.  The Eisenstein series $E(g,f_{\ell},s)$ converges absolutely at $s_0$, and the constant term $E(g,f_{\ell},s=s_0)_{N_{2,F}} = f_{\ell}(g,s = s_0)$.
\end{proposition}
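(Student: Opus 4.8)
The plan is to combine the constant-term formalism of subsection~\ref{subsec:Eis} with an explicit evaluation of the archimedean intertwining operator on the weight-$\ell$ section.

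First I would record the relevant structure. Since $V_{2,F}=H\oplus F$ is a four-dimensional quadratic space of Witt index one over $\Q$, the group $G_{2,F}$ has $\Q$-rank one (in fact $G_{2,F}\cong\Res_{F/\Q}\SL_2$), and $P_{2,F}$, the stabilizer of the isotropic line $\Q b_1$, is simultaneously the minimal parabolic and a maximal parabolic; its Levi $M_{2,F}$ is a (nonsplit) torus, so $W_{M_{2,F}}=\{1\}$ and the relative Weyl group is $W=\{1,w_0\}$. Because $P_{2,F}$ stabilizes an isotropic line in a four-dimensional quadratic space, $\delta_{P_{2,F}}=|\nu|^{2}$, so $s_P=2$. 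As $s_0=|\ell|>2$ by hypothesis, the absolute convergence of $E(g,f_\ell,s)$ at $s=s_0$ is immediate from subsection~\ref{subsec:Eis}, and the long intertwining operator $M(w_0)$ is holomorphic at $s=s_0$ (its $c$-function is $\zeta_F(s-1)/\zeta_F(s)$ times archimedean gamma factors, regular for $\mathrm{Re}(s)>2$). Applying the constant-term formula of subsection~\ref{subsec:Eis} with $Q=P=P_{2,F}$ and $[W_{M_{2,F}}\backslash W/W_{M_{2,F}}]=\{1,w_0\}$, and noting that each of the two Weyl terms is a single term because $M_{2,F}$ has no roots, gives the identity of functions holomorphic near $s_0$
\[
E(g,f_\ell,s)_{N_{2,F}} = f_\ell(g,s) + M(w_0)f_\ell(g,s).
\]

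It therefore remains to prove $M(w_0)f_\ell(\,\cdot\,,s_0)=0$. I would factor $M(w_0)=\prod_v M_v(w_0)$; the finite part $\prod_{p<\infty}M_p(w_0)f_f(\,\cdot\,,s_0)$ is finite by the $c$-function statement above. The archimedean factor is the heart of the argument: since $F$ is a real quadratic field, $G_{2,F}(\R)\cong\SL_2(\R)\times\SL_2(\R)$, the section $f_{\infty,\ell}$ is a product of two weight-$\ell$ vectors, and under $w_0\leftrightarrow(s_\alpha,s_\alpha)$ the operator $M_\infty(w_0)$ is a product of two copies of the standard weight-$\ell$ intertwining operator on $\SL_2(\R)$. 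Evaluating the latter reduces to a Beta-type integral of the shape $\int_{\R}(1+ix)^{(\ell-s)/2}(1-ix)^{-(s+\ell)/2}\,dx$, whose value is a nonzero constant times $\Gamma(s-1)\,\Gamma(\tfrac{s-\ell}{2})^{-1}\,\Gamma(\tfrac{s+\ell}{2})^{-1}$; since $\Gamma(\tfrac{s-|\ell|}{2})$ has a pole at $s=|\ell|$, this archimedean scalar vanishes at $s=s_0$. Hence $M_\infty(w_0)f_{\infty,\ell}(\,\cdot\,,s_0)=0$, so $M(w_0)f_\ell(\,\cdot\,,s_0)=0$, and the constant term equals $f_\ell(g,s_0)$.

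The only genuine computation is this last archimedean integral, and there is no real obstacle. The single point needing care is that the normalization of $s$ forced by $\delta_{P_{2,F}}=|\nu|^{2}$ places the zero of the archimedean $c$-function exactly at $s=|\ell|$; equivalently, one may recognize $E(g,f_\ell,s_0)$ as a holomorphic Hilbert modular Eisenstein series of parallel weight $\ell$, whose constant term is classically the single term $f_\ell(g,s_0)$, but verifying that amounts to the same calculation.
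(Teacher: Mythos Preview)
Your proposal is correct and follows exactly the same strategy as the paper's proof: the constant term is $f_\ell + M(w_0)f_\ell$, and the whole content is the vanishing of the archimedean intertwining operator at $s=|\ell|$, which the paper simply declares well-known and leaves to the reader. Your added detail (identifying $G_{2,F}(\R)\cong\SL_2(\R)\times\SL_2(\R)$, factoring $M_\infty(w_0)$, and recognizing the $\SL_2(\R)$ Beta integral with its $\Gamma(\tfrac{s-|\ell|}{2})^{-1}$ zero) is a legitimate way to carry out that verification; just be aware that, depending on the precise identification of $j(\cdot,i)$ with a character of $\SO(2)\times\SO(2)$, the archimedean section has weights $(\ell,\pm\ell)$ rather than necessarily $(\ell,\ell)$, but this is harmless since the zero occurs in each factor at $s=|\ell|$ either way.
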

\begin{proof} The proof of the proposition boils down to verifying that the archimedean intertwining operator
\begin{equation}\label{eqn:G2Farch} M(w_0)f_{\infty,\ell}(g,s=|\ell|) = \int_{N_{2,F}(\R)}{f_{\infty,\ell}(w_0 n g,s=|\ell|)\,dn} = 0.\end{equation}
As mentioned, this is well-known, and in any event, can be verified by the reader.
\end{proof}

\subsection{The group $G_{3,F}$}\label{subsec:SWEisD3}
Let $F$ be a real quadratic \'etale extension of $\Q$, i.e., either $F = \Q \times \Q$ or $F$ is a real quadratic field.  The group $G_{3,F}$ acts on the vector space $V_{6,F} = H^2 \oplus F$.  Let $P_{3,F}$ be the parabolic subgroup stabilizing the line $\Q b_1$.  The action of $P_{3,F}$ on $b_1$ defines a character $\nu: P_{3,F} \rightarrow \GL_1$, and we consider the induced representation $I(s) = Ind_{P_{3,F}}^{G_{3,F}}(|\nu|^s)$.

Let $v_j = \frac{1}{\sqrt{2}}(b_j + b_{-j})$ for $j = 1,2$ and $v_3 = \frac{1}{\sqrt{2}}(1_F)$.  Then $(v_i,v_j) = \delta_{ij}$.  From $V_3 = \mathrm{Span}(v_1,v_2,v_3)$ one obtains a maximal compact subgroup $K_{G_{3,F},\infty} \subseteq G_{3,F}(\R)$.  For an integer $\ell \geq 1$, let $f_{\infty,\ell}(g,s) \in I_\infty(s) \otimes \mathbf{V}_{\ell}$ be the flat section defined exactly as in subsection \ref{subsec:AMRD7}.  Let $f_{\ell}(g,s) \in I(s)$ be a flat section with archimedean component equal to $f_{\infty,\ell}(g,s)$.  We let $E(g,f_{\ell},s) = \sum_{\gamma \in P_{3,F}(\Q)\backslash G_{3,F}(\Q)}{f_{\ell}(\gamma g,s)}$ be the associated Eisenstein series.

We now fix $\ell = 4$ and $s_0 = 5$.  The Eisenstein series is absolutely convergent at $s=s_0=5$.

\begin{proposition} Suppose $F = \Q \times \Q$ so that $V_{6,F} = H^3$.  Let $P_{3,F;3}= M_{3,F;3}N_{3,F;3}$ be the parabolic subgroup stabilizing $\mathrm{Span}(b_1,b_2,b_3)$.  Then
	\[E(g,f_{4},s=5)_{N_{3,F;3}} = \sum_{ (P_{3,F} \cap M_{3,F;3})(\Q)\backslash M_{3,F;3}(\Q)}{ f_{4}(\gamma g,s=5)}\]
	an absolutely convergent $\SL_3$ Eisenstein series.
\end{proposition}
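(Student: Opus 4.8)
The plan is to use the constant-term formula of subsection \ref{subsec:Eis} and to show that at $s_0=5$ only the trivial double coset contributes. Since $F=\Q\times\Q$, the group $G_{3,F}$ is split $\Spin_6$ acting on $V_{6,F}=H^3$, and the maximal parabolic $P_{3,F}$ stabilizing the line $\Q b_1$ has $\delta_{P_{3,F}}=|\nu|^4$; hence the Eisenstein series $E(g,f_4,s)$ converges absolutely for $\mathrm{Re}(s)>4$, in particular at $s_0=5$, and so does its constant term along $N_{3,F;3}$. That constant term is therefore the finite, absolutely convergent sum $\sum_{w}E^w(g,f_4,s)$ over $w\in[W_{M_{3,F;3}}\backslash W/W_{M_{3,F}}]$, each $E^w$ an Eisenstein series on $M_{3,F;3}$ built from $f^w=M(w)f_4$. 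It suffices to determine this index set and to show each $w\neq 1$ term vanishes at $s=5$.

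Next I would pin down the double-coset set. Take the $D_3$ root system with simple roots $\alpha_1=e_1-e_2$, $\alpha_2=e_2-e_3$, $\alpha_3=e_2+e_3$ (where $t b_i=t_i b_i$, $e_i(t)=|t_i|$, and $\nu$ corresponds to $e_1$); then $\rho_{P_0}=2e_1+e_2$. The parabolic $P_{3,F}$ omits $\alpha_1$, so $M_{3,F}$ is of type $D_2=A_1\times A_1$, while $P_{3,F;3}$ omits $\alpha_3$, so $M_{3,F;3}$ is of type $A_2$, i.e.\ $\GL_3$ up to isogeny (consistent with the claimed ``$\SL_3$ Eisenstein series''). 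Identifying $W/W_{M_{3,F}}$ with the $W$-orbit $\{\pm e_1,\pm e_2,\pm e_3\}$ of $e_1$, the subgroup $W_{M_{3,F;3}}$ acts by permuting the three indices, with orbits $\{e_1,e_2,e_3\}$ and $\{-e_1,-e_2,-e_3\}$; hence $[W_{M_{3,F;3}}\backslash W/W_{M_{3,F}}]$ has exactly two elements, $1$ and a nontrivial $w$ (one may take $w=s_{\alpha_3}s_{\alpha_1}$, of length $2$). For $w=1$ the associated simple roots are $\{\alpha_1\}$, so $wP_{3,F}w^{-1}\cap M_{3,F;3}=P_{3,F}\cap M_{3,F;3}$ is the maximal parabolic of $\GL_3\cong M_{3,F;3}$ with Levi $\GL_1\times\GL_2$, and $E^1(g,f_4,s)=\sum_{(P_{3,F}\cap M_{3,F;3})(\Q)\backslash M_{3,F;3}(\Q)}f_4(\gamma g,s)$ is precisely the sum in the statement; it is absolutely convergent at $s_0=5$ because $5$ exceeds the abscissa of convergence, which is the exponent $3$ of $\delta_{P_{3,F}\cap M_{3,F;3}}$ on the derived group.

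It then remains to show $E^w(g,f_4,s)$ vanishes identically at $s=5$ for the nontrivial $w$ — this is the crux. The section $f^w=M(w)f_4$ factors as a product of local intertwiners; at the finite places one computes (factoring $M(w)=M(s_{\alpha_3})\circ M(s_{\alpha_1})$ and using $\langle\lambda_s,\alpha_1^\vee\rangle=s-1$, $\langle s_{\alpha_1}(\lambda_s),\alpha_3^\vee\rangle=s-2$) that the $c$-function of $M(w)$ is $\frac{\zeta(s-2)}{\zeta(s)}$, which is finite and nonzero at $s=5$. So the vanishing must come from the archimedean place, i.e.\ from $M(w)f_{\infty,4}(s=5)=0$; this is the exact analogue of the vanishing \eqref{eqn:G2Farch} behind Proposition \ref{prop:G2Fconst}. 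To verify it I would use the same factorization $M(w)=M(s_{\alpha_3})\circ M(s_{\alpha_1})$ (valid since $\ell(w)=\ell(s_{\alpha_3})+\ell(s_{\alpha_1})$) into rank-one intertwiners and follow the $K_{G_{3,F},\infty}$-type $\mathbf{V}_4$ through each step; each rank-one intertwiner acts on the relevant weight vector by an explicit ratio of $\Gamma_{\R}$-factors — precisely the kind of $\Gamma_{\R}^{-1}$ factor involving the weight $\ell=4$ that appears in the $\Spin_{12}$ and $\Spin_{14}$ constant-term computations recalled in the proof of Proposition \ref{prop:D6minAut} and imported from \cite{pollackNTM} — and the combined factor has a zero at $s=5$. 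Granting this, $f^w(g,s=5)=0$, hence $E^w(g,f_4,s=5)=0$, and the constant term equals the $w=1$ term alone, which is the assertion. The only real obstacle is this explicit archimedean computation; as in Proposition \ref{prop:G2Fconst} it is elementary and can either be carried out by hand (on $\SL_2$, after reducing to the rank-one pieces) or quoted from \cite{pollackNTM}.
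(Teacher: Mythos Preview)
Your proposal is correct and follows essentially the same approach as the paper: compute the two-element double coset set $[W_{M_{3,F;3}}\backslash W/W_{M_{3,F}}]$, note that the $w=1$ term gives the asserted $\SL_3$ Eisenstein series, and kill the nontrivial term $w=s_{\alpha_3}s_{\alpha_1}$ by showing the archimedean intertwiner $M(w)f_{\infty,4}$ vanishes at $s=5$, the global intertwiner and the associated $\SL_3$ Eisenstein series both being absolutely convergent there. The only cosmetic difference is that the paper packages the archimedean vanishing as a single explicit integral over $\R^\times\times\R^2$ (equation \eqref{eqn:archInt1}) and cites \cite[proof of Proposition 4.1.4]{pollackNTM}, whereas you propose factoring $M(w)$ into the two rank-one pieces $M(s_{\alpha_3})\circ M(s_{\alpha_1})$ and tracking the $K$-type through each; both routes amount to the same computation.
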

\begin{proof} The general form of a constant term is expressed in subsection \ref{subsec:Eis}.  There are two relevant Weyl elements: $1$ and $w$ where $w(r_1) = -r_3$, $w(r_2) = r_1$ and $w(r_3) = -r_2$.  Then $w = w_{r_2+r_3} w_{r_1-r_2}$ has length two.  One finds that for this $w$, the global intertwining operator is absolutely convergent, and the associated $\SL_3$ Eisenstein series is defined by an absolutely convergent sum.  Thus, to prove that $E^w(g,f_{4},s=5)$ is $0$, it suffices to prove that the archimedean intertwining operator $M(w)$ is $0$ on $f_{4,\infty}(g,s=5)$. One is reduced to showing the vanishing of 
\begin{equation}\label{eqn:archInt1}\int_{\R^\times \times \R^2}{|t|^{s+\ell} pr(u b_1 + v b_2 + b_{-3})^{\ell} e^{-t^2(u^2 + v^2 + 1)}\,dt\,du\,dv}\end{equation}
at $s=\ell$.  Here $pr()^{\ell}$ is the natural projection from $Sym^{\ell}(V_{6,F}\otimes \R)$ to $\mathbf{V}_{\ell}$.  One can verify the vanishing using \cite[proof of Proposition 4.1.4]{pollackNTM}.  This completes the proof.
\end{proof}

\begin{remark} There is a second standard $A_2$ maximal parabolic of $G_{3,F}$ when $F = \Q \times \Q$, defined as the stabilizer of $\mathrm{Span}(b_1,b_2,b_{-3})$.  The computation of the constant term of $E(g,f_4,s=5)$ along this parabolic is essentially identical to the computation just done. \end{remark}

\begin{proposition}  Suppose $F$ is a field.  Let $P_{3,F;2} = M_{3,F;2} N_{3,F;2}$ be the parabolic subgroup of $G_{3,F}$ that stabilizes $\mathrm{Span}(b_1,b_2)$.  Then
\[E(g,f_4,s=5)_{N_{3,F;2}} = \sum_{ (P_{3,F} \cap M_{3,F;2})(\Q)\backslash M_{3,F;2}(\Q)}{f_4(\gamma g,s=5)},\]
an absolutely convergent $\SL_2$-type Eisenstein series.
\end{proposition}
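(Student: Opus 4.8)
The plan is to run the same argument as in the two preceding propositions, using the general constant-term formula of subsection~\ref{subsec:Eis}. First I would record the relevant group theory. Since $F$ is a field, the norm form $N_F$ is anisotropic over $\Q$, so $V_{6,F}=H^2\oplus F$ has Witt index $2$ and $G_{3,F}$ has $\Q$-rational root system of type $B_2$. Writing $r_j(t)=|t_j|$ for the split torus with $tb_j=t_jb_j$, I would take simple roots $\alpha_1=r_1-r_2$ and $\alpha_2=r_2$, so that $P_{3,F}$ is the maximal parabolic with $\alpha_1$ in its unipotent radical and $P_{3,F;2}$ is the Siegel parabolic, with $\alpha_2$ in its unipotent radical and Levi $M_{3,F;2}$ of type $A_1$ (simple root $\alpha_1$).

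Next I would compute the double coset set $[W_{M_{3,F;2}}\backslash W_{G_{3,F}}/W_{M_{3,F}}]$ inside the order-$8$ Weyl group $W(B_2)$; a direct check shows it has exactly two elements, $w=1$ and $w=w_{r_2}w_{r_1-r_2}$ of length $2$ (acting by $w(r_1)=-r_2$, $w(r_2)=r_1$), and that in both cases the set of associated simple roots of $w$ is $\{\alpha_1\}$. Hence the constant term $E(g,f_4,s)_{N_{3,F;2}}$ is a sum of two Eisenstein series on $M_{3,F;2}$, each induced from the minimal parabolic $P_{3,F}\cap M_{3,F;2}$: the term for $w=1$ is, by definition, exactly the $\SL_2$-type Eisenstein series $\sum_{\gamma\in(P_{3,F}\cap M_{3,F;2})(\Q)\backslash M_{3,F;2}(\Q)}f_4(\gamma g,s)$ in the statement, and I would check its absolute convergence at $s=5$ exactly as in Proposition~\ref{prop:G2Fconst}. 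So the whole statement reduces to showing that the $w=w_{r_2}w_{r_1-r_2}$ term vanishes at $s=5$.

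For that term, I would observe that both the global intertwining operator $M(w)$ and the defining sum of the Eisenstein series on $M_{3,F;2}$ are absolutely convergent at $s=5$, so it is enough to show the archimedean intertwining operator $M(w)_\infty f_{\infty,4}(g,s)=\int_{N_w(\R)}f_{\infty,4}(w^{-1}ng,s)\,dn$ annihilates the section $f_{\infty,4}(g,s=5)$. Unwinding this in an Iwasawa decomposition reduces the vanishing to that of an archimedean integral of exactly the shape of \eqref{eqn:archInt1} --- a Gaussian integral paired against the projection of a polynomial onto the spherical-harmonic space $\mathbf{V}_4$ --- which I would dispatch by citing \cite[proof of Proposition 4.1.4]{pollackNTM}, precisely as in the preceding proposition.

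The hard part is really just this archimedean vanishing, and since it is an instance of a computation already carried out in \cite{pollackNTM} and used in the previous case, there is no genuinely new obstacle; the remaining work is bookkeeping with the $B_2$ Weyl group and with ranges of absolute convergence. One mild point to be careful about is matching the normalization of the induction parameter (the statement uses $s_0=5$ with $\ell=4$, not $s_0=|\ell|$ as in Proposition~\ref{prop:G2Fconst}), but this only affects which value of the parameter is plugged into the archimedean integral, not the vanishing itself.
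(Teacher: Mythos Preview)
Your outline matches the paper's proof almost exactly: the $B_2$ double coset set $[W_{M_{3,F;2}}\backslash W/W_{M_{3,F}}]=\{1,\,w_{r_2}w_{r_1-r_2}\}$, the identification of the $w=1$ term with the claimed $\SL_2$-type Eisenstein series, and the reduction of the $w$-term to the vanishing of the archimedean intertwiner are all as in the paper.

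The one point where you diverge is the justification of that archimedean vanishing. You claim $M(w)_\infty f_{\infty,4}$ unwinds to an integral ``of exactly the shape of \eqref{eqn:archInt1}'' and cite \cite[proof of Proposition~4.1.4]{pollackNTM}. This is not quite right: \eqref{eqn:archInt1} is the split integral, with a two-dimensional unipotent integration, whereas here $N_w$ contains the root groups for $r_2$ (two-dimensional, since the $r_2$-root space is $F$) and $r_1+r_2$ (one-dimensional), so $N_w(\R)$ is three-dimensional and the resulting integral has a different shape. The paper instead factors $M(w)=M(w_{r_2})\circ M(w_{r_1-r_2})$: the first factor $M(w_{r_1-r_2})$ is an $\SL_2$-type intertwiner computed explicitly in \cite[Proposition~4.2.2]{pollackNTM} and \cite[Proposition~3.3.2]{pollackE8}, and the second factor $M(w_{r_2})$, applied to the result, is precisely the $G_{2,F}$ long intertwiner of \eqref{eqn:G2Farch}, which vanishes at the relevant point. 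So the vanishing you need is \eqref{eqn:G2Farch}, not \eqref{eqn:archInt1}. This factorization is the cleaner route and is what the paper actually does; your appeal to \eqref{eqn:archInt1} would need additional work to be made precise.
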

\begin{proof}
The constant term $E(g,f_4,s)_{N_{3,F;2}}$ is a sum of two Eisenstein series.  The relevant Weyl elements are $1$ and $w$, where $w = w_{r_2} w_{r_1 -r_2}$.  The term for $w=1$ gives the statement of the proposition, so we must verify that $E^w(g,f_{4},s)$ vanishes at $s=5$.

As before, the global intertwining operator $M(w)$ and the associated Eisenstein series $E^w(g, f_4,s)$ are absolutely convergent at $s=5$.  So it suffices to check that the archimedean component $M(w) f_{4,\infty}(g,s)$ vanishes at $s=5$.  To see this, one first applies $M(w_{r_1-r_2})$ to $f_{4,\infty}(g,s)$.  This intertwining operator is computed in \cite[Proposition 4.2.2]{pollackNTM} and \cite[Proposition 3.3.2]{pollackE8}.  One then applies $M(w_{r_2})$ to the result, which is the integral of equation \eqref{eqn:G2Farch}, which vanishes.
\end{proof}

We now compute the constant term long the unipotent radical $N_{3,F}$ of $P_{3,F}$.
\begin{proposition} Let $F$ be either $\Q \times \Q$ or a real quadratic field. Let $w_{12}  = w_{r_1-r_2}$ be the simple reflection corresponding to the root $r_1 - r_2$.  One has
\[E(g,f_4,s=5)_{N_{3,F}} = f_4(g) + E^{w_{12}}(g,f_4,s=5),\]
the Eisenstein series $E^{w_{12}}(g,f_4,s)$ being absolutely convergent at $s=5$.
\end{proposition}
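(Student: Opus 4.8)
The plan is as follows. Because $s=5$ exceeds the abscissa of convergence $s_P=4$ (recall $\delta_{P_{3,F}}=|\nu|^4$, since $P_{3,F}$ is the stabilizer of an isotropic line in a six-dimensional space), the Eisenstein series $E(g,f_4,s)$ --- and every Eisenstein series occurring in any of its constant terms --- is absolutely convergent at $s=5$, so there are no poles or residues to track and the computation is purely a matter of identifying which double-coset terms survive. By the formula recalled in subsection~\ref{subsec:Eis},
\[E(g,f_4,s)_{N_{3,F}}=\sum_{w\in[W_{M_{3,F}}\backslash W/W_{M_{3,F}}]}E^w(g,f_4,s).\]
First I would enumerate $[W_{M_{3,F}}\backslash W/W_{M_{3,F}}]$, which has exactly three elements both in the $F=\Q\times\Q$ case (rational root system $D_3$) and in the $F$-field case (rational root system $B_2$): the identity, $w_{12}=w_{r_1-r_2}$, and a longest representative $w'$, characterized by sending $r_1$ to $-r_1$, equivalently by moving the isotropic line $\Q b_1$ to an isotropic line not orthogonal to it.

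Next I would dispatch the three terms. For $w=1$ the associated simple roots are empty, so $E^1(g,f_4,s)=f_4(g,s)$, holomorphic at $s=5$ and contributing $f_4(g)$. For $w=w_{12}$, both the intertwining operator $M(w_{12})$ and the resulting Eisenstein series $E^{w_{12}}(g,f_4,s)$ on $M_{3,F}$ are absolutely convergent at $s=5$ (as is everything here), giving the term $E^{w_{12}}(g,f_4,s=5)$ of the statement. It then remains to show that the $w'$ term vanishes at $s=5$. A short check of associated simple roots gives $\Delta^{w'}(M_{3,F})=\emptyset$, so $E^{w'}(g,f_4,s)=M(w')f_4(g,s)$ is a single term; since the global operator factors as $M(w')=\prod_v M_v(w')$ and the finite-place factors are not identically zero at $s=5$, it suffices to prove that the archimedean factor $M_\infty(w')f_{\infty,4}(g,s=5)$ vanishes.

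This last vanishing is the step I expect to be the main obstacle, although it is of the same flavor as the computations in the two preceding propositions. I would write $w'$ as a reduced word in the simple reflections, factor $M_\infty(w')$ into the corresponding product of rank-one intertwiners, and apply them in turn: the intertwiners $M_\infty(w_{r_i-r_j})$ are computed in \cite[Propositions 4.1.4 and 4.2.2]{pollackNTM} and \cite[Proposition 3.3.2]{pollackE8}. After verifying that the inducing parameter arrives at exactly the value $s=|\ell|$, one is reduced to the vanishing of a long intertwining integral on the embedded $G_{2,F}$ applied to the highest-weight vector --- precisely the integral~\eqref{eqn:G2Farch} when $F$ is a field, and an archimedean integral of the shape~\eqref{eqn:archInt1} when $F=\Q\times\Q$, whose vanishing follows from \cite[proof of Proposition 4.1.4]{pollackNTM}. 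Thus the only remaining work is the combinatorial bookkeeping: producing the reduced word for $w'$ and checking that the chain of rank-one intertwiners brings the archimedean section to the precise point at which the $G_{2,F}$ long intertwiner is forced to annihilate the weight-$\ell$ vector.
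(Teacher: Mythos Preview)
Your proposal is correct and follows essentially the same approach as the paper: enumerate the three double-coset representatives $1$, $w_{12}$, $w_0$, observe that everything is absolutely convergent at $s=5$, and kill the $w_0$ term by showing the archimedean intertwiner $M_\infty(w_0)f_{\infty,4}(g,s=5)$ vanishes. The paper's proof is terser but identical in structure, reducing the archimedean vanishing directly to the integral~\eqref{eqn:archInt1}.

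One small remark: your case distinction at the archimedean step (reducing to \eqref{eqn:G2Farch} when $F$ is a field versus \eqref{eqn:archInt1} when $F=\Q\times\Q$) is unnecessary. Since $F$ is totally real, $F\otimes\R\cong\R\times\R$ in both cases, so $G_{3,F}(\R)$ is the same split group and the archimedean intertwiner computation is literally identical; the paper accordingly treats both cases uniformly via \eqref{eqn:archInt1}. Also, your parenthetical that ``the finite-place factors are not identically zero'' is superfluous: once the archimedean factor is zero the global product is zero regardless.
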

\begin{proof}  The constant term along $N_{3,F}$ of $E(g,f_4,s)$ has three terms, $f_4(g,s)$, $E^{w_{12}}(g,f_4,s)$ and the long intertwining operator $M(w_0) f_4(g,s)$.  Everything is absolutely convergent, so to prove the proposition it suffices to verify the $M(w_0)f_{4,\infty}(g,s)$ vanishes at $s=5$.  This quickly reduces to the vanishing of the integral in equation \eqref{eqn:archInt1}.  This completes the proof.
\end{proof}

\section{Siegel Weil Eisenstein series II}\label{sec:SWEisII}
In this section, we define the Siegel-Weil Eisenstein series on $G_E$, and compute its constant terms along the various maximal parabolic subgroups.

Let $P_E = P_{G_E}$ be the Heisenberg parabolic subgroup of $G_E$, and $\nu$ the character $P_{G_E} \rightarrow \GL_1$ given by the action on the highest root space.  We consider the induced representation $I(s) = Ind_{P_{G_E}}^{G_E}(|\nu|^s)$.  

Define a flat archimedean inducing section $f_{\infty,\ell}(g,s)$ exactly as in subsection \ref{subsec:minE8}.  We will take $\ell=4$.  We consider flat sections $f_{\ell=4}(g,s) \in I(s)$ with archimedean component equal to $f_{\infty,4}(g,s)$.  Let $E(g,f_4,s)$ be the associated Eisenstein series.  It converges for $Re(s) > 5$.  We will show that the Eisenstein series is regular at $s=5$, and we will be interested in the constant term of $E(g,f_4,s=5)$ along the various maximal parabolic subgroups of $G_E$.

We break the computation into cases: $E$ is a field; $E = E_{sp} = \Q \times \Q \times\Q$, and $E = \Q \times F$ with $F$ a field.

To do many of the constant term computations below, we make precise calculations at the archimedean place. By the equivariance for the maximal compact subgroup, it always suffices to make the computation of the intertwined inducing section $M(w)f_4(g,s)$ at $g=1$.  Then, the way we do this is to factor intertwining operators into ones corresponding to simple reflections, and then to explicitly compute these latter intertwiners, using $\SL_2$ theory.  Then, what one must keep track of is how the various $\SL_2$'s sit inside the group $G_E$.

The papers \cite{pollackE8} and \cite{CDDHPRcompleted} make very similar computations in slightly different contexts.  We will use notation from these two papers, and refer the reader to \cite{pollackE8} and \cite{CDDHPRcompleted} for a more thorough explanation.

We set $A = \left(\begin{array}{ccc} 2 & 2 &1 \\ 56 & 8 & -4 \\ 140 & -20 & 6\end{array}\right)$.  This matrix is the change-of-basis matrix between $x^8 + y^8, x^6 y^2 + x^2 y^4$, $x^4y^4$ and $f_1^8 + f_2^8$, $f_1^6 f_2^2 + f_1^2 f_2^6$, $f_1^4 f_2^4$; see \cite{pollackE8}.  Let $A_1 = A^t$.  We let $d(s) = \diag(v_2(s), v_1(s), v_0(s))$, where $v_0(s) = 1$, $v_1(s) = \frac{((1-s)/2)_1}{((1+s)/2)_1}$ and $v_2(s) = \frac{((1-s)/2)_2}{((1+s)/2)_2}$.

\subsection{The case of $E$ a field}\label{subsec:GEfield}
In this case, the group $G_E$ has rational root system of type $G_2$.  The simple roots are (in a Euclidean coordinate system) $\alpha_1 = (0,1,-1)$ and $\alpha_2 = (1,-2,1)$.

One has for $[W/W_{M_1}]$ the following elements:
\begin{enumerate}
	\item $[]$
	\item $[2]$
	\item $[1,2,1,2]$
	\item $[1,2]$
	\item $[2,1,2,1,2]$
	\item $[2,1,2]$.
\end{enumerate}

For $[W_{M_1}\backslash W/ W_{M_1}]$ one has
\begin{enumerate}
	\item $[]$
	\item $[2]$
	\item $[2,1,2]$
	\item $[2,1,2,1,2]$.
\end{enumerate}

For $[W_{M_2}\backslash W / w_{M_1}]$, one has
\begin{enumerate}
	\item $[]$
	\item $[1,2,1,2]$
	\item $[1,2]$.
\end{enumerate}

One finds that (in the above Euclidean coordinates) $\rho_{P_0} = (5,-1,-4)$ and the highest root is $(2,-1,-1)$.  We thus set $\lambda_s = (2s-5,1-s,4-s)$.

The long intertwining operator $w_0=[2,1,2,1,2]$ includes all the ones of smaller length that we must study, so we write down what happens with $c$-functions for $w_0$. At each step, we compute $\langle \lambda', \frac{1}{3}\alpha_j\rangle$ if a simple reflection $[j]$ is being applied.  Note that, if $j =2 $ so that the root is long, then $\alpha_j^\vee = \frac{1}{3} \alpha_j$ is the coroot.  If $j=1$ is short, then $\alpha_j = \alpha_j^\vee$, but then the associated $c$-function is $\zeta_E(\frac{1}{3} \langle \lambda', \alpha_1^\vee \rangle)$. One has
\begin{enumerate}
	\item $\lambda_s = (2s-5,1-s,4-s)$
	\item apply $[2]$, get $\langle \lambda', \frac{1}{3}\alpha_j\rangle = s-1$, and the new $\lambda' = (s-4,s-1,5-2s)$;
	\item apply $[1]$, get $\langle \lambda', \frac{1}{3}\alpha_j\rangle = s-2$, and the new $\lambda' = (s-4,-2s+5,s-1)$;
	\item apply $[2]$, get $\langle \lambda', \frac{1}{3}\alpha_j\rangle = 2s-5$, and the new $\lambda' = (1-s,2s-5,4-s)$;
	\item apply $[1]$, get $\langle \lambda', \frac{1}{3}\alpha_j\rangle = s-3$, and the new $\lambda' = (-s+1,-s+4,2s-5)$;
	\item apply $[2]$, get $\langle \lambda', \frac{1}{3}\alpha_j\rangle = s-4$, and the new $\lambda' = (-2s+5, s-4,s-1)$.
\end{enumerate}

\begin{proposition} For the constant term $E(g,f_4(g,s))_{N_1}$ at $s=5$, set $f_4^{[2]}(g) = M([2])f_4(g,s)|_{s=5}$ (absolutely convergent) and $E_{\GL_{2,E}}(g,f_4^{[2]}(g))$, an absolutely convergent Eisenstein series on $\GL_{2,E}$.  Then $E(g,f_4,s=5)_{N_1} = f_4(g,s=5) + E_{\GL_{2,E}}(g,f_4^{[2]}(g))$.
\end{proposition}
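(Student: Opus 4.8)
The plan is to invoke the constant-term formalism of subsection \ref{subsec:Eis} for the maximal parabolic $P_1 = M_1N_1$ of $G_E$, so that $E(g,f_4,s)_{N_1} = \sum_{w} E^w(g,f_4,s)$ with $w$ running over $[W_{M_1}\backslash W/W_{M_1}] = \{\,[\,],\,[2],\,[2,1,2],\,w_0\,\}$ (where $w_0 = [2,1,2,1,2]$), and $E^w$ the Eisenstein series on $M_1$ (a group of type $\GL_{2,E}$) built from the section $M(w)f_4(g,s)$ and the parabolic $P_{M_1,\Delta^w(M_1)}$. The first task is a convergence check: from the displayed list of pairings $\langle\lambda',\tfrac13\alpha_j\rangle$ at the successive simple-reflection steps of $w_0$ — namely $s-1,\,s-2,\,2s-5,\,s-3,\,s-4$, which at $s=5$ equal $4,3,5,2,1$, all positive — every simple-reflection intertwiner appearing in the reduced factorizations of $M([2])$, $M([2,1,2])$ and $M(w_0)$ is absolutely convergent at $s=5$. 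Hence each $M(w)f_4(g,s)$ is holomorphic at $s=5$; in particular $f_4^{[2]}(g):=M([2])f_4(g,s)|_{s=5}$ is well defined. This also shows, together with the companion computation along the other maximal parabolic $P_2$, that $E(g,f_4,s)$ itself is regular at $s=5$.

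Next I would dispose of the two surviving terms. For $w=[\,]$ one has $\Delta^{[\,]}(M_1)=\emptyset$, so $E^{[\,]}(g,f_4,s)=f_4(g,s)$ is the inducing section itself, giving the first summand $f_4(g,s=5)$. For $w=[2]$ one computes $\Delta^{[2]}(M_1)$ to be the unique simple root of $M_1$ (because $[2]^{-1}$ carries it into $\Phi(N_1,T)$), so $wP_1w^{-1}\cap M_1$ is the minimal parabolic of $\GL_{2,E}$ and $E^{[2]}$ is a genuine degenerate Eisenstein series on $\GL_{2,E}$ attached to $M([2])f_4$; since $[2](\lambda_s)+\rho_{P_0}=(s-4,s-1,5-2s)$ is strictly dominant along the $M_1$-coroot at $s=5$, this Eisenstein series converges absolutely there and equals $E_{\GL_{2,E}}(g,f_4^{[2]}(g))$. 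It thus remains to show that the terms $w=[2,1,2]$ and $w=w_0$ contribute nothing at $s=5$.

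For $w_0$ one finds $\Delta^{w_0}(M_1)=\emptyset$ (since $w_0$ fixes the $M_1$-root $\alpha_1$, which is not in $\Phi(N_1,T)$), so $E^{w_0}=M(w_0)f_4(g,s)$ is a single term, holomorphic at $s=5$ by the convergence remarks. For $w=[2,1,2]$ one has $\Delta^{[2,1,2]}(M_1)=\{\alpha_1\}$, and one checks from $[2,1,2](\lambda_s)+\rho_{P_0}=(1-s,2s-5,4-s)$ that the resulting $\GL_{2,E}$-Eisenstein series converges absolutely at $s=5$, so this term is holomorphic there as well. Since both terms are holomorphic at $s=5$, it suffices to prove the archimedean vanishings $M([2,1,2])f_{4,\infty}(g,s)|_{s=5}=0$ and $M(w_0)f_{4,\infty}(g,s)|_{s=5}=0$: the first kills $E^{[2,1,2]}$ entirely, and the second kills $E^{w_0}$. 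Here I would argue exactly as in \cite{pollackE8} and \cite{CDDHPRcompleted}: by $K_{G_E,\infty}$-equivariance it is enough to evaluate at $g=1$, one expands $M([2,1,2])=M([2])M([1])M([2])$ and $M(w_0)=M([2])M([1])M([2])M([1])M([2])$ into the rank-one intertwiners attached to the $\SL_2$'s inside $G_E$, and one evaluates the composition on the weight-four vector using the change-of-basis matrices $A$, $A_1$ and the diagonal matrix $d(s)=\diag(v_2(s),v_1(s),1)$ of Pochhammer ratios introduced at the start of the section; the zeros of $v_1$ and $v_2$ at the shifted arguments encountered along the chain (the same mechanism as the $G_2$-type vanishing \eqref{eqn:G2Farch} used in Proposition \ref{prop:G2Fconst}) force the composite to annihilate $f_{4,\infty}(1,s)$ at $s=5$. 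Summing the four contributions then yields $E(g,f_4,s=5)_{N_1}=f_4(g,s=5)+E_{\GL_{2,E}}(g,f_4^{[2]}(g))$.

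The main obstacle is this concluding archimedean computation; the delicate part of it is bookkeeping. One must pin down which $\SL_2$-triple in $\su_2$ (equivalently which copy of $\SL_2\subseteq G_E$, and hence which shift of the argument of $d(\cdot)$) corresponds to each reflection $[1]$ and $[2]$, in which basis of the weight-four $K_{G_E,\infty}$-type $\mathbf V_4$ each intertwiner acts diagonally, and how the resulting matrix product $A_1 d(\cdot) A$ along the reduced word, evaluated at $s=5$, kills the initial vector $x^4y^4$. Once this is set up as in \cite{pollackE8,CDDHPRcompleted}, each single-reflection evaluation reduces to a short $\SL_2$ calculation and the vanishing is automatic; the remainder of the proof is the routine root-combinatorics of the four double cosets.
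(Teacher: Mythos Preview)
Your overall strategy matches the paper's: decompose along $[W_{M_1}\backslash W/W_{M_1}]$, keep the $[\,]$ and $[2]$ terms, and kill $[2,1,2]$ and $w_0$ by an archimedean vanishing. The root combinatorics and the handling of the first three cosets are essentially as in the paper.

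There is, however, a genuine gap in your treatment of $w_0=[2,1,2,1,2]$. You argue that since the five pairings $s-1,\,s-2,\,2s-5,\,s-3,\,s-4$ are all positive at $s=5$, each simple-reflection intertwiner is absolutely convergent, hence $M(w_0)f_4(g,s)$ is holomorphic at $s=5$. Positivity of the pairing only gives \emph{local} absolute convergence of the integral; it does not control the global intertwiner, whose spherical part is the infinite product of local $c$-factors and hence a ratio of (partial) zeta functions. The final $[2]$-step has pairing $s-4$, so the finite $c$-function for $M(w_0)$ picks up a factor $\zeta(s-4)$, which has a simple pole at $s=5$. Thus $M(w_0)f_4(g,s)$ has a \emph{global simple pole} at $s=5$ even though every local integral is absolutely convergent there. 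Consequently, showing $M(w_0)f_{4,\infty}(g,s)|_{s=5}=0$ is not enough to make $E^{w_0}$ vanish at $s=5$; you need the archimedean intertwiner to vanish to order at least two. The paper does exactly this: it computes $v_{21212}(s)=A_1^{-1}d(s-4)A_1 d(s-3)^3 v_{212}(s)$ and checks both $v_{21212}(5)=0$ and $v'_{21212}(5)=0$. Your proposed archimedean computation would presumably discover this double zero once carried out, but your argument as written asserts only a first-order vanishing and justifies it by an incorrect holomorphy claim, so the logic does not close.
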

\begin{proof} We analyze the terms in $[W_{M_1}\backslash W/ W_{M_1}]$ one-by-one:
	\begin{enumerate}
		\item $[]$: This yields the inducing section, which is regular at $s=5$, as desired.
		\item $[2]$: The intertwining operator is absolutely convergent.  Setting $\lambda'' = [2](\lambda_s) + \rho_{P_0}$, we obtain $\langle \lambda'', \frac{1}{3} \alpha_1^\vee \rangle = s-1$.  As $5-1 = 4 > 2$, this gives an absolutely convergent Eisenstein series on $\GL_{2,E}$.  
		\item $[2,1,2]$: The intertwining operator is absolutely convergent.  Setting $\lambda'' = [2,1,2](\lambda_s) + \rho_{P_0}$, we obtain $\langle \lambda'', \frac{1}{3} \alpha_1^\vee \rangle = s-2$.  As $5-2=3 > 2$, this will give us an absolutely convergent Eisenstein series on $\GL_{2,E}$. Looking at the archimedean component, we compute 
		\[v_{212}(s)=A_1^{-1}d(2s-5)A_1d(s-2)^3A_1^{-1}d(s-1)A_1(0,0,1)^t.\]
		One has $v_{212}(s=5) = (0,0,0)$, so this term disappears from the constant term $E(g,f_4,s=5)_{N_1}$.
		\item $[2,1,2,1,2]$: The intertwining operator has a global simple pole at $s=5$, and locally the integrals are absolutely convergent.  Now, we set
		\[v_{21212}(s) = A_1^{-1}d(s-4)A_1d(s-3)^3 v_{212}(s).\] 
		One finds $v_{21212}(s=5) = 0$, and $v'_{21212}(s=5) = 0$.  Consequently, this term does not contribute to the constant term of  $E(g,f_4,s=5)_{N_1}$.
	\end{enumerate}
	The proposition is proved.
\end{proof}

We now compute the constant term of $E(g,f_4,s)$ down to $M_2$.
\begin{proposition} One has $E(g,f_4,s=5)_{N_2} = E_{\GL_2}(g,f_4|_{M_2})$, an absolutely convergent Eisenstein series obtained by restricting the inducing section $f_4(g)$ to $M_2$ and evaluating at $s=5$.
\end{proposition}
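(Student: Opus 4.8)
The plan is to expand $E(g,f_4,s)_{N_2}$ using the constant-term formula of subsection~\ref{subsec:Eis}: with $Q=P_{G_E,2}=M_2N_2$ and $P=P_E=P_{G_E,1}=M_1N_1$, one has $E(g,f_4,s)_{N_2}=\sum_{w}E^w(g,f_4,s)$, the sum running over $[W_{M_2}\backslash W/W_{M_1}]$, which is the three-element set $\{[\,],[1,2],[1,2,1,2]\}$ recorded above. First I would deal with $w=[\,]$: here $M([\,])f_4=f_4$ and the associated simple roots $\Delta^{[\,]}(M_2)$ are nonempty, so $E^{[\,]}(g,f_4,s)$ is precisely the Borel-type Eisenstein series on $M_2$ with inducing section $f_4|_{M_2}$, i.e.\ the series $E_{\GL_2}(g,f_4|_{M_2},s)$ of the statement; from $\lambda_s+\rho_{P_0}=s(2,-1,-1)$ one checks that its defining sum converges absolutely at $s=5$. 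Since this is exactly the claimed answer, the proposition comes down to showing that $E^{[1,2]}(g,f_4,s)$ and $E^{[1,2,1,2]}(g,f_4,s)$ vanish at $s=5$.

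For these two terms I would argue exactly as in the computation of $E(g,f_4,s)_{N_1}$ just carried out. Reading off the chain of coroot pairings along $w_0=[2,1,2,1,2]$, the $c$-functions attached to $[1,2]$ and $[1,2,1,2]$ involve only $\zeta(s-1)$, $\zeta(2s-5)$, $\zeta_E(s-2)$, $\zeta_E(s-3)$ (with innocuous denominators), all regular at $s=5$ --- the single degeneration present in $w_0$ comes from its last long-root step ($\zeta(s-4)$ at $s=5$), which neither shorter word reaches. Hence $M([1,2])$ and $M([1,2,1,2])$ are holomorphic at $s=5$, and (when the associated simple roots are nonempty) the resulting Eisenstein series on $M_2$ is absolutely convergent there, so it suffices to show that the archimedean intertwined sections $M(w)f_{\infty,4}(g,s)$ vanish at $s=5$. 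Factoring through simple reflections as before, and writing vectors in the basis fixed by $A$, I would compute the analogues $v_{12}(s)=d(s-2)^3A_1^{-1}d(s-1)A_1(0,0,1)^t$ and $v_{1212}(s)=d(s-3)^3\,v_{212}(s)$ of $v_{212},v_{21212}$. Then $v_{12}(s=5)=0$ falls out immediately, since $d(3)$ has vanishing first entry while $A_1^{-1}d(4)A_1(0,0,1)^t$ is a multiple of $(1,0,0)^t$; and $v_{1212}(s=5)=d(2)^3v_{212}(s=5)=0$ is immediate from the vanishing $v_{212}(s=5)=0$ already established in the $N_1$ computation. This forces $E^{[1,2]}(g,f_4,s=5)=0=E^{[1,2,1,2]}(g,f_4,s=5)$, and combined with the first paragraph proves the proposition.

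The main --- essentially the only --- obstacle is the explicit archimedean computation: factoring $M(w)$ into the simple-reflection intertwiners, evaluating the $A_1$/$d(s)$ recursion, and keeping track of which $\SL_2$ inside $G_E$ (hence which $c$-function, $\zeta$ versus $\zeta_E$, and with which argument) governs each step. But since the crucial vanishing $v_{212}(s=5)=0$ has already been checked in the preceding proof, the present case is genuinely lighter than the $N_1$ one.
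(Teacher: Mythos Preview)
Your approach matches the paper's: expand via $[W_{M_2}\backslash W/W_{M_1}]=\{[\,],[1,2],[1,2,1,2]\}$, keep the $w=[\,]$ term, and kill the other two via the archimedean intertwiner. The treatment of $[\,]$ and $[1,2]$ is fine (your $v_{12}$ differs from the paper's by an invertible factor of $A_1$, which does not affect the vanishing).

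The gap is in the $[1,2,1,2]$ term. You assert that the resulting Eisenstein series on the long-root $\GL_2$ in $M_2$ is absolutely convergent at $s=5$, but this is false: with $\lambda''=[1,2,1,2](\lambda_s)+\rho_{P_0}$ one has $\langle\lambda'',\alpha_2^\vee\rangle=s-3$, and at $s=5$ this equals $2$, exactly the modulus-character point for the Borel of $\GL_2$. So $E^{[1,2,1,2]}(g,f_4,s)$ has a \emph{simple pole} at $s=5$, with residue the one-dimensional (trivial) representation. Consequently the vanishing $v_{1212}(5)=0$ that you deduce from $v_{212}(5)=0$ is not enough; the simple zero of the archimedean section could cancel the simple pole of the Eisenstein sum and leave a nonzero finite value.

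What the paper does (and what you are missing) is to also compute $v_{1212}'(5)=(*,*,0)$: the third coordinate, which records the component in the trivial $K_\infty$-type of the long-root $\SU(2)$, vanishes. Since the residue of the $\GL_2$ Eisenstein series at this point is the trivial representation, only that third coordinate can pair nontrivially with the pole; its vanishing forces $E^{[1,2,1,2]}(g,f_4,s=5)=0$. So your argument goes through once you add this derivative computation and the observation that the residual $K_\infty$-type is trivial.
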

\begin{proof} We analyze the terms in $[W_{M_2}\backslash W / w_{M_1}]$ one-by-one:
	\begin{enumerate}
		\item $[]$: This term gives an absolutely convergent Eisenstein series on the long root $\GL_2$.
		\item $[1,2,1,2]$: The intertwining operator is globally absolutely convergent.  Setting $\lambda'' = [1,2,1,2](\lambda_s) + \rho_{P_0}$, we have $\langle \lambda'', \alpha_2^\vee \rangle = s-3$.  This is the point where the Eisenstein series has a simple pole, with residue a one-dimensional representation.  We set
		\[v_{1212}(s) = A_1 d(s-3)^3 v_{212}(s).\]
		Then one computes that $v_{1212}(s=5) = (0,0,0)$ and $v_{1212}'(s=5) = (*,*,0)$.  Because only the trivial representation could contribute to the residue, we see that this Eisenstein vanishes at $s=5$.
		\item $[1,2]$: The intertwining operator is globally absolutely convergent.  Setting $\lambda'' = [1,2](\lambda_s)+ \rho_{P_0}$, one has $\langle \lambda'', \alpha_2^\vee \rangle = 2s-4$.  At $s=5$, we thus obtain an absolutely convergent Eisenstein series.  Setting 
		\[v_{12}(s) = A_1 d(s-2)^3 A_1^{-1} d(s-1) A_1 (0,0,1)^t\]
		we have $v_{12}(s=5) = 0$.  Thus this term does not contribute to the constant term at $s=5$.
	\end{enumerate}
\end{proof}

\subsection{The case of $E = \Q \times \Q \times \Q$}\label{subsec:SWEisD4}
In this case $G_{E}$ has a root system of type $D_4$.  In Euclidean coordinates, the simple roots are $\alpha_1 = (1,-1,0,0)$, $\alpha_2 = (0,1,-1,0)$, $\alpha_3=(0,0,1,-1)$, $\alpha_4 = (0,0,1,1)$.  The simple root corresponding to the Heisenberg parabolic is $\alpha_2$.

We again have the Eisenstein series $E(g,f_4,s)$.  We will see that it is regular at $s=5$, and we will compute the constant terms along the maximal parabolic subgroups.  The three maximal non-Heisenberg parabolic subgroups are related by triality, so we will only compute the constant term down to one of them.

The constant term down to the Heisenberg Levi involves the elements of the set $[W_{M_2}\backslash W/W_{M_2}]$, which are given as follows.  
\begin{enumerate}
	\item $[]$;
	\item $[2,4,1,2]$
	\item $[2,3,1,2]$
	\item $[2,4,3,2]$
	\item $w_0 = [2,3,1,2,4,2,3,1,2]$
	\item $[2]$
	\item $[2,4,3,1,2]$
\end{enumerate}

The constant term down to the $D_{3,3}$ Levi involves the elements of the set $[W_{M_1}\backslash W/W_{M_2}]$, which are given as follows.  We also list the simple roots corresponding to the associated new parabolic of $M_1$:
\begin{enumerate}
	\item $[]$; $[2]$
	\item $[1,2]$; $[3,4]$
	\item $[1,2,4,3,2]$; $[2]$.
\end{enumerate}

One has $\rho = (3,2,1,0)$ and the highest root is $(1,1,0,0)$.  We thus set $\lambda_s = (s-3,s-2,-1,0)$.

\begin{proposition} The constant term of $E(g,f_4,s=5)$ along $N_1$ is $E_{M_1}(g,f_4|_{M_1})$, an absolutely convergent Eisenstein series on $M_1$ associated to the simple root $[2]$ of $M_1$.
\end{proposition}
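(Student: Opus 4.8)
The plan is to apply the general description of constant terms of Eisenstein series from subsection \ref{subsec:Eis}. The constant term $E(g,f_4,s)_{N_1}$ is a finite sum of Eisenstein series $E^w(g,f_4,s)$ on $M_1$, one for each $w\in[W_{M_1}\backslash W/W_{M_2}]$, and the three relevant $w$ together with their associated simple roots were listed above: $[]$ with associated simple root $[2]$, $[1,2]$ with associated simple roots $[3,4]$, and $[1,2,4,3,2]$ with associated simple root $[2]$. The $w=[]$ term is by definition the Eisenstein series $E_{M_1}(g,f_4|_{M_1})$ attached to the $[2]$-parabolic of $M_1$, so the proposition reduces to two claims: that this term is absolutely convergent at $s=5$, and that the terms for $w=[1,2]$ and $w=[1,2,4,3,2]$ vanish at $s=5$.

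First I would dispose of $w=[]$: here the intertwining operator is trivial, and pairing $\lambda_s+\rho=(s,s,0,0)$ against $\alpha_2^\vee$ gives $s$; since at $s=5$ this exceeds the exponent $4$ of the modulus character of the $[2]$-parabolic of $M_1$ (a group of type $D_3$) along $\alpha_2^\vee$, the sum $E_{M_1}(g,f_4|_{M_1})$ converges absolutely at $s=5$, which is exactly the asserted term.

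The heart of the argument is showing the other two terms vanish. For each of $w=[1,2]$ and $w=[1,2,4,3,2]$ I would factor $M(w)$ into the rank-one intertwiners of a reduced word, applied right to left starting from $\lambda_s$, and compute the accompanying product of zeta factors; I expect $\zeta(s-2)/\zeta(s)$ for $w=[1,2]$ and $\frac{\zeta(s-2)\zeta(s-3)\zeta(2s-5)}{\zeta(s)\zeta(s-1)\zeta(2s-4)}$ for $w=[1,2,4,3,2]$, both regular and nonzero at $s=5$, so that $M(w)f_4(g,s)$ is regular at $s=5$ (for $w=[1,2]$ one also checks the defining integral and the $M_1$-Eisenstein series are absolutely convergent at $s=5$; for $w=[1,2,4,3,2]$, whose $M_1$-Eisenstein series is again attached to the $[2]$-parabolic, that Eisenstein series has at worst a simple pole at $s=5$ with residue valued in the trivial representation). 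By $K_\infty$-equivariance it then suffices to compute the archimedean intertwined section at the identity, which I would do following subsection \ref{subsec:GEfield} and \cite{pollackE8, CDDHPRcompleted}: write $M([1,2])f_{4,\infty}(1,s)$ and $M([1,2,4,3,2])f_{4,\infty}(1,s)$, in the relevant three-dimensional space, as vectors $v_{12}(s)$ and $v_{12432}(s)$ obtained by applying an appropriate word in the matrices $A_1^{\pm1}$ and $d(\cdot)$ to the initial vector $f_{4,\infty}(1,s)$, and then verify that $v_{12}(s=5)=0$, that $v_{12432}(s=5)=0$, and that $v_{12432}'(s=5)$ has vanishing component in the trivial-representation direction. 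The first of these kills the $w=[1,2]$ term (its section vanishes identically at $s=5$, and the sum converges there); the last two kill the $w=[1,2,4,3,2]$ term (only the trivial representation could enter the residue). Combining the three cases gives $E(g,f_4,s=5)_{N_1}=E_{M_1}(g,f_4|_{M_1})$.

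I expect the main obstacle to be precisely this archimedean computation: correctly setting up the factorizations of $M([1,2])$ and $M([1,2,4,3,2])$, and keeping straight how the $\SL_2$'s attached to $\alpha_1$ and to the Heisenberg root $\alpha_2$ sit inside $G_E$, since this is what dictates the placement of the $A_1$'s and of the powers of $d(\cdot)$ in the word. Once the bookkeeping is pinned down --- consistently with the triality-symmetric $D_4$ picture and with the $G_2$-root-system computation already carried out in subsection \ref{subsec:GEfield} --- verifying the vanishing of $v_{12}(5)$ and of the relevant components of $v_{12432}$ at $s=5$ is a short, explicit matrix calculation.
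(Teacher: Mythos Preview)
Your proposal is correct and follows essentially the same approach as the paper: you handle the three elements of $[W_{M_1}\backslash W/W_{M_2}]$ exactly as the paper does, with the $[]$ term giving the absolutely convergent Eisenstein series, the $[1,2]$ term killed by the archimedean vanishing $v_{12}(5)=0$, and the $[1,2,4,3,2]$ term killed by the combination of $v_{12432}(5)=0$, the simple pole with trivial residue (the paper cites \cite{hanzerSavin} for this, identifying the $[2]$-parabolic Eisenstein series on $M_1$ with one attached to the Jordan algebra $J_2(\Q\times\Q)$), and the $K_\infty$-type argument. The paper supplies the explicit words $v_{12}(s)=d(s-2)A_1^{-1}d(s-1)A_1(0,0,1)^t$ and $v_{12432}(s)=d(2s-5)A_1^{-1}d(s-3)A_1 d(s-2)^2 A_1^{-1}d(s-1)A_1(0,0,1)^t$, which is the bookkeeping you anticipated; your finite $c$-function computations are also correct.
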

\begin{proof} We evaluate one-by-one:
	The constant term down to the $D_{3,3}$ Levi involves the elements of the set $[W_{M_1}\backslash W/W_{M_2}]$, which are given as follows.  We also list the simple roots corresponding to the associated new parabolic of $M_1$:
	\begin{enumerate}
		\item $[]$: The associated simple root is $[2]$.  This gives an absolutely convergent Eisenstein series on $D_{3,3}$ from its ``Siegel" parabolic (stabilizing an isotropic line.)  
		\item $[1,2]$: The associated simple roots are $[3,4]$.  The intertwining operator gives:
		\begin{enumerate}
			\item apply $[2]$, get $\langle \lambda', \alpha_j\rangle = s-1$, and the new $\lambda' = (s-3,-1,s-2,0)$;
			\item apply $[1]$, get $\langle \lambda', \alpha_j\rangle = s-2$, and the new $\lambda' =(-1,s-3,s-2,0)$.
		\end{enumerate}
		It is globally absolutely convergent.  Setting $\lambda'' = [1,2](\lambda_s) + \rho$, one has $\langle \lambda'', \alpha_j^\vee \rangle = s-1$ for $j = 3,4$.  Now, applying the modulus character of the $[3]$ parabolic of $M_1$ to $\alpha_3^\vee(t)$ gives $|t|^2$, and similarly applying the modulus character of the $[4]$ parabolic of $M_1$ to $\alpha_4^\vee(t)$ gives $|t|^2$.  As $5-1 = 4 >2$, this Eisenstein series is absolutely convergent.  Setting $v_{12}(s) = d(s-2) A_1^{-1} d(s-1) A_1 (0,0,1)^t$, we obtain $v_{12}(s=5) = (0,0,0)$.  Thus this term does not contribute to the constant term $E(g,f_4,s=5)_{N_1}$.
		\item $[1,2,4,3,2]$: The associated simple root is $[2]$.  The intertwining operator gives:
		\begin{enumerate}
			\item apply $[2]$, get $\langle \lambda', \alpha_j\rangle = s-1,$ and the new $\lambda' = (s-3,-1,s-2,0)$;
			\item apply $[3]$, get $\langle \lambda', \alpha_j\rangle = s-2$, and the new $\lambda' = (s-3,-1,0,s-2)$;
			\item apply $[4]$, get $\langle \lambda', \alpha_j\rangle =  s-2$, and the new $\lambda' = (s-3,-1,-s+2,0)$;
			\item apply $[2]$, get $\langle \lambda', \alpha_j\rangle = s-3$, and the new $\lambda' = (s-3,-s+2,-1,0)$;
			\item apply $[1]$, get $\langle \lambda', \alpha_j\rangle =  2s-5$, and the new $\lambda' =  (-s+2,s-3,-1,0)$.
		\end{enumerate}
		This intertwining operator is globally absolutely convergent at $s=5$. Setting $\lambda' = [1,2,4,3,2](\lambda_s) + \rho$, we have $\langle \lambda', \alpha_2^\vee \rangle = s-1$.  Applying the modulus character of the ``$2$'' parabolic of this $D_3$ to $\alpha_2^\vee(t)$ gives $|t|^4$.  Thus this Eisenstein series is at the edge of absolute convergence when $s=5$.  We set 
		\[v_{12432}(s) = d(2s-5) A_1^{-1} d(s-3) A_1 d(s-2)^2 A_1^{-1} d(s-1) A_1 (0,0,1)^t.\]
		Now $v_{12432}(s=5) = 0$ and $v'_{12432}(s=5) = (0,0,*)$.  This is an Eisenstein series associated to a group with Jordan algebra $J_2(\Q \times \Q)$, so it has a simple pole at this boundary point where $s=5$ by \cite{hanzerSavin}.  The residue is the trivial representation.  However, since the $K_\infty$ type of this Eisenstein series does not contain the trivial representation, we obtain vanishing.  Thus this term does not contribute to the constant term $E(g,f_4,s=5)_{N_1}$.
	\end{enumerate}
\end{proof}

We now compute the constant term to the Heisenberg parabolic.
\begin{proposition} The constant term $E(g,f_4,s)_{N_2}$ at $s=5$ is $f_4(g,s=5) + E_{\GL_{2,E}}(g,f_4^{[2]})$ where $f_4^{[2]}(g) = M([2])f_4(g,s=5)$.
\end{proposition}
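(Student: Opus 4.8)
The plan is to argue exactly as in the constant-term computations of subsection~\ref{subsec:GEfield} and in the preceding $N_1$ computation: write the constant term $E(g,f_4,s)_{N_2}$ as the sum $\sum_w E^w(g,f_4,s)$ over the seven elements $w\in[W_{M_2}\backslash W/W_{M_2}]$ listed above (see subsection~\ref{subsec:Eis}), and show that only $w=[]$ and $w=[2]$ contribute at $s=5$. The term $w=[]$ gives back $f_4(g,s)$, regular at $s=5$, which is the first summand. For $w=[2]$ one computes $\langle\lambda_s,\alpha_2^\vee\rangle=s-1$, so $M([2])$ is given by an absolutely convergent integral at $s=5$; moreover the associated Eisenstein series on $M_2$, whose derived group is $\SL_{2,E}$, is in its range of absolute convergence at $s=5$, producing the second summand $E_{\GL_{2,E}}(g,f_4^{[2]})$ with $f_4^{[2]}(g)=M([2])f_4(g,s=5)$.

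For the five remaining elements one shows $E^w(g,f_4,s)$ is regular at $s=5$ and vanishes there. By the triality permuting $\alpha_1,\alpha_3,\alpha_4$, the three length-four elements $[2,4,1,2]$, $[2,3,1,2]$, $[2,4,3,2]$ are conjugate, so it suffices to treat one of them together with $[2,4,3,1,2]$ and $w_0=[2,3,1,2,4,2,3,1,2]$. For each such $w$ I would factor $M(w)$ into simple reflections and, step by step with $\lambda_s=(s-3,s-2,-1,0)$ and $\rho=(3,2,1,0)$, record the pairings $\langle\lambda',\alpha_j^\vee\rangle$ and the updated $\lambda'$; this shows $M(w)$ and the associated Eisenstein series on $M_2$ are absolutely convergent at $s=5$, except that the global long intertwiner $M(w_0)$ has at worst a simple pole there, which reduces to the spherical case via the structure of the degenerate principal series (Proposition~\ref{prop:degPSDn} and its analogues, using that $I_p(s=5)$ is generated by the spherical vector). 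In the absolutely convergent cases, by $K_\infty$-equivariance it suffices to evaluate the archimedean intertwined section at the identity; factoring $M(w)_\infty$ into simple reflections and composing the matrices built from $d(s)$ and $A_1$ of subsection~\ref{subsec:SWEisD4} produces a vector $v_w(s)$, and one checks $v_w(s=5)=0$ exactly as for the vectors $v_{12},v_{212},v_{1212},v_{21212}$ in the field case. For $w_0$ the same computation gives $v_{w_0}(s=5)=0$ and $v_{w_0}'(s=5)=0$ (or, alternatively, the inner object on $M_2$ has only the trivial representation as a possible residue, which cannot occur since $f_{\infty,4}$ has $K_\infty$-type $\mathbf{V}_4$, as in the treatment of the $[1,2,4,3,2]$ term in the $N_1$ computation), so $w_0$ contributes nothing. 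Summing the seven cases proves the proposition; combined with the $N_1$ computation and triality it also shows $E(g,f_4,s)$ is regular at $s=5$.

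The main obstacle is the bookkeeping for the long element $w_0$ and the other elements of length $\geq 4$: one must choose reduced factorizations of these Weyl elements, keep track of which $\SL_2$-subgroups of $G_E$ are used at each stage so that the $d(s)$-and-$A_1$ matrix product is assembled with the correct Pochhammer shifts, and verify that $v_w(s)$ actually vanishes at $s=5$ (to first order where there is a simple pole). Once the factorizations are fixed these are finite, routine calculations parallel to those in \cite{pollackE8} and subsection~\ref{subsec:GEfield}.
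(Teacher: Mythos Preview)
Your proposal is correct and follows essentially the same approach as the paper: enumerate the seven elements of $[W_{M_2}\backslash W/W_{M_2}]$, keep the contributions from $[]$ and $[2]$, and kill the other five by showing the archimedean intertwiner vanishes at $s=5$ (to second order for $w_0$, which has a global simple pole). One small correction: your parenthetical alternative for $w_0$ does not apply as stated, since the associated simple roots for $w_0$ are empty --- there is no inner Eisenstein series on $M_2$, just the intertwined section $M(w_0)f_4$ itself --- so the argument must go through the second-order archimedean vanishing, as in your primary claim and as the paper does.
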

\begin{proof} We handle the elements of $[W_{M_2}\backslash W/ W_{M_2}]$ one-by-one.
	\begin{enumerate}
		\item $[]$:  The associated simple roots are $[]$ (empty).  This gives the inducing section $f_4(g,s=5)$.
		\item $[2,4,1,2]$: The associated simple roots are $[3]$.  The intertwining operator is:
		\begin{enumerate}
			\item apply $[2]$, get $\langle \lambda', \alpha_j\rangle = s-1$, and the new $\lambda'= (s-3,-1,s-2,0)$;
			\item apply $[1]$, get $\langle \lambda', \alpha_j\rangle =  s-2$, and the new $\lambda' = (-1,s-3,s-2,0)$;
			\item apply $[4]$, get $\langle \lambda', \alpha_j\rangle =  s-2$, and the new $\lambda' = (-1,s-3,0,-s+2)$;
			\item ppply $[2]$, get $\langle \lambda', \alpha_j\rangle =  s-3$, and the new $\lambda' = (-1,0,s-3,-s+2)$.
		\end{enumerate}
		Setting $\lambda' = [2,1,4,2](\lambda_s) + \rho$, we obtain $\langle \lambda_s, \alpha_3^\vee \rangle = 2s-4$.  Thus the intertwining operator and the Eisenstein series are absolutely convergent.  One calculates the archimedean intertwiner and finds that it vanishes at $s=5$.  Thus this term does not contribute to the constant term $E(g,f_4,s=5)_{N_2}$.
		\item $[2,3,1,2]$: The associated simple roots are $[4]$.  This case is nearly identical to the previous case; there is no contribution to the constant term.
		\item $[2,4,3,2]$: The associated simple roots are $[1]$.  This case is nearly identical to the previous two cases; there is no contribution to the constant term.
		\item $w_0 = [2,3,1,2,4,2,3,1,2]$: The associated simple roots are $[]$ (empty).  One finds that the intertwining operator is locally absolutely convergent but globally has a simple pole.  One computes that the archimedean intertwining operator vanishes to order at least two at $s=5$, so this term does not contribute to the constant term along $N_2$.
		\item $[2]$: The associated simple roots are $[1,3,4]$.  This gives an intertwining operator and Eisenstein series that are both absolutely convergent, and do contribute to the constant term.
		\item $[2,4,3,1,2]$: The associated simple roots are $[1,3,4]$.  This gives an intertwining operator that is absolutely convergent globally, and Eisenstein series that is also absolutely convergent.  The archimedean intertwining operator is computed to vanish at $s=5$, so this term does not contribute.
	\end{enumerate}
\end{proof}

\subsection{The case $E = \Q \times F$}\label{subsec:SWEisB3}
In this case, $G_{E}$ has a rational root system of type $B_3$.  The simple roots (in a Euclidean coordinate system) are $\alpha_1 = (1,-1,0)$, $\alpha_2 = (0,1,-1)$, $\alpha_3 = (0,0,1)$.  The parabolic subgroups $M_j$ for $j=1,2,3$ have the following Levi types: $M_1$ has Levi of type $D_{3,3}$ with rational root system of type $B_2$; $M_2$ is the Heisenberg Levi, isogenous to $\GL_2 \times \GL_{2,F}$; $M_3$ is isogoneous to $\GL_3 \times \SO_{2,F}$.

The constant term down to the Heisenberg Levi involves terms in $[W_{M_2}\backslash W/ W_{M_2}]$, which has elements
\begin{enumerate}
	\item $[]$
	\item $[2]$
	\item $[2,3,2]$
	\item $[2,3,1,2]$
	\item $[2,3,1,2,3,1,2]$.
\end{enumerate}

The constant term down to $M_1$ involves  $[W_{M_1}\backslash W/ W_{M_2}]$, which has elements
\begin{enumerate}
	\item $[]$
	\item $[1,2]$
	\item $[1,2,3,2]$
\end{enumerate}

The constant term down to $M_3$ involves  $[W_{M_3}\backslash W/ W_{M_2}]$, which has elements
\begin{enumerate}
	\item $[]$
	\item $[3,2]$
	\item $[3,2,3,1,2]$
\end{enumerate}

One finds that $\rho_{P_0} = (3,2,1)$ and the highest root is $(1,1,0)$.  We set $\lambda_s = (s-3,s-2,-1)$.

\begin{proposition} The constant term $E(g,f_4,s=5)_{N_1} = E_{M_1}(g,f_4(g,s=5)|_{M_1})$, an absolutely convergent Eisenstein series for the $[2]$ parabolic of $M_1$.
\end{proposition}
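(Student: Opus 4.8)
The plan is to run the same template as subsections \ref{subsec:GEfield} and \ref{subsec:SWEisD4}. By subsection \ref{subsec:Eis}, $E(g,f_4,s)_{N_1}$ is the sum, over the three representatives $w \in [W_{M_1}\backslash W/ W_{M_2}] = \{[],\,[1,2],\,[1,2,3,2]\}$, of terms $E^w(g,f_4,s)$, each an Eisenstein series on $M_1$ for the parabolic $P_{M_1,\Delta^w(M_1)}$ attached to the section $M(w)f_4(g,s)$; the goal is to show that exactly the $w=[]$ term survives at $s=5$. First I would compute the associated simple roots by the recipe of subsection \ref{subsec:Eis}: a short root-combinatorics check (using $\alpha_1 = (1,-1,0)$, $\alpha_2 = (0,1,-1)$, $\alpha_3 = (0,0,1)$) gives $\Delta^{[]}(M_1)=[2]$, $\Delta^{[1,2]}(M_1)=[3]$ and $\Delta^{[1,2,3,2]}(M_1)=[2]$; these are the images, under the folding $D_4 \to B_3$, of the $D_4$ data $\{[],[1,2],[1,2,4,3,2]\} \leftrightarrow \{[2],[3,4],[2]\}$ from subsection \ref{subsec:SWEisD4}.

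Next I would track $\lambda_s = (s-3,s-2,-1)$ through each intertwining operator, factored into simple reflections, recording the scalars $\langle \lambda',\alpha_j^\vee\rangle$ at each step: for $w=[1,2]$ these are $s-1$ then $s-2$, and for $w=[1,2,3,2]$ one continues with the reflections in $\alpha_3$ then $\alpha_2$. In every case the scalars are $\ge 3$ at $s=5$, so the global operators $M([1,2])$ and $M([1,2,3,2])$ are absolutely convergent there. I would also check (routine exponent comparison) that the $M_1$-Eisenstein series for $w=[]$ and for $w=[1,2]$ lie in their ranges of absolute convergence at $s=5$, while the one for $w=[1,2,3,2]$, attached to the ``$[2]$'' (Siegel-type, $J_2$) parabolic of $M_1 = D_{3,3}$, sits exactly at the edge.

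The point that makes the rest painless is that $G_E(\R)$ is split of type $D_4$ independently of $E$, so every archimedean intertwiner below is computed by the identical manipulations with $d(\cdot)$ and $A_1$ used in subsection \ref{subsec:SWEisD4}. Concretely: for $w=[]$ the term is literally $\sum_{\gamma \in (P_{G_E}\cap M_1)(\Q)\backslash M_1(\Q)} f_4(\gamma g,s)$, which at $s=5$ is the absolutely convergent Eisenstein series $E_{M_1}(g,f_4(g,s=5)|_{M_1})$ of the statement; for $w=[1,2]$, since both $M([1,2])$ and the $M_1$-Eisenstein series converge at $s=5$, the term is regular there, and it vanishes because the archimedean intertwiner is governed by the same $v_{12}(s) = d(s-2)A_1^{-1}d(s-1)A_1(0,0,1)^t$ as in subsection \ref{subsec:SWEisD4}, with $v_{12}(s=5)=(0,0,0)$; for $w=[1,2,3,2]$, the $M_1$-Eisenstein series has at most a simple pole at $s=5$ by \cite{hanzerSavin} with residue the trivial representation, which cannot occur since $f_{4,\infty}$ has $K_\infty$-type of weight $\ell=4$ (equivalently, the analogue of $v_{12432}$ vanishes to order $\ge 2$ at $s=5$, exactly as for the $[1,2,4,3,2]$ term in subsection \ref{subsec:SWEisD4}), so this term is regular and contributes nothing. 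Collecting the three contributions gives the proposition.

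The main obstacle I anticipate is purely bookkeeping: producing the double-coset representatives and their associated simple roots correctly for $B_3$, and matching each archimedean intertwiner to its already-computed $D_4$ counterpart (which is legitimate precisely because the real group, and hence the relevant embedded $\SL_2$'s, are unchanged). The only term needing a genuinely finer argument is $w=[1,2,3,2]$, where the $M_1$-Eisenstein series really does have a pole at $s=5$ and the vanishing rests on the $K_\infty$-type obstruction, just as for $[1,2,4,3,2]$ in subsection \ref{subsec:SWEisD4}.
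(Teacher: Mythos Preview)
Your approach is exactly the paper's: enumerate $[W_{M_1}\backslash W/W_{M_2}] = \{[],\,[1,2],\,[1,2,3,2]\}$, identify the associated simple roots $[2],[3],[2]$, and kill the two nontrivial terms by archimedean considerations, leaning on the fact that $G_E(\R)$ is split $D_4$ so the computations of subsection~\ref{subsec:SWEisD4} carry over verbatim. Your treatment of $w=[]$ and $w=[1,2]$ matches the paper.

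There is one inaccuracy in your handling of $w=[1,2,3,2]$. Your parenthetical asserts that the archimedean multiplier vanishes to order $\ge 2$ at $s=5$, ``exactly as for the $[1,2,4,3,2]$ term in subsection~\ref{subsec:SWEisD4}''. But that is not what happens there: the paper records $v_{12432}(5)=0$ with $v_{12432}'(5)=(0,0,*)\neq 0$, i.e.\ order exactly one, and the same holds here for $v_{1232}$. Your main clause (simple pole, trivial-representation residue, weight-$4$ $K_\infty$-type) by itself only shows $E^w(g,f_4,s)$ is \emph{regular} at $s=5$; it does not force the value to be zero. The paper's argument uses both ingredients together: the order-one archimedean zero cancels the simple pole, so the value at $s=5$ is (derivative of the archimedean section)$\times$(residue), and since the residue is the trivial representation while the section carries a nontrivial $K_\infty$-type, this product vanishes. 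So you should replace the order-$\ge 2$ claim with the correct order-$1$ vanishing $v_{1232}(5)=0$ and then invoke the $K$-type obstruction exactly as the paper does.
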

\begin{proof} We consider the elements of $[W_{M_1}\backslash W/ W_{M_2}]$ one-by-one.
	\begin{enumerate}
		\item $[]$: The associated simple root is $[2]$.  Applying the modulus character of the $2$-parabolic of $M_1$ to $\alpha_2^\vee(t)$ gives $|t|^4$.  As $\langle (s,s,0), \alpha_2^\vee \rangle = s$, at $s=5$ this Eisenstein series is absolutely convergent.
		\item $[1,2]$: The associated simple root is $[3]$.  The intertwining operator is absolutely convergent.  Applying the modulus character $\delta_{1;3}$ of the $3$-parabolic of $M_1$ to $\alpha_3^\vee(t)$ gives $|t|^4$, so $\langle \delta_{1;3}, \frac{1}{2}\alpha_3^\vee \rangle = 2$.  Now, if $\lambda' = [1,2](\lambda_s) + \rho_{P_0}$, then $\langle \lambda', \frac{1}{2}\alpha_3^\vee \rangle = s-1$.  As $5-1=4 > 2$, this Eisenstein series is absolutely convergent.  One computes that the archimedean intertwiner vanishes at $s=5$, so this term does not contribute.
		\item $[1,2,3,2]$: The associated simple root is $[2]$.  The intertwining operator is absolutely convergent.  Setting $\lambda' = [1,2,3,2](\lambda_s) + \rho_{P_0}$, one finds $\langle \lambda', \alpha_2^\vee \rangle = s-1$.  So the Eisenstein series is at the reducibility point corresponding to the trivial representation.  It is for a group associated to the Jordan algebra $J_2(F)$, so the pole is simple by \cite{hanzerSavin}.  Set $v_{1232}(s)$ the function of $s$ from the archimedean intertwining operator.  One finds $v_{1232}(s=5) = 0$ and $v'_{1232}(s=5) = (0,0,*)$.  But the archimedean $K$-type is not trivial, so this term does not contribute.
	\end{enumerate}
\end{proof}

We now consider the constant term to the $3$-parabolic $M_3$ of $G_{\Q\times F}$. 
\begin{proposition} The constant term $E(g,f_4,s=5)_{N_3} = E_{M_3}(g,f_4(g,s=5)|_{M_3})$, an absolutely convergent Eisenstein series on $M_3$ for the parabolic associated to the simple root $[2]$.
\end{proposition}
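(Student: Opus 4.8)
The approach mirrors the preceding constant-term computations. Write $P_{G_E,3}=M_3N_3$ and, as before, $P_E=P_{G_E,2}=M_2N_2$ for the Heisenberg parabolic. By subsection \ref{subsec:Eis},
\[ E(g,f_4,s)_{N_3}=\sum_{w\in[W_{M_3}\backslash W/W_{M_2}]}E^w(g,f_4,s), \]
the sum running over the three representatives $[\,],\ [3,2],\ [3,2,3,1,2]$ recorded above. I would first compute the associated simple roots $\Delta^w(M_3)\subseteq\{\alpha_1,\alpha_2\}$ from $\Delta^w(M_3)=\{\beta:w^{-1}(\beta)\in\Phi(N_2,T)\}$, obtaining $\Delta^{[\,]}(M_3)=\{\alpha_2\}$, $\Delta^{[3,2]}(M_3)=\{\alpha_1,\alpha_2\}$, and $\Delta^{[3,2,3,1,2]}(M_3)=\{\alpha_1\}$. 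Thus the three terms are, respectively, the maximal-parabolic Eisenstein series on the $\GL_3$-factor of $M_3$ for the root $\alpha_2$ (which is exactly $E_{M_3}(g,f_4(g,s)|_{M_3})$), a Borel Eisenstein series on that $\GL_3$, and the maximal-parabolic Eisenstein series on $\GL_3$ for $\alpha_1$. A comparison of the pairing of $|\nu|^s=\lambda_s+\rho_{P_0}=(s,s,0)$ against $\alpha_2^\vee$, namely $s$, with the modulus character of the $\alpha_2$-parabolic of the $\GL_3$-factor shows the $w=[\,]$ term converges absolutely at $s=5$, where it equals the asserted $E_{M_3}(g,f_4(g,s=5)|_{M_3})$. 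It therefore remains to prove $E^{[3,2]}$ and $E^{[3,2,3,1,2]}$ are holomorphic at $s=5$ and vanish there.

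For $w=[3,2]=w_{\alpha_3}w_{\alpha_2}$ I would track $\lambda_s$ through the two reflections: $(s-3,s-2,-1)\mapsto(s-3,-1,s-2)\mapsto(s-3,-1,2-s)$, with coroot pairings $\langle\,\cdot\,,\alpha_2^\vee\rangle=s-1$ and $\langle\,\cdot\,,\alpha_3^\vee\rangle=2s-4$, equal at $s=5$ to $4$ and $6$. Hence the integral defining $M([3,2])$ converges, no factor of its $c$-function (a product of $\zeta$- and, for the short root, $\zeta_F$-quotients) has a pole, and the associated Borel Eisenstein series on $\GL_3$ — whose inducing character pairs with $\alpha_1^\vee$ and $\alpha_2^\vee$ to give $s-1$ and $s-2$ — lies well inside its range of absolute convergence. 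So $E^{[3,2]}$ is holomorphic at $s=5$ and it suffices to see $M([3,2])f_{\infty,4}(1,s)=0$ at $s=5$. I would establish this by factoring $M([3,2])=M(w_{\alpha_3})\circ M(w_{\alpha_2})$ into rank-one intertwiners, evaluating each on the weight-$4$ vector via the explicit $\SL_2$-computations of \cite{pollackNTM,pollackE8,CDDHPRcompleted} — being careful to identify which rank-one $\SL_2$ attaches to the long root $\alpha_2$ and which to the short root $\alpha_3$ (the latter ``sees'' $F$) — and checking the resulting vector-valued function vanishes at $s=5$; the key vanishing input is the archimedean identity \eqref{eqn:G2Farch}.

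For $w=[3,2,3,1,2]=w_{\alpha_3}w_{\alpha_2}w_{\alpha_3}w_{\alpha_1}w_{\alpha_2}$ the same bookkeeping produces the coroot pairings $s-1,\ s-2,\ 2s-4,\ 2s-5,\ 2s-6$, equal at $s=5$ to $4,3,6,5,4$, so $M([3,2,3,1,2])$ converges and has no pole at $s=5$. The associated maximal-parabolic Eisenstein series on $\GL_3$ (for $\alpha_1$) sits at the edge of its range of absolute convergence at $s=5$, where by the known pole structure of such degenerate Eisenstein series (cf. \cite{hanzerSavin}) it has at most a simple pole whose residue is a one-dimensional representation, trivial on $K_\infty$. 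Since every intertwine of $f_{\infty,4}$ is $\mathbf{V}_4$-valued and $\mathbf{V}_4$ does not contain the trivial $K_\infty$-type, this potential pole cannot occur, so $E^{[3,2,3,1,2]}$ is holomorphic at $s=5$; and, exactly as for the term $v_{1232}$ in the preceding $N_1$ computation, an explicit computation of the archimedean intertwiner shows $M([3,2,3,1,2])f_{\infty,4}(1,s)$ vanishes at $s=5$. Summing the three contributions yields $E(g,f_4,s=5)_{N_3}=E_{M_3}(g,f_4(g,s=5)|_{M_3})$.

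The main obstacle is the archimedean bookkeeping in the last two cases: one must factor $M([3,2])$ and $M([3,2,3,1,2])$ through the correct chains of rank-one $\SL_2$'s — distinguishing those attached to the short, ``$F$-flavored'' root $\alpha_3$ from those attached to long roots — and verify that the relevant compositions of the explicit $\SL_2$-intertwiners applied to the weight-$4$ vector really vanish at $s=5$; the supplementary point, needed only for $w=[3,2,3,1,2]$, is ruling out its potential simple pole via the $K_\infty$-type obstruction.
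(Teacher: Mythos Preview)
Your proposal is correct and follows essentially the same route as the paper: the same three Weyl representatives with the same associated simple roots, the same convergence bookkeeping, and the same mechanism (archimedean vanishing plus the $K$-type obstruction against the one-dimensional residue of the mirabolic $\GL_3$ Eisenstein series) for killing the $[3,2]$ and $[3,2,3,1,2]$ terms. One small caveat: citing \eqref{eqn:G2Farch} as ``the key vanishing input'' for $M([3,2])$ is not quite right---that identity sits at parameter $|\ell|=4$, whereas after applying $M(w_{\alpha_2})$ the relevant short-root pairing is $s-2=3$; the paper instead computes the archimedean multiplier directly in the $A_1,\,d(s)$ matrix formalism (exactly as it writes out explicitly for $v_{32312}$) and verifies the vanishing by evaluation.
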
 
\begin{proof} The constant term down to $M_3$ involves $[W_{M_3}\backslash W/ W_{M_2}]$.  We consider the terms one-by-one.
	\begin{enumerate}
		\item $[]$: The associated simple root is $[2]$.  One has $\langle (s,s,0),\alpha_2^\vee \rangle = s$ and $\langle \delta_{3;2}, \alpha_2^\vee \rangle = 3$.  Thus this Eisenstein series is absolutely convergent.  
		\item $[3,2]$: The associated simple roots are $[1,2]$. The intertwining operator is globally absolutely convergent. Setting $\lambda' = [3,2](\lambda_s) + \rho_{P_0}$, one has $\langle \lambda', \alpha_1^\vee \rangle = s-1$ and $\langle \lambda', \alpha_2^\vee \rangle = s-2$. The Eisenstein series converges absolutely \cite[Lemma 4]{arthurEis}. Let $v_{32}(s)$ be the archimedean multiplier.  Then one has $v_{32}(s=5) = (0,0,0)$, so this term does not contribute.
		\item $[3,2,3,1,2]$: The associated simple root is $[1]$.  The intertwining operator is globally absolutely convergent.  Setting $\lambda' = [3,2,2,1,2](\lambda_s) + \rho_{P_0}$, one has $\langle \lambda', \alpha_1^\vee \rangle = s-2$.  Thus this Eisenstein series will be at the boundary of absolute convergence.  Let $v_{32312}(s)$ be the archimedean multiplier,
		\[v_{32312}(s) = A_1 d(s-3)^2 A_1^{-1} d(2s-5) A_1 d(s-2)^3 A_1^{-1} d(s-1) A_1 (0,0,1)^t.\]
		One has $v_{32312}(s=5) = 0$ and $v_{32312}'(s) = (0,0,*)$.  But now mirabolic Eisenstein series on $\GL_n$ have simple poles at the modulus character point, with one-dimensional residue.  As the archimedean $K$-type does not contain the trivial representation of $\SO(3) \subseteq\SL_3(\R)$, this Eisenstein series will vanish at $s=5$.
	\end{enumerate}
\end{proof}

Finally, we consider the constant term down to the Heisenberg parabolic.
\begin{proposition} One has $E(g,f_4,s=5)_{N_2} = f_4(g,s=5) + E_{\GL_{2,E}}(g,f_4^{[2]})$, where $f_4^{[2]} = M([2])(f_4(g,s=5))$.  Both the intertwining operator and the Eisenstein series are (globally) absolutely convergent.
\end{proposition}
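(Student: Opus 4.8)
The plan is to compute $E(g,f_4,s)_{N_2}$ at $s=5$ by the standard constant-term recipe of subsection \ref{subsec:Eis}, summing the terms $E^w(g,f_4,s)$ over the double-coset representatives $w \in [W_{M_2}\backslash W/ W_{M_2}]$ listed above, and showing that only $w = []$ and $w = [2]$ survive at $s=5$. Recall for the $B_3$ root system that $\rho_{P_0} = (3,2,1)$, the highest root is $(1,1,0)$, and $\lambda_s = (s-3,s-2,-1)$. For each $w$ I would (i) factor $M(w)$ into rank-one intertwiners, tracking $\langle\lambda',\alpha_j^\vee\rangle$ at each simple reflection to decide absolute convergence of the global operator at $s=5$; (ii) identify the associated simple roots $\Delta^w(M_2) \subseteq \{\alpha_1,\alpha_3\}$ and, using the pairing of $\lambda'' = w(\lambda_s)+\rho_{P_0}$ against the relevant coroots, decide whether $E^w(g,f_4,s)$ on $M_2$ converges absolutely at $s=5$; and (iii) in the edge cases, compute the archimedean intertwined section $M(w)f_{4,\infty}(1,s)$ explicitly via the $A_1$- and $d(\cdot)$-formalism introduced in section \ref{sec:SWEisII}, checking its value (and, when needed, its first derivative) at $s=5$.

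The two surviving terms are immediate. The representative $w=[]$ has $\Delta^{[]}(M_2)$ empty and contributes the inducing section $f_4(g,s=5)$. The representative $w=[2]$ has $\Delta^{[2]}(M_2)=[1]$, the long-root $\GL_2$ inside the $\GL_2 \times \GL_{2,F}$ Levi; here $M([2])$ is absolutely convergent since $\langle\lambda_s,\alpha_2^\vee\rangle = s-1>0$ at $s=5$, and the pairing of $[2](\lambda_s)+\rho_{P_0}$ against $\alpha_1^\vee$ exceeds the modulus exponent of the $[1]$-parabolic of $M_2$, so $E_{\GL_{2,E}}(g,f_4^{[2]})$ converges absolutely at $s=5$. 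This yields the second term of the identity together with the final clause of the proposition about absolute convergence.

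For the remaining three representatives $[2,3,2]$, $[2,3,1,2]$, and $[2,3,1,2,3,1,2]$ I would argue vanishing at $s=5$. In the first two cases the global intertwining operator and the $M_2$-Eisenstein series are absolutely convergent at $s=5$ (or at worst sit at the edge of convergence, where \cite{hanzerSavin} bounds the order of the Eisenstein pole and forces its residue to be one-dimensional), and an explicit archimedean computation gives $M(w)f_{4,\infty}(1,s=5)=0$; in the edge case the archimedean $K$-type is not the trivial one, so the one-dimensional residue is excluded. The hard part will be the longest element $w=[2,3,1,2,3,1,2]$: there the global intertwining operator genuinely has a simple pole at $s=5$, so to kill this term one must show that the explicit archimedean section $M(w)f_{4,\infty}(1,s)$ vanishes to order at least two at $s=5$. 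This requires carrying the $d(\cdot)$- and $A_1$-bookkeeping all the way through the seven simple reflections and verifying that both the value and the first derivative vanish there, exactly as in the analogous $D_4$-computation carried out for the longest element in subsection \ref{subsec:SWEisD4}. Once all five terms are accounted for, only $w=[]$ and $w=[2]$ remain, which proves the asserted identity.
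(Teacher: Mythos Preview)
Your overall strategy matches the paper's proof exactly: enumerate the five elements of $[W_{M_2}\backslash W/W_{M_2}]$, check absolute convergence of intertwiners and Levi Eisenstein series, and kill the three nontrivial terms via archimedean vanishing (to order one for $[2,3,2]$ and $[2,3,1,2]$, to order two for $[2,3,1,2,3,1,2]$ to cancel the global simple pole).

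There is one concrete error. For $w=[2]$ you assert $\Delta^{[2]}(M_2)=[1]$, but in fact $\Delta^{[2]}(M_2)=[1,3]$: both $w_{\alpha_2}^{-1}(\alpha_1)=\alpha_1+\alpha_2$ and $w_{\alpha_2}^{-1}(\alpha_3)=\alpha_2+\alpha_3$ lie in $\Phi(N_2,T)$. Hence the $w=[2]$ term is an Eisenstein series on the \emph{full} Heisenberg Levi $M_2\sim \GL_2\times\GL_{2,F}=\GL_{2,E}$ (recall $E=\Q\times F$ here), summed over both the $\alpha_1$- and $\alpha_3$-parabolics, not just over the long-root $\GL_2$ factor. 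This is precisely what the notation $E_{\GL_{2,E}}(g,f_4^{[2]})$ in the proposition means. Your misidentification would yield the wrong automorphic form on $M_2$.

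A smaller point: your hedge that $[2,3,2]$ and $[2,3,1,2]$ might sit ``at the edge of convergence'' and require \cite{hanzerSavin} plus a $K$-type argument is unnecessary in this proposition. For both terms the intertwining operator and the resulting Levi Eisenstein series are genuinely absolutely convergent at $s=5$, and the paper simply checks that the archimedean multipliers $v_{232}(5)$ and $v_{2312}(5)$ vanish. The residue-plus-$K$-type argument you describe is the one used for the $N_1$ and $N_3$ constant terms (e.g.\ for $[1,2,3,2]$ and $[3,2,3,1,2]$), not here.
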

\begin{proof} The constant term down to the Heisenberg Levi involves terms in $[W_{M_2}\backslash W/ W_{M_2}]$.  We consider the elements one-by-one.
	\begin{enumerate}
		\item $[]$: The associated simple roots are $[]$ (empty).  Here we have the inducing section, which does contribute to the constant term.
		\item $[2]$: The associated simple roots are $[1,3]$.  The intertwining operator is globally absolutely convergent.  So is the associated Eisenstein series.
		\item $[2,3,2]$: The associated simple roots are $[1]$.  The intertwining operator is globally absolutely convergent, and the Eisenstein series is as well.  One finds that the archimedean multiplier $v_{232}(s)$ vanishes at $s=5$.
		\item $[2,3,1,2]$: The associated simple roots are $[1,3]$.  The intertwining operator is globally absolutely convergent.  The Eisenstein series again is absolutely convergent.  One finds that the archimedean multiplier $v_{2312}(s)$ vanishes at $s=5$, so this term does not contribute.
		\item $[2,3,1,2,3,1,2]$: The associated simple roots are $[]$ (empty).  The intertwining operator is locally absolutely convergent, and globally has a simple pole at $s=5$.  One finds that the archimedean multiplier vanishes to order $2$ at $s=5$, so this term also does not contribute.
	\end{enumerate}
	
\end{proof}

\section{Main theorems}\label{sec:Main}
We now come to the Siegel-Weil theorems.  Throughout, we normalize Haar measure on the groups $S_E$ so that $S_E(\Q)\backslash S_E(\A)$ has measure $1$.  The proofs of the results in this section follows the strategy of \cite{ganSW}.

\subsection{The case $G_{2,F}$}
Suppose $F$ is a totally real quadratic \'etale extension of $\Q$. Recall we have a map $G_{2,F} \times S_{\Q \times F} \rightarrow G_{6}$.  Let $f(g,s) \in I_{G_6}(s)$ be a flat section with archimedean part fixed as in subsection \ref{subsec:AMRD6} and $\Theta_f \in \mathcal{A}(G_6)$ the associated element of the automorphic minimal representation.  Let $\overline{f}(g) = Res_{s=6}M(w_0)f(g,s)$.  Set
\[\Theta_f(\mathbf{1})(g) = \int_{S_E(\Q)\backslash S_E(\A)}{\Theta_f(g,h)\,dh}\]
the theta lift of the trivial representation.  Finally, let $E_{G_{2,F}}(g,\overline{f})$ be the absolutely convergent Eisenstein series on $G_{2,F}$ for the parabolic $P_{2,F}$ (stabilizing an isotropic line in $V_{4,F} = H \oplus F$.)

\begin{theorem}\label{thm:SWD2} The theta lift $\Theta_{f}(\mathbf{1})(g) = E(g, \overline{f})$\end{theorem}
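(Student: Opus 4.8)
The plan is to follow the strategy of \cite{ganSW}: both $\Theta_f(\mathbf{1})$ and $E_{G_{2,F}}(\cdot,\overline f)$ are automorphic forms on $G_{2,F}$, and I will prove they coincide by matching their Fourier expansions along the unipotent radical $N_{2,F}$ of the (essentially unique) proper parabolic $P_{2,F}=M_{2,F}N_{2,F}$. First, $S_E=S_{\Q\times F}$ is anisotropic over $\Q$ (the trace form on $J$ is positive definite), so $S_E(\Q)\backslash S_E(\A)$ is compact and the theta integral converges absolutely, defining an element of $\mathcal{A}(G_{2,F})$; the right-hand side converges absolutely by construction. Since $G_{2,F}$ has $\Q$-rank one when $F$ is a field (the case $F=\Q\times\Q$ is handled identically, with $G_{2,F}\cong \SL_2\times\SL_2$ factoring the problem into two copies), an automorphic form on $G_{2,F}$ is determined, via Iwasawa decomposition, by its constant term along $N_{2,F}$ together with its coefficient at each nontrivial character $\chi_x$ of $N_{2,F}(\Q)\backslash N_{2,F}(\A)\cong F(\Q)\backslash F(\A)$, so it suffices to match these.

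\emph{Constant term.} Since $P_{2,F}\subseteq P_{G_6,1}$ we have $N_{2,F}\subseteq N_{G_6,1}$, and $N_{G_6,1}$ is abelian, canonically identified with the quadratic space $V_{10}=F\oplus\Theta_F$ in such a way that $N_{2,F}$ is the summand $F$. Expanding $\Theta_f$ along $N_{G_6,1}(\Q)\backslash N_{G_6,1}(\A)$, the nonzero Fourier coefficients are supported on the nonzero isotropic vectors of $V_{10}$ by the exact sequence of Theorem~\ref{thm:minlocDn}, together with the coefficient at $0$, which equals $(\Theta_f)_{N_{G_6,1}}=\overline f$ by Proposition~\ref{prop:minD6const}. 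Integrating over $[N_{2,F}]$ kills all characters not trivial on $F$, leaving only isotropic $v\in\Theta_F(\Q)$; but $\Theta_F$ is positive definite, hence anisotropic over $\Q$, so only $v=0$ survives. Thus $(\Theta_f(\mathbf 1))_{N_{2,F}}(g)=\int_{[S_E]}\overline f(gh)\,dh$. Because $G_{2,F}$ and $S_E$ commute inside $G_6$ and $\nu$ is trivial on the semisimple group $S_E$, one has $\overline f(gh)=\overline f(hg)=\overline f(g)$, so this integral is $\overline f(g)$. On the other side, Proposition~\ref{prop:G2Fconst} (applied with $|\ell|=4$ and $s_0=4$, since $\Res_{G_{2,F}}(\overline f)$ lies at that point of the degenerate principal series) gives $E_{G_{2,F}}(\cdot,\overline f)_{N_{2,F}}=\Res_{G_{2,F}}(\overline f)=\overline f$. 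Hence the constant terms agree.

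\emph{Nonconstant coefficients.} The same expansion yields, for $x\in F$ with $N_F(x)\neq 0$,
\[
(\Theta_f(\mathbf 1))_{(N_{2,F},\chi_x)}(g)=\int_{[S_E]}\ \sum_{\substack{v\in\Theta_F(\Q)\\ N(v)=-N_F(x)}}\Theta_f^{(x+v)}(gh)\,dh .
\]
For one sign of $N_F(x)$ the inner set is empty by positive definiteness of $\Theta_F$, so the coefficient vanishes; the corresponding coefficient of $E_{G_{2,F}}(\cdot,\overline f)$ is a Jacquet integral whose archimedean factor vanishes for exactly that sign of $N_F(x)$ (the section has archimedean type $j(\bullet,i)^{\pm 4}$, and the vanishing is the one recorded in the archimedean computation of Proposition~\ref{prop:G2Fconst}), so both sides vanish. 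For the other sign the set $\{v\in\Theta_F:\ N(v)=-N_F(x)\}$ is a single $S_E(\Q)$-orbit by Lemma~\ref{lem:transCF}, so, collapsing the sum into the $S_E$-integral exactly as in the proof of Proposition~\ref{prop:twistedJacquetDn}, both coefficients are elements of the space of $(N_{2,F},\chi_x)$-Fourier coefficients twisted by the $S_E$-period, a space that is one-dimensional at every finite place by Proposition~\ref{prop:twistedJacquetDn}(1). Hence the two coefficients are proportional, and one fixes the constant of proportionality by an explicit computation at the archimedean place and at the unramified finite places, using that $\Res_{G_{2,F}}(\overline f)$ is literally the inducing section defining $E_{G_{2,F}}(\cdot,\overline f)$. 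This forces equality of all Fourier coefficients along $N_{2,F}$, hence $\Theta_f(\mathbf 1)=E_{G_{2,F}}(\cdot,\overline f)$. The main obstacle is this last step — both justifying the orbit-unfolding (convergence of the collapsed integral and absence of boundary contributions) and, above all, matching the proportionality constant, where the archimedean Jacquet-integral and unramified computations do the real work; alternatively, one may note that the difference is cuspidal by the constant-term computation and rule out cuspidality using that the local $S_E(\Q_v)$-coinvariants of the minimal representation of $G_6(\Q_v)$, restricted to $G_{2,F}(\Q_v)$, are one-dimensional and non-generic.
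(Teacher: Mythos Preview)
Your constant term computation is correct and matches the paper's. The gap is in the nonconstant part: you reduce both sides to the one-dimensional space of $(N_{2,F},\chi_x)\times S_E$-coinvariants, but then assert that ``an explicit computation at the archimedean place and at the unramified finite places'' fixes the proportionality constant. You do not carry this out, and in fact the paper does \emph{not} proceed this way. The issue is that local one-dimensionality only tells you the global $\chi_x$-coefficient of $\Theta_f(\mathbf 1)$ is proportional to the Jacquet integral $L_\chi(\overline f_S)=\int_{N_{2,F}}\overline f(w_0 n)\chi^{-1}(n)\,dn$; the constant of proportionality $c_\chi(g_\infty)$ depends on $\chi$ and on the archimedean point, and there is no a priori reason it equals the corresponding constant for the Eisenstein series. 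Your ``alternative'' sentence is also off: the $S_E$-coinvariants of $V_{min}$ are not one-dimensional as a $G_{2,F}$-module, nor are they non-generic; the twisted coinvariants are one-dimensional, which is precisely what makes them generic.

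What the paper does instead is a two-step representation-theoretic argument avoiding any matching of constants. First, it uses the one-dimensionality to prove an \emph{injectivity} statement: if $\Theta_f(\mathbf 1)$ is cuspidal, then $\Theta_f(\mathbf 1)=0$. The point is that cuspidality forces the constant term $\overline f|_{G_{2,F}(\A)}$ to vanish identically; in particular $\overline f_S$ vanishes on $G_{2,F}(\A_S)$, so $L_\chi(g_S\overline f_S)=0$ for all $g_S$. Since the global $\chi$-coefficient functional $M_\chi$ on $V_{min,S}$ factors through the same one-dimensional coinvariants, $M_\chi=c\cdot L_\chi$ for some scalar $c$, and hence $M_\chi(g_S\overline f_S)=0$ as well, contradicting any putative nonzero Fourier coefficient. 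Second, having an injection from the theta-lift image $\tau_p$ into the Eisenstein image $\sigma_p\subseteq I_{G_{2,F},p}(s=4)$ at a prime $p$ split in $F$, the paper invokes Theorem~\ref{thm:irredUnit}: $I_{G_{2,F},p}(s=4)$ is irreducible and not unitarizable, so $\tau_p$ cannot support a nonzero cuspidal component. Thus the cuspidal projection of $\Theta_f(\mathbf 1)$ is zero and $\Theta_f(\mathbf 1)=E(\cdot,\overline f)$. The non-unitarizability input is the missing idea in your argument.
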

\begin{proof}  First consider the case that $F$ is a field.  
	
Observe that $\Theta_f(g)_{N_{2,F}} = \Theta_f(g)_{N_{G_6,1}}$.  This follows from the fact that $\Theta_f$ only has rank $0$ and rank one Fourier coefficients along $N_{G_6,1}$.  Now, applying Proposition \ref{prop:minD6const} and Proposition \ref{prop:G2Fconst}, we observe that $\Theta_{f}(\mathbf{1})(g)$ and $E(g, \overline{f})$ have the same constant term to $P_{2,F}=M_{2,F}N_{2,F}$. Because $E(g,\overline{f})$ is orthogonal to cusp forms, we obtain that the Eisenstein part of $\Theta_{f}(\mathbf{1})(g)$ is $E(g, \overline{f})$.
	
Let $S$ be an arbitrary finite set of finite primes.  For $v \in V_{min,S}$, let $\overline{f}_{v}$ be the associated element of $I_{S,G_{2,F}}(s=4)$.  Let $\chi$ be a non-degenerate unitary character of $N_{2,F}(\A_S)$.  Set 
	\[L_\chi(v) = \int_{N_{2,F}(\A_S)}{\overline{f}_{v}(w_0 n )\chi^{-1}(n)\,dn}.\]
By Proposition \ref{prop:twistedJacquetDn}, $L_\chi$ is the unique nonzero $\chi$-linear functional on the $S_{\Q \times F}(\A_S)$ coinvariants of $V_{min,S}$, up to scalar multiple.

\begin{claim}\label{claim:phiFC0} Suppose $f \in I_{G_6,fte}(s=6)$, and $\varphi = \Theta_f(\mathbf{1})(g)$.  If the Eisenstein projection of $\varphi$ is $0$, equivalently, if $\varphi$ is cuspidal, then $\varphi = 0$.
\end{claim}
\begin{proof}
Let $\chi$ be a non-degenerate unitary character of $N_{2,F}(\Q) \backslash N_{2,F}(\A)$.  Suppose $\varphi=\Theta_f(\mathbf{1})$ is nonzero, and $\varphi_\chi(g_S g^{S}_f g_\infty) \neq 0$, for some $g = g_S g^{S}_f g_\infty \in G_{2,F}(\A)$.  Here $S$ is a finite set of finite places, and $g_S$, respectively $g^{S}_f$, denote the component of $g_f$ at the places in $S$, respectively away from $S$.  We choose $S$ large enough so that 
\begin{enumerate}
	\item $f = f_S \otimes f^S$ is a tensor
	\item $f^S$ is the normalized spherical vector
	\item $g^{S}_f \in K^S$, the product of the hyperspecial maximal compact subgroups of $G_{6}$ away from $S$.  (Recall that $G_6$ is split at every finite place.)
\end{enumerate}
With $S$ this large, we have $\varphi_\chi(g) = \varphi(g_S g_\infty) \neq 0$.

Fix the normalized spherical vectors away from $S$.  This gives an embedding $V_{min,S} \rightarrow V_{min,f}$.  Taking $\chi$-Fourier coefficients of theta lifts and evaluating at $g_\infty$ then gives a linear map $M_\chi: V_{min,S} \rightarrow \C$.  We have $M_\chi(g_S \overline{f}) \neq 0$.  The linear map factors through the $S_{\Q \times F}(\A_S)$-coinvariants of $V_{min,S}$.  Thus there is a nonzero constant $c_\chi(g_\infty)$ so that $M_\chi(v) = c_\chi(g_\infty) L_\chi(v)$ for all $v \in V_{min,S}$.
	
Now suppose that $\varphi$ is cuspidal, or equivalently, that its Eisenstein projection is $0$.  Taking the constant term of the Eisenstein series, we see that $\overline{f}|_{G_{2,F}(\A)} \equiv 0$.  But $\overline{f}$ is spherical outside $S$, and of our special form at infinity.  Consequently, the away from $S$ part of $\overline{f}$ is nonzero at the identity.  Consequently, $\overline{f}_S(x_S) = 0$ for all $x_S \in G_{2,F}(\A_S)$.  We obtain that $L_\chi(x_S\overline{f}_S)$ is identically $0$.  This contradicts the nonvanishing of $\varphi_\chi(g_S g_\infty)$.
	
We conclude that all of $\varphi$'s non-degenerate Fourier coefficients are equal to $0$, so $\varphi = 0$.
\end{proof}

Now fix $p$ to be a split place of $F$.  Fix inducing data in $I_{G_6}(s)$ away from $p$, and we let inducing data at $p$ vary.  The theta lift then gives a linear map $I_{G_6,p}(s=6) \rightarrow \mathcal{A}(G_{2,F})$.  This map is $G_{2,F}(\Q_p)$-intertwining and factors through the coinvariants $(V_{min,p})_{S_{\Q \times F}(\Q_p)}$.  Let $\tau_p$ be the image of the map.

Similarly, fixing the same data away from $p$, the absolutely convergent Eisenstein series gives a map $Eis: I_{G_{2,F},p}(s=4) \rightarrow \mathcal{A}(G_{2,F})$.  Let $\sigma_p$ denote the image of this map.  Note that, by Theorem \ref{thm:irredUnit}, $I_{G_{2,F},p}(s=4)$ is irreducible so $\sigma_p \simeq I_{G_{2,F},p}(s=4)$ or is $0$.

Because the Eisenstein projection of the theta lift is the Siegel-Weil Eisenstein series, we obtain an equivariant map $\tau_p \rightarrow \sigma_p$.  From Claim \ref{claim:phiFC0}, this map is injective.

Because $\sigma_p$ is irreducible or $0$, we obtain $\tau_p \simeq \sigma_p \simeq I_{G_{2,F},p}(s=4)$ or $\tau_p =0$.  But $ I_{G_{2,F},p}(s=4)$ is not unitarizable, by Theorem \ref{thm:irredUnit}.  Consequently, the cuspidal projection of $\tau_p$ is $0$. Consequently the Eisenstein projection on $\tau_p$ is the identity, which proves the theorem in case $F$ is a field.

The case of $F = \Q \times \Q$ goes through similarly to the case when $F$ is a field, by applying the following lemma.
\end{proof}

\begin{lemma} Let $Z$ denote the center of the Heisenberg unipotent radical $N_{G_2,2}$ on $G_6$.  Then $\Theta_{f,Z} \equiv \Theta_{f,N_{G_6,2}}$.\end{lemma}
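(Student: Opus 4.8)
The plan is to Fourier-expand the partial constant term $\Theta_{f,Z}$ along the abelianization of the Heisenberg radical and to show that minimality of $\Theta_f$, together with the anisotropy of the octonion norm form, kills every nonzero Fourier mode. Recall that $P_{G_6,2}$ stabilizes an isotropic $2$-plane $U=\Q b_1\oplus\Q b_2\subseteq V_{12}$, that $N_{G_6,2}$ is a Heisenberg group with one-dimensional center $Z=\wedge^2 U$, and that $V:=N_{G_6,2}/Z\simeq U\otimes(U^\perp/U)$, where $U^\perp/U$ carries, up to sign, the octonion norm form $(\Theta,N_\Theta)$. Since $Z$ is central in $N_{G_6,2}$ and $\Theta_f$ is left $G_6(\Q)$-invariant, $\Theta_{f,Z}$ is left invariant under $N_{G_6,2}(\Q)Z(\A)$, hence (in the $N_{G_6,2}$-variable) descends to a function on $V(\Q)\backslash V(\A)$; write its Fourier expansion as $\Theta_{f,Z}(vg)=\sum_\xi c_\xi(g)\,\psi(\langle\xi,v\rangle)$, the sum over $\xi$ in the rational points of the dual of $V$. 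By transitivity of constant terms the term $\xi=0$ is exactly $\Theta_{f,N_{G_6,2}}(g)$, so it suffices to prove $c_\xi\equiv 0$ for $\xi\neq 0$.

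Here I would use that $\Theta_f$ lies in the automorphic minimal representation of $G_6$. Each coefficient $c_\xi$ is, locally at every finite place, a vector in the twisted Jacquet module $(V_{min,G_6,p})_{(V,\psi_\xi)}$, and for the minimal representation this module vanishes unless $\xi$ lies in the closure of the minimal $M_{G_6,2}$-orbit on $V$, i.e.\ unless $\xi$ is a pure tensor $u\otimes w$ with $w$ isotropic in $U^\perp/U$; this is the $D_6$ Heisenberg-parabolic analogue of Theorems \ref{thm:minlocDn} and \ref{thm:minlocE8}. Now the octonion norm $N_\Theta$ is positive definite over $\R$, hence anisotropic over $\Q$, so the only $w\in(U^\perp/U)(\Q)$ with $N_\Theta(w)=0$ is $w=0$, and therefore the only rank-one $\xi$ over $\Q$ is $\xi=0$. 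Consequently $c_\xi=0$ for all $\xi\neq 0$, which gives $\Theta_{f,Z}=\Theta_{f,N_{G_6,2}}$.

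The step I expect to be the main obstacle is pinning down precisely — using only the results already available — that the Fourier modes of $\Theta_f$ along $N_{G_6,2}$ are confined to the rank-one cone of $V$; this is the $D_6$ counterpart of Theorem \ref{thm:minlocE8}, and can either be cited or deduced from the structure of the local minimal representations together with the fact that $\Theta_f$ is a residue of a degenerate Eisenstein series. Once that input is secured, the anisotropy of $(\Theta,N_\Theta)$ finishes the argument at once, and the remaining manipulations (the descent of $\Theta_{f,Z}$ to $V(\Q)\backslash V(\A)$ and the identification of the $\xi=0$ term with $\Theta_{f,N_{G_6,2}}$) are routine.
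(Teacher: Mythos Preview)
Your argument is correct, and the underlying mechanism---anisotropy of the octonion norm form over $\Q$ kills all nonzero Fourier modes---is the same as the paper's. The implementations differ in which local input is invoked.

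The paper works through the \emph{abelian} radical $N_{G_6,1}$ rather than the Heisenberg radical $N_{G_6,2}$. Since $Z\subset N_{G_6,1}$ and $N_{G_6,1}\simeq V_{10}=\Q b_2\oplus\Theta\oplus\Q b_{-2}$, taking the $Z$-constant term selects those Fourier characters $\chi_w$ with $w\in\Q b_2\oplus\Theta$; the rank~$\le 1$ constraint (which is already available from Theorem~\ref{thm:minlocDn}) forces such $w$ to be isotropic, and anisotropy of $N_\Theta$ over $\Q$ then gives $w\in\Q b_2$. This shows $\Theta_{f,Z}$ is left $U_{r_1}(\A)$-invariant; a Weyl element swapping $b_1\leftrightarrow b_2$ (which normalizes $Z$) transports this to $U_{r_2}(\A)$-invariance, completing the proof. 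Thus the paper needs only the line-parabolic structure theorem already stated.

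Your route Fourier-expands along $N_{G_6,2}/Z$ directly and appeals to a Heisenberg-parabolic structure result for $V_{min,G_6}$---the $D_6$ analogue of Theorem~\ref{thm:minlocE8}---to confine the support to rank-one tensors $u\otimes w$ with $w$ isotropic. That result is true and provable by the same Fourier--Jacobi techniques, but it is not stated in the paper, so within this text your argument carries one extra citation burden. In exchange you get a proof that works uniformly with the Heisenberg parabolic in question and avoids the auxiliary Weyl-element step. Either way the heart of the matter is identical: no nonzero $\Q$-rational isotropic vector exists in $\Theta$.
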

\begin{proof} To prove this, one again only needs to use that $\Theta_f$ has only rank $0$ and rank $1$ Fourier coefficients along $N_{G_6,1}$.\end{proof}

\subsection{The case of $G_{3,F}$}
Let $F$ be a quadratic \'etale extension of $\Q$ that is totally real. Recall that we have the map $G_{3,F} \times S_{\Q \times F} \rightarrow G_7$.

Suppose $f(g,s) \in I_{G_7}(s)$ is a flat section, with our fixed vector-valued archimedean component.  Let $\Theta_f(g) = Res_{s=7} E(g,f,s)$ be the associated theta function on $G_7$.  As usual, we set $\overline{f}(g) = Res_{s=7} M(w_0)f(g,s)$. Let $E_{G_{3,F}}(g, \overline{f})$ be the absolutely convergent Siegel-Weil Eisenstein series (see subsection \ref{subsec:SWEisD3}) associated to the parabolic $P_{G_3} \subseteq G_{3,F}$ that stabilizes an isotropic line in $V_{6,F}$. 

\begin{theorem}\label{thm:SWD3} With notation as above, $\Theta_{f}(\mathbf{1})(g) = E_{G_{3,F}}(g,\overline{f})$.\end{theorem}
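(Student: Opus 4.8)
The plan is to mimic closely the proof of Theorem \ref{thm:SWD2}, since the dual pair $G_{3,F} \times S_{\Q\times F} \subseteq G_7$ is the exact $D_7$-analogue of the $D_6$-situation handled there. First I would compare constant terms. The key observation is that $\Theta_f(g)_{N_{3,F}} = \Theta_f(g)_{N_{G_7,1}}$: the twisted Jacquet computations of section \ref{sec:twisedJacquet} (together with Theorem \ref{thm:minlocDn}) tell us that $\Theta_f$ carries only rank $0$ and rank $1$ data along $N_{G_7,1}$, so restricting along $N_{3,F}$ picks up the same constant term. Combining the constant-term formula for $\Theta_f$ along $N_{G_7,1}$ (the $D_7$ minimal-representation constant term: $\overline{f}(g) + \mathrm{res}_{s=7}E^{w_{12}}(g,f,s)$, with the residue term being a $G_{6,\Theta}$-theta function which does not see $S_{\Q\times F}$ in the relevant way — one must check it contributes only the already-accounted rank-one piece) with Proposition \ref{subsec:SWEisD3}'s constant-term computations for $E_{G_{3,F}}(g,\overline{f})$ along $N_{3,F}$, I obtain that $\Theta_f(\mathbf{1})(g)$ and $E_{G_{3,F}}(g,\overline{f})$ have the same constant term along $P_{3,F}$. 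Since $E_{G_{3,F}}(g,\overline{f})$ is orthogonal to cusp forms, this shows the Eisenstein projection of $\Theta_f(\mathbf{1})$ equals $E_{G_{3,F}}(g,\overline{f})$.

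Next I would prove the analogue of Claim \ref{claim:phiFC0}: if $\varphi = \Theta_f(\mathbf{1})$ is cuspidal then $\varphi = 0$. The argument is identical in structure. Take a non-degenerate unitary character $\chi$ of $N_{3,F}(\Q)\backslash N_{3,F}(\A)$; a non-vanishing Fourier coefficient $\varphi_\chi$ is detected by a functional that factors through the $S_{\Q\times F}(\A_S)$-coinvariants of $V_{min,S}$ on $G_7$, and Proposition \ref{prop:twistedJacquetDn}(2) says these coinvariants are one-dimensional, so this functional is a scalar multiple of the explicit integral $L_\chi(v) = \int_{N_{3,F}(\A_S)}\overline{f}_v(w_0 n)\chi^{-1}(n)\,dn$. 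If $\varphi$ is cuspidal, then taking constant terms of the Siegel-Weil Eisenstein series forces $\overline{f}|_{G_{3,F}(\A)} \equiv 0$; since $\overline{f}$ is spherical and of our special form away from $S$ (nonzero at the identity there), this forces $\overline{f}_S \equiv 0$ on $G_{3,F}(\A_S)$, hence $L_\chi \equiv 0$, contradicting $\varphi_\chi \neq 0$. So all non-degenerate Fourier coefficients vanish, giving $\varphi = 0$.

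Finally I would run the local irreducibility/unitarizability argument. Fix a finite place $p$ (split in $F$ if $F$ is a field, so that $S_{\Q\times F}(\Q_p)$ behaves uniformly), fix inducing data away from $p$, and let the data at $p$ vary: the theta lift gives a $G_{3,F}(\Q_p)$-intertwining map $I_{G_7,p}(s=7) \to \mathcal{A}(G_{3,F})$ factoring through $(V_{min,p})_{S_{\Q\times F}(\Q_p)}$, with image $\tau_p$; the absolutely convergent Eisenstein series gives $Eis\colon I_{G_{3,F},p}(s=5)\to \mathcal{A}(G_{3,F})$ with image $\sigma_p$. By Theorem \ref{thm:irredUnit}, $I_{G_{3,F},p}(s=5)$ is irreducible and not unitarizable (here one checks $s=5$ is the ``$>1$'' point in the appropriate normalization for the degenerate principal series on $G_{3,F}$), so $\sigma_p \simeq I_{G_{3,F},p}(s=5)$ or $\sigma_p = 0$. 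The Eisenstein-projection identity already proved gives an intertwining map $\tau_p \to \sigma_p$, which is injective by the cuspidality claim. Hence $\tau_p$ is either $0$ or isomorphic to the non-unitarizable $I_{G_{3,F},p}(s=5)$; in either case its cuspidal projection vanishes, so the Eisenstein projection is the identity on $\tau_p$, proving $\Theta_f(\mathbf{1})(g) = E_{G_{3,F}}(g,\overline{f})$. For $F = \Q\times\Q$ one supplements with the lemma that $\Theta_f$ has only rank $0$ and rank $1$ coefficients along $N_{G_7,1}$, exactly as in the $G_{2,F}$ case, so that passing to the center $Z$ of the Heisenberg radical of $G_{3,F}$ on $G_7$ loses nothing.

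The main obstacle I anticipate is the constant-term bookkeeping in the first step: one must be careful that the ``extra'' residual term $\mathrm{res}_{s=7}E^{w_{12}}(g,f,s)$ appearing in $\Theta_f(g)_{N_{G_7,1}}$ — which is a theta function on $G_{6,\Theta}$ — really does restrict, along $N_{3,F}$ inside $G_{3,F}$, to the same rank-one data already captured by the $E_{G_{3,F}}(g,\overline{f})$ constant term, so that no mismatch survives. Everything else is a mechanical transcription of the $G_{2,F}$ proof, with the caveat that one double-checks the normalization of $s$ so that $s=5$ on $G_{3,F}$ is genuinely in the non-unitarizable range of Theorem \ref{thm:irredUnit}.
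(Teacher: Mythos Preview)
Your overall strategy is the paper's, but two genuine gaps remain that the paper explicitly addresses and you do not.

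First, the constant-term step. On $G_{3,F}$ there is more than one (conjugacy class of) maximal parabolic, so matching constant terms only along $P_{3,F}$ does \emph{not} show that the Eisenstein projection of $\Theta_f(\mathbf{1})$ equals $E_{G_{3,F}}(g,\overline{f})$; you must check all of them (this is why subsection~\ref{subsec:SWEisD3} computes constant terms along $P_{3,F;2}$ and, when $F=\Q\times\Q$, along $P_{3,F;3}$ as well). Moreover, your ``main obstacle'' --- the residual $G_{6,\Theta}$-theta term $\mathrm{res}_{s=7}E^{w_{12}}(g,f,s)$ --- is not handled by saying it ``contributes only rank-one data''. The paper's resolution is that after integrating over $[S_{\Q\times F}]$ this term becomes a theta lift of $\mathbf{1}$ from $S_{\Q\times F}$ to $G_{2,F}$, and one then invokes the already-proved Theorem~\ref{thm:SWD2} to identify it with the Siegel--Weil Eisenstein series on $G_{2,F}$, which matches the corresponding piece of the constant term of $E_{G_{3,F}}(g,\overline{f})$ (one also uses subsection~\ref{subsec:isog} to pass between isogenous groups). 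Without Theorem~\ref{thm:SWD2} the matching of constant terms cannot be completed.

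Second, the step ``all non-degenerate Fourier coefficients vanish, giving $\varphi=0$'' is a gap. Unlike $N_{2,F}\simeq F$, the group $N_{3,F}\simeq H\oplus F$ carries degenerate nontrivial characters, so a cusp form on $G_{3,F}$ with all non-degenerate Fourier coefficients zero could in principle be nonzero (singular). The paper closes this by observing that $\Theta_f$ is a modular form on $G_7$ in the sense of \cite{pollackNTM}, hence so is $\Theta_f(\mathbf{1})$ on $G_{3,F}$, and then appealing to \cite[Theorem~3.2.4]{pollackNTM}: for cuspidal modular forms the generalized Whittaker functions for degenerate characters are unbounded, hence vanish, so only non-degenerate Fourier coefficients can occur. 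You need this (or an equivalent argument) before the one-dimensionality from Proposition~\ref{prop:twistedJacquetDn}(2) finishes the job.
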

\begin{proof} Using the work in subsections \ref{subsec:AMRD6}, \ref{subsec:SWEisD3}, and \ref{subsec:coinvts}, the proof is nearly identical to the proof of Theorem \ref{thm:SWD2}.  We only explain the additional ingredients that are used:
	
To check that $\Theta_{f}(\mathbf{1})(g) - E_{G_{3,F}}(g,\overline{f})$ is cuspidal, besides the computations of \ref{subsec:AMRD6} and \ref{subsec:SWEisD3}, one also uses the Siegel-Weil theorem for $G_{2,F}$, i.e., Theorem \ref{thm:SWD2}.  (One has to apply the work in subsection \ref{subsec:isog} to move between isogenous groups.)

One extra point that must be checked is that a cusp form $\varphi$ in the image of the theta lift cannot be singular, i.e., if all of its non-degenerate Fourier coefficients of $\varphi$ are $0$, then $\varphi = 0$.  In fact, it is true that in this case, $\varphi$ \emph{only} has non-degenerate Fourier coefficients along the unipotent radical of the parabolic $P_{G_3,1}$.  To see this, observe that any theta function $\Theta_f$ on $G_7$ is a \emph{modular form} in the sense of \cite{pollackNTM}.  This follows from Theorem 7.0.1 of \cite{pollackNTM}.  Then the theta lift $\Theta_f(\mathbf{1})(g)$ is again a modular form on $G_{3,F}$.  (In \cite{pollackNTM}, we only worked with groups of the form $\SO(3,n)$ with $n \geq 4$, but everything carries over line-by-line for the case $n=3$.)  But then from Theorem 3.2.4 of \cite{pollackNTM}, if $\varphi$ is a cuspidal modular form, it can only have non-degenerate Fourier coefficients; this is because the generalized Whittaker functions of that theorem are unbounded for degenerate characters.
\end{proof}

\subsection{The case of $\SL_{2,E}$}
Let $E$ be a totally real cubic \'etale $\Q$-algebra. We have the map of groups $\SL_{2,E} \times S_E \rightarrow H_J^1$.  Let $P_3=P_{H_J^1,3}$ be the Siegel parabolic subgroup of $H_J^1$, and $I_{H_J^1}(s) = Ind_{P_{3}}^{H_J^1}(|\lambda|^s)$ be the induced representation studied in subsection \ref{subsec:minE7}.  

Suppose $f(g,s) \in I_{H_J^1}(s)$ is a flat section, with archimedean part fixed as in subsection \ref{subsec:minE7}.  Let $\Theta_f \in \mathcal{A}(H_J^1)$ be the associated element of the automorphic minimal representation.  For $g \in \SL_{2,E}(\A)$, we have the theta lift 
\[\Theta_f(\mathbf{1})(g) = \int_{S_E(\Q)\backslash S_E(\A)}{\Theta_f(g,h)\,dh}.\]

On the other hand, our Siegel-Weil Eisenstein series is $E_{\SL_{2,E}}(g,\overline{f})$.  Here $\overline{f}  = Res_{s=14}M(w_0)f(g,s)$ and the sum defining the Eisenstein series is over $B_{2,E}(\Q)\backslash \SL_{2,E}(\Q)$, where $B_{2,E}$ is the standard Borel subgroup.  The sum is absolutely convergent.

The Siegel-Weil theorem is:
\begin{theorem}\label{thm:SWSL2E} We have an identity $\Theta_f(\mathbf{1})(g) = E_{\SL_{2,E}}(g, \overline{f})$.
\end{theorem}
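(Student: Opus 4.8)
The plan is to follow the strategy of \cite{ganSW}, carried out essentially as in the proofs of Theorems \ref{thm:SWD2} and \ref{thm:SWD3}. There are three parts. First, I would identify the Eisenstein projection of $\Theta_f(\mathbf 1)$ with $E_{\SL_{2,E}}(g,\overline f)$ by matching constant terms along all parabolic subgroups of $\SL_{2,E}$. The unipotent radical $U_E$ of the standard Borel $B_{2,E}$ is identified with $E$, which sits inside $J$, hence inside the Siegel unipotent radical $N_{H_J^1,3}$ of $H_J^1$ under the embedding $\SL_{2,E} \hookrightarrow H_J^1$ of \cite{pollackLL}; by Proposition \ref{prop:E71constant} the constant term of $\Theta_f$ along $N_{H_J^1,3}$ is $\overline f$. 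Since $S_E$ is semisimple the character $|\lambda|$ is trivial on it, so averaging $\overline f$ over $S_E(\Q)\backslash S_E(\A)$ (normalized to measure one) returns $\overline f$; combined with the fact that $V_{min,H_J^1}$ has only rank $\le 1$ Fourier coefficients along $N_{H_J^1,3}$ and that the rank-one cone in $V_E$ has no nonzero rational points (because $E$ is totally real and $N_\Theta$ is positive definite, exactly as for $\Theta_F$ in the $G_{2,F}$ case), this shows the constant term of $\Theta_f(\mathbf 1)$ along $U_E$ equals $\overline f$. On the other side, the constant term of the absolutely convergent Eisenstein series $E_{\SL_{2,E}}(g,\overline f)$ along $U_E$ is $\overline f$ plus the non-identity double-coset terms; I expect these to vanish at our special point by reducing, exactly as in Proposition \ref{prop:G2Fconst} and the archimedean intertwining computations of \cite{pollackNTM,pollackE8} and section \ref{sec:SWEisII}, to archimedean intertwining integrals that kill the weight-$4$ section $f_{\infty,4}$ place-by-place over $E$. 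When $E$ is not a field there are further standard parabolics of $\SL_{2,E}$, whose constant terms I would handle using the smaller Siegel-Weil theorems (Theorems \ref{thm:SWD2}, \ref{thm:SWD3}) together with the passage between isogenous groups of subsection \ref{subsec:isog}. This shows $\Theta_f(\mathbf 1) - E_{\SL_{2,E}}(g,\overline f)$ is cuspidal, so its Eisenstein projection is $E_{\SL_{2,E}}(g,\overline f)$.

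Second, I would show that a cuspidal form in the image of the theta lift vanishes. By Theorem \ref{thm:minE7loc} and the transitivity Lemma \ref{lem:VEtrans}, the space of $S_E(\Q_p)$-coinvariants of the $(U_E,\chi_x)$-twisted Jacquet module of $V_{min,H_J^1}$ is one-dimensional, spanned by a functional of the form $v \mapsto \int_{U_E(\A_S)} \overline f_v(w_0 n)\chi^{-1}(n)\,dn$. Suppose $\varphi = \Theta_f(\mathbf 1)$ is cuspidal; then its Eisenstein projection vanishes, so by the first part $\overline f|_{\SL_{2,E}(\A)} \equiv 0$. But $\overline f$ is spherical away from a finite set $S$ of finite places and of our fixed archimedean form, hence its value at the identity away from $S$ is nonzero; therefore $\overline f_S \equiv 0$ on $\SL_{2,E}(\A_S)$, and via the one-dimensionality of the coinvariant space every non-degenerate Fourier coefficient of $\varphi$ along $U_E$ vanishes. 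Since $E$ is \'etale over $\Q$, we have $N_E(x) \ne 0$ exactly for $x \in E^\times$, so a cusp form on $\SL_{2,E}$ has only non-degenerate Fourier coefficients along $U_E$; hence $\varphi = 0$. (There is no extra ``singular form'' subtlety here, unlike the $G_{3,F}$ case.)

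Third, I would fix a rational prime $p$ that splits completely in $E$, so $\SL_{2,E}(\Q_p) \cong \SL_2(\Q_p)^3$ is split; such primes exist in abundance by Chebotarev. Fixing inducing data away from $p$ and varying it at $p$, the theta lift gives an $\SL_{2,E}(\Q_p)$-intertwining map $I_{H_J^1,p}(s=14) \to \mathcal A(\SL_{2,E})$ that factors through the space of $S_E(\Q_p)$-coinvariants of $V_{min,p}$; call its image $\tau_p$. Likewise the absolutely convergent Eisenstein series gives a map from $I_{\SL_{2,E},p}$ at the corresponding special point with image $\sigma_p$; this point lies past the edge of absolute convergence, so by Theorem \ref{thm:irredUnit} the representation $I_{\SL_{2,E},p}$ there is irreducible and not unitarizable, whence $\sigma_p$ is $0$ or irreducible and not unitarizable. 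The first part gives an $\SL_{2,E}(\Q_p)$-equivariant surjection $\tau_p \to \sigma_p$ (Eisenstein projection), which is injective by the second part. Since $\sigma_p \ne 0$ (otherwise $\overline f|_{\SL_{2,E}} \equiv 0$, impossible), $\tau_p \cong \sigma_p$ is not unitarizable, so the cuspidal projection of $\Theta_f(\mathbf 1)$, being an $\SL_{2,E}(\Q_p)$-equivariant image of $\tau_p$ inside the unitarizable space of cusp forms, is $0$. With the first part, this gives $\Theta_f(\mathbf 1)(g) = E_{\SL_{2,E}}(g,\overline f)$.

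The main obstacle is the constant-term bookkeeping in the first part, specifically checking the vanishing at $s = 14$ of the archimedean intertwining integrals attached to the non-identity double cosets — both the long-intertwiner term along $U_E$ and, for non-field $E$, the terms along the other parabolics. This comes down to tracking precisely how the various $\SL_2$ subgroups sit inside $H_J^1$ and then running the explicit $\SL_2$ intertwining computations of \cite{pollackNTM,pollackE8}. Once these vanishings are established and the smaller Siegel-Weil theorems are in hand for the non-field cases, the local-global argument of the second and third parts is formal, given the groundwork in sections \ref{sec:general} and \ref{sec:twisedJacquet}.
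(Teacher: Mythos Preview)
Your proposal is correct and follows essentially the same three-part strategy as the paper: match constant terms to show $\Theta_f(\mathbf{1}) - E_{\SL_{2,E}}(g,\overline{f})$ is cuspidal, use the one-dimensionality of the $(U_E,\chi_x)$-twisted Jacquet coinvariants under $S_E(\Q_p)$ (Lemma~\ref{lem:VEtrans}) to rule out nonzero cuspidal lifts, and conclude via irreducibility and non-unitarizability at a split prime.

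Two small refinements relative to the paper. First, only Theorem~\ref{thm:SWD2} is actually needed in the non-field case, not Theorem~\ref{thm:SWD3}: when $E = \Q \times F$, the constant term along the unipotent of the $\SL_2$ factor is identified (via Claims~\ref{claim:V11J} and~\ref{claim:ZNQ}) with $\Theta_f(g)_{N_{H_J^1,1}}$, which after integrating over $S_E$ becomes a $G_{2,F}$ theta lift handled by Theorem~\ref{thm:SWD2}; the constant term along the $\SL_{2,F}$ unipotent $N_F$ is handled directly by a conjugation argument (Claim~\ref{claim:Ve11} plus an explicit $\gamma \in \GL_3(\Q)$) reducing to Proposition~\ref{prop:E7minConst2}, with no further Siegel-Weil input required. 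Second, for $E = \Q \times \Q \times \Q$ the paper exploits the outer $S_3$ action on $H_J^1$ to reduce the remaining parabolic constant terms to those already computed, rather than treating each separately. Your identification of the constant-term bookkeeping as the main obstacle is accurate; the paper in fact leaves the $\SL_2$-side constant-term computation of $E_{\SL_{2,E}}(g,\overline{f})$ to the reader, just as you anticipate.
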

\begin{proof}
The structure of the proof is the same as for Theorems \ref{thm:SWD2} and \ref{thm:SWD3}. We only highlight the additional ingredients that are needed for Theorem \ref{thm:SWSL2E}.  

We leave the computation of the constant terms of the Siegel-Weil Eisenstein series $E_{\SL_{2,E}}(g, \overline{f})$ to the reader, as this is an $\SL_2$ calculation.  The following claims are used to compute the constant terms of $\Theta_f(\mathbf{1})(g)$.

Recall $J = E \oplus V_E$.
	\begin{claim}\label{claim:VEJ} Over our global field $\Q$, if $X \in J$ has rank at most one, and $X \in V_E$, then $X = 0$.
	\end{claim}
	\begin{proof} One has $\tr(V)^2 - \tr(V^2) = 2 \tr(V^\#)$ for all $V \in J$.  Thus if $V$ is rank at most one, $\tr(V)^2 = \tr(V^2)$.  If $V \in V_E$, then $\tr(V) = 0$, so $\tr(V^2) = 0$.  But this form is positive definite on $J$, so $V = 0$.
	\end{proof}

When $E$ is a field, Claim \ref{claim:VEJ} is enough to finish the proof of the Siegel-Weil theorem.  We now consider the case when $E = \Q \times F$ with $F$ quadratic \'etale.
\begin{claim}\label{claim:V11J} Suppose $V \in J$ is rank one, with $c_1(V) = 0$.  Then $x_2(V) = x_3(V) = 0$ as well. \end{claim}
\begin{proof} This follows from the fact that $0 = c_j(V^\#)$ for $j = 2,3$.
\end{proof}

\begin{claim}\label{claim:ZNQ} Let $Z$ denote the root space of $H_J^1$ corresponding to $e_{11} \in J$.  Let $P_{1} = M_1 N_1$ denote the $D_{6,2}$ standard parabolic of $H_J^1$. If $\Theta_f$ is a theta function on $H_J^1$, then $(\Theta_f)_Z = \Theta_{N_1}$. \end{claim}
\begin{proof} The unipotent radical $N_1 = (XY)Z$ is a Heisenberg group.  Here $YZ = N_1 \cap N_{3}$ and $X = N_1 \cap M_3$.  Claim \ref{claim:V11J} implies $(\Theta_f)_Z = (\Theta_f)_{YZ}$. Now, there is an element $J_4 \in \Sp_4 \subseteq M_1$ that exchanges $X$ with $Y$, so this proves that $(\Theta_f)_Z$ is also invariant by $X$.  The claim follows. \end{proof}

By Theorem \ref{thm:SWD2} and our computation of $\Theta_f(g)_{N_1}$ in subsection \ref{subsec:minE7}, we can now compute the constant term $\Theta_f(\mathbf{1})(g)_{Z}$ in terms of Eisenstein series on $G_{2,F}$.

Let $N_{2,F}$ be the unipotent radical of the standard Borel of $\SL_{2,F}$, thought of as sitting inside $\SL_{2,E} = \SL_2 \times \SL_{2,F}$.  We now consider the constant term of $\Theta_{f}(g)$ along $N_F$.
\begin{claim}\label{claim:Ve11} Suppose $V \in J$ is rank one, and $V$ is orthogonal to $F \hookrightarrow H_2(\Theta)$.  Then $V \in \Q e_{11}$. \end{claim}
\begin{proof} Let $U$ denote the image of $V$ under the linear projection $J \rightarrow H_2(\Theta)$.  Then if $U \neq 0$, the quadratic norm of $U$ is negative, contradicting the fact that the $c_1$ coordinate of $V^\#$ is $0$.  Thus $U=0$. Now one considers the $c_2$ and $c_3$ coordinate entry of $V^\# = 0$ to deduce $x_2(V) = x_3(V)= 0$. \end{proof}

Let $P_{2}=M_2N_2$ be the standard $D_{5,1} \times \SL_2$ parabolic subgroup of $H_J^1$.  Recall that we have $\GL_3 \subseteq \Sp_6 \subseteq H_J^1$.  Let $\gamma \in \GL_3(\Q)$ be the permutation matrix for which $Ad(\gamma)(\SL_{2,F})$ acts trivially on $e_3,f_3$ and $Ad(\gamma)(\SL_2)$ acts trivially on $e_1,e_2,f_2,f_1$.  Here $e_1,e_2,e_3,f_3,f_2,f_1$ is the standard basis of $\Sp_6$.

Conjugating the statement of Claim \ref{claim:Ve11} by $\gamma$, it can be used to prove that $\Theta_f(g)_{\gamma \cdot N_F} = \Theta_f(g)_{N_2}$. For $g \in \SL_{2,E}(\A)$ and $h \in S_E(\A)$, we then have
\[\Theta_f(g,h)_{N_F} = \Theta_f(\gamma (g,h))_{\gamma \cdot N_F} = \Theta_f((\gamma h \gamma^{-1})\gamma g)_{N_2}.\]
The constant term $\Theta_f(x)_{N_2} = E_{\SL_2}(x,\overline{f})$, an $\SL_2$-type Eisenstein series, see Proposition \ref{prop:E7minConst2}.  We obtain 
\[\Theta_f(g,h)_{N_F} = \sum_{B(\Q)\backslash \SL_2(\Q)}{\overline{f}(\mu \gamma g)}.\]
But $\overline{f}(\gamma^{-1} x) = \overline{f}(x)$, so the sum above is the $N_F$-constant term of the Siegel-Weil Eisenstein series.

The case of the Siegel-Weil theorem when $F$ is a field now follows by the argument of Theorem \ref{thm:SWD2}.  The case when $F = \Q \times \Q$ also follows, this time using the outer $S_3$ (symmetric group) action: If $\tau \in S_3 \subseteq H_J^1(\Q)$, then
\[\Theta_f(\mathbf{1})(\tau g \tau^{-1}) = \int_{[S_E]}{\Theta_f(\tau g \tau^{-1} h)\,dh} = \int_{[S_E]}{\Theta_f(\tau g  h\tau^{-1})\,dh} = \Theta_{\tau^{-1} f}(\mathbf{1})(g)\]
where we have changed variables in the integral.
\end{proof}

\subsection{The case of $G_E$}
We now state and prove the Siegel-Weil theorem for the groups $G_E \times S_E \rightarrow G_J$.  To setup the result, suppose $f(g,s) \in I_{G_J}(s)$ is a flat section, with vector-valued archimedean component fixed as in subsection \ref{subsec:minE8}.  Let $\overline{f}(g) = Res_{s=24} M(w_0)f(g,s)$.  The Siegel-Weil Eisenstein series $E_{G_E}(g,\overline{f})$ is defined in section \ref{sec:SWEisII}. Restricting $\overline{f}$ to $G_E$, one obtains an element in $I_{G_E}(s=5)$.  Extending this to a flat section $\overline{f}(g,s)$, one can define the Eisenstein series $E_{G_E}(g,\overline{f},s) = \sum_{\gamma \in P_E(\Q)\backslash G_E(\Q)}{\overline{f}(\gamma g,s)}$, where $P_E$ is the standard Heisenberg parabolic of $G_E$.  The sum converges absolutely for $Re(s) > 5$, and we proved in section \ref{sec:SWEisII} that the Eisenstein series is regular at $s=5$.

Because the Eisenstein series is regular for all flat sections with our fixed special archimedean component, the Eisenstein map gives a $G_E(\A_f)$-intertwining map $I_{G_E,fte}(s=5) \rightarrow \mathcal{A}(G_E)$.  The Siegel-Weil Eisenstein series is defined as $E_{G_E}(g,\overline{f}) := E_{G_E}(g,\overline{f},s=5)$.

The theta lift is
\[\Theta_f(\mathbf{1})(g) = \int_{S_E(\Q)\backslash S_E(\A)}{\Theta_f(g,h)\,dh}.\]

\begin{theorem}\label{thm:SWGE} With notation as above, one has an identity $\Theta_f(\mathbf{1})(g) = E_{G_E}(g,\overline{f})$.
\end{theorem}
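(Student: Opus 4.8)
The proof will follow the same template used for Theorems \ref{thm:SWD2}, \ref{thm:SWD3}, and \ref{thm:SWSL2E}, carrying out the ``compute constant terms, reduce to a coinvariants statement, then bootstrap via irreducibility and non-unitarity'' strategy of \cite{ganSW}. First I would show that the difference $D(g) := \Theta_f(\mathbf{1})(g) - E_{G_E}(g,\overline{f})$ is cuspidal. For this one compares the constant terms of $\Theta_f(\mathbf{1})(g)$ along the unipotent radicals $N_{G_E,j}$ of the three non-Heisenberg maximal parabolic subgroups of $G_E$, computed from the constant term formulas for $\Theta_f$ on $G_J$ established in subsection \ref{subsec:minE8} together with the transitivity/vanishing results of the earlier ``smaller'' Siegel--Weil theorems (after passing between isogenous groups via the machinery of subsection \ref{subsec:isog}), against the constant terms of $E_{G_E}(g,\overline{f})$ computed in section \ref{sec:SWEisII}. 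Concretely: along the parabolic $P_{G_E,j}$ whose Levi is classical, the constant term of $\Theta_f(\mathbf{1})$ is a theta lift for a smaller dual pair lying inside a Levi of $M_{G_J,\bullet}$, which by Theorems \ref{thm:SWD2}, \ref{thm:SWD3}, \ref{thm:SWSL2E} equals the corresponding degenerate Eisenstein series, matching the constant term of $E_{G_E}(g,\overline{f})$ found in section \ref{sec:SWEisII}. Because $E_{G_E}(g,\overline{f})$ is orthogonal to cusp forms, this identifies the Eisenstein projection of $\Theta_f(\mathbf{1})$ with $E_{G_E}(g,\overline{f})$, so $D(g)$ is cuspidal.

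Next I would rule out that a nonzero cuspidal form can arise this way. This is the analogue of Claim \ref{claim:phiFC0}: if $\varphi = \Theta_f(\mathbf{1})$ is cuspidal, then all its non-degenerate Fourier coefficients along the Heisenberg unipotent $N_E$ (identified with $W_E$) vanish, forcing $\varphi = 0$. As in the proof of Theorem \ref{thm:SWD3}, I would invoke the fact that a theta function $\Theta_f$ on $G_J$ is a quaternionic modular form, so $\Theta_f(\mathbf{1})$ is a quaternionic modular form on $G_E$; by the structure of generalized Whittaker functions (degenerate characters give unbounded functionals, hence cannot occur for a cusp form), a cuspidal modular form on $G_E$ has only non-degenerate $W_E$-Fourier coefficients. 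The vanishing of those non-degenerate coefficients is then reduced to a local statement: fixing a split prime $p$, taking $\chi_x$-Fourier coefficients of the theta lift produces a $\chi_x$-equivariant functional on $V_{min,p}$ factoring through the $S_E(\Q_p)$-coinvariants, which is one-dimensional by the final proposition of section \ref{sec:twisedJacquet}; comparing this functional against the (vanishing, by cuspidality) constant term data as in Claim \ref{claim:phiFC0} forces all such coefficients to vanish.

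Finally I would run the bootstrap argument verbatim from Theorem \ref{thm:SWD2}: fix inducing data on $G_J$ away from a split place $p$ and let the data at $p$ vary. The theta lift gives a $G_E(\Q_p)$-intertwining map $I_{G_E,p}(s=5) \to \mathcal{A}(G_E)$ with image $\tau_p$, factoring through $(V_{min,p})_{S_E(\Q_p)}$; the absolutely convergent Eisenstein series gives a map with image $\sigma_p$, which by Theorem \ref{thm:irredUnit} is either $0$ or isomorphic to the irreducible, non-unitarizable $I_{G_E,p}(s=5)$ (here using $\mathrm{Re}(5) > $ the relevant $s_P$-reducibility threshold for $G_E$, i.e.\ that $5$ exceeds $1$ after renormalization). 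The Eisenstein-projection step gives an equivariant map $\tau_p \to \sigma_p$ which is injective by the cuspidal-vanishing just established; hence $\tau_p$ is $0$ or isomorphic to $I_{G_E,p}(s=5)$, which is non-unitarizable, so the cuspidal projection of $\tau_p$ vanishes — equivalently $D \equiv 0$. The case $E = \Q\times\Q\times\Q$ and $E = \Q \times F$ are handled exactly as before, using the isogeny reduction of subsection \ref{subsec:isog} and, where needed, the outer $S_3$ or $S_2$ symmetry as in the proof of Theorem \ref{thm:SWSL2E}.

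\textbf{The main obstacle.} I expect the bulk of the work — and the place most likely to hide a subtlety — to be the constant-term bookkeeping in the cuspidality step: there are many Weyl representatives in $[W_{M_{G_J,\bullet}}\backslash W_{G_J}/W_{M_{G_J,1}}]$ and one must check for each that the relevant intertwining operator and inner Eisenstein series are either absolutely convergent (hence regular at $s=24$, contributing nothing to the residue) or reduce, via Langlands' functional equation and the isogeny lemma \ref{lem:isogEis}, to one of the already-proven smaller Siegel--Weil identities; matching the surviving terms with the section \ref{sec:SWEisII} computations of the constant terms of $E_{G_E}(g,\overline{f})$ across all three cases ($E$ a field, $\Q\times\Q\times\Q$, $\Q\times F$) is the genuinely laborious part. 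The conceptual content, by contrast, is entirely carried by the one-dimensionality of the twisted-Jacquet coinvariants and the irreducibility/non-unitarity of the degenerate principal series, both already in hand.
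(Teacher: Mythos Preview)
Your overall strategy matches the paper's, but there is a genuine gap in your final ``bootstrap'' step. You claim that $I_{G_E,p}(s=5)$ is irreducible and non-unitarizable by Theorem~\ref{thm:irredUnit}, arguing that ``$5$ exceeds $1$ after renormalization.'' This is false: the Heisenberg parabolic of $G_E$ has $\delta_{P_E} = |\nu|^{5}$, so $I_{G_E}(s=5) = \Ind_{P_E}^{G_E}(\delta_{P_E}^{1})$, which is exactly the boundary point $s=1$ in the normalization of Theorem~\ref{thm:irredUnit}; that theorem requires $\Re(s)>1$ strictly and does not apply. In fact $I_{G_E,p}(s=5)$ is \emph{reducible}: it has a nonsplit length-two composition series with unique irreducible quotient the trivial representation. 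Consequently the argument ``$\sigma_p$ is irreducible, $\tau_p \hookrightarrow \sigma_p$, non-unitarizable, done'' breaks down. (Relatedly, the Siegel--Weil Eisenstein series on $G_E$ is not absolutely convergent at $s=5$; it sits at the boundary $s=s_{P_E}$, and the regularity there is the content of section~\ref{sec:SWEisII}.)

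The paper flags this explicitly and repairs it as follows. One still has the injection $\tau_p \hookrightarrow \sigma_p$ (via Eisenstein projection plus the one-dimensional twisted Jacquet coinvariants), so $\tau_p$ is a subquotient of $I_{G_E,p}(s=5)$. There are three possibilities: $\tau_p$ is the proper subrepresentation $V$, or $\tau_p = I_{G_E,p}(s=5)$, or $\tau_p$ is the trivial representation. For the first, one invokes \cite[Chapter~XI, \S4]{borelWallach} to see $V$ is not unitarizable, so its cuspidal projection vanishes. For the latter two, the cuspidal projection is semisimple and hence either zero or the trivial representation; but the trivial representation of $G_E(\Q_p)$ cannot occur in the cuspidal spectrum (a cusp form would have a nonzero Fourier coefficient along some unipotent radical, which is impossible for a $G_E(\Q_p)$-fixed vector). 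This case analysis replaces your verbatim appeal to Theorem~\ref{thm:irredUnit}.
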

\begin{proof}
Our first task is to compute the constant terms of $\Theta_f(\mathbf{1})(g)$ along the maximal parabolic subgroups of $G_E$, so that we may prove that $\Theta_f(\mathbf{1})(g) - E_{G_E}(g,\overline{f})$ is cuspidal.

We begin with:
\begin{claim} Let $V \in W_J$ be rank at most one, and suppose $V$ has $0$ projection to $W_E$.  Then $V=0$.\end{claim}
\begin{proof} This claim reduces immediately to Claim \ref{claim:VEJ}.\end{proof}
It follows that the constant term $\Theta_f(g)_{N_E} = \Theta_f(g)_{N_J}$ where $N_E$ is the unipotent radical of the standard Heisenberg parabolic subgroup of $G_E$ and $N_J$ is the unipotent radical of the standard Heisenberg parabolic subgroup of $G_J$.

Let $Q_E = L_E V_E$ be the long root $\GL_2$ parabolic subgroup of $G_E$, in its $G_2$-root system.  Let $Q_J = L_J V_J$ be the standard maximal parabolic subgroup of $G_J$ with simple root $\alpha_2$ in its unipotent radical, so that the Levi subgroup $L_J$ is isogenous to $\GL_2 \times M_J^1$.
\begin{claim} The constant term of $\Theta_f(g)_{V_E} = \Theta_f(g)_{V_J}$.\end{claim}
\begin{proof}
Again, one uses Claim \ref{claim:VEJ} to prove this.  Note that we use the fact that $\Theta_f$ has only rank $0$ and rank $1$ Fourier coefficients along $N_J$ and a conjugate of $N_J$.  This can be justified using the identity $\varphi_{(N_J,\chi)}(\gamma g) = \varphi_{(N_J \cdot \gamma, \chi \cdot \gamma)}(g)$ of global Fourier coefficients of an automorphic form $\varphi$ on $G_J$.  Here $\varphi_{(N_J,\chi)}$ is the $\chi$-Fourier coefficient of $\varphi$ along $N_J$.
\end{proof}
In case $E$ is a field, we can now conclude that $\Theta_f(\mathbf{1})(g) - E_{G_E}(g,\overline{f})$ is cuspidal, using our work from subsections \ref{subsec:minE8} and \ref{subsec:GEfield} and Theorem \ref{thm:SWSL2E}.

Now suppose $E = \Q \times F$ with $F$ quadratic \'etale.  Let $P_4 = M_4 N_4$ be the standard parabolic subgroup of $G_J$ with simple root $\alpha_4$ in its unipotent radical.  One can visualize this parabolic in the $F_4$ root system, using Remark \ref{rmk:F4root} below.  Let $P_{G_E,1} = M_{G_E,1} N_{G_E,1}$ be the standard parabolic of $G_E$ with the first simple root in its unipotent radical, in the ordering of simple roots given in subsection \ref{subsec:SWEisB3}.  Thus $P_{G_E,1}$ stabilizes an isotropic line in the representation $V_{6,F} = H^2 \oplus F$ and $M_{G_E,1}$ has absolute Dynkin type $D_3$. One has that $P_{G_E,1} = P_4 \cap G_E$ and $N_{G_E,1} = N_4 \cap G_E$.  

Now, one has the identity of constant terms $\Theta_f(g)_{N_{G_E,1}} = \Theta_f(g)_{N_4}$. To prove this, use Claim \ref{claim:Ve11} and the following two claims.
\begin{claim} Suppose $V \in J$ is rank one, $c_1(V) =0$, and $V^\# \in \Q e_{11}$.  Then $V \in H_2(\Theta)$. \end{claim}
\begin{claim}\label{claim:FCstabs} Suppose $\varphi$ is a quaternionic modular form on $G_J$, $\chi: N_J(\Q)\backslash N_J(\A) \rightarrow \C^\times$ is a character, and $\varphi_\chi(g)$ the corresponding Fourier coefficients.  Suppose $u \in H_J^1(\A)$ is unipotent and stabilizes $\chi$.  Then $\varphi_\chi(ug) = \varphi_\chi(g)$.
\end{claim}
\begin{proof}  We leave the proof of the first claim to the reader.  For the second claim, one uses the fact that main theorem of \cite{pollackQDS} for the formula for the generalized Whittaker function implies that $\varphi_\chi(ug) =\varphi_\chi(g)$ if $u$ is purely archimedean, and then the general case follows by an approximation argument.
\end{proof}
Note that Claim \ref{claim:FCstabs} can be applied to $\varphi = \Theta_f$, because $\Theta_f$ is a quaternionic modular form in the sense of \cite{pollackQDS}.  Indeed, for the vector that is spherical at finite places, this is proved in \cite{pollackE8}; the general cases follows because the map $f \mapsto \Theta_f$ is $G_J(\A_f)$-intertwining.  (One can also use \cite[Proposition 6.4]{ganMin} in place of Claim \ref{claim:FCstabs}.)

Using Theorem \ref{thm:SWD3} and the results of subsections \ref{subsec:minE8} and \ref{subsec:SWEisB3}, \ref{subsec:SWEisD4}, we now have that the constant term of $\Theta_f(\mathbf{1})(g) - E_{G_E}(g,\overline{f})$ along $N_{G_E,1}$ vanishes when $E = \Q \times F$.

Suppose now that $F$ is a field. Then $G_E$ has a maximal parabolic subgroup $P_{G_E,3} = M_{G_E,3} N_{G_E,3}$ with the third simple root (in the numbering of subsection \ref{subsec:SWEisB3}) in the unipotent radical.  This is the standard maximal parabolic with Levi subgroup $M_{G_E,3}$ isogenous to $\GL_3 \times \SO_{2,F}$.  The analysis of the constant term $\Theta_f(\mathbf{1})(g)_{N_{G_E,3}}$ is similar to that of $\Theta_f(\mathbf{1})(g)_{N_{G_E,1}}$.  To do the computation, it is easiest to first consider $E$ as $E = F \times \Q$ instead of $\Q \times F$, and then use a conjugation argument as in the proof of Theorem \ref{thm:SWSL2E}.

Finally, we must consider the case where $E = E_{sp} = \Q \times \Q \times \Q$.  But to handle the constant terms of $\Theta_{f}(\mathbf{1})(g)$ and $E_{G_E}(g,\overline{f})$ in this case, we can use triality to bootstrap off of the constant terms already computed above.  Specifically, if $\tau \in C_3 \subseteq S_3 \subseteq G_J(\Q)$, then
\[\Theta_f(\mathbf{1)}(\tau g \tau^{-1}) = \int_{[S_{E_{sp}}]}{\Theta_f(\tau g \tau^{-1} h)\,dh} = \int_{[S_{E_{sp}}]}{\Theta_f(\tau g  h \tau^{-1})\,dh} = \Theta_{\tau^{-1} f}(\mathbf{1})(g)\]
where we have used triality for $S_{E_{sp}}$ to make a change of variables in the integral.  One also has $E(\tau g \tau^{-1}, \overline{f}) = E(g,\overline{\tau^{-1}f})$, using that the Heisenberg parabolic $P_E(\Q)$ is stable by $\tau$ and $\overline{f}$ is left-invariant by $\tau$.

Combining the above work, we have now proved that $\Theta_f(\mathbf{1})(g) - E_{G_E}(g,\overline{f})$ is cuspidal in all cases. To finish the proof, we make a slightly different representation-theoretic argument compared to the proofs of the other Siegel-Weil theorems, because this time the induced representation $I_{G_E,p}(s=5)$ is reducible.

Fix a split place $p$ for $E$ as usual.  Note that $f \mapsto \Theta_f$ is an intertwining map, and so is the Siegel-Weil Eisenstein series, as was remarked above. Fix inducing data in $I_{G_J}(s=24)$ away from $p$. Let $\tau_p$ be the $p$-adic rerepresentation on $G_E$ coming from the theta lift, and $\sigma_p$ the $p$-adic representation coming from the Eisenstein series map $I_{G_E,p}(s=5) \rightarrow \mathcal{A}(G_{4,E})$.  

If $\chi$ is a non-degenerate unitary character of $N_E(\Q_q)$ for an arbitary finite prime $q$, one verifies that the integral
\[L_\chi(g_q \overline{f}) = \int_{N_E(\Q_q)}{\chi^{-1}(n)\overline{f}(w_0 n g_q)\,dn}\]
is absolutely convergent.  Thus, by the argument of the proof of Theorem \ref{thm:SWD2}, we have an injection $\tau_p \hookrightarrow \sigma_p$, the map given by the Eisenstein projection.  Note that, because cuspidal quaternionic modular forms only have non-degenerate Fourier coefficients (this is a consequence of the main theorem of \cite{pollackQDS}), we do not need to be concerned with singular cuspidal theta lifts $\Theta_f(\mathbf{1})(g)$.

Now, $\sigma_p$ is a quotient of $I_{G_E,p}(s=5)$, and $\tau_p$ is a subquotient of this representation.  If $\tau_p$ is $0$, there is nothing to prove, so we may assume $\tau_p \neq 0$.  The representation $I_{G_E,p}(s=5)$ has a nonsplit composition series of length two, with subrepresentation denoted $V$ and uniuqe irreducible quotient the trivial representation.  Thus either $\tau_p = V$, $\tau_p = I_{G_E,p}(s=5)$, or $\tau_p$ is the trivial representation.  In the first case, the representation $V$ is not unitarizable; see \cite[Chapter XI, section 4]{borelWallach}.  Thus the cuspidal projection of $\tau_p$ is $0$ in that case.  In the latter two cases, the cuspidal projection of $\tau_p$ must be $0$ or the trivial representation, because it is semisimple.  But by considering Fourier coefficients of cusp forms, one sees immediately that the trivial representation of $G_{E}(\Q_p)$ cannot appear in the cuspidal spectrum.  Thus in all cases, the cuspidal projection of $\tau_p$ is $0$.

Because the data at the finite places away from $p$ was arbitrary, this proves the theorem.
\end{proof}

\begin{remark}\label{rmk:F4root} The Lie algebra $\g(J)$ has a $\Z/3\Z$-grading, 
	\[\g(J) = (\sl_3 \oplus \m(J)^0) \oplus (V_3 \otimes J) \oplus (V_3 \otimes J)^\vee,\]
where $V_3$ is the standard representation of $\sl_3$.  To compare this decomposition with the $\Z/2$-grading recalled in subsection \ref{subsec:excgps}, see \cite[Paragraph 4.2.4]{pollackQDS}.  We express various elements of $\g(J)$, in the $\Z/3\Z$-grading, in terms of the $F_4$ root system.  Here $[a_1 a_2 a_3 a_4]$ denotes the root $\sum_j a_j \alpha_j$ in the $F_4$ root system. 
\begin{itemize}
	\item $E_{13} = [2342]$
	\item $E_{12} = [1000]$
	\item $ v_1 \otimes J = \left(\begin{array}{ccc} [1122] & [1121] & [1111] \\ \relax [1121] & [1120] & [1110] \\ \relax [1111] & [1110] & [1100] \end{array}\right)$
	\item $\delta_3 \otimes J^\vee = \left(\begin{array}{ccc} [1220] & [1221] & [1231] \\ \relax [1221] & [1222] & [1232] \\ \relax [1231] & [1232] & [1242] \end{array}\right)$
	\item $E_{23} = [1342]$
	\item $\m(J) = \left(\begin{array}{ccc} * & [0001] & [0011] \\ \relax -[0001] & * & [0010] \\ \relax -[0011] & -[0010] & * \end{array}\right)$
	\item $v_2 \otimes J = \left(\begin{array}{ccc} [0122] & [0121] & [0111] \\ \relax [0121] & [0120] & [0110] \\ \relax [0111] & [0110] & [0100]\end{array}\right)$
\end{itemize}
We have written $v_1,v_2,v_3$ for the standard basis of $V_3$ and $\delta_1,\delta_2,\delta_3$ for the dual basis of $V_3^\vee$.
\end{remark}

Finally, we deduce Theorem \ref{thm:introMain} as a corollary of Theorem \ref{thm:SWGE}.  Let us make precise the statement of the result.

First, we need to define functions $\Theta_f$ for general $K_{G_J,\infty}$-type vectors.  Let $V_{min,\infty}$ denote the space of the archimedean minimal representation, as defined by Gross-Wallach \cite{grossWallachI}.  Fix a basis $w_{-4},w_{-3},\ldots, w_{4}$ of the minimal $K$-type of $V_{min,\infty}$, which we identify with $\mathbf{V}_4$, and let $w_j^\vee$ be the dual basis.  Suppose $f^{\infty}(g,s) \in I_{G_J,fte}(s)$ is a flat section.  As before, let $f_{\infty,4}(g,s)$ be our specified vector-valued archimedean flat inducing section.

Now, suppose $v \in V_{min,\infty}$, and $v = u \cdot w_0$ for some $u$ in the complexified universal enveloping algebra $\mathcal{U}(\g(J)\otimes \C)$.   We set $\Theta_{f^{\infty} \otimes v}(g) = u \langle \Theta_{f}(g), w_0^\vee \rangle$.  Here $f(g,s) = f^{\infty}(g,s) f_{\infty,4}(g,s)$.  It follows from \cite[Corollary 12.12]{ganSavin} that this association is well-defined, and thus gives an intertwining map $I_{G_J,fte}(s=24) \otimes V_{min,\infty} \rightarrow \mathcal{A}(G_J)$.  We can therefore define the theta lift $\Theta_{f^\infty\otimes v}(\mathbf{1})(g) = \int_{[S_E]}{\Theta_{f^\infty \otimes v}(g,h)\,dh}$.

We now define the Siegel-Weil Eisenstein series.  Recall that $\overline{f}(g) := Res_{s=24} M(w_0) f(g,s)$.
\begin{lemma} Suppose $v \in V_{min,\infty}$, $v = u \cdot w_0$ with $u \in \mathcal{U}(\g(J) \otimes \C)$.  Then the association $v \mapsto u \cdot \langle \overline{f}(g), w_0^\vee \rangle$ gives a well-defined, $G(\A_f) \times (\g(J) \otimes \C,K_{G_J,\infty})$-equivariant map $I_f(s=24) \otimes V_{min,\infty} \rightarrow I_{G_J}(s=5)$.
\end{lemma}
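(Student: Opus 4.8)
The plan is to realize this map as a ``residue of the long intertwining operator'' at $s=24$ followed by the canonical realization of $V_{min,\infty}$ as a submodule of $I_{G_J,\infty}(s=5)$ --- the same mechanism behind the construction of $f^\infty\otimes v\mapsto\Theta_{f^\infty\otimes v}$ just above --- and to reduce its well-definedness to the $(\g(J)\otimes\C,K_{G_J,\infty})$-module structure of $V_{min,\infty}$, in the manner of \cite[Corollary 12.12]{ganSavin}. Since the Heisenberg parabolic $P_{G_J}$ has $\delta_{P_{G_J}}=|\nu|^{29}$, the operator $M(w_0)$ carries $I_{G_J}(s)$ to $I_{G_J}(29-s)$ and, by the work recalled in Subsection \ref{subsec:minE8} (\cite[Proposition 4.1.3]{pollackE8} at the archimedean place, Proposition \ref{prop:E8degPS} at the finite places), has at worst a simple pole at $s=24$. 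Thus for each $f^\infty\in I_f(s=24)$ the section $\overline f=Res_{s=24}M(w_0)\big(f^\infty(\cdot,s)f_{\infty,4}(\cdot,s)\big)$ is a well-defined $\mathbf{V}_4$-valued element of $I_{G_J}(s=5)$, depending $G_J(\A_f)$-linearly on $f^\infty$ and satisfying $\overline f(gk)=k^{-1}\overline f(g)$ for $k\in K_{G_J,\infty}$ (this equivariance being inherited from $f_{\infty,4}$ and preserved by $M(w_0)$ and by taking residues); hence each $\langle\overline f(\cdot),w_j^\vee\rangle$ lies in $I_{G_J}(s=5)$, and $u\in\mathcal U(\g(J)\otimes\C)$ acts on these through the archimedean $(\g(J)\otimes\C,K_{G_J,\infty})$-module structure (here $w_0$ denotes the distinguished basis vector of $\mathbf{V}_4$, not the long Weyl element).

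The crux is to show that the $(\g(J)\otimes\C,K_{G_J,\infty})$-submodule $\tau$ of $I_{G_J}(s=5)$ generated by the components $\langle\overline f(\cdot),w_j^\vee\rangle$ is a copy of $V_{min,\infty}$, with $\langle\overline f(\cdot),w_j^\vee\rangle$ corresponding to $w_j$. For this I would work at the archimedean place with $f^\infty$ fixed and consider $R_\infty=Res_{s=24}M(w_0)_\infty:I_\infty(s=24)\to I_\infty(s=5)$. By the structure of the archimedean degenerate principal series --- the counterpart of Proposition \ref{prop:E8degPS}, available from Gross--Wallach \cite{grossWallachI} and Gan--Savin \cite[\S12]{ganSavin} --- the image of $R_\infty$ is a copy of $V_{min,\infty}$, realized inside $I_\infty(s=5)$, whose minimal $K_{G_J,\infty}$-type $\mathbf{V}_4$ occurs there with multiplicity one and generates it. Because the $s=24$ pole is achieved for the weight-$4$ inducing section --- the content of the archimedean intertwiner computation of \cite{pollackE8} --- $R_\infty$ is nonzero on $f_{\infty,4}(\cdot,24)$, hence (being $K_{G_J,\infty}$-equivariant, and $\mathbf{V}_4$ being irreducible of multiplicity one in $V_{min,\infty}$) carries the $K$-type $\mathbf{V}_4$ spanned by the components of $f_{\infty,4}(\cdot,24)$ isomorphically onto the minimal $K$-type of $V_{min,\infty}$. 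This yields the claimed description of $\tau$; if instead $R_\infty$ annihilates $f_{\infty,4}(\cdot,24)$ then $\overline f\equiv0$, the map of the lemma is identically zero, and there is nothing to prove.

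Granting the crux, the rest is formal: writing $\Psi=\Psi_{f^\infty}:V_{min,\infty}\xrightarrow{\sim}\tau$ for the isomorphism with $\Psi(w_j)=\langle\overline f(\cdot),w_j^\vee\rangle$, one has for $v=u\cdot w_0$ that $u\cdot\langle\overline f(\cdot),w_0^\vee\rangle=u\cdot\Psi(w_0)=\Psi(u\cdot w_0)=\Psi(v)$, which depends only on $v$ and not on the chosen $u$; this is the well-definedness. The resulting map $f^\infty\otimes v\mapsto\Psi_{f^\infty}(v)$ is $G_J(\A_f)$-equivariant because $f^\infty\mapsto\overline f$ is (as $M(w_0)$ is $G_J(\A_f)$-equivariant), and $(\g(J)\otimes\C,K_{G_J,\infty})$-equivariant because each $\Psi_{f^\infty}$ is a $\g(J)$-homomorphism that is $K_{G_J,\infty}$-equivariant. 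The main obstacle is exactly the archimedean structural input of the crux step: that $R_\infty$ exhibits $V_{min,\infty}$ as a submodule of $I_\infty(s=5)$ with $\mathbf{V}_4$ its multiplicity-one minimal $K_{G_J,\infty}$-type, and that $R_\infty$ is nonzero on the weight-$4$ vector. The former is the archimedean analogue of Proposition \ref{prop:E8degPS} and comes from \cite{grossWallachI} and \cite[\S12]{ganSavin}; the latter is the explicit archimedean intertwiner computation of \cite{pollackE8} that underlies the choice $\ell=4$ and the attainment of the $s=24$ pole. With these in hand, the statement is the $I_{G_J}(s=5)$-valued analogue of \cite[Corollary 12.12]{ganSavin}, proved identically.
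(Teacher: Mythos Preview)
Your argument is essentially correct, but it takes a different route from the paper's proof.

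The paper's proof is shorter and avoids any direct archimedean structural analysis of $I_\infty(s=5)$ or of the image of $R_\infty$. It observes that the association $f^\infty\otimes v\mapsto \Theta_{f^\infty\otimes v}$ into $\mathcal A(G_J)$ is already known to be well-defined and equivariant (this is exactly \cite[Corollary 12.12]{ganSavin}, the fact used just above the lemma). One then composes with the constant term map along the unipotent radical $U_{P_0}$ of the minimal parabolic, which is manifestly $G_J(\A_f)\times(\g(J),K_{G_J,\infty})$-intertwining. On the residues $Res_{s=24}E(g,f,s)$ this constant term has at most three terms, with \emph{distinct} exponents on $T(\R)$; one of them is $Res_{s=24}M(w_0)f(g,s)$. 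Projection onto that exponent is again intertwining, and this recovers $u\cdot\langle\overline f,w_0^\vee\rangle$. Well-definedness and equivariance of the map in the lemma thus follow from the corresponding facts for $\Theta_{f^\infty\otimes v}$, without ever needing to identify the image of $R_\infty$ inside $I_\infty(s=5)$ or to invoke multiplicity-one of $\mathbf V_4$ there.

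Your approach instead works purely locally at $\infty$, and the well-definedness is reduced to the structural assertion you label the ``crux'': that $R_\infty$ realizes $V_{min,\infty}$ inside $I_\infty(s=5)$ with $\mathbf V_4$ its multiplicity-one minimal $K$-type. This is true and can be extracted from the literature you cite, so your argument goes through; but it front-loads the archimedean representation theory, whereas the paper's argument piggybacks on the automorphic realization already in hand and needs nothing beyond the distinctness of the three exponents appearing in the constant term. The trade-off is that the paper's proof is more economical but less self-contained (it leans on the global $\Theta$-construction), while yours is more direct and would work even without the automorphic realization, at the cost of importing the archimedean analogue of Proposition~\ref{prop:E8degPS}.
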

\begin{proof} Consider the constant term map $\varphi \mapsto \varphi_{U_{P_0}}$ from $\mathcal{A}(G_J) \rightarrow \mathcal{A}(U_{P_0}(\A)\backslash G_J(\A))$.  Here $U_{P_0}$ is the unipotent radical of the minimal standard parabolic of $G_J$.  One sees that on the residues of Eisenstein series $Res_{s=24}E(g,f,s)$ for arbitrary flat sections $f$, there are most three terms, and they have distinct exponents.  One of these terms is the long intertwining operator $Res_{s=24} M(w_0)f(g,s)$.  Suppose $\xi$ is its exponent, restricted to $T(\R)$.  Then the $\xi$-part of the constant term recovers the long-intertwining operator for every residue $Res_{s=24}E(g,f,s)$.  Because it's a constant term, it is an intertwining map.  The lemma follows.
\end{proof}

Given $f^\infty \otimes v \in I_{G_J,f}(s=24) \otimes V_{min,\infty}$, our Siegel-Weil Eisenstein series is defined as 
\[E_{G_E}(g,u \cdot \langle \overline{f}, w_0^\vee \rangle),s=5).\]
Here we restrict $u \cdot \langle \overline{f}, w_0^\vee \rangle$ to a flat section of $I_{G_E}(s)$ and then evaluate the corresponding Eisenstein series at $s=5$.  The Eisenstein series is regular at $s=5$ because restricting $V_{min,\infty}$ to the maximal compact subgroup $K_{G_E,\infty} \subseteq G_E(\R)$, we never see the trivial representation.  (In fact, the long root $\SU_2$ never sees the trivial representation.) If we mod out by the trivial representation, the Eisenstein map 
\[\mathrm{Eis}_1:  I_{G_J,f}(s=24) \otimes V_{min,\infty} \rightarrow \mathcal{A}(G_E)/\mathbf{1}\]
becomes $G_E(\A_f) \times (\g_E,K_{G_E,\infty})$-intertwining, and factors through the $S_E(\A)$-coinvariants.

We now have:
\begin{corollary} In $\mathcal{A}(G_E)/\mathbf{1}$, we have an identity 
	\[\Theta_{f^\infty \otimes v}(\mathbf{1})(g) = E(g,u \cdot \langle\overline{f}, w_0^\vee \rangle,s=5).\]
\end{corollary}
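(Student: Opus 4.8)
The plan is to deduce the corollary from Theorem~\ref{thm:SWGE} by a standard $(\g,K)$-module argument, using that $V_{min,\infty}$ is irreducible and hence generated over $\mathcal{U}(\g(J)\otimes\C)$ by its minimal $K_{G_J,\infty}$-type $\mathbf{V}_4$. Set $M := I_{G_J,f}(s=24)\otimes V_{min,\infty}$. The assignment $f^\infty\otimes v\mapsto\Theta_{f^\infty\otimes v}\in\mathcal{A}(G_J)$ is, by \cite[Corollary~12.12]{ganSavin}, a well-defined $G_J(\A_f)\times(\g(J)\otimes\C,K_{G_J,\infty})$-equivariant map; composing it with restriction to $G_E(\A)$ and integration over $[S_E]$ and reducing modulo $\mathbf{1}$ yields a map $D_\Theta:M\to\mathcal{A}(G_E)/\mathbf{1}$ which is $G_E(\A_f)\times(\g_E,K_{G_E,\infty})$-equivariant and, since integration over $[S_E]$ annihilates derivatives in the $S_E$-direction and averages out $S_E(\A_f)$-translation, factors through the $S_E(\A)$-coinvariants, which I denote $M_{S_E(\A)}$. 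Likewise the lemma preceding the corollary produces a $G_J(\A_f)\times(\g(J)\otimes\C,K_{G_J,\infty})$-equivariant map $M\to I_{G_J}(s=5)$; restricting its values to $G_E$, extending to flat sections, forming Eisenstein series at $s=5$ and reducing modulo $\mathbf{1}$ gives a second map $D_{\mathrm{Eis}}:M\to\mathcal{A}(G_E)/\mathbf{1}$ with the same equivariance, which (as recorded just before the corollary) also factors through $M_{S_E(\A)}$. By construction $\Theta_{f^\infty\otimes v}(\mathbf{1})(g)=D_\Theta(f^\infty\otimes v)(g)$ and $E(g,u\cdot\langle\overline{f},w_0^\vee\rangle,s=5)=D_{\mathrm{Eis}}(f^\infty\otimes v)(g)$, so the corollary is exactly the assertion $D_\Theta=D_{\mathrm{Eis}}$.

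First I would record that $D_\Theta$ and $D_{\mathrm{Eis}}$ agree on the subspace $I_{G_J,f}(s=24)\otimes\mathbf{V}_4$: for $v\in\mathbf{V}_4$ one may take $u\in\mathcal{U}(\Lie(K_{G_J,\infty})\otimes\C)$, and then $\Theta_{f^\infty\otimes v}$ and $u\cdot\langle\overline{f},w_0^\vee\rangle$ are, by $K_{G_J,\infty}$-equivariance, precisely the components of the $\mathbf{V}_4$-valued objects $\Theta_f$ and $\overline{f}$ against the dual basis $w_{-4}^\vee,\dots,w_4^\vee$, so the agreement is the identity $\Theta_f(\mathbf{1})(g)=E_{G_E}(g,\overline{f})$ of Theorem~\ref{thm:SWGE} read off componentwise. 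Since both maps are $G_E(\A_f)\times(\g_E,K_{G_E,\infty})$-equivariant and factor through $M_{S_E(\A)}$, it now suffices to show that $M_{S_E(\A)}$ is generated, as a $G_E(\A_f)\times(\g_E,K_{G_E,\infty})$-module, by the image of $I_{G_J,f}(s=24)\otimes\mathbf{V}_4$. Because $M_{S_E(\A)}\cong\bigl(I_{G_J,f}(s=24)\bigr)_{S_E(\A_f)}\otimes(V_{min,\infty})_{S_E(\R)}$, with $G_E(\A_f)$ acting on the first factor through the full (surjective) coinvariant quotient, this reduces to the purely archimedean claim: \emph{the $S_E(\R)$-coinvariants $(V_{min,\infty})_{S_E(\R)}$ are generated as a $(\g_E,K_{G_E,\infty})$-module by the image of $\mathbf{V}_4$}. (That image is nonzero: $S_E(\R)$, being a compact subgroup of the derived group $H_J^1$ of the Heisenberg Levi $M_{G_J,1}$, commutes with the quaternionic $\su_2$ defining $\mathbf{V}_4$ and hence acts trivially on $\mathbf{V}_4$.) Granting this claim, $D_\Theta-D_{\mathrm{Eis}}$ vanishes on a generating set for $M_{S_E(\A)}$, hence vanishes identically, which is the corollary.

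The step I expect to be the real obstacle is precisely this archimedean generation statement; it is the archimedean analogue of the one-dimensionality of the twisted Jacquet modules established in Section~\ref{sec:twisedJacquet}. I would prove it from the explicit description of the quaternionic minimal representation $V_{min,\infty}$ and of its $K_{G_J,\infty}$-type decomposition in \cite{grossWallachI}, together with the branching of $\g(J)$ under the commuting pair $\g_E\oplus\Lie(S_E)$: writing $\g(J)=\g_E\oplus\Lie(S_E)\oplus\mathfrak{r}$ as a module over the subalgebra $\g_E\oplus\Lie(S_E)$, and using that $\mathbf{V}_4$ generates $V_{min,\infty}$ over $\mathcal{U}(\g(J)\otimes\C)$, a Poincar\'e--Birkhoff--Witt bookkeeping should show that, modulo the $\Lie(S_E)$-action, every $K_{G_J,\infty}$-type occurring in $V_{min,\infty}$ is reached from $\mathbf{V}_4$ using only $\g_E$ (the $\Lie(S_E)$- and $\mathfrak{r}$-directions being controlled by comparing $S_E(\R)$-invariants in successive $K$-types). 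Once this generation is in hand, everything else is the formal $(\g,K)$-module bookkeeping sketched above.
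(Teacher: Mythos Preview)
Your overall strategy matches the paper's exactly: set up the two equivariant maps $D_\Theta$ and $D_{\mathrm{Eis}}$ on the $S_E(\A)$-coinvariants, observe they agree on the minimal $K$-type by Theorem~\ref{thm:SWGE}, and conclude by showing that the image of $\mathbf{V}_4$ generates $(V_{min,\infty})_{S_E(\R)}$ as a $(\g_E,K_{G_E,\infty})$-module.

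The difference is in how that last generation statement is established. You flag it as ``the real obstacle'' and propose a direct PBW-and-branching argument which, as written, is only a heuristic (``should show''). The paper does not attempt anything of the sort: it simply cites \cite{HPS} for the stronger fact that $(V_{min,\infty})_{S_E(\R)}$ is an \emph{irreducible} $(\g_E,K_{G_E,\infty})$-module. Since you already checked that the image of $\mathbf{V}_4$ (indeed of $w_0$) is nonzero in the coinvariants, irreducibility immediately gives generation, and the corollary follows in one line. Your PBW sketch, by contrast, would require real work to make rigorous (controlling how the $\mathfrak{r}$-directions interact with $S_E(\R)$-invariants across $K$-types is not obviously a matter of ``bookkeeping''), and in any case reproves a weaker version of a known theta-correspondence result. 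So the gap in your argument is not in the logic but in the input: replace the proposed PBW computation with the citation to the archimedean irreducibility of the theta lift, and your proof becomes the paper's.
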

\begin{proof} Fix the data away from $\infty$.  Then both sides are $(\g_E,K_{G_E,\infty})$-equivariant maps on the coinvariant $(V_{min,\infty})_{S_E(\R)}$ that agree on the vector $v= w_0$ by Theorem \ref{thm:SWGE}.  But by \cite{HPS}, $(V_{min,\infty})_{S_E(\R)}$ is an irreducible  $(\g_E,K_{G_E,\infty})$-module.  The corollary follows.
\end{proof}

\bibliography{nsfANT2020new}
\bibliographystyle{amsalpha}
\end{document}